
\documentclass[12pt,reqno]{article}
\usepackage{amssymb,amsfonts,amsmath,amsthm,color,url}
\usepackage{a4,bm}

\newcommand{\Rp}{\R^+}

\newcommand{\sgn}{\mathrm{sgn}}

\newcommand{\vv}[1]{\mathbf{#1}}
\numberwithin{equation}{section}
\begin{document}

 \bibliographystyle{plain}

 \newtheorem{theorem}{Theorem}[section]
 \newtheorem{proposition}{Proposition}[section]
 \newtheorem{theoremnoname}{Theorem \hspace*{-2ex}}
 \renewcommand{\thetheoremnoname}{}
 \newtheorem{lemma}{Lemma}[section]
 \newtheorem{corollary}{Corollary}[section]
 \newtheorem{problem}{Problem}[section]
 \newtheorem{claim}{Claim}[section]
 \newtheorem{conjecture}{Conjecture}[section]
\theoremstyle{definition}
 \newtheorem{definition}{Definition}[section]
\theoremstyle{remark}
 \newtheorem{remark}{Remark}[section]
 \newtheorem{example}{Example}[section]

 \newcommand{\mc}{\mathcal}
 \newcommand{\rar}{\rightarrow}
 \newcommand{\Rar}{\Rightarrow}
 \newcommand{\lar}{\leftarrow}
 \newcommand{\lrar}{\leftrightarrow}
 \newcommand{\Lrar}{\Leftrightarrow}
 \newcommand{\zpz}{\mathbb{Z}/p\mathbb{Z}}
 \newcommand{\mbb}{\mathbb}
 \newcommand{\A}{\mc{A}}
 \newcommand{\B}{\mc{B}}
 \newcommand{\cc}{\mc{C}}
 \newcommand{\D}{\mc{D}}
 \newcommand{\E}{\mc{E}}
 \newcommand{\F}{\mc{F}}
 \newcommand{\G}{\mc{G}}
 \newcommand{\cS}{\mc{S}}
  \newcommand{\ZG}{\Z (G)}
 \newcommand{\FN}{\F_n}
 \newcommand{\I}{\mc{I}}
 \newcommand{\J}{\mc{J}}
 \newcommand{\M}{\mc{M}}
 \newcommand{\nn}{\mc{N}}
 \newcommand{\qq}{\mc{Q}}
 \newcommand{\U}{\mc{U}}
 \newcommand{\X}{\mc{X}}
 \newcommand{\Y}{\mc{Y}}
 \newcommand{\itQ}{\mc{Q}}
 \newcommand{\C}{\mathbb{C}}
 \newcommand{\R}{\mathbb{R}}
 \newcommand{\N}{\mathbb{N}}
 \newcommand{\Q}{\mathbb{Q}}
 \newcommand{\Z}{\mathbb{Z}}
 \newcommand{\ff}{\mathfrak F}
 \newcommand{\fb}{f_{\beta}}
 \newcommand{\fg}{f_{\gamma}}
 \newcommand{\gb}{g_{\beta}}
 \newcommand{\vphi}{\varphi}
 \newcommand{\whXq}{\widehat{X}_q(0)}
 \newcommand{\Xnn}{g_{n,N}}
 \newcommand{\lf}{\left\lfloor}
 \newcommand{\rf}{\right\rfloor}
 \newcommand{\lQx}{L_Q(x)}
 \newcommand{\lQQ}{\frac{\lQx}{Q}}
 \newcommand{\rQx}{R_Q(x)}
 \newcommand{\rQQ}{\frac{\rQx}{Q}}
 \newcommand{\elQ}{\ell_Q(\alpha )}
 \newcommand{\oa}{\overline{a}}
 \newcommand{\oI}{\overline{I}}
 \newcommand{\dx}{\text{\rm d}x}
 \newcommand{\dy}{\text{\rm d}y}
\newcommand{\cH}{{\cal H}}
\newcommand{\diam}{\operatorname{diam}}
\newcommand{\ve}{\varepsilon}
\newcommand{\cT}{\mathcal{T}}
\newcommand{\cA}{\mathcal{A}}

\parskip=2ex


\title{\Large\bf Sums of reciprocals of fractional parts and \\ multiplicative Diophantine approximation}

\author{
 V.~Beresnevich\thanks{VB:~Research supported by EPSRC grant  EP/J018260/1.}
 \and
 A.~Haynes\thanks{AH:~Research supported by EPSRC grants EP/F027028/1,  EP/J00149X/1 and EP/M023540/1.}
 \and
 S.~Velani\thanks{SV:~Research supported by EPSRC grants EP/E061613/1, EP/F027028/1 and EP/J018260/1.}   ~
}

\date{}

\maketitle

\centerline{\small\it Dedicated  to Wolfgang M. Schmidt on $N^{o} 80$}

\begin{abstract}
\footnotesize
There are two main interrelated goals of this paper. Firstly we investigate the sums
$$
S_N(\alpha,\gamma):=\sum_{n=1}^N\frac{1}{n\|n\alpha-\gamma\|}
$$
and
$$
R_N(\alpha,\gamma):=\sum_{n=1}^N\frac{1}{\|n\alpha-\gamma\|}\,,
$$
where $\alpha$ and $\gamma$ are real parameters and $\|\cdot\|$ is the distance to the nearest integer. Our theorems improve upon previous results of W.\,M.~Schmidt and others, and are (up to constants) best possible.
Related to the above sums, we also obtain upper and lower bounds for the cardinality of
$$
\{1\le n\le N:\|n\alpha-\gamma\|<\ve\} \, ,
$$
valid for all sufficiently large $N$ and all sufficiently small $\ve$.
This first strand of the work is motivated by applications to multiplicative Diophantine approximation, which are also considered. In particular, we obtain complete Khintchine type results for multiplicative simultaneous Diophantine approximation on fibers in $\R^2$. The divergence result is the first of its kind and represents an attempt of developing the concept of ubiquity to the multiplicative setting.
\end{abstract}

\allowdisplaybreaks

\maketitle

\noindent{\footnotesize{\it Mathematics subject classification}\,: 11K60, 11J71, 11A55, 11J83, 11J20, 11J70, 11K38, 11J54, 11K06, 11K50}\\
{\footnotesize{\it Keywords}\,: Multiplicative Diophantine approximation, Ostrowski expansion, uniform distribution}

\newpage
\tableofcontents
\newpage

\part{\Large Problems and main results}

\bigskip
\bigskip

\section*{Notation}

To simplify notation the Vinogradov symbols $\ll$
and $\gg$ will be used to indicate an inequality with an
unspecified positive multiplicative constant. If $a \ll b$ and $a
\gg b$ we write $ a \asymp b $, and  say that the quantities $a$
and $b$ are \emph{comparable}. For a real number $x$, the quantity $\{x\}$ will denote the fractional part of $x$.  Also, $\lfloor x\rfloor:=x-\{x\}$  denotes the  largest integer not greater than $x$   and  $\lceil x\rceil:=-\lfloor -x\rfloor:=\min\{m\in\Z:m\ge x\}$  denotes  the smallest integer not less than $x$. Given a real number $x$, $\sgn(x)$ will stand for $1$ if $x>0$, $-1$ if $x<0$ and $0$ if $x=0$.
Finally, $\|x\|=\min\{|x-m|:m\in\Z\}$ will denote the distance from $x \in \R $ to the nearest integer, and $\Rp=(0,+\infty)$.

\section{Sums of reciprocals}\label{sec1}

\subsection{Background}\label{intro}

Let $\alpha,\gamma\in\R$ and suppose that
\begin{equation}\label{contfrac.eqn.|na-g|}
\|n\alpha-\gamma\|>0 \quad \forall   \ n\in\N.
\end{equation}
For $N\in\N$ consider the sums
\[
S_N(\alpha,\gamma):=\sum_{n=1}^N\frac{1}{n\|n\alpha-\gamma\|}
\]
and
\[
R_N(\alpha,\gamma):=\sum_{n=1}^N\frac{1}{\|n\alpha-\gamma\|}\,.
\]

\noindent Note that for any real number $x$ we have that $\|x\|$ is the minimum of the fractional parts $\{x\}$ and $\{-x\}$. Hence,  it is legitimate that the sums $S_N(\alpha,\gamma)$ and $R_N(\alpha,\gamma)$ are referred to as the `sums of reciprocals of fractional parts'.

The above sums can be related by the following well known partial summation formula: given sequences    $(a_n) $  and $ (b_n) $ with $ n \in \N $
\begin{equation}\label{psum1}
\sum_{n=1}^Na_nb_n=\sum_{n=1}^N(a_n-a_{n+1})(b_1+\dots+b_n)+a_{N+1}(b_1+\dots+b_N)\,.
\end{equation}
In particular, it is readily seen that
\begin{equation}\label{pg}
S_N(\alpha,\gamma)=\sum_{n=1}^N\frac{R_n(\alpha,\gamma)}{n(n+1)}
+\frac{R_N(\alpha,\gamma)}{N+1}\,.
\end{equation}

Motivated by a wide range of applications, bounds for the above sums have been extensively studied over a long period of time, in particular, in connection with counting lattice points in polygons, problems in the theory of uniform distribution, problems in the metric theory of Diophantine approximation, problems in dynamical systems and problems in electronics engineering (see, for example, \cite{Beck1994}, \cite{Beck2014}, \cite{Behnke}, \cite{Chowla}, \cite{HardyLittlewood1922}, \cite{HardyLittlewood1922b}, \cite{HardyLittlewood1930}, \cite[pp.\,108-110]{Koksma}, \cite{KuipersNiederreiter1974}, \cite{Kruse}, \cite{Saligrama}, \cite{Schmidt1964}, \cite{SinaiUlcigrai2009}, \cite{Walfisz1},\cite{Walfisz2}).  Schmidt has shown in \cite{Schmidt1964} that for any $\gamma\in\R$ and for any $\varepsilon >0$
\begin{equation}\label{schmidt}
(\log N)^2\ll S_N(\alpha,\gamma)\ll (\log N)^{2+\varepsilon},
\end{equation}
for almost all $\alpha\in\R$. In other words, the set of $\alpha$ for which  \eqref{schmidt} fails is a set of Lebesgue measure zero. It should be emphasised that \eqref{schmidt} is actually a simple  consequence of a much more general result, namely Theorem~2 in \cite{Schmidt1964}, established in higher dimensions for sums involving  linear forms of integral polynomials with real coefficients. To a large extent Schmidt's interest in understanding the behavior of sums such as $ S_N(\alpha,\gamma)$ lies in applications to metric Diophantine approximation, more specifically to obtaining Khintchine type theorems.  Our motivation is somewhat similar.

In the homogeneous case, that is to say when  $\gamma=0$,  the inequalities \eqref{schmidt} are known to be true with $\ve=0$  for any badly approximable $\alpha$.  This result was originally proved by Hardy and Littlewood \cite[Lemma~3]{HardyLittlewood1922b}. Today this classical statement can be found as a set exercise in the monograph \cite[Exercise 3.12]{KuipersNiederreiter1974} of Kuipers and Niederreiter. However, there is a downside  in that the set of badly approximable numbers is of Lebesgue measure zero and therefore we are only guaranteed comparability  in \eqref{schmidt} for $\alpha$ in a thin set.

Walfisz \cite[(48\raisebox{-0.6ex}{\tiny II}) on p.\,571]{Walfisz1} proved that for any $\ve>0$ we have that $S_N(\alpha,0)\ll (\log N)^{3+\ve}$ for almost every $\alpha\in\R$. In another paper \cite[p.\,787]{Walfisz2}, he showed that for any $\ve>0$ we have that $R_N(\alpha,0)\ll N(\log N)^{1+\ve}$ for almost every $\alpha \in \R$. This together with  \eqref{pg} implies the right hand side of \eqref{schmidt} was essentially known to  Walfisz.   Indeed, such upper bounds can  be deduced from the even earlier work of Behnke \cite{Behnke} and metric properties of continued fractions appearing in \cite{Khintchine}.

More generally (see  \cite[Exercise 3.12]{KuipersNiederreiter1974}) if $\alpha\in\R$ and $f:\Rp\rar\Rp$ is a nondecreasing function such that
\begin{equation}\label{intro.eqn.approxtype}
\inf_{n\in\N} nf(n)\|n\alpha\|>0,
\end{equation}
then
\begin{equation}\label{intro.eqn.S_Nbound1}
S_N(\alpha, 0)\ll (\log N)^2+f(N)+\sum_{1\le n\le N}\frac{f(n)}{n}.
\end{equation}
A simple consequence of the Borel-Cantelli Lemma in probability theory (or equivalently the trivial convergence part of Khintchine's Theorem \cite[Theorem 2.2]{Harman1998} -- see also \S\ref{MDA}),   implies that if the sum
\begin{equation}\label{intro.eqn.Khinsum}
\sum_{n=1}^\infty\frac{1}{nf(n)}
\end{equation}
converges then \eqref{intro.eqn.approxtype}, and so \eqref{intro.eqn.S_Nbound1}, holds for almost all $\alpha$. However, when \eqref{intro.eqn.Khinsum} converges the last term of \eqref{intro.eqn.S_Nbound1} grows at least as fast as $(\log N)^2\log\log N$. This means that even when $\gamma=0$, for almost all $\alpha$ the lower bound of $(\log N)^2$  in  \eqref{schmidt} does not coincide with the upper bound  given by  \eqref{intro.eqn.S_Nbound1}.

The true magnitude of $S_N(\alpha,0)$ for almost every number  was eventually discovered by Kruse \cite[Theorem~6(b)]{Kruse}. He  proved that for almost every $\alpha\in\R$
\begin{equation}\label{Kruse1}
S_N(\alpha,0)\asymp (\log N)^2\,.
\end{equation}
Beyond this `almost sure' result, Kruse \cite[Theorem~1(g)]{Kruse} showed that for any irrational $\alpha$
\begin{align}
&S_N(\alpha,0)\gg\log q_K\cdot \log (N/q_K)+\sum_{n=1}^{K-1}\log q_n\cdot\log a_{n+1}+\sum_{n=1}^{K+1}a_n\,, \label{Kruse3a}\\[2ex]
&S_N(\alpha,0)\ll
  \log q_K\cdot \log (N/q_K)+\sum_{n=1}^{K-1}\log q_n\cdot(1+\log a_{n+1})+\sum_{n=1}^{K+1}a_n\, \label{Kruse3b}
\end{align}
where $[a_0;a_1,a_2,\dots]$ is the continued fraction expansion of $\alpha$ and $K$ is the largest integer such that the denominator $q_K$ of $[a_0;a_1,a_2,\dots,a_K]$ satisfies $q_K\le N$ (see below for further details of continued fractions).    Kruse also provides the following  simplification  of  \eqref{Kruse3b}
\begin{equation}\label{Kruse2}
  S_N(\alpha,0)\ll (\log N)(\log q_K)+\sum_{n=1}^{K+1}a_n\, .
\end{equation}
Observe that the estimate \eqref{Kruse3a} is not always sharp. For example, if $\alpha=[1;1,1,\dots]$, the golden ratio, and $N=q_K$, the lower bound  \eqref{Kruse3a} becomes $S_N(\alpha,0)\gg \log N$ and this  is significantly smaller than the truth  -- see Theorem~\ref{T1} below. In fact, based on this example it is simple to construct  many other real numbers for which \eqref{Kruse3a}  is not optimal.  In short, they correspond to real numbers  that contain sufficiently large blocks of $1$'s in their continued fraction expansion. In this paper we will  rectify this issue. Moreover, we shall  prove that the two sides of \eqref{Kruse2} are  comparable and provide explicit constants.

Turning our attention to $R_N(\alpha,0)$, we have already mentioned Walfisz's result
that $R_N(\alpha,0)\ll N(\log N)^{1+\ve}$ for almost every $\alpha \in \R$.
Beyond this `almost sure' result, Behnke \cite[pp.\,289-290]{Behnke} showed that for any irrational $\alpha$
\begin{equation}\label{Behnke1}
R_N(\alpha,0)\ll N\log N+\sum_{\substack{n=1\\[0.5ex] n\equiv0\bmod{q_k}}}^N\frac{1}{\|n\alpha\|}\,,
\end{equation}
where $K$ is the same as in \eqref{Kruse3a}. The sum appearing on the right hand side of \eqref{Behnke1} may result in substantial spikes and needs to be analysed separately.
Lang \cite[Theorem~2, p.\,37]{LangDAbook} has shown that if, for some increasing function $g:\Rp\to\Rp$, the inequality  $q_{K+1}\ll q_Kg(q_K)$ holds for all sufficiently large $K\in\N$, where $q_K$ are the denominators of the principal convergents of the continued fraction expansion of $\alpha$, then
\begin{equation}\label{lang}
R_N(\alpha,0)\ \ll \ N\log N+Ng(N)\,.
\end{equation}
As is mentioned in \cite[Remark~1, p.\,40]{LangDAbook} both terms on the right of \eqref{lang} are necessary. This should be interpreted in the following sense: there are  functions $g$, irrationals $\alpha$  and arbitrarily large $N$ such that the inequality in \eqref{lang} can be reversed.

Although \eqref{lang} is not optimal for all choices of $\alpha$ and $N$, for badly approximable $\alpha$ the result is precise. Indeed, when $\alpha$ is badly approximable, \eqref{lang} becomes
\begin{equation}\label{lang+}
N\log N \ \ll \ R_N(\alpha,0) \ \ll \ N\log N\,.
\end{equation}
for all $N>1$. The latter was originally obtained by Hardy and Litlewood \cite{HardyLittlewood1922, HardyLittlewood1922b} in connection with counting integer points in certain polygons in $\R^2$. The upper bound of \eqref{lang+} also appears in the work \cite[p.\,546]{Chowla} of Chowla. Recently, L\^e and Vaaler \cite{Vaaler} have investigated a much more general problem that involves the sums $R_N(\alpha,0)$  and their higher dimensional generalisations. In particular, they prove in \cite[Theorem~1.1]{Vaaler} that
\begin{equation}\label{vaaler}
R_N(\alpha,0)\gg N\log N
\end{equation}
for all $N$ irrespective of the properties of the irrational number $\alpha$. Indeed, the implicit constant within $\gg$ can be taken to be $1$ provided $N$ is sufficiently large. In the same paper, L\^e and Vaaler  show that inequality \eqref{vaaler} is best possible for a large class of  $\alpha$. More precisely, they prove in \cite[Theorem~1.3]{Vaaler} that
for every sufficiently large $N$ and every $0<\ve<1$ there exists a subset $ X_{\ve;N}$ of $\alpha\in[0,1]$ of Lebesgue measure $\ge1-\ve$ such that
$$
R_N(\alpha,0)\ll_\ve N\log N    \qquad   \forall    \   \alpha \in X_{\ve;N}  \,.
$$

The above results  for $R_N(\alpha,0)$ can in fact be  derived from  the work of Kruse \cite{Kruse}. More precisely,  inequalities (75) and (76) in \cite{Kruse} state that for any irrational $\alpha$ and $N$ sufficiently large:
\begin{align}
&\sum_{\substack{n=1\\[0.5ex] n\not\equiv0\bmod{q_K}}}^N\frac{1}{\|n\alpha\|}\asymp N\log q_K+ q_Ka_{K+1}\log\frac{a_{K+1}}{\max\{1,a_{K+1}-N/q_K\}}\,,\label{Kruse3c}\\[2ex]
&\sum_{\substack{n=1\\[0.5ex] n\equiv0\bmod{q_K}}}^N\frac{1}{\|n\alpha\|}\asymp q_Ka_{K+1}(1+\log (N/q_K))\, ,\label{Kruse3d}
\end{align}
where $K$ is the same as in \eqref{Kruse3a}.  These can then be combined to give (see formula  (77) in  \cite{Kruse}) the  estimate
\begin{equation}\label{Kruse3e}
  R_N(\alpha,0) \, \asymp  \,  N\log N+q_Ka_{K+1}(1+\log (N/q_K))\, .
\end{equation}
The techniques developed in this paper allow us to re-establish the above estimates of Kruse with fully explicit constants.

\medskip

One of our  principle goals is
to undertake an in-depth investigation of the sums $S_N(\alpha,\gamma)$ and $R_N(\alpha,\gamma)$, especially in the currently fragmented inhomogeneous case  ($\gamma\neq0$).  The intention is to establish (up to constants) best possible upper and lower bounds for the sums in question.  We begin with the homogeneous case.

\subsection{Homogeneous results and corollaries}

In the homogeneous case ($\gamma =0$)  we are  pretty much able to give exact bounds for
$$
S_N(\alpha,0):=\sum\limits_{1\le n\le N}\,\dfrac{1}{n\|n\alpha\|}\qquad\text{and}\qquad
R_N(\alpha,0):=\sum_{n=1}^N\frac{1}{\|n\alpha\|}
$$
that are valid for all irrational $\alpha$. In order to state our results  we require notions from  the theory of continued fractions, which will be recalled in \S\ref{sec.contfrac}. For now let $q_k=q_k(\alpha)$ denote the denominators of the  principal convergents and let $a_k=a_k(\alpha)$ denote the partial quotients of (the continued fraction expansion of) $\alpha$. Also, given $k\in\N$, let
$$
A_k=A_k(\alpha):=\sum_{i=1}^ka_i
$$
denote the sum of the first $k$ partial quotients of $\alpha$. Our first result concerns the sum $S_N(\alpha,0)$.

\begin{theorem}\label{T1}
Let $\alpha\in\R\setminus\Q$, $N\in \N$
and let $K=K(N,\alpha)$ denote the largest integer satisfying $q_K\le N$. Then, for all sufficiently large $N$
\begin{equation}\label{eq12}
\max\Big\{\,\tfrac12(\log N)^2,~A_{K+1}\,\Big\}~\le~ S_N(\alpha,0)~\le~
33(\log N)^2+10A_{K+1}\, .
\end{equation}
\end{theorem}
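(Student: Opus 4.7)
The plan is to prove the lower and upper bounds in \eqref{eq12} independently. The lower bound is fairly soft, whereas the upper bound requires a careful decomposition of the range of summation via the Ostrowski expansion.

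\medskip

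For the lower bound I would use two independent arguments. The term $A_{K+1}$ is captured by restricting the sum to $n=q_k$ with $0\le k\le K$: since $\|q_k\alpha\|<1/q_{k+1}$ and $q_{k+1}\ge a_{k+1}q_k$, the classical estimate
\[
\frac{1}{q_k\|q_k\alpha\|}\;>\;\frac{q_{k+1}}{q_k}\;\ge\;a_{k+1}
\]
yields $A_{K+1}$ after summation over $k$. For the term $\tfrac12(\log N)^2$, I would insert the L\^e--Vaaler bound \eqref{vaaler} in the effective form $R_n(\alpha,0)\ge n\log n+O(n)$ (valid for $n$ large) into the partial summation identity \eqref{pg}, obtaining
\[
S_N(\alpha,0)\;\ge\;\sum_{n=n_0}^{N}\frac{R_n(\alpha,0)}{n(n+1)}\;=\;\tfrac12(\log N)^2+O(\log N),
\]
which dominates $\tfrac12(\log N)^2$ for $N$ sufficiently large.

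\medskip

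For the upper bound the key tool is the Ostrowski expansion: every $n\in\N$ has a unique representation $n=\sum_{k\ge 0}c_kq_k$ with admissible digits $c_k$. Setting $D_k:=q_k\alpha-p_k$, the $D_k$ alternate in sign and satisfy $|D_k|=a_{k+2}|D_{k+1}|+|D_{k+2}|$, which lets $\|n\alpha\|$ be read off from the Ostrowski digits of $n$. I would split $S_N(\alpha,0)$ by the leading non-zero index $K'$ of the expansion and, within each such layer, by the leading digit $c_{K'}$. The inner sums over $c_{K'}$ telescope into harmonic sums $\sum 1/c\asymp \log a_{K'+1}$, while the outer factor $1/n$ and the $O(\log N)$ number of layers together produce the $(\log N)^2$ contribution. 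Boundary integers very close to a convergent denominator $q_k$ (the "spikes") are responsible for the $A_{K+1}$ contribution.

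\medskip

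The main obstacle is obtaining the \emph{explicit} numerical constants $33$ and $10$ in \eqref{eq12} rather than generic $O(1)$ bounds. This forces a non-wasteful execution of the Ostrowski bookkeeping along the lines of Kruse \cite{Kruse}, in which each appeal to the three-distance theorem, to the recurrence for $|D_k|$, and to the triangle inequality when bundling spike terms must be tracked with explicit numerical factors. I expect the bulk of the technical work to lie in this constant-tracking and in cleanly isolating the spike contributions of total size $A_{K+1}$ from the generic layer contributions of total size $(\log N)^2$.
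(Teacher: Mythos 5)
Your plan is essentially the one the paper carries out. The lower-bound pieces match the paper's: $A_{K+1}$ is obtained exactly as you describe by restricting to the convergent denominators $n=q_k$, $0\le k\le K$, and $\tfrac12(\log N)^2$ is obtained by feeding a lower bound on $R_n(\alpha,0)$ into the partial-summation identity \eqref{pg} (the paper's input is Corollary~\ref{coro2++}, derived from the Minkowski-convex-body argument of Theorem~\ref{T3}, rather than the L\^e--Vaaler harmonic-analysis bound, but either works as input). For the upper bound the paper first establishes Theorem~\ref{T2} for $R_N(\alpha,0)$ by the Ostrowski decomposition you sketch (Gaps Lemma~\ref{gapslemma1}, the classes $B_N(u,a)$, and the harmonic sums in the leading digit), and then transfers this to $S_N$ via partial summation, treating the spike integers $n\equiv 0,\,q_{K-1}\pmod{q_K}$ as a separate Ostrowski computation contributing $\le 9A_{K+1}+O(1)$. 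Decomposing $S_N$ directly rather than routing through $R_N$ is a modest variation on the same technical core; you would still need to isolate the classes with $m=K$ (and the case $c_{m+1}=1$, which forces passing to $c_{m+2}$ as in the paper's $B_N(u,a)$ with $u=m+1$) to avoid an uncontrolled $\log a_{K+1}$ factor multiplying $\log N$.

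There is one small but genuine gap in your lower-bound argument. After inserting $R_n(\alpha,0)\ge n\log n+O(n)$ and discarding the final term of \eqref{pg}, you assert
$$
\sum_{n=n_0}^{N}\frac{R_n(\alpha,0)}{n(n+1)}=\tfrac12(\log N)^2+O(\log N)
$$
and that this dominates $\tfrac12(\log N)^2$. An unsigned $O(\log N)$ does not give $\ge\tfrac12(\log N)^2$: indeed $\sum_{n\ge n_0}\frac{\log n}{n+1}<\int_{n_0}^N\frac{\log x}{x}\,dx<\tfrac12(\log N)^2$, so the error term has the wrong sign. To close this you need either a strictly positive linear term in the $R_n$ bound (Corollary~\ref{coro2++} gives $R_N(\alpha,0)\ge N\log N+N\log(e/2)+2$ with $\log(e/2)>0$), or to retain the term $\frac{R_N(\alpha,0)}{N+1}\ge\log N+O(1)$ from \eqref{pg}, either of which absorbs the deficit.
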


\medskip

\begin{remark}
The term $A_{K+1}$ appearing in \eqref{eq12} is natural since
$$
\frac{1}{2q_i\|q_i\alpha\|}-1<a_{i+1}<\frac{1}{q_i\|q_i\alpha\|}
$$
(see \eqref{vb7} below) and thus we have that
$$
A_{K+1}\ \asymp \ \sum_{i=1}^{K}\frac{1}{q_i\|q_i\alpha\|}\,.
$$
Note that it is possible to quantify explicitly the meaning of `sufficiently large $N$' in Theorem~\ref{T1}. The upper bound within \eqref{eq12} is a consequence of the following statement for the `wilder' behaving sum $R_N(\alpha,0)$. Note that the sum $R_N(\alpha,0)$ is split into two subsets which are treated separately.
\end{remark}

\begin{theorem}\label{T2}
Let $\alpha\in\R\setminus\Q$, $N\ge q_3$ and let $K=K(N,\alpha)$ be the largest integer satisfying $q_K\le N$. Then
\begin{equation}\label{eq1}
\tfrac{1}{24} N\log q_K-(\tfrac13\log q_2+\tfrac12)N\ \le\hspace*{-4ex}\displaystyle \sum_{\substack{1\le n\le N \\[0.2ex] n\not\equiv0,\,q_{K-1}\, ({\operatorname{mod}{q_K})}}}\hspace*{-4ex}\frac{1}{\|n\alpha\|}\ \le \ 64N\log q_K +2q_3N,
\end{equation}
and
\begin{equation}\label{eq4}
q_{K+1}\log(1+N/q_K)\ \le \hspace*{-3ex}\sum_{\substack{1\le n\le N\\[0.2ex] n\equiv0,\,q_{K-1}\, ({\operatorname{mod}{q_K})}}}\hspace*{-4ex}\frac{1}{\|n\alpha\|}\ \le \ 4q_{K+1}(1+\log(1+N/q_K))\,.
\end{equation}
Furthermore, let  $c>0$.  Then
\begin{equation}\label{eq2}
\sum_{\substack{1\le n\le N\\[0.2ex] n\equiv0,\,q_{K-1}\, ({\operatorname{mod}{q_K})}}}\hspace*{-4ex}\min\left\{cN,\frac{1}{\|n\alpha\|}\right\}\le 12\,N\,(ca_{K+1})^{\frac12}\,.
\end{equation}
\end{theorem}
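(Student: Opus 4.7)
The plan is to split the sum by residue class, identify $\|n\alpha\|$ exactly using standard continued-fraction identities, and then bound each piece via the geometric-mean inequality $\min(a,b) \le \sqrt{ab}$.

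\textbf{Parametrisation and identities.} Write $n = mq_K$ for $1 \le m \le M := \lfloor N/q_K\rfloor$ in the first class and $n = mq_K + q_{K-1}$ for $0 \le m \le M' := \lfloor(N-q_{K-1})/q_K\rfloor$ in the second. Setting $\beta_j := \|q_j\alpha\|$ and using $q_j\alpha - p_j = (-1)^j\beta_j$, a direct reduction modulo $1$ will yield
\[
\|mq_K\alpha\| = m\beta_K, \qquad \|(mq_K + q_{K-1})\alpha\| = |m\beta_K - \beta_{K-1}|,
\]
both expressions being less than $1/2$ in the relevant range (this uses $N < q_{K+1}$, $\beta_K < 1/q_{K+1}$, $\beta_{K-1} < 1/q_K$, and the hypothesis $q_K \ge q_3 \ge 2$). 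The recursion $\beta_{K-1} = a_{K+1}\beta_K + \beta_{K+1}$ shows $\rho := \beta_{K-1}/\beta_K \in (a_{K+1}, a_{K+1}+1)$, and the inequality $n \le N < q_{K+1} = a_{K+1}q_K + q_{K-1}$ forces $M' \le a_{K+1} - 1$.

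\textbf{Main estimate.} Write $\lambda := 1/\beta_K$. Applying $\min(cN, 1/x) \le \sqrt{cN/x}$ term by term and using $\sum_{k=1}^L k^{-1/2} \le 2\sqrt{L}$, the first piece satisfies
\[
S_1 := \sum_{m=1}^{M} \min\!\Bigl(cN, \tfrac{1}{m\beta_K}\Bigr) \le 2\sqrt{cN\lambda M}.
\]
For the second piece, substitute $k = a_{K+1} - m$ so that $|m\beta_K - \beta_{K-1}| = (k+\theta)\beta_K \ge k\beta_K$ with some $\theta \in (0,1)$ and $k$ ranging over $M'+1$ integers inside $\{1,\ldots,a_{K+1}\}$; by monotonicity of $k^{-1/2}$ this range may be replaced by $\{1,\ldots,M'+1\}$, giving
\[
S_2 := \sum_{m=0}^{M'} \min\!\Bigl(cN, \tfrac{1}{|m\beta_K - \beta_{K-1}|}\Bigr) \le 2\sqrt{cN\lambda (M'+1)}.
\]

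\textbf{Bookkeeping.} The continued-fraction identity $\lambda = \alpha^*_{K+1}q_K + q_{K-1}$, combined with $\alpha^*_{K+1} < a_{K+1}+1$ and $q_{K-1} \le q_K$, yields $\lambda \le (a_{K+1}+2)q_K \le 3a_{K+1}q_K$. Together with $M \le N/q_K$ and $M'+1 \le N/q_K + 1 \le 2N/q_K$ (the last step uses $q_K \le N$), this gives $M\lambda \le 3Na_{K+1}$ and $(M'+1)\lambda \le 6Na_{K+1}$. Therefore
\[
S_1 + S_2 \le 2N\sqrt{3ca_{K+1}} + 2N\sqrt{6ca_{K+1}} = 2(\sqrt{3} + \sqrt{6})\,N\sqrt{ca_{K+1}} < 12\,N\sqrt{ca_{K+1}},
\]
which is the required inequality.

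The main obstacle is the first step: verifying the closed-form identity for $\|(mq_K + q_{K-1})\alpha\|$ and extracting the sharp range restriction $M' \le a_{K+1}-1$ from the continued-fraction recursions and the definition of $K$. Everything after that is essentially mechanical, and the geometric-mean trick absorbs the truncation at $cN$ without any case analysis on the size of $1/\|n\alpha\|$.
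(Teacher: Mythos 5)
Your proposal establishes only the third inequality, \eqref{eq2}. The theorem has three conclusions: \eqref{eq1} (a two-sided bound for the sum over $n\not\equiv 0,q_{K-1}\pmod{q_K}$), \eqref{eq4} (a two-sided bound for the untruncated sum over $n\equiv 0,q_{K-1}\pmod{q_K}$), and \eqref{eq2} (the truncated version of \eqref{eq4}). You never touch \eqref{eq1} or \eqref{eq4}, and in fact your parametrisation trick---running over the two arithmetic progressions $mq_K$ and $mq_K+q_{K-1}$---has no analogue for \eqref{eq1}, whose range of summation is the complement of those two progressions and is dealt with in the paper via the Ostrowski expansion (the sets $B_N(u,a)$) together with the gaps lemma, Lemma~\ref{cortogapslem1}. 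Your argument could be pushed a little further to yield \eqref{eq4} by computing $\sum_{m\le M}\lambda/m$ and the corresponding alternating sum directly (both are comparable to $q_{K+1}\log(1+N/q_K)$), but that is not in the write-up, and there is no route from here to \eqref{eq1}.

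The part you did prove is correct. The identities $\|mq_K\alpha\|=m\beta_K$ and $\|(mq_K+q_{K-1})\alpha\|=|m\beta_K-\beta_{K-1}|=(k+\theta)\beta_K$ with $k=a_{K+1}-m$, $\theta=\beta_{K+1}/\beta_K\in(0,1)$, are exactly what the paper obtains in \S\ref{5.4} from Lemma~\ref{contfrac.homoglem.homoglem} (after the change of variable $b\leftrightarrow k$), and your verification that all quantities stay below $1/2$ under $N<q_{K+1}$, $q_K\ge q_3\ge 3$ is sound. The range restriction $M'\le a_{K+1}-1$ follows just as you say from $N<a_{K+1}q_K+q_{K-1}$. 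Where your route genuinely differs from the paper's is the final step: the paper controls $\min\{cN,\cdot\}$ by splitting the summation variable at the threshold $2(a_{K+1}/c)^{1/2}$ and bounding each sub-sum trivially, while you apply $\min(a,b)\le\sqrt{ab}$ term by term and sum $\sum k^{-1/2}\le 2\sqrt{L}$. These are morally the same optimisation, but yours avoids the case split and is slightly cleaner. Your bound $\lambda:=1/\beta_K\le(a_{K+1}+2)q_K$ via the complete quotient $\alpha^*_{K+1}=[a_{K+1};a_{K+2},\dots]$ is a small refinement over what the paper needs (it uses only $\tfrac12\le q_{K+1}|D_K|\le 1$, giving $\lambda\le 2q_{K+1}\le 4a_{K+1}q_K$); either is more than enough since $2(\sqrt3+\sqrt6)<12$.

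In short: the ideas and computations you supply for \eqref{eq2} are correct and a legitimate (mildly streamlined) variant of the paper's argument, but two of the three assertions of the theorem are simply not addressed, and the harder one, \eqref{eq1}, requires machinery (Ostrowski digits, the gaps lemma) that your parametrisation does not provide.
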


\medskip

\begin{remark}
Note that the upper and lower bounds in \eqref{eq1} and \eqref{eq4} are comparable and are thus (up to constants) best possible. The absolute constants appearing in the above results (and indeed elsewhere) can be improved.  For the sake of clarity,  during the course of  proving our results,  we do not attempt to obtain  the sharpest possible constants, let alone asymptotic formulae. Instead, we aim to minimize  technical details.
\end{remark}

\medskip

\subsubsection{Corollaries to Theorem~\ref{T1} regarding $S_N(\alpha,0)$}

First of all, we have the following straightforward consequence  of Theorem~\ref{T1}.

\begin{corollary}\label{coro4}
Suppose  $\alpha\in\R\setminus\Q$ satisfies the condition
\begin{equation}\label{cond2}
A_{k+1}= o(k^2)\,.
\end{equation}
Then, for all sufficiently large $N$
\begin{equation}\label{ineq1}
\tfrac1{2}\,(\log N)^2\ \le \  S_N(\alpha,0)  \ \le \ 34\,(\log N)^2  \, .
\end{equation}
\end{corollary}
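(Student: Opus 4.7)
The plan is to deduce the corollary directly from Theorem \ref{T1}, with the only real work being to convert the condition $A_{k+1} = o(k^2)$ on the partial quotients (indexed by $k$) into a bound on $A_{K+1}$ in terms of $\log N$.

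For the lower bound in \eqref{ineq1}, there is nothing to prove: the inequality $\tfrac12(\log N)^2 \le S_N(\alpha,0)$ is the lower bound in \eqref{eq12} of Theorem \ref{T1}.

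For the upper bound, the strategy is to show that under hypothesis \eqref{cond2} the term $10 A_{K+1}$ appearing in \eqref{eq12} is absorbed into a single extra copy of $(\log N)^2$. The key observation is that the continued fraction denominators satisfy the Fibonacci-type recursion $q_k \ge q_{k-1} + q_{k-2}$, which forces $q_k \ge F_{k+1}$, and hence $q_k \gg \phi^k$ with $\phi = (1+\sqrt5)/2$. Since $K = K(N,\alpha)$ is defined by $q_K \le N < q_{K+1}$, this yields
\[
K \ \le \ c \log N
\]
for some absolute constant $c > 0$, provided $N$ is large enough. In particular $K \to \infty$ as $N \to \infty$.

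Combining this with the hypothesis $A_{k+1} = o(k^2)$, for every $\varepsilon>0$ there exists $k_0$ with $A_{k+1} \le \varepsilon k^2$ for all $k \ge k_0$. Taking $\varepsilon = 1/(10 c^2)$ and $N$ large enough that $K \ge k_0$, we obtain
\[
10\, A_{K+1} \ \le \ 10 \varepsilon K^2 \ \le \ 10 \varepsilon c^2 (\log N)^2 \ = \ (\log N)^2.
\]
Substituting this into the upper bound of \eqref{eq12} in Theorem \ref{T1} gives
\[
S_N(\alpha,0) \ \le \ 33(\log N)^2 + 10 A_{K+1} \ \le \ 34 (\log N)^2,
\]
as required.

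There is no genuine obstacle here; the proof is a one-step bookkeeping reduction from Theorem \ref{T1}. The only thing to be careful about is the quantitative relation $K = O(\log N)$, which relies solely on the standard lower bound $q_k \gtrsim \phi^k$ for the convergent denominators recalled in \S\ref{sec.contfrac}, and on the fact that \eqref{cond2} is an asymptotic condition in the index $k$ of the partial quotients rather than in $N$.
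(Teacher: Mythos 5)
Your proof is correct and follows essentially the same route as the paper's: both read off the lower bound directly from Theorem~\ref{T1}, and both derive the upper bound by combining $K \ll \log q_K \le \log N$ (in the paper via \eqref{zvb}; in your case via the equivalent Fibonacci growth $q_k \ge q_{k-1}+q_{k-2}$) with the hypothesis $A_{k+1}=o(k^2)$ to absorb $10A_{K+1}$ into a single extra $(\log N)^2$. The paper merely states the ingredients; your writeup spells out the quantifier bookkeeping explicitly, which is fine.
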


\begin{proof}
This result follows from \eqref{eq12}, \eqref{cond2}, the fact that $q_K\le N$, and the estimate $K\ll \log q_K$ (see \eqref{zvb} below).
\end{proof}

\bigskip

The set of $\alpha \in \R$ satisfying condition \eqref{cond2} is of full Lebesgue measure. In fact, for any $\ve>0$ the set of $\alpha\in\R$ such that $A_k\le k^{1+\ve}$ for all sufficiently large $k$ is of full Lebesgue measure (see \cite{DiamondVaaler1986}). Thus we have the following result.

\begin{corollary}\label{coro4+}
The upper bound in \eqref{ineq1} holds for almost every real number $\alpha$, and the lower bound holds for all $\alpha$.
\end{corollary}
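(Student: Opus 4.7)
The plan is to combine the two halves (upper and lower bounds) from two distinct sources: Corollary~\ref{coro4} for the upper bound, and Theorem~\ref{T1} directly for the lower bound. The entire work has already been done upstream, so the argument amounts to a book-keeping exercise verifying that the relevant hypotheses hold on a full-measure set (for the upper bound) and trivially on all of $\R$ (for the lower bound).

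For the lower bound, I would argue as follows. If $\alpha \in \R \setminus \Q$, then Theorem~\ref{T1} gives $S_N(\alpha,0) \ge \tfrac12 (\log N)^2$ for all sufficiently large $N$, which is exactly the left-hand inequality in \eqref{ineq1}. If $\alpha \in \Q$, then $\|n\alpha\| = 0$ for some $n \in \N$ (namely any multiple of the denominator of $\alpha$ in lowest terms), so $S_N(\alpha,0)$ contains a term $\tfrac{1}{n \cdot 0}$ and is most naturally interpreted as $+\infty$; in particular, the lower bound holds trivially. Either way, the lower bound holds for every $\alpha \in \R$.

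For the upper bound, I would invoke the result of Diamond and Vaaler \cite{DiamondVaaler1986} which is quoted in the paragraph preceding the corollary: for every $\ve > 0$, the set of $\alpha \in \R$ with $A_k(\alpha) \le k^{1+\ve}$ for all sufficiently large $k$ has full Lebesgue measure. Taking, say, $\ve = \tfrac12$ and intersecting with $\R \setminus \Q$ (still a full-measure set), one sees that almost every $\alpha \in \R$ satisfies
\[
A_{k+1}(\alpha) \le (k+1)^{3/2} = o(k^2)
\]
as $k \to \infty$, which is precisely condition \eqref{cond2}. For any such $\alpha$, Corollary~\ref{coro4} yields the upper bound $S_N(\alpha,0) \le 34 (\log N)^2$ for all sufficiently large $N$. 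This completes the proof.

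There is no substantive obstacle here: the hard analytic work is packaged in Theorem~\ref{T1} (proved elsewhere in the paper) and in the Diamond--Vaaler estimate (cited), and the only care needed is to note that rational $\alpha$ pose no problem for the lower bound since $S_N(\alpha,0)$ is then infinite. If one wished to be slightly more generous on the upper bound exponent and use only Borel--Cantelli rather than Diamond--Vaaler, one could note that $\sum_{n} 1/(n \log^{1+\ve} n)$ converges, so by the convergence part of Khintchine's theorem almost every $\alpha$ satisfies $\|n\alpha\| \gg 1/(n \log^{1+\ve} n)$ for all large $n$, which in turn forces $a_{k+1}(\alpha) \ll \log^{1+\ve} q_k \ll k^{1+\ve}$ and hence $A_{k+1} = o(k^2)$; but invoking Diamond--Vaaler is cleaner.
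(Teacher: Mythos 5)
Your proof is correct and matches the paper's (implicit) argument exactly: the lower bound comes straight from Theorem~\ref{T1}, and the upper bound comes from the Diamond--Vaaler full-measure bound $A_k \le k^{1+\ve}$, which gives condition~\eqref{cond2} and hence invokes Corollary~\ref{coro4}. Your extra care regarding rational $\alpha$ (for which the sum is $+\infty$, or, equivalently, which are excluded by the standing hypothesis~\eqref{contfrac.eqn.|na-g|}) and the alternative Borel--Cantelli route are minor elaborations, not departures.
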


\medskip

\subsubsection{Corollaries to Theorem~\ref{T2} regarding $R_N(\alpha,0)$  \label{Ty} }

We now discuss various consequences of Theorem~\ref{T2}. By definition, $\log q_K\le \log N$. Therefore, the sum in \eqref{eq1} is always $\ll N\log N$. In fact, we shall see that the sum is actually comparable to $N\log N$ unless $\alpha$ is a Liouville number. Throughout the paper  $\mathfrak{L}$ will denote the set of  Liouville numbers; i.e. $\alpha\in\R\setminus\Q$ such that
\[
\liminf_{n\rar\infty}n^w\|n\alpha\|=0   \qquad \forall   \ w >0  \, .
\]
It follows from the Jarn\'ik-Besicovitch Theorem \cite{Bes34,Ja29} (alternatively see \cite{BDV-06}) that
\begin{equation}\label{Liou}
  \dim \mathfrak{L}=0\, ;
\end{equation}
that is,  $\mathfrak{L}$ has zero Hausdorff dimension.

\begin{corollary}\label{coro1}
Let $\alpha\in\R\setminus\Q$. Then $\alpha \notin \mathfrak{L}$  if and only if for all sufficiently large $N$
\begin{equation}\label{L1}
\sum_{\substack{1\le n\le N \\[0.2ex] n\not\equiv0,\,q_{K-1}\, ({\operatorname{mod}{q_K})}}}\frac{1}{\|n\alpha\|}\ \asymp N\log N \,.
\end{equation}
\end{corollary}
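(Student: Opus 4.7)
The starting point is \eqref{eq1} of Theorem~\ref{T2}, which for fixed $\alpha$ can be rewritten as
\[
\tfrac{1}{24}N\log q_K - O_\alpha(N) \;\le\; \sum_{\substack{1\le n\le N\\ n\not\equiv 0,\,q_{K-1}\,(\operatorname{mod} q_K)}}\frac{1}{\|n\alpha\|} \;\le\; 64\,N\log q_K + O_\alpha(N),
\]
the implied constants depending only on $\alpha$ via $q_2$ and $q_3$. Since $\log N\to\infty$, for $N$ sufficiently large the additive $O_\alpha(N)$ terms are negligible compared to the main term $N\log q_K$. It follows that \eqref{L1} holds (for all large $N$, with constants independent of $N$) if and only if $\log q_K\asymp\log N$ uniformly in large $N$, where $K=K(N,\alpha)$. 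Since $\log q_K\le\log N$ is automatic from $q_K\le N$, the substantive content is the matching lower bound $\log q_K\ge c\log N$ for some $c=c(\alpha)>0$ and all sufficiently large $N$.

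The second ingredient is the standard continued-fraction characterisation
\[
\alpha\notin\mathfrak{L}\quad\Longleftrightarrow\quad\text{there exists }C=C(\alpha)\text{ with }\log q_{K+1}\le C\log q_K\text{ for all large }K.
\]
This follows from the inequalities $\tfrac{1}{2q_{K+1}}<\|q_K\alpha\|<\tfrac{1}{q_{K+1}}$ together with the best-approximation property $\|n\alpha\|\ge\|q_K\alpha\|$ valid for $1\le n<q_{K+1}$: if $\limsup_K\log q_{K+1}/\log q_K=\infty$ then the sequence $(q_K)$ itself witnesses $\alpha\in\mathfrak{L}$, while conversely, if this $\limsup$ is bounded by some $C$ then for any large $n$, taking $K$ with $q_K\le n<q_{K+1}$, one has $\|n\alpha\|\ge\tfrac12 q_{K+1}^{-1}\ge\tfrac12 q_K^{-C}\ge\tfrac12 n^{-C}$, which rules out Liouville behaviour.

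Granted these two inputs, the proof is almost mechanical. For the forward implication, if $\alpha\notin\mathfrak{L}$ then for large $N$ the sandwich $q_K\le N<q_{K+1}$ gives $\log N<\log q_{K+1}\le C\log q_K$, so $\log q_K\ge C^{-1}\log N$ and \eqref{L1} drops out of the first display. For the converse, suppose \eqref{L1} holds, and compare its lower bound $\gg N\log N$ with the upper bound in \eqref{eq1}: this yields $c,N_0>0$ with $\log q_K\ge c\log N$ for every $N\ge N_0$. Evaluating at $N=q_{K+1}-1$ (for which $K(N,\alpha)=K$) forces $\log q_{K+1}\le(2/c)\log q_K$ for every sufficiently large $K$, whence $\alpha\notin\mathfrak{L}$. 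No step presents a genuine obstacle; the only non-routine input is the continued-fraction characterisation of $\mathfrak{L}$, and once that is in hand the conclusion is a direct consequence of \eqref{eq1} and the bracketing $q_K\le N<q_{K+1}$.
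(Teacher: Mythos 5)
Your proof is correct and takes essentially the same route as the paper: you combine the two-sided estimate \eqref{eq1} with the continued-fraction characterisation of Liouville numbers (the paper's Lemma~\ref{Liouvillelem1}), proving the forward direction via $\log q_K\asymp\log N$ and testing the converse along $N=q_{K+1}-1$. The only cosmetic difference is that the paper phrases the converse as a contrapositive (showing Liouville numbers violate \eqref{L1} along the subsequence $N_i=q_{K_i+1}-1$) while you argue directly from \eqref{L1} back to the boundedness of $\log q_{K+1}/\log q_K$, but the substance is identical.
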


To establish Corollary~\ref{coro1}, we will make use of well known facts about the growth of the denominators $q_k$ associated with Liouville numbers. Recall that the exponent of approximation of $\alpha \in \R$ is defined as
\begin{equation}\label{cors}
w(\alpha):=\sup\{w>0:\|q\alpha\|<q^{-w}\text{ for i.m. }q\in\N\}\,.
\end{equation}
Note that, for an irrational $\alpha$, by definition, $w(\alpha)<\infty$ if and only if $\alpha$ is not Liouville. Also, by Dirichlet's Theorem, $w(\alpha)\ge1$ for all $\alpha$.

\begin{lemma}\label{Liouvillelem1}
Let $\alpha\in\R\setminus\Q$ and let $q_k$ denote the denominators of the principal convergents of $\alpha$. Then
\begin{equation}\label{liou}
w(\alpha)=\limsup_{k\to\infty}\frac{\log q_{k+1}}{\log q_k}\,.
\end{equation}
In particular, $\alpha\not\in\mathfrak{L}$ if and only if $\log q_{k+1}\ll \log q_k$ for all $k$.
\end{lemma}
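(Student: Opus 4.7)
The plan is to prove the identity $w(\alpha)=\limsup_{k\to\infty}\log q_{k+1}/\log q_k$ by two matching inequalities, using two standard facts about continued fractions (to be recalled in the preliminaries of \S\ref{sec.contfrac}, and essentially contained in \eqref{vb7}): (a) the two-sided estimate $1/(q_k+q_{k+1})\le \|q_k\alpha\|\le 1/q_{k+1}$, and (b) the best-approximation property, that $\|q\alpha\|\ge \|q_k\alpha\|$ for every integer $q$ with $1\le q<q_{k+1}$. Write $\tau:=\limsup_{k\to\infty}\log q_{k+1}/\log q_k$ for brevity.

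For the inequality $w(\alpha)\ge\tau$, I would fix $\varepsilon>0$ and choose an increasing sequence $(k_j)$ with $\log q_{k_j+1}/\log q_{k_j}>\tau-\varepsilon$; then (a) yields $\|q_{k_j}\alpha\|<1/q_{k_j+1}<q_{k_j}^{-(\tau-\varepsilon)}$, exhibiting infinitely many $q=q_{k_j}$ for which $\|q\alpha\|<q^{-(\tau-\varepsilon)}$. By the definition \eqref{cors} of $w(\alpha)$ this gives $w(\alpha)\ge\tau-\varepsilon$, and letting $\varepsilon\to 0$ establishes the inequality. (If $\tau=\infty$ one runs the same argument with $\tau-\varepsilon$ replaced by arbitrarily large $M$.)

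For the reverse inequality $w(\alpha)\le\tau$, pick any $w<w(\alpha)$, so that $\|q\alpha\|<q^{-w}$ holds for infinitely many $q\in\N$. For each such $q$ let $k=k(q)$ be the unique index with $q_k\le q<q_{k+1}$; since the intervals $[q_k,q_{k+1})$ each contain only finitely many integers, $q\to\infty$ forces $k\to\infty$, so infinitely many distinct $k$ occur. Combining (b) with the lower bound in (a),
$$\frac{1}{2q_{k+1}}\;\le\;\|q_k\alpha\|\;\le\;\|q\alpha\|\;<\;q^{-w}\;\le\;q_k^{-w},$$
which rearranges to $\log q_{k+1}>w\log q_k-\log 2$. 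Taking the limsup along this subsequence of $k$ gives $\tau\ge w$, and as $w<w(\alpha)$ was arbitrary, $\tau\ge w(\alpha)$.

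The final sentence of the lemma is then immediate from \eqref{liou}: since $w(\alpha)\ge1$ always, $\alpha\notin\mathfrak{L}$ is equivalent to $w(\alpha)<\infty$, which is in turn equivalent to $\tau<\infty$, i.e.\ to $\log q_{k+1}\ll \log q_k$ uniformly in $k$. The argument is essentially routine; the only subtlety is verifying that the sequence of indices $k(q)$ produced in the second step is genuinely infinite, and this is handled by the finiteness-of-lattice-points remark above.
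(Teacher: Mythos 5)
Your proof is correct and rests on exactly the same two ingredients as the paper's proof (the two-sided bound $(2q_{k+1})^{-1}<\|q_k\alpha\|<q_{k+1}^{-1}$ and the best-approximation property of convergents); you simply present it as two separate inequalities, whereas the paper first reformulates $w(\alpha)$ as a $\limsup$ over the convergents and then applies the bound once. The approaches are essentially identical.
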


\begin{proof}
This result is well known, however the proof is short and we give it for completeness. It easily follows from the definition of $w(\alpha)$ that
$$
w(\alpha)=\limsup_{q\to\infty}\frac{\log\|q\alpha\|^{-1}}{\log q}\,.
$$
Recall that the principal convergents of the continued fraction expansion of $\alpha$ are best approximations, that is $\|q_k\alpha\|\le \|q\alpha\|$ whenever $q_k \le q <  q_{k+1}$. Hence
$$
w(\alpha)=\limsup_{k\to\infty}\frac{\log\|q_k\alpha\|^{-1}}{\log q_k}\,.
$$
Since $(2q_{k+1})^{-1}<\|q_k\alpha\|<q_{k+1}^{-1}$, this expression for $w(\alpha)$ immediately implies the required result.
\end{proof}

\bigskip

\noindent{\em Proof of Corollary~\ref{coro1}.}
Assume that $\alpha\not\in\mathfrak{L}$. Then, by Lemma~\ref{Liouvillelem1}, $\log q_K\asymp\log q_{K+1}$. Since, by definition, $q_K\le N<q_{K+1}$, we also have that $\log N\asymp\log q_K$. By Theorem~\ref{T2}, the sum in (\ref{eq1}) is comparable to $N\log q_K\asymp N\log N$. This proves one direction of Corollary~\ref{coro1}.
Now assume that $\alpha\in\mathfrak{L}$. Then, by Lemma~\ref{Liouvillelem1}, there is a sequence of $K_i$ such that $\log q_{K_i+1}/\log q_{K_i}\to \infty$ as $i\to\infty$. Taking $N=N_i:=q_{K_i+1}-1$ implies that the sum in (\ref{eq1}) is $\asymp N_i\log q_{K_i}=o(N_i\log N_i)$ as $i\to\infty$. This shows that (\ref{L1}) does not hold for Liouville numbers and thereby completes the proof.\qed

\bigskip

 Another consequence of (\ref{eq1}) is that the sum in \eqref{L1} can become  $o(N\log N)$, along a subsequence of $N$,  when  $\alpha$ is a Liouville number. Nevertheless, on combining the estimates \eqref{eq1} and \eqref{eq4} we are able to recover  the lower bound \eqref{vaaler} obtained by L\^e and Vaaler in \cite{Vaaler}.  At this point it is worth formally restating  \eqref{vaaler} as a result in its own right.

\begin{corollary}\label{coro2}
Let $\alpha\in\R\setminus\Q$. Then, for all sufficiently large $N$
\begin{equation}\label{zvb2+2}
R_N(\alpha,0) \ \gg \  N\log N\,.
\end{equation}
\end{corollary}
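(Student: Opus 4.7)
The plan is to combine the two lower bounds from Theorem~\ref{T2} and then run a short case split on the size of $q_K$ relative to $\sqrt{N}$. First, I would split the full sum along the congruence class used in the statement of Theorem~\ref{T2}:
$$
R_N(\alpha,0)=\sum_{\substack{1\le n\le N\\ n\not\equiv 0,\,q_{K-1}\,(\bmod\, q_K)}}\frac{1}{\|n\alpha\|}\;+\sum_{\substack{1\le n\le N\\ n\equiv 0,\,q_{K-1}\,(\bmod\, q_K)}}\frac{1}{\|n\alpha\|},
$$
and apply the lower bounds from \eqref{eq1} and \eqref{eq4} to the two pieces, yielding
$$
R_N(\alpha,0)\;\ge\;\tfrac{1}{24}\,N\log q_K \;-\; c_\alpha N \;+\; q_{K+1}\log\!\left(1+N/q_K\right),
$$
where $c_\alpha:=\tfrac{1}{3}\log q_2+\tfrac{1}{2}$ depends only on $\alpha$ (through its second principal-convergent denominator).

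The second step is a dichotomy on $q_K$. If $q_K\ge N^{1/2}$, then $\log q_K\ge\tfrac{1}{2}\log N$, so the first term alone contributes at least $\tfrac{1}{48}\,N\log N$. If instead $q_K<N^{1/2}$, then by the maximality of $K$ we have $q_{K+1}>N$, while $N/q_K>N^{1/2}$ forces $\log(1+N/q_K)>\tfrac{1}{2}\log N$, so the third term is bounded below by $\tfrac{1}{2}\,N\log N$. In either case the right-hand side contains a main contribution of order $N\log N$; since $c_\alpha N=o(N\log N)$, the negative correction is absorbed for all sufficiently large $N$, and we conclude $R_N(\alpha,0)\gg N\log N$ as required.

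No step is genuinely hard here: all the real work has been done inside Theorem~\ref{T2}, and Corollary~\ref{coro2} amounts to a repackaging. The only point to verify carefully is that the threshold $q_K\asymp\sqrt{N}$ is the right one, and conceptually it is: when $q_K$ is small the complementary partial quotient $a_{K+1}\asymp q_{K+1}/q_K$ is large, which is precisely what drives the spike contribution on the sublattice $n\equiv 0,\,q_{K-1}\pmod{q_K}$ isolated in \eqref{eq4}. In this sense the mechanism L\^e--Vaaler had to handle directly is here automatic, once the two complementary estimates of Theorem~\ref{T2} are in place.
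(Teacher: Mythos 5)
Your proof is correct and is essentially the paper's argument: combine the two lower bounds from Theorem~\ref{T2} and split on whether $q_K\ge\sqrt{N}$, using the bound from \eqref{eq1} when $q_K$ is large and the bound from \eqref{eq4} (noting $q_{K+1}>N$) when $q_K$ is small. The paper states this a bit more tersely, but the decomposition, the threshold $q_K\asymp\sqrt{N}$, and the conclusion are identical.
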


\begin{proof}
If $N\le q_K^2$ then $\log q_K\ge\tfrac12\log N$ and Corollary~\ref{coro2} is a consequence of \eqref{eq1} appearing within Theorem~\ref{T2}. Otherwise $N/q_K>N^{1/2}$,  and it follows  that $\log (1+N/q_K)\ge \tfrac12\log N$. In addition, by the definition of $q_K$, we have that $q_{K+1}>N$ and so in this case Corollary~\ref{coro2} is a consequence of (\ref{eq4}) appearing within Theorem~\ref{T2}.
\end{proof}

\bigskip

The proof of \eqref{zvb2+2} given in \cite{Vaaler} has its basis in harmonic analysis.  However, we will demonstrate in the  next section  that such homogeneous lower bound estimates do not require either the full power of Theorem~\ref{T2} or indeed the harmonic analysis tools utilized in \cite{Vaaler}.

\medskip

\subsubsection{Homogeneous lower bounds via Minkowski's Theorem}\label{linforms}

It turns out that  appropriately exploiting  Minkowski's Convex Body Theorem  \cite[p.71]{CasselsGofN} from the geometry of numbers,  leads to sharper lower bounds for  $ S_N(\alpha,0)$ and $ R_N(\alpha,0)$ than those described above in Corollary \ref{coro4+} and Corollary  \ref{coro2}.  Furthermore, the approach provides lower bounds within  the more general linear forms setting.  To start with,  we prove the following key statement, which only uses Minkowski's theorem  for  convex bodies and partial summation.

\begin{theorem}\label{T3}
Let $A=(\alpha_1,\dots,\alpha_n)$ be any $n$-tuple of real numbers such that $1,\alpha_1,\dots,\alpha_n$ are linearly independent over $\Q$. Let $T_1,\dots,T_n$ be any positive integers such that $T:=T_1\cdots T_n\ge2$ and let $L\ge2$ be a real number. Then
\begin{align}
\nonumber \sum_{\substack{\vv q\in\Z^n\setminus\{\vv0\}\\[0.5ex] |q_j|\le T_j\ (1\le j\le n)}}\! &\min\left\{L,\big\|q_1\alpha_{1}+\dots+q_n\alpha_n\big\|^{-1}\right\} \\[0ex]
&\ge 2T\min\{\log L,\log T\}+(2^{n+1}-2-\log4)T+4\,.\label{vb+102}
\end{align}
\end{theorem}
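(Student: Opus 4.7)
The plan is to rewrite the sum $\mathcal S$ on the left as an integral of a counting function (via a layer-cake / partial-summation identity) and then lower-bound that counting function using Minkowski's Convex Body Theorem. First, since $1,\alpha_1,\ldots,\alpha_n$ are $\Q$-linearly independent, $\|\vv q\cdot\vec\alpha\|>0$ for every $\vv q\in\Z^n\setminus\{\vv 0\}$, so the summand is well-defined. I would apply the identity $\min\{L,y\}=\int_0^L\mathbf{1}[y>s]\,ds$ (valid for $y>0$) with $y=\|\vv q\cdot\vec\alpha\|^{-1}$ and swap sum with integral to obtain
\[
\mathcal S \;=\; \int_0^L N(s)\,ds,\qquad N(s):=\#\bigl\{\vv q\in\Z^n\setminus\{\vv 0\}:\,|q_j|\le T_j,\ \|q_1\alpha_1+\cdots+q_n\alpha_n\|<1/s\bigr\}.
\]

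Next I would estimate $N(s)$ from below in two ranges. For $s\in[0,2]$ the condition $\|\cdot\|<1/s$ is vacuous (as $\|\cdot\|\le 1/2\le 1/s$), so $N(s)$ equals the total count $\prod_{j=1}^n(2T_j+1)-1$, which expands to at least $2^{n}T$. For $s\in[2,T]$ I would apply Minkowski's theorem to the centrally symmetric convex body
\[
K_s \;:=\; \bigl\{(\vv q,p)\in\R^{n+1}:\,|q_j|\le T_j,\ |q_1\alpha_1+\cdots+q_n\alpha_n-p|\le 1/s\bigr\}
\]
of volume $2^{n+1}T/s$. A counting refinement of Minkowski (van der Corput's theorem, or iteration of Minkowski applied to the dilates $k^{-1}K_s$) then produces at least $2\lfloor T/s\rfloor$ nonzero lattice points in $K_s$; by linear independence each has $\vv q\neq\vv 0$, so $N(s)\ge 2\lfloor T/s\rfloor\ge 2T/s-2$. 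For $s>T$ only the trivial bound $N(s)\ge 0$ is used.

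Finally I would split $\int_0^L$ at $s=2$ and at $s=\min\{L,T\}$. If $L\le T$ the two nontrivial pieces give
\[
\mathcal S \;\ge\; 2^{n+1}T+\int_2^L\Bigl(\tfrac{2T}{s}-2\Bigr)ds \;=\; 2T\log L+(2^{n+1}-\log 4)T-2L+4,
\]
and $-2L\ge-2T$ (using $L\le T$) converts this to $2T\log L+(2^{n+1}-2-\log 4)T+4$. If $L>T$ the logarithmic integral is truncated at $s=T$, producing the same expression with $\log T$ in place of $\log L$. Together the two cases are exactly the stated bound. The main obstacle will be upgrading the basic Minkowski conclusion ("there is at least one nonzero lattice point") to the linear-in-volume count $N(s)\gtrsim T/s$; it is precisely this refinement that converts the trivial estimate $\mathcal S\gtrsim T$ into $\mathcal S\gtrsim T\log\min\{L,T\}$. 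Everything else is routine calculation, with the specific constants $(2^{n+1}-2-\log 4)T$ and $+4$ arranged to fall out of the integrations above.
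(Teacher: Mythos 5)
Your argument is correct and is a continuous re-packaging of the paper's proof rather than a genuinely different route. Both proofs rest on the same two ingredients: the Minkowski-type counting bound $N(s)\ge 2\lfloor T/s\rfloor\ge 2T/s-2$, which is exactly the paper's inequality $\#\Phi_k\ge 2\lfloor b^{-k}T\rfloor$ evaluated at $s=b^k$, plus the trivial count $N(s)\ge 2^nT$ for $s\le 2$, matching the paper's $\#\Phi_k\ge 2^nT$ when $b^k\le 2$; and a summation-by-parts step, which you carry out in one stroke via the layer-cake identity $\min\{L,y\}=\int_0^L\mathbf 1[y>s]\,ds$, while the paper performs a discrete Abel summation over $b$-adic shells and then passes to the limit $b\to 1^+$. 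Your integral formulation eliminates both the auxiliary base $b$ and the limiting argument, which is a genuine streamlining, and your computation split at $s=2$ and $s=\min\{L,T\}$ reproduces the paper's constants exactly. You are also right to flag that the linear-in-volume lattice-point count is a quantitative refinement of Minkowski's first theorem (a van der Corput-type statement) rather than the basic existence result; the paper relies on the same refinement.
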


\begin{proof}
Since both sides of \eqref{vb+102} depend on $L$ continuously, we can assume without loss of generality that $L>2$. Also in the case $T=2$ the left hand side is at least
$2\times\prod_{i=1}^n(2T_i)=2^{n+2}$, while the right hand side is $4\log2+(2^{n+1}-2-\log4)2+4=2^{n+2}$. Thus, \eqref{vb+102} holds for $T=2$ and we can assume without loss of generality that
\begin{equation}\label{vb+123}
\min\{T,L\}>2\,.
\end{equation}
Fix any $b\in\R$ with $1<b<\sqrt2$ such that for some $m\in\Z$,
\begin{equation}\label{m}
b^m=\min\{T,L\}\,.
\end{equation}
Let $m_0\in\N$ satisfy
\begin{equation}\label{m_0}
b^{m_0}\le 2<b^{m_0+1}.
\end{equation}
Since $b<\sqrt2$, we have that $m_0\ge2$. By \eqref{vb+123}, $b^m>2$ and so $2\le m_0<m$.
Next, given an integer $k\ge 1$, let
  $$
  N_{k}:=\Big\{\vv q\in\Z^n\setminus\{\vv 0\}:|q_j|\le T_j\ (1\le j\le n),\ b^{-k-1}< \big\|\sum_{j=1}^n\alpha_{j}q_j\big\|\le b^{-k}\Big\}
  $$
and let $\Phi_k:=\bigcup_{\ell=k}^\infty N_{\ell}$. Obviously the sets $N_\ell$ are disjoint and so
$\#\Phi_k=\sum_{\ell=k}^\infty \#N_{\ell}$, where $\#X$ stands for the cardinality of $X$. Note that, by definition, $\Phi_k$ consists precisely of integer vectors
$\vv q\in\Z^n\setminus\{\vv 0\}$ such that
$|q_j|\le T_j$  \ $(1\le j\le n)$  and  $\big\|\sum_{j=1}^n\alpha_{j}q_j\big\|\le b^{-k}$.
Hence, by Minkowski's  Convex Body Theorem we conclude that
\begin{equation}\label{w1}
\#\Phi_k\ge 2\lfloor b^{-k}T\rfloor\ge 2b^{-k}T-2\quad\text{when }~b^k\le T.
\end{equation}
Furthermore, note that we trivially have that
\begin{equation}\label{w2}
\#\Phi_k= (2T_1+1)\cdots(2T_n+1)-1 \ge 2^{n}T\qquad\text{when}\qquad b^k\le 2\,;
\end{equation}
that is, when $k\le m_0$. Then, since $\#N_k=\#\Phi_k-\#\Phi_{k+1}$, it follows that
\begin{align*}
\sum_{\substack{\vv q\in\Z^n\setminus\{\vv0\}\\[0.5ex] |q_j|\le T_j\ (1\le j\le n)}}\ &\min\Big\{L,\big\|\sum_{j=1}^n\alpha_{j}q_j\big\|^{-1}\Big\}\
\ge
\sum_{k=1}^\infty\sum_{\vv q\in N_k}\ \min\{L,b^{k}\}\\
&\ge
\sum_{k=1}^\infty\sum_{\vv q\in N_k}\ \min\{b^m,b^{k}\}\\
&=
\sum_{k=1}^m\sum_{\vv q\in N_k}\ b^{k}+\sum_{k=m+1}^\infty\sum_{\vv q\in N_k}\ b^{m}\\
&=\sum_{k=1}^m\#N_k b^{k}+\#\Phi_{m+1}\ b^{m}\\
&=\sum_{k=1}^m(\#\Phi_k-\#\Phi_{k+1})b^{k}~+~\#\Phi_{m+1}\ b^{m}\\
&
=\sum_{k=1}^m\#\Phi_kb^{k}-\sum_{k=2}^{m+1}\#\Phi_kb^{k-1}~+~\#\Phi_{m+1}\ b^{m}\\
&= \#\Phi_1b+\sum_{k=2}^m\#\Phi_k(b^{k}-b^{k-1})\\
&=\#\Phi_1b+(b-1)\sum_{k=2}^mb^{k-1}\#\Phi_k\\
&
=\#\Phi_1b+(b-1)\sum_{k=2}^{m_0}b^{k-1}\#\Phi_k+(b-1)\sum_{k=m_0+1}^mb^{k-1}\#\Phi_k\\
&~ \hspace*{-5ex} \stackrel{\eqref{w1}\,\&\,\eqref{w2}}{ \ge} ~~  \!\! 2^nTb+(b-1)2^nT\sum_{k=2}^{m_0}b^{k-1}+(b-1)\sum_{k=m_0+1}^mb^{k-1}(2b^{-k}T-2)\\
&
=2^nTb+2^nT(b^{m_0}-b)+2(b-1)b^{-1}\sum_{k=m_0+1}^m(T-b^k)\\
&\stackrel{\eqref{m_0}}{\ge} ~~  2^{n+1}Tb^{-1}+2(b-1)b^{-1}\sum_{k=m_0+1}^m(T-b^k)\\
&=2^{n+1}Tb^{-1}+2(b-1)b^{-1}\Big((m-m_0)T-\frac{b^{m+1}-b^{m_0+1}}{b-1}\Big)\\[1ex]
&=2^{n+1}Tb^{-1}+2(b-1)b^{-1}(m-m_0)T-2b^{m}+2b^{m_0}\\[1ex]
&\stackrel{\eqref{m}}{=} ~~ 2^{n+1}Tb^{-1}+2(b-1)b^{-1}(m-m_0)T-2\min\{L,T\}+2b^{m_0}\,.
\end{align*}
Now since
$$
m=\frac{\min\{\log L,\log T\}}{\log b},\qquad m_0\le \frac{\log 2}{\log b}\qquad\text{and}\qquad
2b^{-1}<b^{m_0}\,,
$$
we have that the above is
$$
\ge 2^{n+1}Tb^{-1}+2T\frac{b-1}{b\log b}\big(\min\{\log L,\log T\}-\log 2\big)-2\min\{L,T\}+4b^{-1}\,.
$$
Since this estimate holds for $b$ arbitrarily close to $1$ and since $\lim\limits_{b\to1^+}\frac{b-1}{b\log b}=1$, we obtain that the above sum is
\begin{align*}
&\ge 2^{n+1}T+2T\big(\min\{\log L,\log T\}-\log 2\big)-2\min\{L,T\}+4\\[1ex]
&= 2T\min\{\log L,\log T\}+(2^{n+1}-\log4)T-2\min\{L,T\}+4\,,
\end{align*}
whence the required estimate immediately follows.
\end{proof}

\bigskip

\begin{remark}
It is worth mentioning that a similar argument can be given for the  sums of products investigated in~\cite{Vaaler}.
\end{remark}

Theorem~\ref{T3} together with  formula \eqref{pg},   yields  the following  lower bound estimates for $R_N(\alpha,0)$ and $S_N(\alpha,0)$, that are sharper  than those given by  Corollary~\ref{coro4+} and Corollary~\ref{coro2}.  We leave the details of the proof to the reader.

\begin{corollary}\label{coro2++}
For any $\alpha\in\R\setminus\Q$ and any integer $N\ge2$
$$
R_N(\alpha,0)  \ \ge \ N\log N+N\log (e/2)+2
$$
and
$$
S_N(\alpha,0)  \ \ge \  \tfrac12(\log N)^2\,.
$$
\end{corollary}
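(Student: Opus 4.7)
The plan is to deduce both inequalities from Theorem~\ref{T3} in the simplest case $n=1$, combined with the partial summation identity \eqref{pg}.

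For the first inequality, I would specialise Theorem~\ref{T3} to $n=1$, $T_1=T=N$, $\alpha_1=\alpha$, and any $L\ge N$. Using the symmetry $\|q\alpha\|=\|(-q)\alpha\|$, the left-hand side of \eqref{vb+102} reduces to $2\sum_{q=1}^{N}\min\{L,\|q\alpha\|^{-1}\}$, while the choice $L\ge N$ forces $\min\{\log L,\log T\}=\log N$. Since $2^{n+1}-2-\log 4=2-2\log 2$, dividing by $2$ produces
$$
\sum_{q=1}^{N}\min\bigl\{L,\|q\alpha\|^{-1}\bigr\}\ \ge\ N\log N+(1-\log 2)N+2.
$$
Because $\alpha$ is irrational and the sum is finite, letting $L\to\infty$ via monotone convergence replaces the left side by $R_N(\alpha,0)$; since $1-\log 2=\log(e/2)$, the first claim follows.

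For the second inequality, I would substitute the $R$-bound into \eqref{pg} with $\gamma=0$:
$$
S_N(\alpha,0)=\sum_{n=1}^{N}\frac{R_n(\alpha,0)}{n(n+1)}+\frac{R_N(\alpha,0)}{N+1}.
$$
For $n\ge 2$ the inequality just proved gives $R_n(\alpha,0)/[n(n+1)]\ge \log n/(n+1)+\log(e/2)/(n+1)+2/[n(n+1)]$, while the $n=1$ term $1/(2\|\alpha\|)\ge 1$ is handled directly. Using $\log n/(n+1)=\log(n+1)/(n+1)-\log(1+1/n)/(n+1)$, telescoping $\sum 2/[n(n+1)]$, and partial-fraction manipulation of $\sum 1/(n+1)$, the resulting expression reorganises into $\sum_{m=3}^{N+1}(\log m)/m$ plus explicit lower-order pieces. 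Integral comparison, using that $(\log x)/x$ is decreasing on $[e,\infty)$, then furnishes the leading contribution $\tfrac12(\log N)^2$, while the positive slack from the constants $\log(e/2)>0$ and $+2$ in the $R_n$-bound, together with the endpoint contribution $R_N(\alpha,0)/(N+1)$, absorbs the residual deficits uniformly for every $N\ge 2$.

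The main obstacle is the lower-order bookkeeping needed to land on the constant $\tfrac12$ uniformly rather than only asymptotically. The familiar asymptotic $\sum_{n\le N}(\log n)/n=\tfrac12(\log N)^2+\gamma_1+o(1)$ with $\gamma_1<0$ shows that, without exploiting the additive constants supplied by Theorem~\ref{T3}, one would fall just short of $\tfrac12(\log N)^2$ for large $N$. Verifying that the terms $n\log(e/2)+2$ in the $R_n$-bound, when integrated against the weight $1/[n(n+1)]$, produce an additional positive contribution of order $\log N$ that comfortably closes this gap for every $N\ge 2$ is the key arithmetic check.
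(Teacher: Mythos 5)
Your proposal is correct and is essentially the route the paper intends: specialise Theorem~\ref{T3} to $n=1$, $T_1=T=N$, take $L$ large enough that the min is dropped, and divide by $2$ to get the $R_N$ bound; then feed this into \eqref{pg} to obtain the $S_N$ bound. The arithmetic you flag as the key check does close with room to spare: after substituting the $R_n$ bound for $n\ge 2$ and handling $n=1$ by $R_1/2\ge 1$, the constants $+2$ telescope exactly against the $2/(N+1)$ from the endpoint term, the $\log(e/2)$ contributions are nonnegative, and writing $\sum_{n=2}^N\tfrac{\log n}{n+1}=\sum_{m=3}^{N+1}\tfrac{\log m}{m}-\sum_{n=2}^N\tfrac{\log(1+1/n)}{n+1}\ge\tfrac12(\log(N+2))^2-\tfrac12(\log 3)^2-\tfrac12$ (using monotonicity of $\log x/x$ on $[3,\infty)$) leaves $S_N\ge\tfrac12(\log N)^2+\tfrac{N\log N}{N+1}+(2-\tfrac12-\tfrac12(\log 3)^2)$, where the residual constant is already positive, so the endpoint term only adds further slack, uniformly for all $N\ge 2$.
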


\begin{remark}
Another consequence of Theorem~\ref{T3} is that
\begin{align}
\nonumber \sum_{\substack{\vv q\in\Z^n\setminus\{\vv0\}\\[0.5ex] |q_j|\le T_j\ (1\le j\le n)}}\! \frac{1}{\Pi_+(\vv q)\big\|q_1\alpha_{1}+\dots+q_n\alpha_n\big\|} ~\gg~ (\log T)(\log T_1)\cdots(\log T_n)\,,
\end{align}
where $\alpha_1,\dots,\alpha_n$ and $T_1,\dots,T_n$ are as above in Theorem \ref{T3}  and
$$
\Pi_+(\vv q)=\prod_{q_i\neq 0}|q_i|\,,\qquad\text{where}\quad \vv q=(q_1,\dots,q_n)\,.
$$
We omit the details of the proof since in  \S\ref{linforms2} below we will prove a  more general inhomogeneous statement.
\end{remark}

\medskip

By analogy with the one-dimensional case (see also \cite[Theorem 2]{Schmidt1964}),  it makes sense to make the following upper bound conjecture.

\begin{conjecture}
Let $T_1,\dots,T_n$ be positive integer parameters and $T=T_1\cdots T_n$. Then, as $T\to\infty$, for almost every $(\alpha_1,\dots,\alpha_n)\in\R^n$ we have that
  \begin{align}
\sum_{\substack{\vv q\in\Z^n\setminus\{\vv0\}\\[0.5ex] |q_j|  \, \le T_j\ (1\le j\le n)}}\! \frac{1}{\Pi_+(\vv q)\big\|q_1\alpha_{1}+\dots+q_n\alpha_n\big\|} ~\ll~ (\log T)(\log T_1)\cdots(\log T_n)\,.\label{kron}
\end{align}
\end{conjecture}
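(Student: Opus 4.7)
The plan is to combine a first-moment (expectation) estimate with a Borel--Cantelli truncation argument. Set $M:=T^{n+1}$ and split
\[
\sum_{\vv q}\frac{1}{\Pi_+(\vv q)\,\|L(\vv q,\vv\alpha)\|}\ \le\ \sum_{\vv q}\frac{\min\{M,\|L(\vv q,\vv\alpha)\|^{-1}\}}{\Pi_+(\vv q)}\ +\hspace*{-1ex}\sum_{\substack{\vv q\\ \|L(\vv q,\vv\alpha)\|<M^{-1}}}\hspace*{-3ex}\frac{1}{\Pi_+(\vv q)\,\|L(\vv q,\vv\alpha)\|},
\]
where $L(\vv q,\vv\alpha):=q_1\alpha_1+\dots+q_n\alpha_n$ and $\vv q$ ranges over $\Z^n\setminus\{\vv0\}$ with $|q_j|\le T_j$. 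The first (``bulk'') sum is to be controlled by its expectation; the second (``tail'') sum is to be shown to vanish almost surely for all sufficiently large $T$.

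For the bulk sum, the starting observation is that for each fixed $\vv q\neq\vv0$ the function $\vv\alpha\mapsto\|L(\vv q,\vv\alpha)\|$ is uniformly distributed on $[0,\tfrac12]$ when $\vv\alpha$ is uniform in $[0,1]^n$. Consequently, for $M\ge2$,
\[
\int_{[0,1]^n}\min\{M,\|L(\vv q,\vv\alpha)\|^{-1}\}\,\mathrm{d}\vv\alpha\ \le\ 2\log M+O(1).
\]
Weighting by $1/\Pi_+(\vv q)$ and using the elementary identity $\sum_{\vv q\neq\vv0,\,|q_j|\le T_j}\Pi_+(\vv q)^{-1}\ll\prod_j\log T_j$ gives
\[
\int_{[0,1]^n}\sum_{\vv q}\frac{\min\{M,\|L\|^{-1}\}}{\Pi_+(\vv q)}\,\mathrm{d}\vv\alpha\ \ll\ (\log M)\prod_{j=1}^n\log T_j\ \ll\ (\log T)\prod_{j=1}^n\log T_j,
\]
since $M=T^{n+1}$ and $n$ is fixed, which is the conjectured order on average. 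For the tail, the expected number of $\vv q$ with $|q_j|\le T_j$ and $\|L(\vv q,\vv\alpha)\|<T^{-n-1}$ is $O(T^{-1})$; summed along the dyadic sublattice $T_j=2^{k_j}$ this produces a convergent series, so a standard Khintchine--Groshev/Borel--Cantelli argument shows that the tail sum is empty for almost every $\vv\alpha$ once $T$ is sufficiently large and therefore does not contribute.

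The main obstacle is to promote the first-moment bound on the bulk to an almost-sure bound of matching order, uniformly in $(T_1,\dots,T_n)$. A plain Markov inequality applied along the dyadic parameter lattice $T_j=2^{k_j}$ does not produce summable failure probabilities. The natural remedy is a second-moment computation
\[
\int_{[0,1]^n}\Big(\sum_{\vv q}\tfrac{\min\{M,\|L\|^{-1}\}}{\Pi_+(\vv q)}\Big)^{2}\!\mathrm{d}\vv\alpha=\sum_{\vv q,\vv q'}\frac{\int_{[0,1]^n}\min\{M,\|L(\vv q)\|^{-1}\}\min\{M,\|L(\vv q')\|^{-1}\}\,\mathrm{d}\vv\alpha}{\Pi_+(\vv q)\,\Pi_+(\vv q')},
\]
isolating the diagonal contribution of pairs $\vv q'=\pm\vv q$ and exploiting the quasi-independence estimate
\[
\mathrm{meas}\{\vv\alpha\in[0,1]^n:\|L(\vv q,\vv\alpha)\|<s,\ \|L(\vv q',\vv\alpha)\|<t\}\ \ll\ st,
\]
valid whenever $\vv q,\vv q'$ are $\Z$-linearly independent, to control the off-diagonal contribution. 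Organising this cross term over all rank-two sublattices of $\Z^n$, while carefully handling near-degenerate pairs $\vv q\parallel\vv q'$, is the delicate step. A Chebyshev inequality applied along $T_j=2^{k_j}$, combined with a monotonicity argument to remove the dyadic restriction, should then deliver \eqref{kron}. Even without this second-moment refinement, a direct first-moment Borel--Cantelli argument should already yield \eqref{kron} with an additional $(\log\log T)^{n+1+\ve}$ factor, in the spirit of Theorem~2 of Schmidt \cite{Schmidt1964}.
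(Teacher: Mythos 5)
Before assessing the argument, a structural point: the statement you are proving is labelled a \emph{Conjecture} in the paper and is deliberately left open. The authors observe that the best currently known bound (via Beck's Lemma~4.1) is $\ll(\log T)^{n+2}$, a full $\log T$ worse than \eqref{kron} in the symmetric case, and they remark only that Beck's method ``may well'' be adaptable. There is therefore no proof in the paper to compare against, and any proposal must be assessed on its own terms.

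Assessed on its own terms, the second-moment step fails quantitatively, and you have placed the obstruction in the wrong part of the sum. Write $X_T:=\sum_{\vv q}\min\{M,\|L(\vv q,\vv\alpha)\|^{-1}\}/\Pi_+(\vv q)$ with $M=T^{n+1}$. Since $\|L(\vv q,\cdot)\|$ has density $2$ on $[0,\tfrac12]$ for fixed $\vv q\ne\vv0$, one computes
\[
\int_{[0,1]^n}\min\{M,\|L(\vv q,\vv\alpha)\|^{-1}\}^{2}\,\mathrm{d}\vv\alpha
=2\int_0^{1/M}M^2\,\mathrm{d}u+2\int_{1/M}^{1/2}u^{-2}\,\mathrm{d}u
=4M-4\asymp M ,
\]
so already the pairs $\vv q'=\pm\vv q$ contribute $\asymp M\sum_{\vv q}\Pi_+(\vv q)^{-2}\asymp M=T^{n+1}$ to $\int X_T^2$. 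This is astronomically larger than $\big(\int X_T\big)^2\asymp(\log T)^2\prod_j(\log T_j)^2$, so $\operatorname{Var}(X_T)\gg T^{n+1}$ and Chebyshev gives nothing; the delicate part is not the off-diagonal cross-term (your quasi-independence estimate handles that perfectly well) but the diagonal, which your sketch passes over silently. Lowering the truncation to $M_1\asymp(\log T)^A$ tames the variance, but then the first moment of the truncated sum is only $\asymp A(\log\log T)\prod_j\log T_j$: essentially \emph{all} of the conjectured main term $\log T\prod_j\log T_j$ lives in the middle range $M_1^{-1}>\|L\|>T^{-(n+1)}$, which the tail argument no longer clears out and which a second-moment estimate cannot control. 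This middle range is precisely where Schmidt's and Beck's arguments work hard and where their losses arise; removing those losses is what makes the statement a conjecture.

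For completeness: the first-moment computation, the dyadic Borel--Cantelli treatment of the very deep tail $\|L\|<T^{-(n+1)}$ (with a harmless constant adjustment when passing from dyadic to general $T_j$), and the reduction to a dyadic grid via monotonicity of the sum in each $T_j$ are all correct. The closing claim that a first-moment Borel--Cantelli already yields \eqref{kron} with only an extra $(\log\log T)^{n+1+\ve}$ factor is too optimistic: Markov along the dyadic grid $T_j=2^{k_j}$ loses a factor $\gg\prod_j(\log T_j)^{1+\ve}$, a genuine power of $\log T$ and worse than Beck's bound, and Schmidt's own Theorem~2 (which already uses more than a bare first moment) still loses $(\log T)^{\ve}$.
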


\medskip

\begin{remark}
Bounds such as \eqref{kron} are instrumental in the theory of uniform distribution to study the discrepancy of Kronecker sequences $\big(n(\alpha_1,\dots,\alpha_n)\big)_{n\in\N}$,   see for example \cite{Beck1994}, \cite[\S1.4.2]{Drmota-Tichy}, \cite{Hua-Wang} or \cite[\S6]{Korobov}. Note that Lemma~4.4 in \cite{Beck1994} (see also \cite[Lemma~1.95]{Drmota-Tichy}) implies that the left hand side of \eqref{kron} is $\ll (\log T)^{n+2}$.  When $T_1=\dots=T_n$, this is  $\log T$ times bigger than the  conjectured bound of $(\log T)^{n+1}$. It may well be that the proof of Lemma~4.1 in \cite{Beck1994} (see also \cite[Lemma~1.93]{Drmota-Tichy}) can be adapted to prove the conjecture in the symmetric case. It is worth highlighting the fact that compared to the sum in  \eqref{kron} the range of  summation in \cite[Lemma~4.1]{Beck1994} is restricted by the addition condition that
$$
\Pi_+(\vv q) \, \big\|q_1\alpha_{1}+\dots+q_n\alpha_n\big\|  \, \ll \ (\log T)^{20n}  \, .
$$
\end{remark}

\medskip

\subsubsection{Almost sure behaviour of $R_N(\alpha,0)$  \label{ae} }

Corollary \ref{coro4+} completely describes the almost sure behaviour of the sum $S_N(\alpha,0)$; namely that for almost all $\alpha \in \R $ we have that
\begin{equation*}
\tfrac1{2}\,(\log N)^2\ \le \  S_N(\alpha,0)  \ \le \ 34\,(\log N)^2  \, ,
\end{equation*}
for all sufficiently large $N\in\N$.   In this section, we make use of Theorem \ref{T2} to investigate the almost sure behaviour of the sum $R_N(\alpha,0)$.

\vspace{1ex}

 In order to understand the almost sure behaviour of  inequalities  \eqref{eq4} and \eqref{eq2} associated with Theorem \ref{T2},  we  first recall from \S\ref{intro}, that by the Borel-Cantelli Lemma,  if $f:\N\to \Rp$ is any function such that the sum \eqref{intro.eqn.Khinsum}  converges,
  then for almost all $\alpha\in\R$ one has that $qf(q)\|q\alpha\|\ge 1$ for all sufficiently large $q$. In particular, taking $q=q_K$, where $q_K=q_K(\alpha)$ is as before the sequence of denominators of the principle  convergents  of $\alpha$, we have that $\|q_K\alpha\|\ge (q_Kf(q_K))^{-1}$ for sufficiently large $K$. In addition, we have the following well known inequalities from the theory of continued fractions \cite{Khintchine}:
$$
(2q_{K+1})^{-1}<\|q_K\alpha\|<q_{K+1}^{-1},
$$
see also \eqref{vb7} below. Hence, whenever \eqref{intro.eqn.Khinsum} converges, for almost every $\alpha$ we have that $$q_{K+1}<q_Kf(q_K)$$ for sufficiently large $K$.

On the other hand, the main substance of Khintchine's Theorem, the divergent part (namely, \eqref{neweq1} below with $k=1$),  implies that if $f$ is monotonic and the sum \eqref{intro.eqn.Khinsum} diverges, then for almost all $\alpha$ we have that $qf(q)\|q\alpha\|<1$ for infinitely many $q\in\N$. Since  convergents are best approximations, for almost all $\alpha$ we have that $2q_Kf(q_K)\|q_K\alpha\|<1$ for infinitely many $K$. Thus, in the case $f$  is monotonic and the sum \eqref{intro.eqn.Khinsum} diverges, it follows that for almost all $\alpha$ we have that $$q_{K+1}>q_Kf(q_K)$$ for infinitely many $K$.

The  above observations regarding the size of $q_{K+1}$  together with  \eqref{eq4} of Theorem~\ref{T2} give rise to the following statement.

\begin{corollary}\label{coro3+}
Let  $f:\N\to(0,\infty)$ be any increasing function. If \eqref{intro.eqn.Khinsum} converges, then for almost all $\alpha \in \R $ we have that
$$
\sum_{\substack{1\le n\le N\\[0.2ex] n\equiv0,\,q_{K-1}\, ({\operatorname{mod}{q_K})}}}\hspace*{-4ex}\frac{1}{\|n\alpha\|}\ \ll \ q_Kf(q_{K})\log(1+N/q_K)\ll Nf(N),\
$$
for all sufficiently large $N\in\N$.
On the other hand, if \eqref{intro.eqn.Khinsum} diverges, then for almost all $\alpha$ there are infinitely many $N\in\N$ such that
$$
\sum_{\substack{1\le n\le N\\[0.2ex] n\equiv0,\,q_{K-1}\, ({\operatorname{mod}{q_K})}}}\hspace*{-4ex}\frac{1}{\|n\alpha\|}\ \gg \ Nf(N)\,.
$$
\end{corollary}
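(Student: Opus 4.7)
The plan is to combine the two--sided estimates provided by \eqref{eq4} of Theorem~\ref{T2} with the size bounds on $q_{K+1}$ that were derived in the discussion preceding the corollary. Both halves then reduce to essentially one--line calculations once those ingredients are assembled.

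For the convergence half I would argue as follows. Assuming $\sum_{n=1}^\infty 1/(nf(n))$ converges, the Borel--Cantelli argument recalled in the excerpt yields a set of full measure on which $q_{K+1}\le q_Kf(q_K)$ for all sufficiently large $K$. Inserting this into the upper bound in \eqref{eq4} gives
\[
\sum_{\substack{1\le n\le N\\ n\equiv 0,\,q_{K-1}\,(\operatorname{mod} q_K)}}\!\frac{1}{\|n\alpha\|}\ \le\ 4q_{K+1}\bigl(1+\log(1+N/q_K)\bigr)\ \ll\ q_Kf(q_K)\log(1+N/q_K).
\]
The further inequality $q_Kf(q_K)\log(1+N/q_K)\ll Nf(N)$ then reduces to the elementary estimate $\log(1+x)\le x$, which gives $q_K\log(1+N/q_K)\le N$, combined with $q_K\le N$ and the monotonicity of $f$ to conclude $f(q_K)\le f(N)$.

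For the divergence half, monotonicity of $f$ makes $\psi(q):=1/(qf(q))$ monotonically decreasing, so Khintchine's theorem (divergent part) applies directly. Hence for almost all $\alpha$ there exist infinitely many $q\in\N$ with $\|q\alpha\|<1/(qf(q))$. Because convergents are best approximations and because $f$ is increasing, any such $q$ forces $\|q_K\alpha\|<1/(q_Kf(q_K))$ for the associated $K$, and then $\|q_K\alpha\|>1/(2q_{K+1})$ gives $q_{K+1}>\tfrac12 q_Kf(q_K)$ for infinitely many $K$. For each such $K$ I would choose $N=q_K$; this forces $K(N,\alpha)=K$ and $\log(1+N/q_K)=\log2$, so the lower bound in \eqref{eq4} yields
\[
\sum_{\substack{1\le n\le N\\ n\equiv 0,\,q_{K-1}\,(\operatorname{mod} q_K)}}\!\frac{1}{\|n\alpha\|}\ \ge\ q_{K+1}\log2\ \gg\ q_Kf(q_K)\ =\ Nf(N),
\]
and the sequence of such $N$ is necessarily infinite since $q_K\to\infty$.

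I do not anticipate any genuine obstacle here; all the real work has already been done inside Theorem~\ref{T2} and in the preparatory discussion, and the corollary is just the synthesis. The only small technicalities to take care of are the monotonicity check needed to apply Khintchine's theorem (which is automatic because $q\mapsto qf(q)$ is increasing) and the choice of the subsequence $N=q_K$ in the divergence direction, which is what aligns the ``spike'' in the lower bound with the infinitely many $K$ supplied by Khintchine.
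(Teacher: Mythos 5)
Your proposal is correct and follows essentially the same route as the paper: both halves are obtained by feeding the almost-sure bounds on $q_{K+1}$ derived in the discussion preceding the corollary into the two-sided estimate \eqref{eq4} of Theorem~\ref{T2}, with the choice $N=q_K$ aligning the spike with the infinitely many $K$ supplied by Khintchine in the divergence direction. The paper states the corollary without elaboration, and your write-up supplies exactly the intended synthesis.
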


\begin{example} \label{eg1}
Let $f(q)=\log q\log\log q$. Then, Corollary~\ref{coro3+} implies that for almost all $\alpha \in \R$
$$
R_N(\alpha,0) \  \gg \ N\,\log N\,\log\log N  \quad \mbox{ for infinitely many $N\in\N$.}
$$
However, if $f(q)=\log q \,(\log\log q)^{1+\ve}$, then Corollary~\ref{coro3+} together with Corollary~\ref{coro1} (and the well known fact that the set of Liouville numbers are a set of measure zero), shows that
for almost all $\alpha \in \R$
$$
R_N(\alpha,0) \ \ll \ N\,\log N\,(\log\log N)^{1+\ve}   \quad \mbox{for all sufficiently large $N\in\N$.}
$$
\end{example}

Finally, we analyze the almost all behavior of the sum appearing in (\ref{eq2}), in which  the possible spikes of $\|n\alpha\|^{-1}$ are ``trimmed''  to be no more than $cN$. Once again,  we appeal to the Borel-Cantelli Lemma, which implies that if \eqref{intro.eqn.Khinsum} converges then, for almost all $\alpha$, we have $q_{K+1}<q_Kf(q_K)$ for sufficiently large $K$. Since $q_{K+1}=a_{K+1}q_K+q_{K-1}\ge a_{K+1}q_K$, we have that $a_{K+1}< f(q_K) \le f(N) $ for large $K$. This observation with $f$ replaced by $\frac{1}{144}  f $,  together with (\ref{eq2}) implies the following statement.

\begin{corollary}\label{coro3}
Let   $f:\N\to\Rp$ be an increasing function such that \eqref{intro.eqn.Khinsum} converges.
Then, for almost all $\alpha \in \R$ and for all sufficiently large $N$ and  $c>0$
\begin{equation}\label{eq3}
\sum_{\substack{1\le n\le N\\[0.2ex] n\equiv0,\,q_{K-1}\, ({\operatorname{mod}{q_K})}}}\min\left\{cN,\frac{1}{\|n\alpha\|}\right\}\le N\,(cf(N))^{1/2}\,.
\end{equation}
\end{corollary}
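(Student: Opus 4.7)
The plan is to derive Corollary \ref{coro3} directly from inequality \eqref{eq2} of Theorem \ref{T2}. Since \eqref{eq2} already bounds the trimmed sum by $12N(ca_{K+1})^{1/2}$, the entire task reduces to producing an almost-sure upper bound of the shape $a_{K+1}\le f(N)/144$, valid for all sufficiently large $N$. The factor $144=12^2$ is chosen precisely so that the constant $12$ in \eqref{eq2} is absorbed when one takes square roots in the final estimate. Crucially, this bound on $a_{K+1}$ does not involve $c$, so the resulting estimate will automatically be uniform in $c$.

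The first step is to apply the Borel-Cantelli lemma to the scaled function $f/144$. Convergence of $\sum_n 1/(nf(n))$ is equivalent to convergence of $\sum_n 144/(nf(n))$, and the latter is the Khintchine-type sum associated with the inequality $\|q\alpha\|<144/(qf(q))$. By the convergence half of Khintchine's theorem (or simply by the Borel-Cantelli argument already recalled in the paragraph preceding the corollary), for almost every $\alpha\in\R$ one has $\|q\alpha\|\ge 144/(qf(q))$ for all sufficiently large $q\in\N$. Specialising to $q=q_K$ and using the standard continued-fraction inequality $\|q_K\alpha\|<1/q_{K+1}$ gives
\[
q_{K+1}\;<\;\tfrac{1}{144}\,q_K f(q_K)
\]
for all sufficiently large $K$. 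Combining this with $q_{K+1}\ge a_{K+1}q_K$ and with the monotonicity of $f$ together with $q_K\le N$, one obtains $a_{K+1}\le f(N)/144$ for all sufficiently large $N$.

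The second and final step is to substitute this bound into \eqref{eq2}. For every $c>0$ this yields
\[
\sum_{\substack{1\le n\le N\\[0.2ex] n\equiv 0,\,q_{K-1}\,(\operatorname{mod}\,q_K)}}\min\!\left\{cN,\tfrac{1}{\|n\alpha\|}\right\}\;\le\;12N(ca_{K+1})^{1/2}\;\le\;12N\bigl(cf(N)/144\bigr)^{1/2}\;=\;N(cf(N))^{1/2},
\]
which is exactly the desired inequality \eqref{eq3}.

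I do not anticipate any genuine obstacle: all the analytic work is hidden in Theorem \ref{T2} and in the standard Borel-Cantelli computation for $\|q\alpha\|$. The only points requiring a little care are (i) preserving the convergence of \eqref{intro.eqn.Khinsum} under the scaling $f\mapsto f/144$, which is trivial, and (ii) exploiting the monotonicity of $f$ to pass from $f(q_K)$ to $f(N)$, for which the hypothesis that $f$ is increasing is precisely what is needed.
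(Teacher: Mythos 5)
Your proof is correct and follows the same route the paper takes: apply Borel-Cantelli to a rescaled $f$ to force $a_{K+1}\le f(N)/144$ almost surely for large $N$, then feed that bound into \eqref{eq2}. The only difference from the paper's presentation is that you carry the factor $144$ explicitly throughout, whereas the paper states the argument for $f$ itself and notes at the end that one should replace $f$ by $\frac{1}{144}f$; this is purely cosmetic, and your verification that $12\sqrt{1/144}=1$ makes the constant bookkeeping transparent.
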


\medskip

\begin{example}
Taking $f(q)=\log q\,(\log\log q)^{1+ \ve}$ with $\ve>0$ in the above corollary, gives the upper bound estimate
\[  {\rm l.h.s. \ of \ } \eqref{eq3}   \  \leq \  N    \big(   c  \, \log N (\log\log N)^{1+ \ve} \big)^{1/2} \] for almost  all  $\alpha \in \R$, $c > 0 $  and for all sufficiently large $N$.
\end{example}

\begin{example}
Take $f$ as the previous example and $c=\dfrac{\log N}{(\log\log N)^{1+\ve}}$ with $\ve~>~0$.   Then   Corollary \ref{coro3}  together with the upper bound  appearing in \eqref{eq1} of Theorem~\ref{T2}  implies that \begin{equation}\label{zvb2+}
\sum_{\substack{1\le n\le N}}\min\left\{\dfrac{N\log N}{(\log\log N)^{1+\ve}},\frac{1}{\|n\alpha\|}\right\}\ll N\log N
\end{equation} for almost all $\alpha   \in \R$ and for all sufficiently large $N$.
\end{example}

\subsubsection{The homogeneous estimates for algebraic numbers}

In this section we investigate the implication of the estimates obtained above for $R_N(\alpha,0)$ and $S_N(\alpha,0)$  on specific classes of numbers.  In particular, we consider the case that $\alpha$ is an  algebraic irrational.

To begin with, observe that the task at hand is much simplified if we know the continued fraction expansion of the  number $\alpha$ under consideration. For instance, it is well known that
$$
    e = [2; 1, 2, 1, 1, 4, 1, 1, 6, 1, 1, 8, 1, 1, 10, \dots] \,.
$$
Now, for any irrational number we have that
\begin{eqnarray*}
{\rm r.h.s. \ of \ (\ref{eq4})}  \ & \ll  & \ q_{K+1}(1+\log(2N/q_K))\le 2a_{K+1}q_K (1+\log(2N/q_K))   \\[1ex]
&\ll&  N a_{K+1}q_K (1+\log(2N/q_K))/N \ll N a_{K+1}\,
\end{eqnarray*}
and note  that  for  $e$ we also have that $a_k\ll k$.   Hence,  Theorem~\ref{T2} implies that for all sufficiently large $N$
$$
R_N(e,0)   :=  \sum_{1\le n\le N}\frac{1}{\|ne\|}\ \asymp \ N\log N \,.
$$
Similarly, it is readily seen from Corollary~\ref{coro4} that
$$
S_N(e,0)  :=  \sum_{1\le n\le N}\frac{1}{n\|ne\|}\ \asymp \ (\log N)^2 \,.
$$

The sums $R_N(\alpha,0)$ and $S_N(\alpha,0)$ are of course just as easily estimated when $\alpha$ is a quadratic irrational. One just exploits the fact that the   continued fraction of $\alpha$ is periodic.   The  behaviour of
the sums for quadratic irrationals  is similar to that for $\alpha=e$.

The case of algebraic numbers $\alpha$ of degree $\ge3$ is problematic, although some bounds can be obtained using, for instance, Roth's Theorem. Indeed, Roth's Theorem implies that  the  denominators $q_{k} $  of the  convergents of an algebraic number  $\alpha$   satisfy $q_{k+1}< q_k^{1+\ve}$ for every  $\ve>0$  and for all $k$ sufficiently large.
Hence,  Theorem~\ref{T2} implies that if $\alpha$ is an algebraic irrational, then
$$
N \log N  \ \ll   \  R_N(\alpha ,0)  \ \ll \ N^{1+\ve}\,.
$$
Lang's Conjecture \cite{Lang}, dating back to 1965,  implies  that $q_{k+1}\ll q_k(\log q_k)^{1+\ve}$ for any $\ve>0$. Together with Theorem~\ref{T2}, this would in turn imply the  following strengthening of the previous statement: if $\alpha$ is an algebraic irrational, then
$$
N\log N\ \ll \  R_N(\alpha ,0)  \ \ll N(\log N)^{1+\ve} \, .
$$
 Using \eqref{pg}, one can deduce similar inequalities in relation to $S_N(\alpha,0)$ for  algebraic irrational $\alpha$ . Furthermore, it is not unreasonable to believe in the truth of the following statement which,   in view of Corollary  \ref{coro4+},  it implies that $S_N(\alpha,0)$ for algebraic irrationals  behaves in the same way as for almost all irrationals.

\begin{conjecture}\label{conj2}
For any algebraic $\alpha\in\R\setminus\Q$
$$
  S_N(\alpha ,0)  \ \asymp   \ (\log N)^2\,.
$$
\end{conjecture}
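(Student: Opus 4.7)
\textbf{Proof proposal for Conjecture \ref{conj2}.} The lower bound $S_N(\alpha,0) \ge \tfrac12(\log N)^2$ is unconditional (Corollary~\ref{coro2++}), so only the upper bound requires work. By the explicit estimate in Theorem~\ref{T1},
\[
  S_N(\alpha,0) \;\le\; 33(\log N)^2 + 10\,A_{K+1}(\alpha),
\]
so the plan is to reduce the conjecture to the arithmetic statement
\begin{equation}\label{plan.target}
  A_{K+1}(\alpha) \;=\; \sum_{i=1}^{K+1} a_i(\alpha) \;\ll\; (\log N)^2 \qquad \text{for every algebraic } \alpha\in\R\setminus\Q.
\end{equation}
Since $K \ll \log q_K \le \log N$ (this is \eqref{zvb} invoked in the proof of Corollary~\ref{coro4}), \eqref{plan.target} is a statement to the effect that the partial quotients of an algebraic irrational satisfy, on average, $a_i \ll i$.

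The natural way to attack \eqref{plan.target} is via the trivial bound $a_{k+1} \le q_{k+1}/q_k + 1$, which turns the problem into one about the growth of denominators of principal convergents. Roth's theorem gives $q_{k+1} \ll_\varepsilon q_k^{1+\varepsilon}$, hence $a_{k+1} \ll_\varepsilon q_k^{\varepsilon} \le N^\varepsilon$, and consequently $A_{K+1} \ll N^\varepsilon \log N$; this is useless for the target $(\log N)^2$. A much finer input is Lang's conjecture, $q_{k+1} \ll q_k (\log q_k)^{1+\varepsilon}$, which would yield $a_{k+1} \ll (\log q_k)^{1+\varepsilon} \ll k^{1+\varepsilon}$ and therefore $A_{K+1} \ll (\log N)^{2+\varepsilon}$. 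This is what is noted in the discussion preceding the conjecture and falls short of the target by exactly the $\varepsilon$ loss. The plan is therefore to aim for the slightly stronger quantitative form
\[
  q_{k+1} \;\ll\; q_k \,\log q_k \qquad (k \ge k_0(\alpha)),
\]
which, inserted into the summation $\sum_{k=1}^{K+1} a_k \ll \sum_{k=1}^{K+1} \log q_{k-1} \ll K \log q_K$, delivers exactly $A_{K+1} \ll (\log N)^2$ and hence the conjecture.

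\textbf{The main obstacle.} The hard step is precisely this strengthening of Lang's conjecture, and indeed Lang's conjecture itself remains open for algebraic numbers of degree $\ge 3$. The best effective bounds available, coming from Baker's theory of linear forms in logarithms, yield only estimates of the form $|\alpha-p/q| \gg q^{-2}\exp(-c(\log q)^{\kappa})$ for some $\kappa \in (0,1)$, translating into partial-quotient bounds $a_{k+1} \ll \exp(c(\log q_k)^\kappa)$, which are vastly weaker than the $a_{k+1} \ll \log q_k$ needed. I expect that any direct attack on \eqref{plan.target} via individual bounds on $a_{k+1}$ will inevitably run into this barrier.

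\textbf{An alternative route.} Rather than bounding partial quotients one at a time, one might seek to control the \emph{sum} $A_{K+1}$ collectively, exploiting the fact that $\alpha$ satisfies an algebraic equation of some fixed degree $d$. Large spikes of $a_{k+1}$ correspond to rational approximations $p_k/q_k$ that are exceptionally close to $\alpha$, and the Schmidt subspace theorem (together with its quantitative refinements) restricts how frequently such exceptional convergents can occur. One could then try to combine a quantitative non-occurrence statement with the averaged bound $\sum a_k$ rather than $\max a_k$. This approach circumvents the need for a pointwise Lang-type bound, but making it genuinely effective (and with constants good enough to yield the correct exponent $2$) appears to require inputs beyond what is currently in the literature. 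In summary, the plan is clear modulo a sharpened Lang-type estimate; producing that estimate — or a collective substitute — is where the real difficulty lies.
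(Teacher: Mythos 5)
You have correctly recognized what the paper itself signals by labelling this statement a \emph{conjecture}: no proof is given in the paper, and none is currently known. Your analysis therefore cannot be a proof, and you are right not to present it as one. The reduction you carry out — lower bound from Corollary~\ref{coro2++}, upper bound from Theorem~\ref{T1} reduced to $A_{K+1}(\alpha)\ll(\log N)^2$ — is exactly the reduction the paper has in mind; immediately after stating Conjecture~\ref{conj2} the paper records the equivalent formulation $A_k(\alpha)\le c_\alpha k^2$ for all $k$, which (via $K\ll\log N$) is the same target.

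One point worth flagging is that your plan aims at the pointwise estimate $q_{k+1}\ll q_k\log q_k$ (equivalently $a_{k+1}\ll\log q_k$), which is strictly stronger than what is needed. The paper emphasises precisely the opposite: it is \emph{not} necessary for each individual partial quotient to grow slowly, only that the cumulative sums $A_k$ grow like $k^2$ on average. A number whose continued fraction has occasional very large partial quotients compensated by long runs of small ones would satisfy $A_k\ll k^2$ yet violate any pointwise Lang-type bound, so the averaged target is genuinely weaker and is exactly where your ``alternative route'' via the subspace theorem should be aimed. Your assessment of the obstacle — that Roth gives only $a_{k+1}\ll_\varepsilon N^\varepsilon$, that Lang's conjecture would give $(\log N)^{2+\varepsilon}$, and that effective bounds from Baker's theory are far weaker — is accurate and matches the discussion in the paper preceding the conjecture. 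In short, your reduction is correct and agrees with the paper; you have correctly identified this as an open problem; the only refinement to make is to target the averaged bound $A_k\ll k^2$ rather than a pointwise bound on $q_{k+1}/q_k$.
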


\noindent In order to establish Conjecture~\ref{conj2},   it is not necessary to know that all the partial quotients associated with an  algebraic irrational grow relatively slowly. Instead, all that is required  is that the growth ``on average'' is relatively slow. Indeed,
Conjecture~\ref{conj2} is  equivalent to the following statement.

\begin{conjecture}
For any algebraic $\alpha\in\R\setminus\Q$,  there exists a constant $c_\alpha> 0$ such that for all $k\in\N$
$$
A_k(\alpha)\ \le \ c_\alpha k^2\, .
$$
\end{conjecture}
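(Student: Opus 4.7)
\medskip

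The quadratic case is immediate: if $\alpha$ is a quadratic irrational then its continued fraction is eventually periodic, hence the partial quotients $a_i$ are bounded and $A_k(\alpha)=O(k)$. The substance of the conjecture therefore concerns algebraic $\alpha$ of degree $d\ge 3$, for which no explicit description of the $a_i$ is known and one must proceed via Diophantine input.

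My plan is a dyadic level-set decomposition of $A_k$. Set
\begin{equation*}
N(k,T) \,:=\, \#\{1\le i\le k : a_i\ge T\},
\end{equation*}
so that $A_k=\sum_{T\ge 1}N(k,T)$. Iterating the trivial identity $q_i = a_iq_{i-1}+q_{i-2}\ge a_iq_{i-1}$ along those indices $i$ with $a_i\ge T$ yields $q_k\ge T^{N(k,T)}$, whence
\begin{equation*}
N(k,T) \,\le\, \min\!\Bigl\{\,k,\ \tfrac{\log q_k}{\log T}\,\Bigr\}\qquad (T\ge 2).
\end{equation*}
Splitting the sum over $T$ at the crossover point $T_0:=q_k^{1/k}$ and bounding $\int_{T_0}^{M_k}dT/\log T \ll kM_k/\log q_k$, where $M_k:=\max_{1\le i\le k}a_i$, one obtains
\begin{equation*}
A_k \,\ll\, k\,q_k^{1/k} \,+\, k\,M_k.
\end{equation*}

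The next step is to control $M_k$ via Roth's theorem applied to the algebraic $\alpha$: for each $\ve>0$ there exists $C_\ve=C_\ve(\alpha)>0$ with $a_{i+1}\le C_\ve q_i^{\ve}$ for all $i$, so $M_k\le C_\ve q_k^{\ve}$. Substituting, the desired inequality $A_k\le c_\alpha k^2$ would follow from
\begin{equation*}
q_k(\alpha)^{1/k} \,+\, q_k(\alpha)^{\ve} \,\ll_\alpha\, k,
\end{equation*}
i.e., from a subexponential bound $\log q_k(\alpha)=O(\log k)$ on the growth of the convergent denominators of an algebraic irrational.

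This last inequality is precisely where the attack breaks down, and is the main obstacle. The best unconditional information on $q_k(\alpha)$ for algebraic $\alpha$ of degree $\ge 3$ comes from Roth, which only gives $q_{k+1}\ll q_k^{1+\ve}$ and does not preclude iterated exponential growth of $q_k$ in $k$. Closing the gap would require either Lang's conjecture $q_{k+1}\ll q_k(\log q_k)^{1+\ve}$ (which, as noted above, would already yield the stronger $A_k=O(k\log k)$), or a quantitative refinement of Roth's inequality---for instance via Evertse-type bounds from the Schmidt subspace theorem applied to the associated norm-form equations, or via $p$-adic hypergeometric methods for restricted classes of algebraic $\alpha$. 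Without such an input, the argument above establishes the claimed bound only conditionally on a subexponential-growth hypothesis for the $q_k$, and an unconditional proof appears to lie beyond current techniques.
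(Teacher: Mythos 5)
This statement is presented in the paper as a conjecture: the paper offers no proof of it, only the remark that (via Theorem~\ref{T1} together with the elementary facts $K\ll\log q_K$ from \eqref{zvb} and $\log q_k\le (k-1)\log 2+A_k(\alpha)$) it is equivalent to Conjecture~\ref{conj2}, i.e.\ to $S_N(\alpha,0)\asymp(\log N)^2$ for algebraic irrationals. So there is no proof in the paper to compare yours against, and you are right not to claim one. Your diagnosis of the central obstruction is essentially correct: Roth's theorem controls each ratio $q_{i+1}/q_i$ but gives no polynomial-in-$k$ control on $\log q_k$, which a priori may grow like $(1+\ve)^k$, and this is exactly what blocks every known approach.

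Two concrete points in your write-up are nevertheless wrong and worth flagging. First, the condition you reduce the problem to, namely $q_k^{1/k}+q_k^{\ve}\ll k$, i.e.\ $\log q_k=O(\log k)$, is not merely unproved but false for every irrational: by \eqref{zvb} one always has $q_k\ge 2^{(k-1)/2}$, so $q_k^{\ve}$ grows exponentially in $k$ and can never be $\ll k$. The lossy step is $kM_k\le kC_\ve q_k^{\ve}$ --- replacing every large partial quotient by the global maximum $M_k$ discards too much, so your dyadic decomposition cannot recover $k^2$ even under optimistic growth hypotheses on $q_k$; your ``conditional proof'' is conditional on a false statement. The natural sufficient condition is instead of the shape $\log q_k\ll k$ together with $a_{i+1}\ll\log q_i$ on average. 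Second, the parenthetical claim that Lang's conjecture would already give $A_k=O(k\log k)$ does not follow: Lang gives $a_{i+1}\ll(\log q_i)^{1+\ve}$, and since Lang still permits $\log q_i$ to be as large as roughly $i\log i$, direct summation yields only $A_k\ll k^{2+\ve}(\log k)^{1+\ve}$ --- so even Lang's conjecture does not obviously imply the $k^2$ bound, let alone $k\log k$. The honest conclusion is the one the paper itself draws: the statement is open, and settling it requires Diophantine input on algebraic numbers of degree at least three going beyond Roth.
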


\medskip


\subsection{Inhomogeneous results and corollaries}

\subsubsection{Upper bounds}

We concentrate our attention on the sums $S_N(\alpha,\gamma)$. As we have seen from the homogeneous case, these sums behave more predictably than the sums $R_N(\alpha,\gamma)$.

Our first inhomogeneous  result removes the `epsilon' term  in  the upper bound appearing in  (\ref{schmidt}) obtained by Schmidt.

\begin{theorem}\label{T4}
For each $\gamma\in\R$ there exists a set $\mathcal{A}_\gamma\subset\R$ of full Lebesgue measure such that for all $\alpha\in \mathcal{A}_\gamma$ and all sufficiently large $N$
\begin{equation}\label{sum1}
S_N(\alpha,\gamma):=\sum_{1\le n\le N}\frac{1}{n\|n\alpha-\gamma\|}\ \ll\ (\log N)^2 \,.
\end{equation}
\end{theorem}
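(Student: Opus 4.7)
The plan is to upgrade Schmidt's bound $S_N(\alpha,\gamma)\ll(\log N)^{2+\varepsilon}$ from \eqref{schmidt} to $(\log N)^2$ by combining a dyadic decomposition of the sum with the counting estimates for $|\{1\le n\le N:\|n\alpha-\gamma\|<\varepsilon\}|$ whose existence is announced in the abstract. For each integer $j\ge 0$ set
\[
B_j(N):=\{1\le n\le N:2^{-j-1}<\|n\alpha-\gamma\|\le 2^{-j}\},
\]
so that $S_N(\alpha,\gamma)\le\sum_{j\ge 0}2^{j+1}\sum_{n\in B_j(N)}n^{-1}$. To cap the range of nonempty $j$ I would apply the convergence part of the inhomogeneous Borel--Cantelli lemma with weight $\psi(n):=1/(n(\log n)^2)$: since $\{\alpha\in[0,1]:\|n\alpha-\gamma\|<\psi(n)\}$ has Lebesgue measure at most $2\psi(n)$ and $\sum_n\psi(n)<\infty$, there is a full-measure set $\mathcal{A}_\gamma\subset\R$ on which $\|n\alpha-\gamma\|\ge\psi(n)$ holds for all sufficiently large $n$. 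Consequently $B_j(N)=\emptyset$ whenever $j>J:=\lceil\log_2 N\rceil+2\lceil\log_2\log N\rceil=O(\log N)$.

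Next, for each such $j$ I would split $[1,N]$ into dyadic blocks $[2^{k-1},2^k)$ with $1\le k\le\lceil\log_2 N\rceil$ and use
\[
\sum_{n\in B_j(N)\cap[2^{k-1},2^k)}\frac{1}{n}\ \le\ \frac{|B_j(N)\cap[2^{k-1},2^k)|}{2^{k-1}}.
\]
Applying the upper counting bound $|\{n\le M:\|n\alpha-\gamma\|<\delta\}|\ll M\delta$ (the a.s.\ estimate promised earlier in the paper, valid on a further full-measure refinement of $\mathcal{A}_\gamma$) with $M=2^k$ and $\delta=2^{-j}$ gives each inner sum $\ll 2^{-j}$. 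Summing over $k$ yields $\sum_{n\in B_j(N)}n^{-1}\ll 2^{-j}\log N$, and then summing over $0\le j\le J$ produces the desired
\[
S_N(\alpha,\gamma)\ \ll\ \sum_{0\le j\le J}2^{j+1}\cdot 2^{-j}\log N\ \ll\ J\log N\ \ll\ (\log N)^2.
\]

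The principal obstacle is ensuring that the counting estimate is valid uniformly across all $O((\log N)^2)$ relevant pairs $(2^k,2^{-j})$ on a single full-measure set, and in particular in the \emph{tail regime} where $M\delta=2^{k-j}$ is of the same order as, or smaller than, the discrepancy-type additive error intrinsic to any such counting bound. In that regime the density estimate breaks down, and one must instead exploit sparsity of resonant $n$: the three-distance theorem, or equivalently the Ostrowski expansion of $\gamma$ with respect to $\alpha$ developed elsewhere in the paper, shows that the solutions of $\|n\alpha-\gamma\|<2^{-j}$ for small $2^{-j}$ form a quasi-arithmetic progression whose common difference is $\gg 2^j$, so that their contribution to $\sum n^{-1}$ is bounded and does not spoil the overall $(\log N)^2$ bound. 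Handling this tail cleanly is where the delicate combinatorics of the Ostrowski representation enters, and is the technical heart of the argument.
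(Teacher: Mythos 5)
Your plan differs fundamentally from the paper's proof, and as written it has a genuine gap that you half-acknowledge but do not close. The paper does \emph{not} derive Theorem~\ref{T4} from the counting estimate for $\#N_\gamma(\alpha,\ve)$. Instead it splits off the range $n\|n\alpha-\gamma\|\ge c$ and invokes Theorem~\ref{T6} (whose proof is a direct Ostrowski-digit analysis), discards the range $n\|n\alpha-\gamma\|\,(\log n)^2<1$ via Borel--Cantelli, and then controls the remaining middle range by a Fatou/integral argument: it computes
$J(n)=\int_{\A_n}\|n\alpha-\gamma\|^{-1}\,d\alpha\sim 4\log\log n$ on the annulus $\A_n=\{\alpha: (n(\log n)^2)^{-1}<\|n\alpha-\gamma\|\le n^{-1}\}$, observes that $\sum_n J(n)/(n(\log n)^{1+\varepsilon})<\infty$, and deduces a.e.\ finiteness of the weighted tail, hence the bound $\ll(\log N)^{1+\varepsilon}$ for that range. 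This integral step is precisely what replaces the tail analysis you flag as the ``technical heart.''

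The specific flaw in your argument is the use of the upper counting bound $\#\{n\le M:\|n\alpha-\gamma\|<\delta\}\ll M\delta$ ``valid on a further full-measure refinement.'' No such uniform a.s.\ bound exists, and the paper's own Lemma~\ref{lem12+++} makes this explicit: the correct bound is
\[
\#N(\alpha,\delta)\ \le\ 32\,\min\Bigl\{\delta q_{K+1},\ \max\bigl\{\delta M,\ \tfrac{M}{2q_K}\bigr\}\Bigr\},
\]
which is $\gg M\delta$ whenever $\delta<1/(2q_K)$, i.e.\ whenever $2^{j-1}>q_K$. Corollary~\ref{lem12} only gives $\#N(\alpha,\ve)\ll M\ve$ for $\ve>M^{-\nu}$ with $\nu<1/w(\alpha)$; it does not reach $\ve$ near $(N(\log N)^2)^{-1}$, which is exactly the regime at the top of your $j$-range. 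Since for a.e.\ $\alpha$ one only has $\log q_K\asymp\log N$ with a constant strictly less than one, the ``bad'' scales $j$ with $q_K<2^{j-1}\le N(\log N)^2$ occupy a positive proportion of the full range $0\le j\le J$, not an $O(\log\log N)$ sliver. Your fallback---that the resonant $n$ form a quasi-arithmetic progression of common difference $\gg 2^j$, so their $\sum 1/n$ contribution is ``bounded''---is not quantitatively correct either: the smallest element $n_1$ of such a progression contributes $1/n_1$ which, after the Borel--Cantelli trimming, is only $\ll (\log N)^2/2^j$; multiplying by $2^{j+1}$ and summing over the $\Theta(\log N)$ bad scales gives $(\log N)^3$, not $(\log N)^2$, unless additional structure (the fact that most bad scales are empty, which again rests on the detailed Ostrowski/gap analysis of Lemmas~\ref{gapslemma1}--\ref{cortogapslem1}, and ultimately on something like the inhomogeneous counterpart of Theorem~\ref{T2}) is brought in. This is exactly the pathology illustrated by Theorem~\ref{T5}: for some $\gamma$ a single spike $\max_n (n\|n\alpha-\gamma\|)^{-1}$ dominates, and a naive density-plus-progression argument cannot separate it from the bulk. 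So the proposal, while a reasonable and genuinely different strategy, does not constitute a proof without the tail analysis being carried out; and once you do carry it out you essentially rebuild either Theorem~\ref{T6} or the paper's Fatou argument.
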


In view of the lower bound given by (\ref{schmidt}), this theorem  is  best possible up to the implied constant. Furthermore,
the set $\mathcal{A}_\gamma$ cannot be made independent of $\gamma$, as demonstrated by the following result.

\begin{theorem}\label{T5}
For any $\alpha\in\R\setminus\Q$ and any  increasing function $f:\N\rar\Rp$ satisfying $f(N)=o(N)$, there exist continuum many $\gamma\in\R$, satisfying \eqref{contfrac.eqn.|na-g|}, and such that
\begin{equation*}
\limsup_{N\rar\infty}~f(N)^{-1}\left(S_N(\alpha,\gamma)-\max_{1\le n\le N}\frac{1}{n\|n\alpha-\gamma\|}\right)=\infty.
\end{equation*}
\end{theorem}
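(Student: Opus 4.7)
My plan is to construct continuum many $\gamma$ via a Cantor-like nested binary-tree construction. For each finite binary string $\sigma \in \{0,1\}^k$ I will produce a closed interval $I_\sigma \subset \R$ of positive length and an integer $N_k = N_k(\sigma)$ such that every $\gamma \in I_\sigma$ satisfies
\[
S_{N_k}(\alpha,\gamma) \;-\; \max_{1 \le n \le N_k}\frac{1}{n\|n\alpha-\gamma\|} \;>\; k\,f(N_k),
\]
with $I_{\sigma 0}$ and $I_{\sigma 1}$ disjoint subintervals of $I_\sigma$. For each infinite binary sequence $\ve=(\ve_1,\ve_2,\dots)$ the Cantor intersection $\bigcap_k I_{\ve_1\cdots\ve_k}$ is non-empty; distinct sequences give distinct $\gamma$ because at the first stage where they differ the two candidate intervals are disjoint. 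Each resulting $\gamma$ then satisfies the stated conclusion, since $k \to \infty$.

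The engine is a \emph{two-spike} construction inside any given interval $I \subset \R$. Choose $N$ very large (quantified below) and let $q_j$ be the largest denominator of a convergent of $\alpha$ with $q_j \le N$, so that $q_{j+1} > N$ and $\|q_j\alpha\| \in (1/(2q_{j+1}), 1/q_{j+1})$. By the three-distance theorem there exists $n_1 \in \N$ with $\{n_1\alpha\}$ strictly inside $I$ and with $n_1$ bounded by a constant depending only on $|I|$. Set $n_2 := n_1 + q_j$; then $\{n_2\alpha\}$ differs from $\{n_1\alpha\}$ by $\|q_j\alpha\| < 1/N$, so $\{n_2\alpha\}$ also lies in $I$ once $N$ is large enough. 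Restrict $\gamma$ to a subinterval $J \subset I$ of radius $\ve$ around $\{n_2\alpha\}$, with $\ve$ smaller than $\min\bigl(1/(4 n_2 q_{j+1}),\ \tfrac12\mathrm{dist}(\{n_2\alpha\}, \partial I)\bigr)$. For any $\gamma \in J$: the term at $n_2$ has size $1/(n_2\ve) > 4q_{j+1}$, and by the best-approximation property $\|m\alpha\|\ge\|q_j\alpha\|>1/(2q_{j+1})$ for $0<|m|<q_{j+1}$ we get $\|n\alpha-\gamma\|\ge 1/(2q_{j+1}) - \ve \ge 1/(4q_{j+1})$ for every other $n\le N$, so the corresponding term is at most $4q_{j+1}$. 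Hence $n_2$ attains the maximum. Moreover $\|n_1\alpha-\gamma\| \le \|q_j\alpha\| < 1/q_{j+1}$, so the term at $n_1$ contributes at least $q_{j+1}/n_1 > N/n_1$ to $S_N - \max$. Because $f(N) = o(N)$ while $n_1$ is bounded in terms of $|I|$, the inequality $N/n_1 > k\,f(N)$ holds for all $N$ beyond a threshold depending on $k$ and $|I|$.

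Binary branching at stage $k+1$ is produced by applying the two-spike construction twice inside $I_\sigma$: pick integers $n_1^{(0)}, n_1^{(1)}$ with $\{n_1^{(0)}\alpha\}$ in the left third and $\{n_1^{(1)}\alpha\}$ in the right third of $I_\sigma$, both available by equidistribution for $N_{k+1}$ large enough. Running the construction around each yields disjoint intervals $I_{\sigma 0}, I_{\sigma 1} \subset I_\sigma$. The choice of $N_{k+1}$ must simultaneously ensure (i) existence of such $n_1^{(i)}$, (ii) $q_{j+1} > N_{k+1} > (k+1) n_1^{(i)} f(N_{k+1})$, and (iii) radius $\ve_{k+1}$ small enough to keep each $J_i$ strictly inside $I_\sigma$; all three follow from $f(N)=o(N)$ by taking $N_{k+1}$ sufficiently large. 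I also trim each $I_{\sigma i}$ by a countable set to avoid $\alpha\N+\Z$, ensuring condition \eqref{contfrac.eqn.|na-g|} holds for every resulting $\gamma$.

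The chief subtlety is that the lower bound $q_{j+1}/n_1$ on $S_N - \max$ must beat $k f(N)$ for \emph{every} irrational $\alpha$, including badly approximable ones where $q_{j+1}/q_j$ is bounded. The resolution is that no assumption on partial quotients is needed: the bare inequality $q_{j+1}>N$ combined with $f(N) = o(N)$ forces $q_{j+1}/n_1 > N/n_1 \gg f(N)$ for $N$ large, uniformly in the structure of $\alpha$. This universality is exactly what the statement requires, and it is recovered from the sole fact that $N/f(N)\to\infty$.
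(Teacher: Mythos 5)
Your proposal is correct, and it takes a genuinely different route from the paper's. The paper constructs $\gamma$ directly from its Ostrowski digits, setting $\gamma=\sum_i D_{k_i}$ for a sparse sequence $(k_i)$ that carries parity constraints encoding a binary sequence $(\ve_i)$ (the source of continuum many $\gamma$'s); the two large terms live at $n_{i-1}=\sum_{j<i}q_{k_j}$ and $n_i=n_{i-1}+q_{k_i}$, the sizes $\|n_{i-1}\alpha-\gamma\|$ and $\|n_i\alpha-\gamma\|$ are read directly off the Ostrowski tail via Corollary~\ref{cor1}, and the paper avoids identifying the maximizer by lower-bounding $S_{n_i}-\max$ simply by the smaller of the two big terms. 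Your Cantor nested-interval construction with the two-spike device achieves the same end more concretely: you anchor $\gamma$ near $\{n_2\alpha\}$ so that $n_2$ is provably the unique maximizer, and then collect the companion term at $n_1=n_2-q_j$. Both arguments rest on the same structural fact, which you state explicitly at the end: the bare inequality $q_{j+1}>N$, combined with boundedness of $n_1$, forces $S_N-\max\gg N/n_1>k\,f(N)$, with no dependence on the growth of the partial quotients. Your route is arguably more elementary, substituting equidistribution and the best-approximation property for the Ostrowski machinery of \S\ref{sec.contfrac}--\S\ref{gaps}, at the cost of the interval-nesting bookkeeping.

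Two small points should be made explicit. The best-approximation step $\|(n-n_2)\alpha\|\ge\|q_j\alpha\|$ requires $|n-n_2|<q_{j+1}$ for all $1\le n\le N$; for $n>n_2$ this is automatic from $N<q_{j+1}$, but for $n<n_2$ it needs $n_1+q_j-1<q_{j+1}$, which holds only once $j$ (hence $N$) is large relative to the bound on $n_1$ (using $q_{j+1}-q_j\ge q_{j-1}\to\infty$). You also need $n_2\le N$, i.e.\ $N\ge q_j+n_1$, so $N$ must be chosen a little above $q_j$; since $q_j+n_1<q_{j+1}$ for $j$ large, such $N$ exists. Finally, for a general $\gamma\in J$ one has $\|n_1\alpha-\gamma\|\le\|q_j\alpha\|+\ve$ rather than $\le\|q_j\alpha\|$, and the constant bounding $n_1$ depends on $\alpha$ as well as on $|I|$; neither affects the conclusion, as the lower bound only loses a benign constant factor and $\alpha$ is fixed.
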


Despite Theorem~\ref{T5}, the main `bulk' of the sum in (\ref{sum1}) can be estimated on a set $\A$ of $\alpha$'s independent of $\gamma$. The following can be viewed as the inhomogeneous analogue of the upper bound appearing in Corollary \ref{coro4}.

\begin{theorem}\label{T6}
There exists a set $\A\subset \R$ of full Lebesgue measure  such that for any $\alpha\in\A$, any $\gamma\in\R$ and any $c>0$, we have that for all sufficiently large $N$,
\begin{equation*}
\sum_{\substack{1\le n\le N \\[0.5ex] n\|n\alpha-\gamma\|\ge c}}\frac{1}{n\|n\alpha-\gamma\|} \ \ll_{\alpha,\gamma, c} \ (\log N)^2  \,.
\end{equation*}
\end{theorem}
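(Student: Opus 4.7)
The plan is to combine a double-dyadic decomposition of the restricted sum with the $\gamma$-uniform cardinality estimate
\[
\#\{1\le n\le N:\|n\alpha-\gamma\|<\ve\}\ \ll_\alpha\ N\ve+\log N\,,
\]
whose existence on a full-measure set of $\alpha$ is advertised in the abstract and will be established elsewhere in the paper. The set $\A$ is taken to be this full-measure set; the implicit constant there is allowed to depend on $\alpha$ but the bound is uniform in $\gamma$, so that $\A$ itself does not depend on $\gamma$. The extra $\gamma$- and $c$-dependence in the implicit constant in \eqref{sum1} will be absorbed via the threshold ``for all sufficiently large $N$'' and some low-order terms.

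To decompose the sum, write $M_j=2^j$ and $T_k=2^{-k}$, and partition the range of summation into cells
\[
\mathcal{C}_{j,k}=\big\{n\in(M_j,2M_j]:T_{k+1}<\|n\alpha-\gamma\|\le T_k\big\}.
\]
The constraint $n\|n\alpha-\gamma\|\ge c$ confines the summation to cells with $k\le j+O_c(1)$. Each summand in $\mathcal{C}_{j,k}$ is bounded by $(M_jT_{k+1})^{-1}=2^{k+1-j}$, and the cardinality estimate applied at the scale $(2M_j,T_k)$ gives
\[
\#\mathcal{C}_{j,k}\ \le\ \#\{n\le 2M_j:\|n\alpha-\gamma\|\le T_k\}\ \ll_\alpha\ M_jT_k+j,
\]
so that the total contribution of the cell is at most $(M_jT_k+j)\cdot 2^{k+1-j}\ll 1+j\cdot 2^{k-j}$. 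Summing over $k$ from $0$ to $j+O_c(1)$ yields $O(j+\log(1/c))$ per dyadic $n$-range (since $\sum_k 2^{k-j}=O(1)$), and summing over $j\le\log_2 N$ produces the claimed bound $\ll_{\alpha,\gamma,c}(\log N)^2$.

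The main obstacle is the sharpness of the cardinality estimate. The classical metric discrepancy bound yields only an error term of order $(\log N)^{1+\eta}$ uniformly in $\gamma$, which would give $(\log N)^{2+\eta}$ and would fail to remove the $\ve$ present in Schmidt's estimate \eqref{schmidt}. Obtaining the genuinely linear factor $\log N$ in the error, on a full-measure set of $\alpha$ that does not depend on $\gamma$, is therefore the critical ingredient. I expect this to be established via the Ostrowski expansion of $\gamma$ with respect to the continued fraction convergents of $\alpha$, combined with almost-sure growth bounds for the partial quotients $a_k(\alpha)$, in the spirit of the passage from Schmidt's almost-sure estimate to the sharp $(\log N)^2$ of Kruse in the homogeneous case recorded in Theorem~\ref{T1}.
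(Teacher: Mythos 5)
The central gap is the claimed cardinality estimate
\[
\#\{1\le n\le N:\|n\alpha-\gamma\|<\ve\}\ \ll_\alpha\ N\ve+\log N
\quad\text{(uniformly in $\gamma$),}
\]
which is not what the paper proves, and is in fact \emph{false} on any set of $\alpha$ of full measure. A uniform-in-$\gamma$ bound of this strength would mean that the discrepancy of the Kronecker sequence satisfies $N D_N(\alpha)\ll_\alpha\log N$ for all large $N$, and this fails for almost every $\alpha$: the sharp order of $N D_N$ is governed by partial-quotient sums of the form $\sum_{k\le K}a_k(\alpha)$ with $q_K\le N<q_{K+1}$, and for almost every $\alpha$ these exceed $c\,(\log N)(\log\log N)$ along a subsequence of $N$. (What the paper's abstract is advertising is Lemma~\ref{lem12++}/Lemma~\ref{lem12+++}, which give the two-sided estimate $\#N(\alpha,\ve)\asymp\ve N$ only under a compatibility condition between $\ve$, $N$ and the denominators $q_\ell(\alpha)$; in the regime $\ve<1/q_K$ the upper bound degrades to $\asymp\min\{\ve q_{K+1},\,N/q_K\}$, which can be as large as $a_{K+1}$. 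Lemma~\ref{inhom2} then transfers these to $N_\gamma(\alpha,\ve)$.) This matters quantitatively: plugging the true almost-everywhere error $(\log M)(\log\log M)$ into your cell computation gives $\sum_{j\le\log_2 N} j\log j\asymp(\log N)^2\log\log N$, worse than the target and not even matching Schmidt's $(\log N)^{2+\ve}$.

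The paper takes a genuinely different route in \S\ref{inhom}: it works directly with the Ostrowski expansion of $\gamma$ (Lemma~\ref{Ostrowski2}), the two-sided estimates for $\|n\alpha-\gamma\|$ in terms of the coefficient differences $\delta_{k+1}=c_{k+1}-b_{k+1}$ (Lemmas~\ref{contfrac.inhomoglem.inhomoglem}--\ref{contfrac.leminhombnd.leminhombnd2}), and the Gaps Lemma (\ref{gapslemma1}) together with the counting Lemmas~\ref{contfrac.sumbndlem.sumbndlem} and~\ref{cortogapslem1}. Instead of bounding the counting function at each scale $(M_j,T_k)$ uniformly, it groups the $n$'s by the shape $(u,a)$ (and $(m,a,\ell)$, $(L,a)$) of their coefficient pattern relative to $\gamma$, and sums telescopically; what enters is not $\max_k a_k$ but only the averaged quantities $\log(a_1\cdots a_n)\ll n$ (Khintchine--L\'evy) and $\sum_{i\le n}a_i\ll n^{1+\ve}$ (Diamond--Vaaler), exactly the two conditions defining the full-measure set $\A$. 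This is precisely what avoids the extra $\log\log N$.

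Your dyadic framework is not hopeless: if you replace the false estimate by the paper's Lemma~\ref{lem12+++} (transferred via Lemma~\ref{inhom2}), then at scale $(M_j,T_k)$ the error over the main term $M_jT_k$ is $\ll\min\{T_k q_{K_j+1},\,M_j/q_{K_j}\}$, where $q_{K_j}\le M_j<q_{K_j+1}$. Weighting by the summand $2^{k-j}$ and summing over the admissible $k$, the error contribution at scale $j$ is $\ll q_{K_j+1}/M_j$; summing over $j$ within a fixed continued-fraction block collapses geometrically to $\ll a_{K+1}$, and the grand total is $\ll\sum_{K\lesssim\log N}a_{K+1}\ll(\log N)^{1+\ve}$, a lower-order term. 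Carrying this through would give an alternative proof, but it is a nontrivial repair, not a verification of the proposal as written, and the $\gamma$-uniformity still needs Lemma~\ref{inhom2}, not a discrepancy-type black box.
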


We now move on to describing lower bound estimates.

\subsubsection{Lower bounds}\label{lowerbounds}

All our lower bound results will apply to any irrational number $\alpha$ which is not Liouville, that is $\alpha\not\in\mathfrak{L}$. Recall,  that $\mathfrak{L}$ is a small set in the sense that it has Hausdorff dimension zero (see \eqref{Liou}).

\begin{theorem}\label{T7}
Let $\alpha\in\R\setminus(\mathfrak{L}\cup\Q)$ and let $0<v<1$. There exist constants $r,C_1,$ and $N_0$, depending on $\alpha$ and $v$ only, with $0<r,C_1<1$ and $N_0>1$, and such that, for any $\gamma\in\R$ and all $N>N_0$,
\begin{equation*}
\sum_{\substack{rN<n\le N \\[0.5ex] \|n\alpha-\gamma\|\ge N^{-v}}}\frac{1}{\|n\alpha-\gamma\|} \ \ge \ C_1N\log N  \,.
\end{equation*}
\end{theorem}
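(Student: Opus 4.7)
The plan is to bootstrap the lower bound for the restricted sum from an equidistribution estimate for the sequence $\{n\alpha-\gamma\}$, uniform in $\gamma$. Such an estimate is available because the non-Liouville hypothesis $\alpha\notin\mathfrak{L}$ allows one to control the denominators of rational approximations to $\alpha$, and hence the discrepancy of $\{n\alpha\bmod 1\}$ via Erd\H{o}s--Tur\'an.

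Concretely, Lemma~\ref{Liouvillelem1} gives $w:=w(\alpha)<\infty$, so for some constant $c=c(\alpha)>0$ we have $\|h\alpha\|^{-1}\le c\,h^{w+1}$ for every $h\ge 1$. Inserting this into the Erd\H{o}s--Tur\'an inequality and optimising the cutoff parameter $H$ yields $\delta=\delta(\alpha)\in(0,1)$ (any $\delta<1/(w+1)$) and $C'=C'(\alpha)\ge 2$ such that for every $M\ge 2$, every $\gamma''\in\R$, and every interval $I\subset\R/\Z$ of length $|I|\ge C'M^{-\delta}$,
\begin{equation*}
\tfrac{1}{2}M|I|\ \le\ \#\{1\le m\le M:\ m\alpha-\gamma''\in I\bmod 1\}\ \le\ \tfrac{3}{2}M|I|.
\end{equation*}

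To apply this, fix $r:=1/2$, $M:=\lfloor(1-r)N\rfloor$, $\gamma'':=\gamma-\lceil rN\rceil\alpha$, and reindex by $m:=n-\lceil rN\rceil$; the sum in question becomes $S=\sum 1/\|m\alpha-\gamma''\|$ over $1\le m\le M$ with $\|m\alpha-\gamma''\|\ge N^{-v}$. Write $F(t):=\#\{1\le m\le M:\|m\alpha-\gamma''\|\le t\}$ and $\tau:=\max\{N^{-v},\ C'M^{-\delta}/2\}$. Abel summation gives
\begin{equation*}
S\ \ge\ \int_{\tau}^{1/2}\frac{dF(t)}{t}\ =\ 2F(\tfrac12)-\frac{F(\tau)}{\tau}+\int_{\tau}^{1/2}\frac{F(t)}{t^2}\,dt.
\end{equation*}
Applying the bound above to the intervals $I=[-t,t]$ yields $F(t)\ge tM$ on $[\tau,1/2]$ and $F(\tau)\le 3\tau M$, while trivially $F(1/2)=M$. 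Therefore $S\ge 2M-3M+M\log\!\bigl(1/(2\tau)\bigr)=M\bigl(\log(1/(2\tau))-1\bigr)$. Since $\log(1/(2\tau))\ge\min(v,\delta)\log N-O_{\alpha}(1)$, we conclude $S\ge\tfrac14\min(v,\delta(\alpha))\,N\log N$ for every $N\ge N_0(\alpha,v)$, which is the desired estimate with $C_1:=\tfrac14\min(v,\delta(\alpha))\in(0,1)$.

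The main obstacle is the quantitative Erd\H{o}s--Tur\'an step: the non-Liouville hypothesis is all one needs, but extracting from it an explicit discrepancy exponent $\delta(\alpha)>0$ that is uniform in the shift $\gamma''$ takes some care. Once this is in hand, the rearrangement / partial-summation argument is routine, and the dependence of $r$, $C_1$, and $N_0$ on $\alpha$ (and via $\delta$ on $v$) stems entirely from the dependence of $\delta$ on the irrationality exponent $w(\alpha)$.
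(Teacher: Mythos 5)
Your proposal is correct and is a genuinely different route from the paper's. The paper's proof of Theorem~\ref{T7} proceeds through the Ostrowski-expansion machinery: it first uses the Ostrowski coefficients of $\gamma$ to exhibit an explicit $n\le N$ with $\|n\alpha-\gamma\|<\tfrac12 N^{-\nu}$ (so that $N_\gamma(\alpha,\tfrac12 N^{-\nu})\neq\emptyset$), then invokes the counting estimate of Corollary~\ref{lem12} (derived from continued-fraction gap lemmas) together with the homogeneous-to-inhomogeneous transfer in Lemma~\ref{inhom2} to get $\tfrac14 N\ve\le\#N_\gamma(\alpha,\ve)\le 65N\ve$, and finally runs a dyadic decomposition over the scales $R^{-k\nu}$. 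You instead obtain the counting estimate directly from the Erd\H{o}s--Tur\'an inequality, using $w(\alpha)<\infty$ to bound the exponential-sum term $\|h\alpha\|^{-1}$ polynomially, and then assemble the sum via Abel summation against the distribution function $F$. The two counting inputs are essentially equivalent (both are uniform-distribution statements with polynomial error), but your route sidesteps two pieces of the paper's machinery: you do not need the non-emptiness step (because the discrepancy of $(m\alpha-\gamma'')_{m\le M}$ is automatically shift-invariant, so uniformity in $\gamma''$ is free, which also addresses the concern you raise at the end), and you do not need the Ostrowski gap lemmas at all. What you give up is the explicit numerical constants the paper tracks; and your exponent $\delta<1/(w(\alpha)+1)$ is slightly weaker than the paper's $\nu<1/w(\alpha)$, though this has no effect on the conclusion. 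The Abel-summation step is a clean reformulation of the paper's dyadic sum over $A_k$; the integration-by-parts identity, the insertion of $F(t)\ge Mt$ for $t\ge\tau$, and the resulting $M(\log(1/2\tau)-1)$ bound are all correct, and the final reduction $\log(1/2\tau)\gg\min(v,\delta)\log N$ gives $C_1>0$ depending only on $\alpha$ and $v$ as required.
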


Using \eqref{pg} we readily obtain the following result.

\begin{corollary}\label{cor8}
Let $\alpha\in\R\setminus(\mathfrak{L}\cup\Q)$ and let $0<u,v<1$. There exist constants $r,C_2,$ and $N_1$,  depending on $\alpha$, $u$ and $v$ only, with $0<r,C_2<1<N_1$, and such that, for any $\gamma\in\R$ and all $N>N_1$,
$$
\sum_{\substack{N^u\le n\le N \\[0.5ex] n^{v}\|n\alpha-\gamma\|\ge r^v}}\frac{1}{n\|n\alpha-\gamma\|}
\ge C_2(\log N)^2 \, .
$$
\end{corollary}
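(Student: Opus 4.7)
The strategy is to apply Theorem~\ref{T7} not once but at a geometric sequence of scales $N_k=r^kN$ covering $[N^u,N]$, so that the scale-dependent truncations $\|n\alpha-\gamma\|\ge N_k^{-v}$ coalesce into the single $n$-dependent truncation $n^v\|n\alpha-\gamma\|\ge r^v$ that appears in the statement. The desired $(\log N)^2$-bound should then emerge from summing logarithmic contributions across roughly $(\log N)/\log(1/r)$ dyadic-type scales, the weighting by $1/n$ being handled exactly in the spirit of the partial summation formula~\eqref{pg} hinted at in the paper.

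Concretely, I would fix $\alpha$, $u$, $v$ and let $r\in(0,1)$, $C_1\in(0,1)$ and $N_0>1$ be the constants furnished by Theorem~\ref{T7}. Then I would set $N_k:=r^kN$ for $k=0,1,\ldots,K$, where $K$ is the largest integer with $N_{K+1}\ge N^u$; for $N$ large enough (depending on $\alpha,u,v$) we automatically have $N_K>N_0$, and one checks that $K+1\ge (1-u)(\log N)/\log(1/r)-1$. The half-open intervals $(N_{k+1},N_k]=(rN_k,N_k]$ are then pairwise disjoint and contained in $(N^u,N]$. Applying Theorem~\ref{T7} at scale $N_k$ and bounding $1/n\ge 1/N_k$ on this range gives
\begin{equation*}
\sum_{\substack{rN_k<n\le N_k\\[0.4ex] \|n\alpha-\gamma\|\ge N_k^{-v}}}\frac{1}{n\|n\alpha-\gamma\|}\ \ge\ \frac{1}{N_k}\cdot C_1N_k\log N_k\ =\ C_1\log N_k\ \ge\ C_1u\log N.
\end{equation*}

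The key compatibility observation, which links the decomposition to the statement, is the following: for $n\in(rN_k,N_k]$ satisfying $\|n\alpha-\gamma\|\ge N_k^{-v}$ one has $n^v\|n\alpha-\gamma\|\ge (n/N_k)^v\ge r^v$, so every summand contributing at scale $N_k$ is legitimately counted by the sum on the left-hand side of the corollary. Since the scale intervals are pairwise disjoint and contained in $(N^u,N]$, summing the inequality above over $k=0,\ldots,K$ gives at least $(K+1)C_1u\log N$, which dominates $\frac{C_1u(1-u)}{2\log(1/r)}(\log N)^2$ for $N$ large. Setting $C_2$ equal to this constant (and shrinking below $1$ if needed) and $N_1$ large enough that all of the above steps are justified yields the corollary.

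The main (and really only) non-trivial design choice is the geometric ratio of the decomposition: by taking it to equal the constant $r$ from Theorem~\ref{T7}, the scale-dependent thresholds $N_k^{-v}$ reassemble precisely into the $n$-dependent threshold $(r/n)^v$ demanded by the corollary, with no loss in the compatibility estimate $(n/N_k)^v\ge r^v$. Everything else is bookkeeping together with the elementary weighting argument that converts a lower bound for an $R$-type restricted sum into a lower bound for an $S$-type restricted sum.
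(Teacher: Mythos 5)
Your proposal is correct and matches the paper's own proof essentially step for step: the paper likewise decomposes $(N^u,N]$ into geometric blocks $(r^{\ell+1}N,\,r^{\ell}N]$ with the ratio $r$ taken from Theorem~\ref{T7}, applies Theorem~\ref{T7} at each scale after bounding $1/n\ge 1/(r^{\ell}N)$, and sums the resulting $\gg\log N$ contributions over $\asymp\log N$ scales. In particular, the compatibility observation you isolate --- that $n>rN_k$ and $\|n\alpha-\gamma\|\ge N_k^{-v}$ force $n^v\|n\alpha-\gamma\|\ge r^v$ --- is exactly what justifies the paper's first displayed inequality.
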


\begin{proof}
Let $r$ be as in Theorem~\ref{T7} and set
\[\ell_0:=\left\lfloor\frac{u\log N}{\log r}-1\right\rfloor.\]
Then, it follows that
\begin{align*}
\sum_{\substack{N^u\le n\le N \\[0.5ex] n^{v}\|n\alpha-\gamma\|\ge r^v}}\frac{1}{n\|n\alpha-\gamma\|}
&~\ge~ \sum_{\ell=0}^{\ell_0}
~~\sum_{\substack{r^{\ell+1} N<n\le r^{\ell}N \\[0.5ex] \|n\alpha-\gamma\|\ge (r^{\ell}N)^{-v}}}\frac{1}{n\|n\alpha-\gamma\|}\\[1ex]
\end{align*}
\begin{align*}
&~\ge~ \sum_{\ell=0}^{\ell_0}~~\frac{1}{r^\ell N}
\sum_{\substack{r^{\ell+1} N<n\le r^{\ell}N \\[0.5ex] \|n\alpha-\gamma\|\ge (r^{\ell}N)^{-v}}}\frac{1}{\|n\alpha-\gamma\|}\\[0ex]
&~\ge~ \sum_{\ell=0}^{\ell_0}~~\frac{1}{r^\ell N}\cdot C_1r^\ell N\log(r^{\ell} N)
\\[1ex]
&~\ge~ C_1\ell_0\log(r^{\ell_0} N)\ge C_2(\log N)^2,
\end{align*}
provided that $\ell_0\ge 1$ and $rN^u>N_0$. These conditions determine the quantity  $N_1$ appearing in the statement of the corollary.
\end{proof}

\medskip

Corollary~\ref{cor8} complements the upper bounds associated with Theorems~\ref{T4} and \ref{T6}. It also implies the following statement.

\begin{corollary}\label{cor8+}
Let $\alpha\in\R\setminus(\mathfrak{L}\cup\Q)$ and $c>0$. Then,  for all sufficiently large $N$ and any $\gamma\in\R$,  we have that
$$
S_N(\alpha,\gamma)\ge \sum_{\substack{1\le n\le N \\[0.5ex] n\|n\alpha-\gamma\|\ge c}}\frac{1}{n\|n\alpha-\gamma\|}
\gg (\log N)^2 \, .
$$
\end{corollary}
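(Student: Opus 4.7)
The first inequality is trivial: since every term of $S_N(\alpha,\gamma)$ is non-negative, restricting the sum to the indices satisfying $n\|n\alpha-\gamma\|\ge c$ can only decrease the value. Therefore, the content of the corollary lies in establishing the lower bound
$$
\sum_{\substack{1\le n\le N \\[0.5ex] n\|n\alpha-\gamma\|\ge c}}\frac{1}{n\|n\alpha-\gamma\|}\ \gg\ (\log N)^2.
$$

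The plan is to deduce this directly from Corollary~\ref{cor8} by exhibiting the summation range there as a subset of the summation range appearing above. Fix parameters $u,v\in(0,1)$ (the specific values are irrelevant; for concreteness one may take $u=v=\tfrac12$), and let $r=r(\alpha,u,v)$, $C_2=C_2(\alpha,u,v)$ and $N_1=N_1(\alpha,u,v)$ be the constants furnished by Corollary~\ref{cor8}.

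The key observation is the following set-theoretic containment: if $n$ satisfies $N^u\le n\le N$ and $n^v\|n\alpha-\gamma\|\ge r^v$, then
$$
n\|n\alpha-\gamma\|\ \ge\ r^v\, n^{1-v}\ \ge\ r^v\, N^{u(1-v)}.
$$
Since $u(1-v)>0$, for all $N$ sufficiently large (depending only on $\alpha$, $u$, $v$ and $c$) we have $r^v N^{u(1-v)}\ge c$, so every index appearing in the sum from Corollary~\ref{cor8} also satisfies $n\|n\alpha-\gamma\|\ge c$. Applying Corollary~\ref{cor8} and using positivity of the terms then yields
$$
\sum_{\substack{1\le n\le N \\[0.5ex] n\|n\alpha-\gamma\|\ge c}}\frac{1}{n\|n\alpha-\gamma\|}\ \ge\ \sum_{\substack{N^u\le n\le N \\[0.5ex] n^{v}\|n\alpha-\gamma\|\ge r^v}}\frac{1}{n\|n\alpha-\gamma\|}\ \ge\ C_2(\log N)^2,
$$
which is the required bound. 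There is no essential obstacle here: once Corollary~\ref{cor8} is in hand, the only verification needed is that the truncation threshold $r^v N^{u(1-v)}$ eventually dominates the fixed constant $c$, which is immediate.
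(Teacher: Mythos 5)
Your proof is correct and matches the paper's intended derivation: the paper states Corollary~\ref{cor8+} as an immediate consequence of Corollary~\ref{cor8} without writing out the details, and the containment argument you give — noting that $n\ge N^u$ and $n^v\|n\alpha-\gamma\|\ge r^v$ force $n\|n\alpha-\gamma\|\ge r^v N^{u(1-v)}\ge c$ once $N$ is large — is exactly the intended reasoning, with the constants appropriately tracked.
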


\vspace*{1ex}

\begin{remark}
Recall, that Schmidt's inhomogeneous result \eqref{schmidt} is true on a set of $\alpha \in \R$ of full measure, which may depend on $\gamma$. Corollary~\ref{cor8+} shows that the lower bound holds for all $\alpha \in \R$ except possibly for Liouville numbers  - an explicit set of Hausdorff dimension zero that is independent of $\gamma$.
\end{remark}

\vspace*{2ex}

\begin{remark}
We do not  know if the condition that $\alpha$ is not a Liouville number is necessary
for the conclusion of Corollary~\ref{cor8+} to hold for all $\gamma$. However
the next result, which can be viewed as the inhomogeneous analogue of Corollary \ref{coro2}, demonstrates the necessity of the condition  that $\alpha$ is not a Liouville number in the statement of Theorem~\ref{T7}.
\end{remark}

\begin{theorem}\label{T8}
Let $\alpha\in\R\setminus\Q$. Then, $\alpha\not\in\mathfrak{L}$ if and only if for any $\gamma\in\R$,
$$
R_N(\alpha,\gamma):=\sum_{1\le n\le N}\frac{1}{\|n\alpha-\gamma\|}\gg N\log N\quad\text{for}~ N\ge 2. \,
$$
\end{theorem}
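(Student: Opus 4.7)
The plan is to establish the two implications separately.

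\emph{Sufficiency} ($\alpha\not\in\mathfrak{L}\Rightarrow R_N\gg N\log N$). This part is handled by invoking Theorem~\ref{T7}. Given $\alpha\not\in\mathfrak{L}$ and fixing any $v\in(0,1)$, Theorem~\ref{T7} supplies constants $r,C_1\in(0,1)$ and $N_0>1$ depending only on $\alpha$ such that for every $\gamma\in\R$ and every $N>N_0$,
$$
R_N(\alpha,\gamma)\ \ge\ \sum_{\substack{rN<n\le N\\ \|n\alpha-\gamma\|\ge N^{-v}}}\frac{1}{\|n\alpha-\gamma\|}\ \ge\ C_1\,N\log N.
$$
For the remaining range $2\le N\le N_0$ the trivial bound $R_N(\alpha,\gamma)\ge 2N$ (valid since $\|\cdot\|\le\tfrac12$ whenever $R_N$ is finite) gives $R_N(\alpha,\gamma)\ge(2/\log N_0)\,N\log N$. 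Setting $c:=\min\{C_1,2/\log N_0\}$ yields the desired uniform lower bound for all $N\ge 2$ and all $\gamma\in\R$.

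\emph{Necessity} ($\alpha\in\mathfrak{L}\Rightarrow$ the lower bound fails for some $\gamma$). My strategy is to construct a fixed $\gamma\in\R$ and a sequence $N_i\to\infty$ along which $R_{N_i}(\alpha,\gamma)=o(N_i\log N_i)$. By Lemma~\ref{Liouvillelem1} and a thinning argument I can select denominator indices $K_i\to\infty$ with $q_{K_i+1}\ge q_{K_i}^{M_i}$ for some prescribed $M_i\to\infty$, and also with $q_{K_{i+1}}\ge q_{K_i}^{3}$. Set $r_i:=\lfloor q_{K_i}^{(M_i-1)/2}\rfloor$ and $N_i:=r_i q_{K_i}$; then $q_{K_i}\le N_i<q_{K_i+1}$, $r_i\ll a_{K_i+1}$, and crucially $\log q_{K_i}/\log N_i\to 0$.

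The geometric engine is the cluster description of the $N_i$-partition provided by the three distance theorem. Letting $\theta_K\in(-\tfrac12,\tfrac12]$ be the representative of $q_K\alpha$ modulo~$1$, so $|\theta_K|=\|q_K\alpha\|<1/q_{K+1}$, the bijection $n=m+jq_K$ with $1\le m\le q_K$, $0\le j\le r-1$ gives
$$
\{\{n\alpha\}:1\le n\le rq_K\}\ =\ \{\{m\alpha+j\theta_K\}:1\le m\le q_K,\ 0\le j\le r-1\}.
$$
Thus the $N_i$ points split into $q_{K_i}$ clusters around the base points $\{m\alpha\}$, each cluster of width $(r_i-1)\|q_{K_i}\alpha\|=o(1/q_{K_i})$, and (because $r_i\ll a_{K_i+1}$) the inter-cluster gaps have length $\gg 1/q_{K_i}$. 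I then build $\gamma=\bigcap_iI_i$ by a Cantor-style nested interval procedure, with $I_i$ a closed sub-interval of length $\asymp 1/q_{K_i}$ sitting inside some big gap of the $N_i$-partition, and $I_{i+1}\subset I_i$. The inductive step is feasible because $I_i$ contains $\asymp q_{K_{i+1}}|I_i|\gg 1$ big-gap centres of the $N_{i+1}$-partition, thanks to $q_{K_{i+1}}\ge q_{K_i}^{3}$. The resulting $\gamma$ then satisfies $\|n\alpha-\gamma\|\ge c/q_{K_i}$ for all $n\le N_i$ and some absolute $c>0$.

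To finish, I would bound $R_{N_i}(\alpha,\gamma)$ by ordering the $q_{K_i}$ cluster centres by distance to $\gamma$: the three distance theorem at level $q_{K_i}$ tells us that consecutive such distances differ by $\asymp 1/q_{K_i}$ and the smallest is $\ge c/q_{K_i}$, so the $k$-th nearest centre on either side of $\gamma$ sits at distance $\asymp k/q_{K_i}$. Since the cluster width is negligible on this scale, every one of the $r_i$ points of that cluster contributes $\ll q_{K_i}/k$, giving
$$
R_{N_i}(\alpha,\gamma)\ \ll\ 2r_i\sum_{k=1}^{q_{K_i}/2}\frac{q_{K_i}}{k}\ \ll\ N_i\log q_{K_i}\ =\ o(N_i\log N_i),
$$
which contradicts $R_N(\alpha,\gamma)\gg N\log N$. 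The main technical obstacle I anticipate is carrying out the Cantor construction rigorously: at each stage $i$ one must verify, from the three distance theorem at level $q_{K_{i+1}}$, that $I_i$ genuinely contains a big gap of the $N_{i+1}$-partition, and the cluster-width bound must be uniform enough that the ``$\asymp k/q_{K_i}$'' estimate from cluster centre to $\gamma$ extends to every cluster point rather than just the centre. Everything else reduces to standard continued fraction bookkeeping along the lines of the estimates to be recalled in \S\ref{sec.contfrac}.
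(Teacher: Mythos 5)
Your sufficiency argument (the direction $\alpha\not\in\mathfrak{L}\Rightarrow R_N\gg N\log N$) is precisely the paper's: quote Theorem~\ref{T7} for $N>N_0$ and patch in the trivial bound $R_N\ge 2N$ for the bounded range, noting that if some $\|n\alpha-\gamma\|=0$ the statement is vacuous. That part requires no comment.

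For the necessity direction your strategy is genuinely different from the paper's, and it looks sound. The paper constructs $\gamma$ directly through its Ostrowski digits, setting $b_{K_i+1}=\lceil a_{K_i+1}/2\rceil$ and $b_{k+1}\in\{0,1\}$ otherwise (see \eqref{bk+1}), takes $N_i=\lfloor a_{K_i+1}/4\rfloor\,q_{K_i}$, and then reruns the full inhomogeneous machinery of \S\ref{inhom} --- the sets $C_{N_i}(u,a)$, $C^+_{N_i}(m,a,\ell)$, $F_{N_i}(L,a)$ and Lemmas~\ref{contfrac.leminhombnd.leminhombnd}--\ref{contfrac.leminhombnd.leminhombnd2} --- to show $R_{N_i}\ll N_i\log q_{K_i}$. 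You instead argue geometrically via the three-distance theorem and a nested-interval (Cantor) construction of $\gamma$. Your key structural inputs all check out: for the $q_K$ base points $\{m\alpha\}$, $1\le m\le q_K$, the best-approximation property forces all gaps to be $\ge\|q_{K-1}\alpha\|\ge 1/(2q_K)$, and the three-gap theorem together with the identity $q_K|D_{K-1}|+q_{K-1}|D_K|=1$ shows the only gap lengths are $|D_{K-1}|$ and $|D_{K-1}|+|D_K|$, both lying in $[1/(2q_K),\,2/q_K]$; the cluster widths $(r_i-1)|D_{K_i}|\le r_i/q_{K_i+1}$ are $o(1/q_{K_i})$ by your choice $r_i\asymp q_{K_i}^{(M_i-1)/2}\ll a_{K_i+1}$; and $|I_i|\asymp 1/q_{K_i}$ contains $\asymp q_{K_{i+1}}/q_{K_i}\gg 1$ cluster centres of the next level, giving room for the inductive step. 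The ``$k$-th nearest'' bound $d_{(k)}\gg k/q_{K_i}$ then follows from the uniform gap lower bound $1/(2q_{K_i})$ and the buffer $c/q_{K_i}$ built into $I_i$, so the final estimate $R_{N_i}\ll r_iq_{K_i}\log q_{K_i}=N_i\log q_{K_i}=o(N_i\log N_i)$ is correct. Compared with the paper your choice of $N_i$ is much smaller (roughly $q_{K_i+1}^{1/2}$ versus $\asymp q_{K_i+1}$), which costs nothing, and in exchange you bypass \S\ref{estim}--\S\ref{gaps} entirely; your route is therefore more self-contained and elementary, though less explicit about $\gamma$. The two caveats you raise at the end are real but minor: the cluster-width bound is already uniform over $m$ (it is $(r_i-1)|D_{K_i}|$ for every cluster), and the nested step requires only that the level-$(K_{i+1})$ gaps be a large constant factor finer than $|I_i|$, which $q_{K_{i+1}}\ge q_{K_i}^3$ supplies with room to spare.
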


\subsection{Linear forms revisited}\label{linforms2}

In this section we deduce a few further corollaries via the results of \S\ref{lowerbounds}. These provide inhomogeneous generalisations of the results of \S\ref{linforms} and also give an alternative proof of Theorem~\ref{T3} and its corollaries, albeit with weaker constants. The constants are not explicitly given but can be computed from the proofs given below and in
\S\ref{lowerbounds}.

\begin{theorem}\label{T9}
Let $A=(\alpha_1,\dots,\alpha_n)$ be any $n$-tuple of real numbers such that $\alpha_1$ is irrational but not a  Liouville number. Then, for any positive integers $T_1,\dots,T_n$ such that $T_1$ is sufficiently large, and for any $\gamma\in\R$,
\begin{equation}\label{cc5}
\sum_{\substack{\vv q\in\Z^n\\[0.5ex] 1\le q_j\le T_j\ (1\le j\le n)}}\! \frac{1}{\,\big\|q_1\alpha_{1}+\dots+q_n\alpha_n-\gamma\big\|\,}
~\gg~ T_1\cdots T_n\log T_1\,.
\end{equation}
\end{theorem}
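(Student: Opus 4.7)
The plan is to reduce the $n$-dimensional sum on the left-hand side of \eqref{cc5} to a one-parameter family of shifted one-dimensional sums and then invoke Theorem~\ref{T8}. Concretely, I would first write the sum with $q_1$ on the inside and $q_2,\dots,q_n$ on the outside. For each fixed tuple $(q_2,\dots,q_n)$ in the prescribed range, set
\[
\gamma'=\gamma'(q_2,\dots,q_n):=\gamma-q_2\alpha_2-\cdots-q_n\alpha_n,
\]
so that the inner sum is exactly
\[
\sum_{q_1=1}^{T_1}\frac{1}{\|q_1\alpha_1-\gamma'\|}=R_{T_1}(\alpha_1,\gamma').
\]

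Next, I would apply Theorem~\ref{T8}. Since $\alpha_1\in\R\setminus(\mathfrak{L}\cup\Q)$, that theorem supplies a constant $c=c(\alpha_1)>0$, depending only on $\alpha_1$, such that $R_N(\alpha_1,\gamma')\ge cN\log N$ for every $\gamma'\in\R$ and every $N\ge 2$. The key feature being used here is that the implied constant is uniform in the inhomogeneous shift $\gamma'$; this uniformity is exactly what is built into the quantifier structure of Theorem~\ref{T8} (where ``for any $\gamma\in\R$'' precedes a single $\gg$). Applying the inner bound with $N=T_1$ for each of the $T_2\cdots T_n$ choices of $(q_2,\dots,q_n)$ and summing yields
\[
\sum_{\substack{\vv q\in\Z^n\\[0.3ex] 1\le q_j\le T_j\,(1\le j\le n)}}\frac{1}{\|q_1\alpha_1+\cdots+q_n\alpha_n-\gamma\|}\ \ge\ c\,T_1\log T_1\cdot T_2\cdots T_n,
\]
valid once $T_1\ge 2$, which is precisely \eqref{cc5}.

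The main obstacle is conceptual rather than computational: verifying that the constant delivered by Theorem~\ref{T8} is genuinely independent of $\gamma$, so that it can be pulled outside the sum over $(q_2,\dots,q_n)$. Once that is accepted, the rest is a single application of Fubini-type rearrangement. A minor technicality is that for certain exceptional tuples $(q_2,\dots,q_n)$ the shift $\gamma'$ can satisfy $\|q_1^*\alpha_1-\gamma'\|=0$ for one (and only one, since $\alpha_1$ is irrational) value $q_1^*\in\{1,\dots,T_1\}$; under the natural convention $1/0=+\infty$ this makes the corresponding inner sum trivially infinite, and otherwise one excises that single index and applies Theorem~\ref{T8} to the remaining $T_1-1$ terms, at the cost of an inconsequential factor in the constant.
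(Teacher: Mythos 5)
Your proposal is correct and takes essentially the same route as the paper: fix $(q_2,\dots,q_n)$, absorb those terms into the inhomogeneous shift, and apply a uniform-in-$\gamma$ lower bound for the one-dimensional sum. The only presentational difference is that you invoke Theorem~\ref{T8}, whose quantifier structure you must then parse to confirm the implied constant is independent of $\gamma'$, whereas the paper cites Theorem~\ref{T7} directly, which names its constants $r,C_1,N_0$ and explicitly states they depend only on $\alpha$ and $v$; since the $\alpha\notin\mathfrak{L}\cup\Q$ direction of Theorem~\ref{T8} is itself a one-line consequence of Theorem~\ref{T7}, the two invocations are equivalent, and your discussion of the degenerate case $\|q_1^{*}\alpha_1-\gamma'\|=0$ is harmless but unnecessary (Theorem~\ref{T7} already covers $\gamma$ of the form $n_0\alpha+m_0$ by reducing to the homogeneous case, and in any event the convention $1/0=+\infty$ makes the inner sum trivially large).
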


\begin{proof}
By Theorem~\ref{T7}, for any $T_1>T_0$ and any $\gamma_1\in\R$,
\begin{equation*}
\sum_{1\le q_1\le T_1 }\frac{1}{\|q_1\alpha_1-\gamma_1\|} \ \ge \ C_1T_1\log T_1\,,
\end{equation*}
where $T_0$ and $C_1$ do not depend on $\gamma_1$. Applying this inequality with
$\gamma_1=\gamma-(q_2\alpha_{2}+\dots+q_n\alpha_n)$,  and summing over  $1\le q_i\le T_i$ ($2\le i\le n$), we obtain the desired statement.
\end{proof}

\begin{theorem}\label{T10}
Let $A=(\alpha_1,\dots,\alpha_n)$ be any $n$-tuple of real numbers such that $\alpha_1$ is irrational but not a  Liouville number. Then for any positive integers $T_1,\dots,T_n$ such that $T_1$ is sufficiently large, and for any $\gamma\in\R$,
\begin{equation}\label{cc7}
\sum_{\substack{\vv q\in\Z^n\\[0.5ex] 1\le q_j\le T_j\ (1\le j\le n)}}\! \frac{1}{q_1\cdots q_n \big\|q_1\alpha_{1}+\dots+q_n\alpha_n-\gamma\big\|\,}
~\gg~ \log T_1\prod_{i=1}^n\log T_i\,.
\end{equation}
\end{theorem}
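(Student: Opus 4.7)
\textbf{Proof proposal for Theorem~\ref{T10}.}
The plan is to reduce to the one-variable inhomogeneous lower bound $S_N(\alpha,\gamma) \gg (\log N)^2$ coming from Corollary~\ref{cor8+}, applied to the innermost sum over $q_1$, and then to harvest a factor of $\log T_i$ from each of the remaining variables by the trivial harmonic lower bound $\sum_{q=1}^{T_i} 1/q \ge \log T_i$.

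More concretely, fix $q_2,\ldots,q_n$ with $1\le q_j\le T_j$ and define
\begin{equation*}
\gamma_1 \ := \ \gamma - (q_2\alpha_2 + \cdots + q_n\alpha_n).
\end{equation*}
Then the inner sum over $q_1$ is precisely
\begin{equation*}
\sum_{1\le q_1\le T_1}\frac{1}{q_1\|q_1\alpha_1-\gamma_1\|} \ = \ S_{T_1}(\alpha_1,\gamma_1).
\end{equation*}
Since $\alpha_1\in\R\setminus(\mathfrak{L}\cup\Q)$, Corollary~\ref{cor8+} gives a constant $C>0$ and an $N_0>0$, depending only on $\alpha_1$, such that for all $T_1\ge N_0$ and every $\gamma_1\in\R$,
\begin{equation*}
S_{T_1}(\alpha_1,\gamma_1) \ \ge \ C(\log T_1)^2.
\end{equation*}
The key point, which the proof relies on, is that in Corollary~\ref{cor8+} the implied constant and the threshold $N_0$ do not depend on the inhomogeneous shift, so the bound above is uniform in $q_2,\ldots,q_n$.

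Summing over the remaining variables and using $\sum_{q=1}^{T_i}1/q\ge\log T_i$ for each $i\ge 2$, we obtain
\begin{equation*}
\sum_{\substack{\vv q\in\Z^n\\[0.2ex] 1\le q_j\le T_j\ (1\le j\le n)}}\! \frac{1}{q_1\cdots q_n \big\|q_1\alpha_{1}+\dots+q_n\alpha_n-\gamma\big\|}
\ \ge \ C(\log T_1)^2 \prod_{i=2}^{n}\sum_{q_i=1}^{T_i}\frac{1}{q_i}
\ \gg \ \log T_1 \prod_{i=1}^{n}\log T_i,
\end{equation*}
which is \eqref{cc7}. The ``sufficiently large $T_1$'' hypothesis absorbs the threshold $N_0$ from Corollary~\ref{cor8+}.

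The only subtle point — and thus the only potential obstacle — is the uniformity of the constants with respect to $\gamma_1$; without this, summing over $q_2,\ldots,q_n$ would not produce a clean product of logarithms. Fortunately this uniformity is already built into the statement of Corollary~\ref{cor8+} (which in turn descends from Theorem~\ref{T7}), so no further work is required and the argument reduces to the short display above.
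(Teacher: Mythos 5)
Your argument is correct and is essentially the same as the paper's own proof of Theorem~\ref{T10}: the paper likewise fixes $q_2,\dots,q_n$, sets $\gamma_1=\gamma-(q_2\alpha_2+\dots+q_n\alpha_n)$, invokes Corollary~\ref{cor8} (of which your Corollary~\ref{cor8+} is the direct consequence, with the same uniformity in $\gamma_1$) to get $S_{T_1}(\alpha_1,\gamma_1)\ge C_2(\log T_1)^2$ for $T_1>T_0$, multiplies by $(q_2\cdots q_n)^{-1}$, and sums over the remaining variables. Your emphasis on the uniformity of $C$ and $N_0$ in the shift $\gamma_1$ is exactly the point that makes the reduction work, and it is indeed supplied by the statements of Corollary~\ref{cor8} and Corollary~\ref{cor8+}.
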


\begin{proof}
By Corollary~\ref{cor8}, for any $T_1>T_0$ and any $\gamma_1\in\R$
\begin{equation}\label{cc8}
\sum_{1\le q_1\le T_1 }\frac{1}{q_1\|q_1\alpha_1-\gamma_1\|} \ \ge \ C_2(\log T_1)^2\,,
\end{equation}
where $T_0$ and $C_2$ do not depend on $\gamma_1$. Now consider \eqref{cc8} with
$\gamma_1=\gamma-(q_2\alpha_{2}+\dots+q_n\alpha_n)$, where $q_2,\dots,q_n$ are integers. Then, on  multiplying the resulting inequality  by $(q_2\cdots q_n)^{-1}$ we obtain that
\begin{equation*}
\sum_{1\le q_1\le T_1 }\frac{1}{q_1\cdots q_n\|q_1\alpha_1+\dots+q_n\alpha_n-\gamma\|} \ \ge \ C_2(\log T_1)^2\frac{1}{q_2\cdots q_n}\,.
\end{equation*}
Summing this over $1\le q_i\le T_i \ $ ($2\le i\le n$), we obtain the desired statement.
\end{proof}

\section{Multiplicative Diophantine approximation}\label{MDA}

\subsection{Background}\label{intro2}

Many problems in multiplicative Diophantine approximation can be phrased in terms of   the set
$$
\cS^\times_k(\psi):=\Big\{\,(x_1,\dots,x_k)\in \R^k\,:\,\prod_{i=1}^k\|nx_i\|<\psi(n)\text{ for i.m. }n\in\N\,\Big\},
$$
where $k \ge 1$ is an integer, `i.m.' means `infinitely many,' and $\psi:\N\to\Rp$ is a function which, for obvious reasons, is referred to as an approximating function. The famous conjecture of Littlewood from the nineteen thirties  states  that $\cS^\times_2(n\mapsto \ve n^{-1})=\R^2$ for any $\ve>0$, or equivalently that
\begin{equation}\label{littleor}
\liminf_{n\rar\infty}n\|n\alpha\|\|n\beta\|=0  \quad \text{for all }   \  (\alpha,\beta) \in \R^2\, .
\end{equation}
Littlewood's conjecture has attracted much attention-- see  \cite{Badziahin-Velani-MAD, Einsiedler-Katok-Lindenstrauss-06:MR2247967, Pollington-Velani-00:MR1819996,vent} and references within.   Despite some recent remarkable progress, the Littlewood Conjecture remains  very much open. For instance, we are unable to show that (\ref{littleor}) is valid for the pair $(\sqrt2,\sqrt3)$.

On the contrary, the measure theoretic description of $\cS^\times_2(n\mapsto \ve n^{-1})$, and indeed $\cS^\times_2(\psi)$ is well understood.   For $\psi:\N\rar\Rp$ monotonic\footnote{When $\psi$ is not monotonic the convergence condition reads $\sum_{n=1}^\infty  \psi(n)  \, |\log \psi(n)|^{k-1} <\infty$; see \cite{BHV} for more details.}, a  simple `volume' argument together with the Borel-Cantelli Lemma from probability theory implies that
$$
|\cS^\times_k(\psi)|= 0   \quad {\rm \ if \  }  \quad \sum_{n=1}^\infty  \,\,\psi(n)  \;  (\log n)^{k-1} <\infty  \, .
$$
Throughout,  $|X|$  denotes the
$k$-dimensional Lebesgue measure of the set $X\subset\R^{k} $.

 For the case when the above sum diverges we have the following non-trivial result due to Gallagher~\cite{Gallagher1962}.

\begin{theoremnoname}[Gallagher]
Let $\psi:\N\rar\Rp$ be monotonic. Then
\begin{equation}\label{neweq1}
|\R^k \setminus \cS^\times_k(\psi)|=0   \quad \mbox{ if \ }  \quad \sum_{n=1}^\infty  \,\,\psi(n) \;  (\log n)^{k-1}=\infty  \ .
\end{equation}
\end{theoremnoname}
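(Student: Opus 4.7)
The plan is to follow the classical Borel--Cantelli strategy combined with a zero--one law. By periodicity it suffices to work on the torus $\mathbb T^k := \R^k/\Z^k$ and to show that the set
\[
E \ := \ \limsup_{n\to\infty} E_n\,, \qquad E_n \ := \ \big\{\vv x\in\mathbb T^k\,:\,\|nx_1\|\cdots\|nx_k\|<\psi(n)\big\}\,,
\]
has full Lebesgue measure in $\mathbb T^k$. The first step is a volume computation: partitioning $\mathbb T^k$ into $n^k$ cubes of sidelength $1/n$ and applying the standard formula for the volume of the hyperbolic cross $\{\vv y\in[-\tfrac12,\tfrac12]^k:|y_1\cdots y_k|<t\}$ gives $|E_n| \asymp \psi(n)(\log(1/\psi(n)))^{k-1}$. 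Before proceeding one should truncate: replace $\psi$ by $\tilde\psi(n):=\min\{\psi(n),(n(\log n)^k)^{-1}\}$. A short case analysis using the monotonicity of $\psi$ shows that $\sum_n\tilde\psi(n)(\log n)^{k-1}=\infty$ whenever $\sum_n\psi(n)(\log n)^{k-1}=\infty$, while the truncation forces $\log(1/\tilde\psi(n))\asymp\log n$; after this reduction $\sum_n|E_n|=\infty$.

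The main obstacle will be the quasi--independence estimate
\[
\sum_{m,n\le N}|E_m\cap E_n| \ \ll \ \Big(\sum_{n\le N}|E_n|\Big)^{\!2},
\]
where the multiplicative structure and the monotonicity of $\psi$ are really needed. The approach I would take is to expand each $\mathbf 1_{E_n}$ into its Fourier series on $\mathbb T^k$: since $E_n$ is $\tfrac1n\Z^k$-periodic, its Fourier support lies in $n\Z^k$, and the radial symmetry of the defining hyperbolic cross enables explicit control of the coefficients. Multiplying the Fourier expansions of $\mathbf 1_{E_m}$ and $\mathbf 1_{E_n}$ and integrating then reduces the overlap to certain exponential sums that can be bounded by oscillatory integral estimates depending on $\gcd(m,n)$.

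Once the quasi--independence estimate is established, the Chung--Erd\H{o}s inequality
\[
|E|\ \ge \ \limsup_{N\to\infty}\frac{\big(\sum_{n\le N}|E_n|\big)^{\!2}}{\sum_{m,n\le N}|E_m\cap E_n|}
\]
yields $|E|>0$. Since $E$ is (modulo a null set) invariant under every rational translation of $\mathbb T^k$, a Cassels--Gallagher-type zero--one law then upgrades positive measure to full measure, completing the proof. The delicate point throughout is the Fourier-analytic overlap estimate; the volume computation, the truncation lemma, and the measure-theoretic upgrade are all essentially routine.
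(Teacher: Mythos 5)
The paper does not actually prove Gallagher's theorem: it is quoted and attributed to Gallagher~\cite{Gallagher1962} (with the $k=1$ case credited to Khintchine). So I can only judge your proposal on its own merits. The high-level skeleton you give --- measure of $E_n$ via the hyperbolic-cross volume, a truncation to force $\log(1/\psi(n))\asymp\log n$, a quasi-independence-on-average estimate, the Chung--Erd\H{o}s inequality, and finally a Cassels--Gallagher zero--one law --- is the standard template for divergence Borel--Cantelli results, and the outer steps are fine (modulo a small point: your truncation $\tilde\psi(n)=\min\{\psi(n),(n(\log n)^k)^{-1}\}$ only caps $\psi$ from above; to get $\log(1/\tilde\psi(n))\asymp\log n$ you also need to discard, or bound below, the $n$ with $\psi(n)<n^{-2}$, say, whose contribution to the series is convergent anyway).

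The genuine gap is the quasi-independence estimate, which is precisely the crux of the argument and which you gesture at rather than establish. Two problems with the Fourier-analytic route as sketched. First, the defining region $\{|y_1\cdots y_k|<t\}$ is \emph{not} radially symmetric --- it has the symmetry of the signed permutation group $\{\pm1\}^k\rtimes S_k$ but is highly elongated along the coordinate hyperplanes --- so the promised ``explicit control of the coefficients'' via radial symmetry is not available; the Fourier transform of a hyperbolic cross decays slowly and anisotropically. Second, and more fundamentally, the overlap $|E_m\cap E_n|$ can be genuinely large when $\gcd(m,n)$ is large (already in the $k=1$ Khintchine case this is where monotonicity is indispensable, cf.\ Duffin--Schaeffer), and no known proof of Gallagher's divergence theorem proceeds by verifying \eqref{qia0} directly for the multiplicative sets. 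Gallagher's actual argument is a fiber argument: one shows that for almost every $(x_1,\dots,x_{k-1})$ the reciprocal sums $\sum_{n\le N}\prod_{i<k}\|nx_i\|^{-1}$ (suitably truncated) are of order $N(\log N)^{k-1}$, and then applies the one-dimensional Khintchine theorem to $x_k$ with the $(x_1,\dots,x_{k-1})$-dependent approximating function $\psi(n)/\prod_{i<k}\|nx_i\|$, before invoking the zero--one law. This is exactly the philosophy behind Theorems~\ref{T11}--\ref{T13} of the paper (see \S\ref{wer} and \S\ref{sect13}, where the reciprocal-sum estimates of Part~I feed the fiber argument). As written, your proposal would need the overlap estimate supplied in full before it could be called a proof.
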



\begin{remark}
In fact Gallagher~\cite{Gallagher1962} established the above statement for $k>1$. The $k=1$ case is the  famous classical (divergent) result of Khintchine \cite{Khintchine-1924} dating back to 1924. The monotonicity condition  in this classical statement cannot in general be relaxed, as was shown by Duffin and Schaeffer  \cite{DuffinSchaeffer1941}  in 1941.  In short, they constructed a non-monotonic function  $\psi$ for which $ \sum_{n=1}^\infty \psi(n) $ diverges but $ |\cS^\times_1(\psi)|=0$.  In the same paper they formulated the following alternative statement for arbitrary approximating functions.

\noindent\textbf{The Duffin-Schaeffer Conjecture.} {\em Let $\psi:\N\to\Rp$.  Then  for  almost all $\alpha \in\R$  the inequality $$|n\alpha-r| < \psi(n)$$  holds for infinitely many  coprime pairs $(n,r) \in \N\times \Z $  provided that}
\begin{equation}\label{divcond1}
\sum_{n=1}^{\infty} \frac{\varphi (n) }{n}  \, \psi (n) =\infty \, .
\end{equation}
Here  $\varphi $ is the Euler phi function. Note for comparison that, for monotonic $\psi$, condition (\ref{divcond1}) is equivalent to  $ \sum_{n=1}^\infty \psi(n) = \infty$. Although various partial results have been obtained, this conjecture represents a key unsolved problem in number
theory.  For background and recent developments regarding this fundamental problem see \cite{Beresnevich-Bernik-Dodson-Velani-Roth,Harman1998,haypollvel}.
However, since we shall make use of it later on, it is worth highlighting the following `partial'  result established in \cite{DuffinSchaeffer1941}.
\end{remark}

\medskip

\begin{theoremnoname}[Duffin \& Schaeffer]
The above conjecture is true if, in addition to the divergent sum condition \eqref{divcond1}, we  also have that
\begin{equation}\label{dst}
 \limsup_{N\to\infty}\left(\sum_{n=1}^{ N} \frac{\varphi (n)}{n}\psi (n)\right)  \left(\sum_{n=1}^{ N}  \psi (n) \right)^{-1} \, >  \, 0  \ .
\end{equation}
\end{theoremnoname}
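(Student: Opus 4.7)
The plan, following Duffin and Schaeffer's original strategy, is to apply the divergence half of the Borel--Cantelli lemma in its Chung--Erd\H{o}s (quasi-independence) form to the natural approximation sets, and to use the auxiliary hypothesis \eqref{dst} precisely to absorb the error term that arises in the pairwise overlap estimate. By periodicity we may work on the unit interval. For each $n\in\N$ define
\[
E_n \ := \ \bigcup_{\substack{0\le r\le n\\ \gcd(r,n)=1}} \Big(\tfrac{r}{n}-\tfrac{\psi(n)}{n},\ \tfrac{r}{n}+\tfrac{\psi(n)}{n}\Big)\ \cap\ [0,1].
\]
After replacing $\psi(n)$ by $\min\{\psi(n),\tfrac12\}$ (which preserves both \eqref{divcond1} and \eqref{dst}), the component intervals are disjoint, so $|E_n|=2\varphi(n)\psi(n)/n$. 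The target is to prove $|\limsup_n E_n|=1$, which is precisely the conclusion of the conjecture.

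The first step is the Chung--Erd\H{o}s inequality
\[
|\limsup_n E_n|\ \ge\ \limsup_{N\to\infty}\frac{\bigl(\sum_{n=1}^N|E_n|\bigr)^{2}}{\sum_{m,n=1}^N|E_m\cap E_n|}\,.
\]
The numerator is exactly $\bigl(\sum_{n\le N}2\varphi(n)\psi(n)/n\bigr)^{2}$, which by \eqref{divcond1} tends to infinity. Everything then rests on bounding the double sum of pairwise overlaps. The key arithmetic lemma to establish, for $m\ne n$, is a quasi-independence bound of the shape
\[
|E_m\cap E_n|\ \ll\ |E_m|\,|E_n|\ +\ \frac{\psi(m)\psi(n)}{\gcd(m,n)^{-1}}\cdot h(m,n),
\]
where $h$ is a tame multiplicative error, so that summation yields
\[
\sum_{m,n\le N}|E_m\cap E_n|\ \ll\ \Big(\sum_{n\le N}|E_n|\Big)^{2}\ +\ \Big(\sum_{n\le N}\psi(n)\Big)^{2}.
\]
This is the main obstacle. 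The standard route is to observe that if $\alpha\in I_{r,m}\cap I_{s,n}$ with both $r/m$ and $s/n$ in lowest terms, then $rn-sm$ is a nonzero integer (distinctness because equality in lowest terms would force $m=n$) of size at most $n\psi(m)+m\psi(n)$; one then counts coprime pairs $(r,s)$ satisfying $rn-sm=k$ via inclusion--exclusion on common factors and executes an Euler-product manipulation to extract the $\varphi(m)\varphi(n)/(mn)$ factor that turns a crude $\psi(m)\psi(n)$ bound into $|E_m||E_n|$ up to a residual $\psi(m)\psi(n)$ error.

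Hypothesis \eqref{dst} now enters decisively: it says that along some subsequence $N_k\to\infty$,
\[
\sum_{n\le N_k}|E_n|\ =\ 2\sum_{n\le N_k}\frac{\varphi(n)\psi(n)}{n}\ \gg\ \sum_{n\le N_k}\psi(n),
\]
so the $\bigl(\sum\psi(n)\bigr)^{2}$ error in the overlap sum is dominated by the $\bigl(\sum|E_n|\bigr)^{2}$ main term along $N_k$. Consequently, the Chung--Erd\H{o}s ratio is bounded below by a positive constant, so $|\limsup_n E_n|>0$. Finally, $\limsup_n E_n$ is invariant under the (ergodic) action of $\alpha\mapsto\alpha+p/q$ for every rational $p/q$, and by a standard Cassels/Gallagher zero--one law any translation-invariant measurable set has Lebesgue measure $0$ or $1$; positive measure therefore upgrades to full measure, completing the proof. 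The hardest step is the overlap lemma: it must be sharp enough that the principal contribution is exactly $|E_m||E_n|$, leaving only a $\psi(m)\psi(n)$-type error of the precise shape that \eqref{dst} can handle.
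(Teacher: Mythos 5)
The paper does not prove this theorem; it is quoted from Duffin and Schaeffer's 1941 paper and used as a black box, so there is no in-paper argument to compare against. Your outline is nevertheless the right one -- it follows Duffin and Schaeffer's original second-moment strategy: truncate $\psi$, compute $|E_n|=2\varphi(n)\psi(n)/n$, bound the pairwise overlaps, feed the result into the Chung--Erd\H{o}s/quasi-independence Borel--Cantelli lemma (which is exactly Lemma~\ref{qia} in this paper), and upgrade positive measure to full measure by a zero--one law. The role you assign to \eqref{dst} -- absorbing a $\bigl(\sum\psi\bigr)^2$ error into $\bigl(\sum|E_n|\bigr)^2$ along a subsequence -- is precisely how the extra hypothesis is used.

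Two places need tightening. First, your overlap lemma is stated backwards: the clean estimate one proves is simply $|E_m\cap E_n|\ll\psi(m)\psi(n)$ for $m\neq n$ (the count of admissible $k=rn-sm$ is $\ll(n\psi(m)+m\psi(n))/\gcd(m,n)$, each value of $k$ contributes at most $\gcd(m,n)$ coprime pairs, and each overlapping pair of intervals contributes $\le 2\min\{\psi(m)/m,\psi(n)/n\}$). Writing this as $|E_m||E_n|$ plus a ``residual $\psi(m)\psi(n)$ error'' is vacuous here because that residual already dominates $|E_m||E_n|=4\varphi(m)\varphi(n)\psi(m)\psi(n)/(mn)$; no Euler-product extraction of $\varphi(m)\varphi(n)/(mn)$ is needed for \emph{this} theorem -- that refinement belongs to the later Erd\H{o}s--Vaaler and Pollington--Vaughan work. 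Second, and this is a genuine gap: $\limsup_n E_n$ is \emph{not} invariant under $\alpha\mapsto\alpha+p/q$. Translating a point near a reduced fraction $r/n$ by $p/q$ lands it near $(rq+pn)/(nq)$, which after reduction has denominator $nq/\gcd(rq+pn,nq)$, not $n$; so membership in $E_n$ is not preserved, and the ``translation-invariant set has measure $0$ or $1$'' principle does not apply as stated. The correct upgrade is Gallagher's 1961 zero--one law for approximation by reduced fractions (or Duffin--Schaeffer's own full-measure argument), whose invariance structure is more delicate than rational translation.
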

Note that  condition \eqref{dst} implies that the convergence/divergence behavior of the sum in (\ref{divcond1}) and the sum $ \sum_{n=1}^\infty \psi(n) $  are equivalent.

\bigskip

\begin{remark}
In the case $k > 1$, we expect to be able to remove the condition that $\psi$ is monotonic in the hypothesis of Gallagher's Theorem (without imposing any other condition such as coprimality) if  we replace  the  divergence condition appearing in \eqref{neweq1}  by  $\sum_{n=1}^\infty  \psi(n)  |\log \psi(n)|^{k-1}~=~\infty \, $; see \cite{BHV} for more details.
For the  multiplicative analogue of the Duffin-Schaeffer Theorem see  \cite[Theorem~2]{BHV}.
\end{remark}

\subsection{Problems and main results}

 With $k=2$, observe that for almost all $(\alpha,\beta) \in \R^2$, Gallagher's Theorem improves upon Littlewood's Conjecture by a factor of $(\log n)^2$. More precisely, it implies that
\begin{equation}\label{log2}
\liminf_{n\rar\infty}n(\log n)^2\|n\alpha\|\|n\beta\|=0 \quad \mbox{for almost all }   (\alpha, \beta) \in \R^2  .
\end{equation}
Note that this is beyond the scope of what Khintchine's Theorem  (i.e., \eqref{neweq1} with $k=1$)  can tell us; namely  that
\begin{equation}\label{log}
\liminf_{n\rar\infty}n\log n\|n\alpha\|\|n\beta\|=0  \quad \forall   \  \alpha \in \R  \ \quad  \mbox{and}   \quad \  \mbox{for almost all } \beta \in \R  \, .
\end{equation}
However, the extra $\log$ factor in \eqref{log2} comes at a cost of having to  sacrifice a set of measure zero on the $\alpha$  side. As a consequence, unlike with (\ref{log}) which is valid for any $\alpha$, we are unable to claim that the stronger `$\log$ squared' statement (\ref{log2}) is true for example when $\alpha = \sqrt2$.    Obviously, the role of $\alpha$ and $\beta$ in (\ref{log}) can be reversed. This raises the natural question of whether (\ref{log2}) holds for every $\alpha$. If true, it would mean  that for any $\alpha$ we still beat Littlewood's Conjecture by `log squared' for almost all $\beta$.   In general, this line of thought leads to following problem.

\begin{problem}\label{rubber} Let $k \ge 2$. Prove that for any real numbers $\alpha_1,\dots,\alpha_{k-1}\in\R$ one has that
$$
\liminf_{n\rar\infty}n(\log n)^k \,\|n\alpha_1\|\cdots\|n\alpha_{k-1}\|\,\|n\alpha_k\|   = 0  \quad \  \mbox{for almost all }  \alpha_k \in \R \, .
$$
\end{problem}

\noindent Problems of this nature fall within the scope of the theory of  multiplicative Diophantine approximation on manifolds \cite{BV-07,BKM}. In short, the approximated points $ (\alpha_1,\dots,\alpha_{k}) \in \R^k$ associated with Problem~\ref{rubber} are confined to lie on the manifold, or rather the line,  given by  $(\alpha_1,\dots,\alpha_{k-1}) \times \R$. More generally, one can pose a similar problem for arbitrary submanifolds $\mathcal{M}$  of $\R^k$ -- see \cite{BV-07} for details in the case the manifold is a planar curve.

In this paper we resolve Problem~\ref{rubber} in the two-dimensional case by obtaining the following fiber version of Gallagher's Theorem.  Basically, given $\alpha \in \R$,  the points $ (\alpha_1,\alpha_{2}) \in \R^2$ of interest are forced to lie on the line  given by $ \{ \alpha \} \times  \R  $.

\begin{theorem}\label{T11}
Let $\alpha \in \R$ and let $\psi:\N\to\Rp$ be a monotonic function such  that

\begin{equation}\label{yy}
\sum_{n=1}^{\infty} \,  \psi (n)\,\log n
\end{equation}
diverges and such that for some $\ve>0$,
\begin{equation}\label{lb}
  \liminf_{\ell\to\infty}\,q_\ell^{3-\ve}\psi(q_\ell)\ge1\,  ,
\end{equation}
where  $q_\ell=q_\ell(\alpha)$ denotes the denominators of the principal convergents of  $\alpha$.
Then for almost all $\beta\in\R$, there exists infinitely many $n\in\N$ such that
\begin{equation}\label{ineq}
\|n\alpha\| \,  \|n\beta\| <\psi(n)  \, .
\end{equation}
\end{theorem}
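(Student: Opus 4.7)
The plan is to show that the limsup set
\begin{equation*}
\mathcal{E}(\alpha,\psi)\;:=\;\big\{\beta\in[0,1]:\|n\beta\|<\psi(n)/\|n\alpha\|\text{ for i.m.\ }n\in\N\big\}
\end{equation*}
has full Lebesgue measure in $[0,1]$. Writing $A_n:=\{\beta\in[0,1]:\|n\beta\|<\psi(n)/\|n\alpha\|\}$, this amounts to $|\limsup_n A_n|=1$, and one notes that $|A_n|=\min\{1,2\psi(n)/\|n\alpha\|\}$. I would then split the argument into a divergence step, a quasi-independence step, and a conclusion via a zero-one law.

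\emph{Step 1 (divergence of measures).} Applying the partial summation identity \eqref{psum1} with $a_n=\psi(n)$ (non-increasing) and $b_n=1/\|n\alpha\|$ yields
\begin{equation*}
\sum_{n=1}^{N}\frac{\psi(n)}{\|n\alpha\|}\;=\;\sum_{n=1}^{N}\bigl(\psi(n)-\psi(n+1)\bigr)\,R_n(\alpha,0)\;+\;\psi(N+1)\,R_N(\alpha,0).
\end{equation*}
Substituting the lower bound $R_n(\alpha,0)\gg n\log n$ from Corollary~\ref{coro2} and performing a second Abel summation, the right-hand side is $\gg\sum_{n=1}^{N}\psi(n)\log n$, which diverges by hypothesis \eqref{yy}. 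If $|A_n|=1$ occurs for infinitely many $n$, then $\mathcal{E}$ is an intersection of sets of full measure and we are done; otherwise $|A_n|=2\psi(n)/\|n\alpha\|$ for cofinitely many $n$, so $\sum_n|A_n|=\infty$.

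\emph{Step 2 (quasi-independence via multiplicative ubiquity).} The effective approximating function $\tilde\psi(n):=\psi(n)/\|n\alpha\|$ is highly non-monotonic, with enormous spikes at the principal denominators $n=q_\ell$, so classical Khintchine-type theorems do not apply and the Duffin-Schaeffer conjecture is open in this generality. Instead I would develop a \emph{ubiquity framework in the multiplicative setting}: the resonant points for the $\beta$-problem are the rationals $p/n$, and each $A_n$ is a union of $n$ arcs of radius $\tilde\psi(n)/n$ centred at these resonant points. The required pairwise estimates $|A_m\cap A_n|\ll|A_m|\,|A_n|$ (for most $m\neq n$) are governed by $\gcd(m,n)$ and by the spacings between fractions $p/m$ and $q/n$; the counting bounds needed to control them are supplied by Theorem~\ref{T2}, specifically by its decomposition of $R_N(\alpha,0)$ into contributions from $n\equiv 0,q_{K-1}\pmod{q_K}$ versus the complement. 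The lower-bound hypothesis \eqref{lb} enters precisely here: it guarantees that the spikes of $\tilde\psi$ near $n=q_\ell$ are not so concentrated as to thwart the covering/independence estimates in the ubiquity argument.

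The hardest part will be Step 2: adapting the ubiquity machinery, traditionally formulated in additive metric Diophantine approximation, to a setting where the effective approximating function inherits the irregular behaviour of $\|n\alpha\|^{-1}$. Once pairwise quasi-independence is in place, the divergence Borel-Cantelli lemma in its Chung-Erd\H{o}s formulation yields $|\mathcal{E}|>0$, and a zero-one law (either Gallagher's zero-one law restricted to the fiber $\{\alpha\}\times\R$, or directly from the $\Z$-invariance of $\mathcal{E}$) upgrades this to $|\mathcal{E}|=1$, completing the proof.
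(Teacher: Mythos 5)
Your high-level architecture — establish positive measure via a quasi-independence (Chung--Erd\H{o}s/divergence Borel--Cantelli) argument and then upgrade to full measure via a zero-one law — is exactly the one the paper uses (it invokes Cassels' zero-one law, Lemma~\ref{lem17}, rather than Gallagher's, but the role is the same). Your Step~1 (Abel summation against $R_N(\alpha,0)\gg N\log N$ to get $\sum_n \psi(n)/\|n\alpha\|=\infty$) is a correct computation and mirrors Lemma~\ref{lem+} in spirit.

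However, Step~2 is the entire content of the theorem and your proposal does not carry it out; moreover, the route you gesture at is not the one the paper uses and would run into genuine obstacles. You propose to estimate $|A_m\cap A_n|$ directly for pairs $m\neq n$, viewing each $A_n$ as $n$ arcs of radius $\tilde\psi(n)/n$ around the rationals $p/n$. The difficulty is that these radii are wildly non-uniform (they blow up when $n$ is near a principal denominator $q_\ell$), and the overlaps are contaminated by shared subarcs when $\gcd(m,n)$ is large, so the pairwise bound $|A_m\cap A_n|\ll|A_m||A_n|$ will not in general hold for the raw sets $A_n$. The paper avoids this by \emph{not} working with the full arcs $A_n$. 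Instead it stratifies: for pairs $(t,k)$ with $R^{t-1}<n\le R^t$ and $R^{-k-1}\le\|n\alpha\|<R^{-k}$, Minkowski's convex body theorem (Lemma~\ref{lem14}) shows $|\Omega_{t,k}|\ge\tfrac12$ whenever some $q_\ell$ lies in the window $[R^{k+1},R^{t-1})$, which forces the existence of $\asymp R^{2t-k}$ well-separated resonant points $s/n$ in that stratum (equation~\eqref{zz8}). The sets $E_{t,k}$ are then built from arcs of a \emph{uniform} radius $\psi(R^t)R^{-t+k}$ around a separated subfamily $Z(t,k)$ of these, and the quasi-independence $|E_{t,k}\cap E_{t',k'}|\ll |E_{t,k}||E_{t',k'}|+R^{-\eta(t+t')}$ is established by a delicate case analysis on whether $n's-ns'=0$: the nonzero case uses Blichfeldt's theorem for the lattice-point count, while the zero case uses a cross-product/volume argument in $\R^3$ to show the intersection is confined to a thin set. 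This bears no resemblance to the "controlled by $\gcd(m,n)$ plus Theorem~\ref{T2}" plan you outline.

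Finally, your explanation of where hypothesis~\eqref{lb} enters is not quite right. It is not used to tame the spikes of $\tilde\psi$ in the overlap estimates. It is used at the stage of \emph{choosing the index set} $\cT^*$: one must discard those $t$ for which no $q_\ell$ falls in the window $[R^{\nu_2 t},R^{t-1})$, and condition~\eqref{lb} (or the weaker $w(\alpha)<3$) guarantees that either the sum over the surviving $t$ still diverges, or — if it does not — the inequality $\|q_\ell\alpha\|<\psi(q_\ell)$ already holds infinitely often, making the conclusion trivial for every $\beta$ via~\eqref{vb}. This dichotomy is the precise mechanism, and it is quite different from "not thwarting the covering estimates."
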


\medskip

\begin{remark}  Condition \eqref{lb} is not particularly restrictive. Irrespective of $\psi$, it holds for all $\alpha$ with exponent of approximation  $w(\alpha)<3$ and it follows from the Jarn\'ik-Besicovitch Theorem \cite{Bes34,Ja29} (alternatively see \cite{BDV-06}) that the complement is of relatively small Hausdorff dimension; namely
$
\dim\{\alpha\in\R:w(\alpha)\ge 3\}=\frac{1}{2}\,.
$
\end{remark}

\medskip

\begin{remark} Theorem \ref{T11} is applicable with $\psi(n)=(n(\log n)^2\log\log n)^{-1}$ and arbitrary irrational $\alpha$ and   thus resolves Problem~\ref{rubber} for $k=2$.
\end{remark}

\medskip

\begin{remark}
We have  reason to believe that  the approach taken in \S\ref{multiproof}  to prove Theorem \ref{T11}  could be  the foundation  for  developing a general  multiplicative framework of regular/ubiquitous systems. We hope to explore this in the near future.  For details of the current  systems and their role in establishing measure theoretic statements for a general class of $\limsup$ sets associated with the  simultaneous  approximation  see \cite{BBD-Baker, BDV-06}.
\end{remark}

\medskip

\begin{remark} For the sake of completeness, we mention that a  strengthening of Khintchine's   simultaneous   theorem  to fibers,   akin to  the  above strengthening of Gallagher's multiplicative theorem, has recently been obtained. Recall that the basic object in the (homogeneous) theory of simultaneous Diophantine approximation is the set
$$
\mathcal{W}_k(\psi):=\Big\{\,(x_1,\dots,x_k)\in \R^k\,:\,\max_{1 \le i \le k}\|nx_i\|<\psi(n)\text{ for i.m. }n\in\N\,\Big\}   \, ,
$$
and under the assumption that  $\psi:\N\to\Rp$ is monotonic, Khintchine's  simultaneous  theorem states that  $$
|\R^k \setminus \mathcal{W}_k(\psi)|=0   \quad \mbox{ if \ }  \quad \sum_{n=1}^\infty  \,\,\psi(n)^{k}=\infty  \ .
$$
We refer the reader to   \cite[\S4.5]{notes}  and references within  for further details.
\end{remark}

\medskip

We next consider the situation in which the sum \eqref{yy} converges.  In this case we are able to obtain the following  inhomogeneous statement that naturally complements the above theorem.  It can be viewed as the fiber analogue of the convergence results  established in \cite{BVV-11, BV-07} for (non-degenerate) planar curves.

\begin{theorem}\label{T12}
Let $\gamma,\delta\in\R$ and $\alpha \in \R$ be any irrational real number and let $\psi:\N\to\Rp$ be such that the sum \eqref{yy} converges. Furthermore,  assume either of the following two conditions\,{\rm:}\\[-5ex]
\begin{itemize}
  \item[{\rm(i)}] $n\mapsto n\psi(n)$ is decreasing and
\begin{equation}\label{cond}
S_N(\alpha;\gamma)\ll(\log N)^2\qquad\text{for all  $N \ge 2 $}\,;
\end{equation}
  \item[{\rm(ii)}] $n\mapsto \psi(n)$ is decreasing and
  \begin{equation}\label{cond+}
R_N(\alpha;\gamma)\ll N\log N\qquad\text{for all  $N \ge 2$}\,.
\end{equation}
\end{itemize}
Then for almost all $\beta\in\R$,   there exist only finitely many $n\in\N$ such that
\begin{equation}\label{ineq+}
\|n\alpha-\gamma\| \,  \|n\beta-\delta\| <\psi(n) \, .
\end{equation}

\end{theorem}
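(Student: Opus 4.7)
The plan is to reduce \eqref{ineq+} to a convergence Borel--Cantelli argument in the $\beta$-variable, using the hypotheses on $S_N(\alpha,\gamma)$ or $R_N(\alpha,\gamma)$ to control the resulting series via a Fubini-type rearrangement. Both sides of \eqref{ineq+} are periodic in $\beta$ with period $1$, so it suffices to prove the statement for $\beta \in [0,1]$. For $n \in \N$ with $\|n\alpha-\gamma\|>0$, set
\[
E_n := \{\beta \in [0,1] : \|n\alpha-\gamma\|\,\|n\beta-\delta\| < \psi(n)\}.
\]
Then $E_n$ is a union of intervals cut out by $\|n\beta - \delta\| < \psi(n)/\|n\alpha-\gamma\|$, and a standard count gives $|E_n| \le 2\psi(n)/\|n\alpha-\gamma\|$. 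Since $\alpha$ is irrational, $\|n\alpha-\gamma\| = 0$ for at most one $n$; discarding that exception, the theorem follows from the convergence Borel--Cantelli lemma as soon as we establish
\begin{equation*}
\sum_{n=1}^\infty \frac{\psi(n)}{\|n\alpha-\gamma\|} < \infty. \qquad\qquad (\ast)
\end{equation*}

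The main task is to deduce $(\ast)$ from the convergence of \eqref{yy}. Under hypothesis (i), the positive decreasing sequence $n\psi(n)$ tends to zero, for otherwise $\psi(n) \gg 1/n$ would force $\sum \psi(n)\log n = \infty$. Hence, setting $c_m := m\psi(m) - (m+1)\psi(m+1) \ge 0$, we have $n\psi(n) = \sum_{m\ge n} c_m$, and interchanging summation gives
\[
\sum_{n=1}^\infty \frac{\psi(n)}{\|n\alpha-\gamma\|} \;=\; \sum_{n=1}^\infty \frac{1}{n\|n\alpha-\gamma\|}\sum_{m\ge n} c_m \;=\; \sum_{m=1}^\infty c_m\, S_m(\alpha,\gamma) \;\ll\; \sum_{m=1}^\infty c_m(\log m)^2,
\]
by \eqref{cond}. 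Telescoping yields the identity
\[
\sum_{m=1}^M c_m(\log m)^2 + (M+1)\psi(M+1)(\log M)^2 \;=\; \sum_{m=2}^M m\psi(m)\bigl((\log m)^2 - (\log(m-1))^2\bigr),
\]
whose right-hand side is $\ll \sum \psi(m)\log m < \infty$ since $(\log m)^2 - (\log(m-1))^2 \ll (\log m)/m$. Both summands on the left being non-negative, we may drop the boundary term and conclude $\sum c_m (\log m)^2 < \infty$, which gives~$(\ast)$.

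Under hypothesis (ii) the argument is structurally identical: $\psi(n) \to 0$ by the same reasoning, one decomposes $\psi(n) = \sum_{m\ge n}(\psi(m)-\psi(m+1))$, and the Fubini swap yields $\sum (\psi(m)-\psi(m+1))R_m(\alpha,\gamma) \ll \sum (\psi(m)-\psi(m+1)) m\log m$ by \eqref{cond+}; the elementary inequality $m\log m - (m-1)\log(m-1) \ll \log m$ then reduces matters to $\sum \psi(m)\log m$, and one concludes as before.

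The conceptual point, and the only nontrivial step, is the Fubini rearrangement. A single Abel summation applied directly to $(\ast)$ would leave a boundary term of order $N\psi(N)(\log N)^2$ in case (i), or $N\psi(N)\log N$ in case (ii), which does not obviously tend to $0$ from the convergence of \eqref{yy} together with the stated monotonicity alone. The Fubini decomposition relocates this boundary quantity to the \emph{same} side of the estimate as the series we wish to bound: being non-negative, it can then simply be discarded. Everything else in the argument is a routine partial summation plus the elementary logarithm estimates quoted above.
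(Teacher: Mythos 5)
Your proof is correct and takes essentially the same route as the paper: reduce to the convergence Borel--Cantelli lemma in $\beta$, then control $\sum_n\psi(n)/\|n\alpha-\gamma\|$ by a two-fold partial summation relating it to $\sum_m c_m S_m(\alpha,\gamma)$ (resp.\ $R_m$) and thence to the convergent series $\sum_n\psi(n)\log n$; this is exactly the content of the paper's Lemmas~\ref{lem+} and~\ref{lem++} together with the reduction in \S\ref{wer}. One small remark: your claim that the boundary term $N\psi(N)(\log N)^2$ from a single Abel summation ``does not obviously tend to~$0$'' is a bit overstated --- it does tend to zero under the hypotheses (compare $N\psi(N)(\log N)^2$ against the tail $\sum_{\sqrt N\le n\le N}\psi(n)\log n$ using the monotonicity of $n\psi(n)$) --- but, as you observe, the telescoping identity sidesteps the question anyway since the boundary term is non-negative and can be discarded, which is precisely how the paper's exact partial summation formula \eqref{psum1} handles it too.
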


\medskip

\begin{remark}
Recall that in view of Theorem \ref{T4},  we know that  condition \eqref{cond} is satisfied for almost every real number.  This is  far from the truth  regarding condition \eqref{cond+},  as demonstrated by Example \ref{eg1}   in  \S\ref{ae}.   Also recall  that  in the homogeneous case,   we are able to give explicit examples of  real numbers $\alpha$ satisfying \eqref{cond}  and indeed \eqref{cond+},  such as  quadratic irrationals (and more generally badly approximable numbers)  and  $e$ (the base of natural logarithm).
\end{remark}

\begin{corollary}\label{cor10}
Let  $\alpha$ be  a badly approximable number, $\gamma=0$ and $\delta\in\R$. Let  $\psi:\N\to\Rp$  be monotonic such that \eqref{yy} converges.   Then,  for almost every $\beta  \in \R$  inequality  \eqref{ineq+} holds for only finitely many $n\in\N$.
\end{corollary}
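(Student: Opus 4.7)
The plan is to apply Theorem~\ref{T12}(ii) directly, which is the natural branch here since it only requires $\psi$ itself (not the product $n\mapsto n\psi(n)$) to be decreasing. First I would observe that under the hypotheses $\psi$ must in fact be decreasing: since $\psi$ is monotonic and $\sum_{n=1}^\infty \psi(n)\log n$ converges, $\psi$ cannot be eventually non-decreasing, so after adjusting finitely many terms we may take $\psi$ to be decreasing on all of $\N$ without loss of generality.

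Next I would verify the arithmetic condition $R_N(\alpha,0)\ll N\log N$ for all $N\ge 2$ required by Theorem~\ref{T12}(ii) with $\gamma=0$. This is exactly the classical Hardy--Littlewood estimate for badly approximable $\alpha$, already recorded as \eqref{lang+}: because the partial quotients of $\alpha$ are bounded, one has $q_{k+1}\ll q_k$, so \eqref{lang} applies with $g$ a constant function. This yields the bound for all sufficiently large $N$, and after an innocuous adjustment of the implied constant, for every $N\ge 2$. With both hypotheses of Theorem~\ref{T12}(ii) in place, the theorem asserts that for almost every $\beta\in\R$ the inequality $\|n\alpha\|\,\|n\beta-\delta\|<\psi(n)$ has only finitely many solutions $n\in\N$, which is exactly \eqref{ineq+} in the case $\gamma=0$, completing the argument.

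There is no substantial obstacle to overcome: the corollary is a direct specialisation of Theorem~\ref{T12} to the class of $\alpha$ for which the sum estimate $R_N(\alpha,0)\ll N\log N$ is available \emph{unconditionally} for every $N$. The only real subtlety is choosing branch~(ii) rather than branch~(i) of Theorem~\ref{T12}, since branch~(i) would require $n\psi(n)$ to be decreasing, a property that is not guaranteed by monotonicity of $\psi$ alone. In particular, the same argument would apply verbatim to any irrational $\alpha$ satisfying $R_N(\alpha,0)\ll N\log N$ for all $N\ge 2$, for instance $\alpha=e$ as discussed earlier in the paper.
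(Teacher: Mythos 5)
Your proof is correct and follows the route the paper clearly intends: specialise Theorem~\ref{T12}(ii) after observing that monotonicity of $\psi$ together with convergence of $\sum\psi(n)\log n$ forces $\psi$ to be decreasing, and that badly approximable $\alpha$ gives $R_N(\alpha,0)\ll N\log N$ unconditionally via the Hardy--Littlewood estimate \eqref{lang+}. The one genuinely non-routine choice --- using branch~(ii) because branch~(i)'s hypothesis that $n\mapsto n\psi(n)$ be decreasing is not guaranteed by monotonicity of $\psi$ alone --- is exactly the point you identify.
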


\begin{remark}
Conjecture~\ref{conj2}, if true, implies the validity of Corollary~\ref{cor10} for any irrational algebraic number $\alpha$.
\end{remark}

%

Returning to the  case when  the  sum \eqref{yy}  diverges, we highlight the rather amazing fact  that currently nothing is known concerning the  natural inhomogeneous version of Gallagher's Theorem.

\begin{conjecture}[Inhomogeneous Gallagher] \label{rub}
Let $\gamma, \delta \in \R$ and let  $\psi:\N\rar\Rp$ be monotonic such that the sum \eqref{yy} is divergent.  Then,  for  almost all $(\alpha, \beta)  \in\R^2$  inequality \eqref{ineq+}
  holds for infinitely many   $n \in \N $.
\end{conjecture}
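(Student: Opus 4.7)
The plan is to fiber over $\alpha$ and use Theorem~\ref{T8} to reduce Conjecture~\ref{rub} to a one-dimensional inhomogeneous approximation problem with a non-monotonic approximating function. Since the Liouville set $\mathfrak{L}$ is Lebesgue null, by Fubini it suffices to prove that for any fixed $\alpha\in\R\setminus(\mathfrak{L}\cup\Q)$ and any $\gamma,\delta\in\R$, the set
\[
\mathcal{B}:=\Big\{\beta\in\R\,:\,\|n\beta-\delta\|<\Psi_{\alpha,\gamma}(n)\text{ for infinitely many }n\in\N\Big\},
\qquad \Psi_{\alpha,\gamma}(n):=\frac{\psi(n)}{\|n\alpha-\gamma\|}\,,
\]
has full one-dimensional Lebesgue measure.

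The first step is to verify that $\sum_n\Psi_{\alpha,\gamma}(n)=\infty$, so that the expected Borel--Cantelli heuristic is in force. By Theorem~\ref{T8}, one has $R_N(\alpha,\gamma)\gg N\log N$ for all $N\ge2$. Applying the partial summation identity~\eqref{psum1} with $a_n=\psi(n)$ and $b_n=\|n\alpha-\gamma\|^{-1}$, and using the monotonicity of $\psi$, yields
\[
\sum_{n=1}^N\Psi_{\alpha,\gamma}(n)\ \gg\ \sum_{n=1}^{N}(\psi(n)-\psi(n+1))\,n\log n+\psi(N+1)\,N\log N\,.
\]
A second Abel summation collapses the right-hand side to $\gg\sum_{n=2}^N\psi(n)\log n$, which tends to infinity by hypothesis~\eqref{yy}.

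The main obstacle is that $\Psi_{\alpha,\gamma}$ is highly non-monotonic: its spikes, localised on the resonant indices $n\equiv q_{K-1}\pmod{q_K}$ that already dominated the sum in Theorem~\ref{T2}, are too heavy for Sz\"usz's inhomogeneous analogue of Khintchine's theorem to apply directly. To get around this I would try to build the multiplicative inhomogeneous ubiquity framework foreshadowed in the remark following Theorem~\ref{T11}. The key ingredients would be (i) a quantitative dispersion statement for the shifted orbit $\{n\alpha-\gamma\}_{n\le N}$ in $[0,1)$, obtained from a three-distance-type theorem combined with the continued-fraction control guaranteed by $\alpha\notin\mathfrak{L}$, and (ii) a quasi-independence estimate
\[
\big|E_n^{\gamma,\delta}\cap E_m^{\gamma,\delta}\big|\ \ll\ \big|E_n^{\gamma,\delta}\big|\cdot\big|E_m^{\gamma,\delta}\big|
\]
for the hyperbolic rectangles $E_n^{\gamma,\delta}:=\{(\alpha,\beta)\in[0,1)^2\,:\,\|n\alpha-\gamma\|\,\|n\beta-\delta\|<\psi(n)\}$, each of measure $\asymp\psi(n)\log(1/\psi(n))\asymp\psi(n)\log n$. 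Estimate~(ii) was obtained by Gallagher in the homogeneous case via Fourier analysis on the torus; the inhomogeneous shifts $(\gamma/n,\delta/n)$ destroy the group-invariant structure that argument relied upon, so~(ii) must be reproved by exploiting the continued-fraction fine structure of $\|n\alpha-\gamma\|$. This quasi-independence estimate is the decisive technical hurdle. Once it is in hand, the divergence Borel--Cantelli lemma (in its Chung--Erd\H{o}s form) together with a Cassels--Gallagher zero-one law should upgrade positive measure to full measure and complete the proof.
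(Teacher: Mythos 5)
The statement you are attempting to prove is labelled as a \emph{conjecture} in the paper (Conjecture~\ref{rub}), and the paper does not prove it. The authors establish only the partial case $\delta=0$ (Theorem~\ref{T13}), and they explicitly record in Problem~\ref{crazy} that the full statement would follow from an inhomogeneous analogue of the Duffin--Schaeffer Theorem, a result they were unable to obtain. So there is no proof in the paper against which your proposal can be matched; anything that purports to close the case $\delta\ne0$ must be assessed as a claim to settle an open problem.

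On the substance of your sketch: the opening reduction (fibre over $\alpha$, discard the Liouville null set, and use Theorem~\ref{T8} with the Abel summation identity to verify $\sum_n\Psi_{\alpha,\gamma}(n)=\infty$) is correct and is essentially Lemma~\ref{lem+} together with the discussion in \S\ref{wer}. But from that point onward you do not give a proof: you yourself call the quasi-independence estimate for the sets $E_n^{\gamma,\delta}$ ``the decisive technical hurdle'' and leave it unestablished, and proving that estimate \emph{is} the conjecture. Two further cautions. First, your write-up slides between two incompatible frames: you begin by fibering over $\alpha$ and working with one-dimensional $\limsup$ sets of $\beta$, but the sets $E_n^{\gamma,\delta}$ whose pairwise overlaps you propose to control are two-dimensional hyperbolic regions; these lead to very different overlap computations and you would need to commit to one. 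Second, the zero-one law quoted in the paper (Lemma~\ref{lem17}) is the homogeneous Cassels law; to upgrade positive measure to full measure for an inhomogeneous $\limsup$ set with $\delta\ne0$ you need an inhomogeneous zero-one law, which is a separate ingredient not supplied here. By way of comparison, the route the paper \emph{does} take for $\delta=0$ avoids overlap estimates altogether: it restricts to a positive-density set of $n$ with $\varphi(n)/n$ bounded below, invokes Schmidt's discrepancy bound for $\sum_{i\le T}\|n_i\alpha-\gamma\|^{-1}$ along that subsequence, and then applies the homogeneous Duffin--Schaeffer Theorem to absorb the non-monotonicity of the approximating function. That route is blocked for $\delta\ne0$ precisely because an inhomogeneous Duffin--Schaeffer Theorem is missing; your proposed alternative via a Chung--Erd\H{o}s-type argument is a natural competing strategy, but it remains a strategy, not a proof.
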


In this paper we establish the following partial result.

\begin{theorem}\label{T13}
Let $\gamma\in\R$, $\delta=0$ and $\psi:\N\rar\Rp$ monotonic, and assume that the sum \eqref{yy} is divergent. Then for almost all $(\alpha,\beta)  \in\R^2$  inequality \eqref{ineq+} holds for infinitely many  $n \in \N $.
\end{theorem}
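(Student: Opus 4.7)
My plan is to extend Gallagher's original argument for the homogeneous case~\cite{Gallagher1962} to the inhomogeneous setting via a two-dimensional divergent Borel-Cantelli argument, in which the $\gamma$-shift is absorbed by the translation invariance of Lebesgue measure. Working modulo $1$ in the unit square, and replacing $\psi(n)$ by $\min\{\psi(n),1/n\}$ if necessary (on the complementary range the conclusion follows at once from Dirichlet's theorem applied to $\beta$), I set
\[
A_n:=\big\{(\alpha,\beta)\in[0,1)^2:\|n\alpha-\gamma\|\cdot\|n\beta\|<\psi(n)\big\}.
\]
The goal is to show $\big|\limsup_{n\to\infty}A_n\big|=1$.

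\textbf{Volume and quasi-independence.} First I would compute, by Fubini and the measure-preserving change of variable $u\equiv n\alpha-\gamma\pmod 1$,
\[
|A_n|=\int_0^1\min\!\Big\{1,\,\tfrac{2\psi(n)}{\|u\|}\Big\}\,du\ \asymp\ \psi(n)\log\!\big(1/\psi(n)\big)\ \gg\ \psi(n)\log n,
\]
so $\sum|A_n|=\infty$ by hypothesis \eqref{yy}. To then invoke the quantitative Borel-Cantelli lemma (Chung-Erd\H{o}s form), I must establish a quasi-independence estimate $|A_m\cap A_n|\ll|A_m||A_n|+E(m,n)$ with errors $E(m,n)$ whose double sum over $m,n\le N$ is $o\big(\big(\sum_{n\le N}|A_n|\big)^2\big)$. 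I would do this in two stages: for each fixed $\alpha$, bound the $\beta$-section of $A_m\cap A_n$ using the classical one-dimensional Gallagher quasi-independence lemma applied to the pair of forms $m\beta,n\beta$; then control the resulting $\alpha$-integral by an analogous lemma for the pair $(m\alpha-\gamma,n\alpha-\gamma)$ taken modulo $1$. Crucially, only integer combinations $\ell m-kn$ enter the joint distributional analysis in $\alpha$, so the $\gamma$-shifts cancel and Gallagher's original homogeneous estimates apply without modification.

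\textbf{Main obstacle.} The principal technical difficulty is that the double quasi-independence produces correction terms involving $\gcd(m,n)/\max(m,n)$; keeping their cumulative contribution strictly smaller than $\big(\sum_{n\le N}|A_n|\big)^2$ requires a careful dyadic decomposition and $\gcd$-counting, exactly as in Gallagher's original treatment. An alternative fiber approach---fixing $\alpha\notin\mathfrak L\cup\Q$ first and applying the partial Duffin-Schaeffer theorem~\cite{DuffinSchaeffer1941} in $\beta$ to the (non-monotonic) approximation function $\min\{1/2,\psi(n)/\|n\alpha-\gamma\|\}$, with the required divergence of $\sum\psi(n)/\|n\alpha-\gamma\|$ supplied by the lower bound $R_N(\alpha,\gamma)\gg N\log N$ of Theorem~\ref{T8} via Abel summation---merely exchanges the $\gcd$-bookkeeping for the verification of the Duffin-Schaeffer comparability condition \eqref{dst}, which does not appear any easier. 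Thus the direct 2D Borel-Cantelli plan above seems to be the natural one.
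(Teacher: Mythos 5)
Your proposal sketches two routes but dismisses the one the paper actually takes, and the route you favour is left as an unverified plan with a genuine gap at the crucial step.

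Your ``main plan'' --- direct $2$D Borel--Cantelli on the sets $A_n$ --- is not how the paper proceeds, and the quasi-independence estimate you wave at is exactly the hard part. You assert that ``Gallagher's original homogeneous estimates apply without modification'' once the $\gamma$-shift is absorbed, but no pairwise quasi-independence estimate for the inhomogeneous multiplicative sets $A_n$ is supplied, nor is it clear one is available: the $\gamma$-translate changes the arithmetic structure of the $\alpha$-sections of $A_m\cap A_n$ (the centres $\{\ell/n + \text{shift}\}$ no longer coincide with the rational grid on which Gallagher's $\gcd$-counting rests), and the cancellation of $\gamma$ in the differences $\ell m - kn$ does not by itself bound the intersection measure. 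So as stated this plan is a proposal, not a proof.

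Your ``alternative fiber approach'' is essentially the one the paper uses, and your reason for rejecting it --- that checking the Duffin--Schaeffer comparability condition \eqref{dst} looks no easier than the $\gcd$-bookkeeping --- is where you miss the key idea. The paper sidesteps \eqref{dst} by \emph{pre-restricting} the approximation function to a positive-density set on which it holds automatically: it sets
\[
\mathcal{N}=\{n\in\N:\varphi(n)/n\ge\xi\}=\{n_1<n_2<\dots\},
\]
chooses $\xi$ small enough (using Mertens and Vaaler's lemma) that $\mathcal{N}$ has lower density $>\tfrac12$, and defines $\psi^{\gamma}_{\alpha}(n)=\Psi^{\gamma}_{\alpha}(n)$ for $n\in\mathcal{N}$ and $0$ otherwise. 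Then $\sum_{n\le N}\frac{\varphi(n)}{n}\psi^{\gamma}_{\alpha}(n)\ge\xi\sum_{n\le N}\psi^{\gamma}_{\alpha}(n)$, so \eqref{dst} is immediate. A second point you also miss: because the support is now the \emph{sparse} sequence $(n_i)$, Theorem~\ref{T8} (which concerns $R_N(\alpha,\gamma)$ over all $n\le N$ and requires $\alpha\notin\mathfrak L$) is not the right input. Instead the paper invokes Schmidt's discrepancy bound \cite[Proposition 4]{Schmidt1964}: for any increasing $(n_i)$ and any $\gamma$, $\sum_{i\le T}\|n_i\alpha-\gamma\|^{-1}\gg T\log T$ for almost all $\alpha$. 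Combined with $n_i\le 2i$ (from the density bound) and Abel summation, this gives $\sum_n\psi^{\gamma}_{\alpha}(n)\gg\sum_{n\le T}\psi(n)\log n\to\infty$ for almost all $\alpha$. The Duffin--Schaeffer Theorem then applies in $\beta$ (here the hypothesis $\delta=0$ is essential, since only the homogeneous Duffin--Schaeffer Theorem is available), and Fubini finishes. The route via Theorem~\ref{T8} that you mention would only yield the \emph{conditional} Theorem~\ref{T14}, since without the $\mathcal{N}$-restriction one must invoke the full Duffin--Schaeffer Conjecture to control $\sum\varphi(n)\Psi^{\gamma}_{\alpha}(n)/n$.
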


\noindent  It will be  evident from the proof of Theorem \ref{T13} given in \S\ref{sect13},  that the same  argument would establish Conjecture \ref{rub} in full if we had the inhomogeneous version of the Duffin-Schaeffer Theorem  at hand.  Unfortunately and somewhat surprisingly, such a statement does not seem to be in the existing literature and we have not been able to prove it.

\begin{problem}\label{crazy}
Let $\gamma  \in \R$  and $\psi:\N\rar\Rp$ . Prove, that for  almost all $\alpha \in\R$  the inequality $$|n\alpha-r - \gamma | < \psi(n)$$  holds for infinitely many  coprime pairs $(n,r) \in \N\times \Z $  provided that the divergent sum condition  \eqref{divcond1} and the limsup condition \eqref{dst} hold.
\end{problem}

\smallskip

Of course a version of Conjecture~\ref{rub} in which the points $(\alpha,\beta)$  are restricted to a fiber in $\R^2$ or a (non-degenerate) curve is of interest but currently seems well out of reach. In particular, it is  natural to consider  the following two-dimensional inhomogeneous version of Problem~\ref{rubber}.

\begin{problem}\label{prob2}
Let $\gamma, \delta  \in \R$  and $\alpha\in\R\setminus(\mathfrak{L}\cup\Q)$. Prove that
\begin{equation}\label{y}
\liminf_{n\rar\infty}n\,(\log n)^2 \|n\alpha-\gamma\|\,\|n\beta-\delta\|   = 0  \quad \  \mbox{for almost all }  \beta \in \R \, .
\end{equation}
\end{problem}

\smallskip

Within this paper we obtain the following conditional partial result.

\begin{theorem}\label{T14}
Let $\gamma \in \R, \delta=0$  and $\alpha\in\R\setminus(\mathfrak{L}\cup\Q)$. Then,   under the assumption that the Duffin-Schaeffer Conjecture is true,    \eqref{y} holds.
\end{theorem}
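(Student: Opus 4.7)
Fix $\alpha\in\R\setminus(\mathfrak{L}\cup\Q)$, $\gamma\in\R$, and $\ve>0$, and set
\[
\Psi(n):=\min\Big\{\tfrac12,\ \tfrac{\ve}{n(\log n)^2\,\|n\alpha-\gamma\|}\Big\}\qquad(n\ge 3).
\]
If $\|n\beta\|<\Psi(n)$ holds for infinitely many $n$, then $n(\log n)^2\|n\alpha-\gamma\|\|n\beta\|<\ve$ for infinitely many $n$; varying $\ve$ through a countable sequence tending to $0$ then yields \eqref{y} on an intersection of full-measure sets. Thus it suffices to prove, for each fixed $\ve>0$, that $\{\beta:\|n\beta\|<\Psi(n)\text{ for infinitely many }n\}$ has full Lebesgue measure. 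Let $T:=\{n\ge 3:\Psi(n)=1/2\}$. If $T$ is infinite, then for every $\beta$ outside the countable null set $\bigcup_{n\in T}\{\beta:n\beta\in\tfrac12+\Z\}$ we have $\|n\beta\|<1/2=\Psi(n)$ for every $n\in T$, and we are done. We may therefore assume $T$ is finite, so that $\Psi(n)=\ve/(n(\log n)^2\|n\alpha-\gamma\|)$ for every $n\ge n_0$.

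\textbf{Applying Duffin--Schaeffer.} By the assumed Duffin--Schaeffer Conjecture, almost every $\beta$ admits infinitely many coprime pairs $(n,r)\in\N\times\Z$ with $|n\beta-r|<\Psi(n)$, and hence $\|n\beta\|<\Psi(n)$, provided
\begin{equation}\label{planeqdiv}
\sum_{n\ge n_0}\frac{\varphi(n)}{n}\Psi(n)\ =\ \ve\sum_{n\ge n_0}\frac{\varphi(n)/n}{n(\log n)^2\|n\alpha-\gamma\|}\ =\ \infty.
\end{equation}
Setting $\Phi(N):=\sum_{n=n_0}^{N}\frac{\varphi(n)/n}{n\|n\alpha-\gamma\|}$, Abel summation against the decreasing weight $b_n:=1/(\log n)^2$, whose first difference satisfies $b_n-b_{n+1}\asymp 1/(n(\log n)^3)$, gives
\[
\sum_{n=n_0}^N\frac{\varphi(n)/n}{n(\log n)^2\|n\alpha-\gamma\|}\ \gg\ \sum_{n=n_0}^{N-1}\frac{\Phi(n)}{n(\log n)^3},
\]
so that \eqref{planeqdiv} reduces to establishing the lower bound
\begin{equation}\label{planeqphi}
\Phi(N)\ \gg\ (\log N)^2,
\end{equation}
whence the right-hand side above dominates $\sum_{n\le N}1/(n\log n)\asymp\log\log N\to\infty$.

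\textbf{The main obstacle.} The bound \eqref{planeqphi} is the $\varphi(n)/n$-weighted analogue of the lower bound in Corollary~\ref{cor8+}, and is where the bulk of the work lies. The natural line of attack is M\"obius inversion: applying $\varphi(n)/n=\sum_{d\mid n}\mu(d)/d$ and swapping the order of summation gives
\[
\Phi(N)\ =\ \sum_{d\ge 1}\frac{\mu(d)}{d^2}\,S_{\lfloor N/d\rfloor}(d\alpha,\gamma),
\]
and since Liouville-ness is invariant under nonzero rational scaling, $d\alpha\notin\mathfrak{L}\cup\Q$ for every $d\ge 1$, so Corollary~\ref{cor8+} delivers $S_M(d\alpha,\gamma)\gg(\log M)^2$ term by term. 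The principal difficulty is twofold: one must control the implied constants in Corollary~\ref{cor8+} uniformly in $d$, via quantitative non-Liouville bounds for $d\alpha$ extracted from the continued fraction data of $\alpha$, and simultaneously supply matching \emph{upper} bounds for the $S_{\lfloor N/d\rfloor}(d\alpha,\gamma)$ with $\mu(d)<0$, so that the cancellation encoded by $\mu$ combined with $\sum_{d\ge 1}\mu(d)/d^2=6/\pi^2>0$ leaves a positive constant multiple of $(\log N)^2$. An alternative route---restricting Corollary~\ref{cor8+} to the positive-density set $\{n:\varphi(n)/n\ge c\}$ and showing the lower bound persists---reduces to an essentially equivalent independence-type statement between the arithmetic function $\varphi(n)/n$ and the orbit $\{n\alpha-\gamma\}_{n\in\N}$, which sits at the same level of difficulty.
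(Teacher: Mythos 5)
There is a genuine gap, and you have in fact identified it yourself: everything in your argument hinges on establishing the $\varphi$-weighted lower bound $\Phi(N)\gg(\log N)^2$, and you never prove it. The M\"obius-inversion identity $\Phi(N)=\sum_{d\ge1}\frac{\mu(d)}{d^2}S_{\lfloor N/d\rfloor}(d\alpha,\gamma)$ is correct, and it is also correct that $d\alpha\notin\mathfrak{L}\cup\Q$ for every $d\ge1$, but Corollary~\ref{cor8+} only gives lower bounds, with constants depending on the irrational, and $\mu(d)$ alternates in sign; to extract a positive main term you would need matching upper bounds for the negative-$\mu$ terms and uniformity of the implied constants in $d$, none of which is available. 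The alternative you sketch (positive-density restriction and an ``independence'' statement between $\varphi(n)/n$ and the orbit) is likewise an open-ended problem. So the argument as written does not close.

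The paper sidesteps this entirely by exploiting the slack in the $\liminf=0$ statement. Instead of $\psi(n)=\ve/(n(\log n)^2)$, it takes
\[
\psi(n)=\frac{1}{n(\log n)^2\,\log\log\log n}\,,
\]
which is still $o\bigl(1/(n(\log n)^2)\bigr)$, so infinitely many solutions of $\|n\alpha-\gamma\|\,\|n\beta\|<\psi(n)$ again force $\liminf n(\log n)^2\|n\alpha-\gamma\|\,\|n\beta\|=0$. Then, rather than carrying $\varphi(n)/n$ inside the oscillating sum, it bounds it \emph{crudely and pointwise} by the classical estimate $\varphi(n)/n\gg1/\log\log n$, which pulls the arithmetic factor out of the sum altogether. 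After one partial summation (against the decreasing weight $\psi(n)/\log\log n$), the divergence criterion for Duffin--Schaeffer reduces to $R_N(\alpha,\gamma)\gg N\log N$ — exactly Theorem~\ref{T8}, already available for $\alpha\notin\mathfrak{L}\cup\Q$ — and the sum becomes
\[
\sum_n\frac{1}{n\log n\,\log\log n\,\log\log\log n}=\infty\,.
\]
The point is that the $\log\log\log n$ of deliberate loss in $\psi$ absorbs the $\log\log n$ of loss from the crude $\varphi$-bound, so no refined information about the distribution of $\varphi(n)/n$ along the orbit $\{n\alpha-\gamma\}$ is required. Your reduction to $\Phi(N)\gg(\log N)^2$ corresponds to trying to make this work with no slack at all, which is a genuinely harder (and unresolved) problem; the lesson is that when proving a $\liminf=0$ statement you should always spend some of that slack.
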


\begin{remark}
 It is worthwhile comparing Problem~\ref{prob2} with the inhomogeneous version of Littlewood's Conjecture.  The latter  states:  {\em  for any $\alpha,\beta\in\R$ such that $1,\alpha,\beta$ are linearly independent over $\Q$
\begin{equation}\label{iLC}
\liminf_{n\rar\infty}n\|n\alpha-\gamma\|\,\|n\beta-\delta\|   = 0 \qquad\text{for any $\gamma,\delta\in\R^2$} \,.
\end{equation} }
The existence of a single pair $(\alpha,\beta)$ satisfying \eqref{iLC} was conjectured by Cassels in the 1950s and remained  open until the  recent work of Shapira \cite{Shapira}. He showed the validity of \eqref{iLC} for almost all pairs $(\alpha,\beta)$ and, in particular, for the basis $1,\alpha,\beta$ of any real cubic number field. A higher dimensional version of these findings have subsequently been established by Lindenstrauss and Shapira \cite{Lindenstrauss-Shapira}.
\noindent In short, Problem~\ref{prob2} implies a $(\log n)^2$ strengthening of the  inhomogeneous  Littlewood's Conjecture at a cost of `losing' a null set of $\beta\in\R$, which almost certainly will depend on $\delta$.
  Note that without the $(\log n)^2$ strengthening, Shapira's result shows that for almost every $\alpha \in \R$,   the null set is independent of $\delta$.  Recently, Gorodnik and Vishe \cite{Gorodnik-Vishe} have obtained a $(\log \log \log \log \log  n)^\lambda$ strengthening of Shapira's result. Here $\lambda$ is some positive constant.
\end{remark}

\newpage
\part{\Large Developing techniques and establishing the  main results}

\bigskip
\bigskip

\section{Ostrowski expansion}\label{sec.contfrac}

The Ostrowski expansion of real numbers  is very much at the heart of our approach towards obtaining good estimates for the size of $\|n\alpha-\gamma\|$.  In short, the  Ostrowski expansion  naturally expresses real numbers in terms of the basic parameters associated with the theory of continued fraction.

\subsection{Continued fractions: the essentials}

To begin with we recall various standard facts from the theory of continued fractions, which can for instance be found in \cite{Khintchine} or \cite{RockettSzusz1992}.
Let $\alpha$ be a real irrational  number. Throughout, the expression
\[
\alpha=[a_0;a_1,a_2,\ldots\,]\,=\,a_0+\dfrac{1}{a_1+\dfrac{1}{a_2+\dfrac{1}{a_3+\raisebox{-2ex}{$\ddots$}}}}
\]
will denote the \emph{simple continued fraction expansion} of $\alpha$.  The integers  $a_k$ are called the \emph{partial quotients} of $\alpha$ and  for $k \ge 1$ satisfy $ a_k \geq 1$.  The reduced rationals
\[
\frac{p_k}{q_k}=[a_0;a_1,\ldots ,a_k]\qquad (k\ge 0)
\]
obtained by truncating the infinite continued fraction expansion   are called the \emph{principal convergents} of $\alpha.$ We will often make use of the quantities
\begin{equation}\label{D_m}
D_k=q_k\alpha-p_k\qquad (k\ge0).
\end{equation}
The following well known relations can be found in \cite{RockettSzusz1992}:
\begin{align}
p_{k+1}=a_{k+1}p_k+p_{k-1},\qquad q_{k+1}=a_{k+1}q_k+q_{k-1}& \qquad (k\ge 1), \label{vb7-}\\[2ex]
D_0=\{\alpha\},\ \quad\qquad  D_k=(-1)^k\|q_k\alpha\| &  \qquad (k\ge1),\label{vb3}\\[2ex]
 a_{k+1}D_k=D_{k+1}-D_{k-1}&   \qquad (k\ge1),\label{vb2}\\[2ex]
  \tfrac12\le q_{k+1}|D_k|\le1 & \qquad (k\ge0).\label{vb7}
\end{align}
Using (\ref{vb3}) and (\ref{vb2}) it is easily verified that
\begin{equation}\label{vb9}
    \rule{0ex}{3.5ex}a_{k+2}|D_{k+1}|+|D_{k+2}|=|D_k|\qquad(k\ge0),\ \ \
\end{equation}
 \begin{equation}\label{vb0}
\sum_{i=1}^\infty a_{k+2i}|D_{k+2i-1}|=|D_k|\qquad(k\ge0)
\end{equation}
and therefore
\begin{equation}\label{vb1}
\sum_{i=k+1}^\infty a_{i+1}|D_i|=|D_k|+|D_{k+1}|\qquad(k\ge0).\qquad
\end{equation}

\noindent Recall that $a_0=\lfloor\alpha\rfloor$, $p_1=a_0a_1+1$, $q_1=a_1$, and therefore
\[D_1=q_1\alpha-p_1=a_1\{\alpha\}-1.\]
In view of \eqref{vb3}, $D_1<0$  and so it follows that $|D_1|=1-a_1\{\alpha\}$.   Now, since $|D_0|=\{\alpha\}$, we have that $a_1|D_0|+|D_1|=1$ and on  using \eqref{vb1} with $k=0$ we obtain that
\begin{equation}\label{vb1zx}
\sum_{i=0}^\infty a_{i+1}|D_i|=|D_0|+1 \,  .
\end{equation}

We will also appeal to the following well known and useful consequence of (\ref{vb7-}), which can easily be verified  by induction:
\begin{equation}\label{zvb}
\max\{2^{\frac{m-1}2},a_1\cdots a_m\}\ \le \ q_m\ \le \ 2^{m-1}\cdot a_1\cdots a_m \qquad \forall   \  m\ge 1 \,.
\end{equation}
Finally, it is easy to show that
\begin{equation}\label{monot}
    |D_k|> |D_{k+1}|\qquad (k\ge0).
\end{equation}

\subsection{The Ostrowski expansion}

The continued fractions framework offers a natural and convenient way of encoding real numbers (both integers and irrationals) via the continued fraction expansion of a given irrational number. The following lemma explicitly determines the  expansion of a positive integer in terms of the denominators of the  principal convergents of a given irrational number.

\begin{lemma}\label{Ostrowski1}
Let $\alpha\in\R\setminus\Q$ and let $a_k=a_k(\alpha)$ and $q_k=q_k(\alpha)$ be as above. Then, for every $n\in\N$ there is a unique integer $K\ge0$ such that ~$$ q_K\le n< q_{K+1},$$ and a unique sequence $\{c_{k+1}\}_{k=0}^\infty$ of integers such that
\begin{equation}\label{contfrac.eqn.Ostrowski1}
n=\sum_{k=0}^\infty c_{k+1}q_k,
\end{equation}
\begin{equation}\label{contfrac.eqn.Ostrowski1+1}
0\le c_1<a_1\quad\text{and}\quad 0\le c_{k+1}\le a_{k+1} \quad \forall   \ k\ge 1,
\end{equation}
\begin{equation}\label{contfrac.eqn.Ostrowski1+2}
\rule{0ex}{3ex}c_k=0 \quad \text{whenever}\quad c_{k+1}=a_{k+1} \quad  \text{with}     \ k\ge 1,
\end{equation}
\begin{equation}\label{contfrac.eqn.Ostrowski1+3}
\rule{0ex}{3ex}c_{k+1}=0  \quad \forall   \ k>K.
\end{equation}
\end{lemma}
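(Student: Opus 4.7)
The existence of $K$ with $q_K\le n<q_{K+1}$ is immediate since $q_k\to\infty$ strictly from some point on. For the representation, my approach is a standard greedy algorithm applied to the decreasing sequence $q_K>q_{K-1}>\dots>q_0=1$. Set $c_{k+1}=0$ for all $k>K$, set $c_{K+1}:=\lfloor n/q_K\rfloor$, and then recursively peel off the largest multiple of each $q_k$ that still fits. Using \eqref{vb7-}, namely $q_{K+1}=a_{K+1}q_K+q_{K-1}$, together with $q_K\le n<q_{K+1}$, we immediately get $1\le c_{K+1}\le a_{K+1}$, and the residue $n':=n-c_{K+1}q_K$ satisfies $0\le n'<q_K$. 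The crucial observation is that if $c_{K+1}=a_{K+1}$, then $n'=n-a_{K+1}q_K<q_{K+1}-a_{K+1}q_K=q_{K-1}$, so the next greedy digit $c_K$ is forced to be $0$; this is exactly \eqref{contfrac.eqn.Ostrowski1+2}. Iterating and using the same argument for each $k\le K$, together with the fact that $q_0=1$ and $0\le n'<q_1=a_1$ at the last step to deliver $0\le c_1<a_1$, finishes the existence part.

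For uniqueness, the heart of the matter is the following inequality, valid for any sequence $(c_{k+1})$ satisfying \eqref{contfrac.eqn.Ostrowski1+1} and \eqref{contfrac.eqn.Ostrowski1+2}:
\begin{equation}\label{keyineq}
\sum_{k=0}^{m}c_{k+1}q_k \;<\; q_{m+1}\qquad(m\ge 0).
\end{equation}
I will prove \eqref{keyineq} by induction on $m$. The base case $m=0$ is $c_1q_0=c_1<a_1=q_1$. For the inductive step, split into two cases: if $c_{m+1}<a_{m+1}$, then by the inductive hypothesis applied to index $m-1$,
\[
\sum_{k=0}^{m}c_{k+1}q_k \le (a_{m+1}-1)q_m+(q_m-1)=a_{m+1}q_m-1\le q_{m+1}-1,
\]
using \eqref{vb7-}. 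If instead $c_{m+1}=a_{m+1}$, then \eqref{contfrac.eqn.Ostrowski1+2} forces $c_m=0$, so by the inductive hypothesis at $m-2$ (trivial if $m\le 1$),
\[
\sum_{k=0}^{m}c_{k+1}q_k = a_{m+1}q_m+\sum_{k=0}^{m-2}c_{k+1}q_k\le a_{m+1}q_m+q_{m-1}-1=q_{m+1}-1,
\]
again using \eqref{vb7-}. This proves \eqref{keyineq}.

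Suppose now that $n$ admits two representations $(c_{k+1})$ and $(c'_{k+1})$ satisfying all the stated conditions. Since both are eventually zero, the set of indices at which they differ is finite; let $k^\ast$ be its maximum. Subtracting the two representations gives $\sum_{k=0}^{k^\ast}c_{k+1}q_k=\sum_{k=0}^{k^\ast}c'_{k+1}q_k$. Assuming without loss of generality $c_{k^\ast+1}>c'_{k^\ast+1}$, we have on one hand $\sum_{k=0}^{k^\ast}c_{k+1}q_k\ge c_{k^\ast+1}q_{k^\ast}\ge(c'_{k^\ast+1}+1)q_{k^\ast}$, and on the other, applying \eqref{keyineq} to the truncation $(c'_{k+1})_{k\le k^\ast-1}$, $\sum_{k=0}^{k^\ast}c'_{k+1}q_k<c'_{k^\ast+1}q_{k^\ast}+q_{k^\ast}=(c'_{k^\ast+1}+1)q_{k^\ast}$. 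This contradiction forces $k^\ast$ not to exist, giving uniqueness. Finally, \eqref{keyineq} with $m=K-1$ shows $\sum_{k<K}c_{k+1}q_k<q_K\le n$, so $c_{K+1}\neq 0$, which together with uniqueness pins down $K$ as the one determined by $q_K\le n<q_{K+1}$.

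The main technical hurdle is the inductive proof of \eqref{keyineq}: the two cases are essentially both tight, and the constraint \eqref{contfrac.eqn.Ostrowski1+2} is precisely what prevents overflow when the top digit is maximal. Everything else is bookkeeping.
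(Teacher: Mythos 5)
Your proof is correct and uses essentially the same approach the paper relies on: the paper does not prove Lemma~\ref{Ostrowski1} directly (deferring to Rockett--Sz\"usz), but the greedy construction you use is exactly the one described in the paper's proof of Lemma~\ref{gapslemma1}, where \eqref{greedy} and \eqref{vb7-} are invoked to verify properties \eqref{contfrac.eqn.Ostrowski1}--\eqref{contfrac.eqn.Ostrowski1+3}. Your inductive proof of the bound $\sum_{k=0}^{m}c_{k+1}q_k < q_{m+1}$ is the standard way of making the uniqueness (and the identification of $K$ as the top nonzero index) rigorous, and it supplies the details the paper leaves to the cited reference.
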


\noindent The representation given by Lemma~\ref{Ostrowski1} is  called the \emph{Ostrowski expansion} (or \emph{Ostrowski numeration}) \emph{of the integers}. In view of (\ref{contfrac.eqn.Ostrowski1+3}) the sum (\ref{contfrac.eqn.Ostrowski1}) is  finite.   The next lemma shows that if we work with the quantities $D_k$ rather than simply the denominators  $q_k$, then  a similar representation is valid for irrational numbers.

\begin{lemma}\label{Ostrowski2}
Let $\alpha\in[0,1)\setminus\Q$ and let $a_k=a_k(\alpha)$, $q_k=q_k(\alpha)$ and $D_k=D_k(\alpha)$ be as above, and suppose that $\gamma\in [-\alpha,1-\alpha)$.
Then there is a unique sequence $\{b_{k+1}\}_{k=0}^{\infty}$ of integers such that
\begin{equation}\label{contfrac.eqn.Ostrowski2}
\gamma=\sum_{k=0}^\infty b_{k+1}D_k,
\end{equation}
\begin{equation}\label{contfrac.eqn.Ostrowski2+1}
0\le b_1<a_1\quad\text{and}\quad 0\le b_{k+1}\le a_{k+1}\ \quad \forall   \ k\ge 1,
\end{equation}
\begin{equation}\label{contfrac.eqn.Ostrowski2+}
\rule{0ex}{3ex}b_k=0 \quad \text{whenever}\quad b_{k+1}=a_{k+1} \quad \text{with}   \ k\ge 1.
\end{equation}
\end{lemma}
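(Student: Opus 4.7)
The plan is to construct the sequence $\{b_{k+1}\}_{k=0}^\infty$ by a greedy procedure and then verify that the resulting digits satisfy the stated constraints, that the series converges to $\gamma$, and that the representation is unique. The key algebraic facts to exploit are the sign pattern $\operatorname{sgn}(D_k)=(-1)^k$ from \eqref{vb3}, the recursion \eqref{vb9}, and the telescoping identity \eqref{vb1} which says that the maximal tail of admissible sums satisfies $\sum_{i=k+1}^\infty a_{i+1}|D_i| = |D_k|+|D_{k+1}|$.

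First I would set $\gamma_0:=\gamma$ and inductively define $b_{k+1}$ together with the residual $\gamma_{k+1}:=\gamma_k-b_{k+1}D_k$ by requiring that $\gamma_{k+1}$ fall inside a specific half-open interval of length $|D_k|+|D_{k+1}|$, whose endpoints depend on the parity of $k$ (so that the alternating signs of $D_k$ cooperate). The starting hypothesis $\gamma\in[-\alpha,1-\alpha)=[-D_0,1-D_0)$ is precisely the base case interval that guarantees $0\le b_1<a_1$. At subsequent stages, the width $|D_k|+|D_{k+1}|$ together with the step size $|D_k|$ forces $0\le b_{k+1}\le a_{k+1}$, with the extreme value $b_{k+1}=a_{k+1}$ only occurring when $\gamma_{k+1}$ sits in a boundary sub-interval of length $|D_{k+1}|$, which by the identity \eqref{vb9} is exactly what $|D_{k-1}|-a_{k+1}|D_k|$ equals.

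Next I would verify the carry condition \eqref{contfrac.eqn.Ostrowski2+}: if the greedy rule assigns $b_{k+1}=a_{k+1}$ for some $k\ge 1$, then $\gamma_{k+1}$ lies in the boundary strip of width $|D_{k+1}|$, and the same boundary analysis at the previous step shows that the greedy algorithm must have chosen $b_k=0$ (otherwise a better digit $b_k-1$ at step $k-1$ would place $\gamma_k$ in the correct interval). This is the Ostrowski analogue of forbidding trailing 9's in decimal expansions. Convergence $\sum_{k=0}^N b_{k+1}D_k \to \gamma$ follows immediately from $|\gamma_{N+1}|\le |D_N|+|D_{N+1}|$ and $|D_k|<1/q_{k+1}\to 0$ via \eqref{vb7}.

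For uniqueness, suppose $\{b_{k+1}\}$ and $\{b'_{k+1}\}$ are two distinct sequences satisfying \eqref{contfrac.eqn.Ostrowski2}--\eqref{contfrac.eqn.Ostrowski2+} representing the same $\gamma$, and let $K$ be the smallest index at which they differ. Then
\[
(b_{K+1}-b'_{K+1})D_K \;=\; \sum_{k>K}(b'_{k+1}-b_{k+1})D_k.
\]
Using \eqref{contfrac.eqn.Ostrowski2+1}--\eqref{contfrac.eqn.Ostrowski2+} together with the sign alternation of $D_k$, I would bound the right-hand side in absolute value by $|D_K|$ (with equality impossible under the carry condition), while the left-hand side is a nonzero integer multiple of $|D_K|$; this contradiction forces $b_{K+1}=b'_{K+1}$.

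The main obstacle is the bookkeeping of signs: the $D_k$ alternate in sign, so the greedy intervals must be chosen with parity-dependent orientation, and one must check carefully that the length-$|D_k|+|D_{k+1}|$ target interval for $\gamma_{k+1}$ is exactly the image of the corresponding interval at step $k$ under the shift by $b_{k+1}D_k$. Once this is set up correctly, the carry condition \eqref{contfrac.eqn.Ostrowski2+} falls out naturally from the identity \eqref{vb9}, and both existence and uniqueness reduce to the telescoping identity \eqref{vb1}.
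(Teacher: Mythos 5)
Your existence sketch is essentially the standard greedy construction and is sound in outline, but the uniqueness argument contains a genuine gap: the bound you assert on the tail sum is not available, and in fact the contradiction you are after does not materialize. From $|b'_{k+1}-b_{k+1}|\le a_{k+1}$ and \eqref{vb1}, the natural bound is
\[
\Bigl|\sum_{k>K}(b'_{k+1}-b_{k+1})D_k\Bigr|\ \le\ \sum_{k>K}a_{k+1}|D_k|\ =\ |D_K|+|D_{K+1}|,
\]
which already exceeds $|D_K|$, and the carry condition \eqref{contfrac.eqn.Ostrowski2+}, imposed on each sequence separately, does \emph{not} force the sharper bound $<|D_K|$. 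Equality at $|D_K|$ can occur, and when it does the defining identity is satisfied rather than contradicted.

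For a concrete counterexample take $\alpha=(\sqrt5-1)/2$, so $a_k=1$ for all $k\ge1$ and $D_k=(-1)^k\alpha^{k+1}$. The digit sequences
\[
b=(0,0,0,0,1,0,1,0,1,\dots)\qquad\text{and}\qquad b'=(0,0,1,0,0,1,0,1,0,1,\dots)
\]
both satisfy \eqref{contfrac.eqn.Ostrowski2+1} and \eqref{contfrac.eqn.Ostrowski2+}, yet
\[
\sum_{k\ge0}b_{k+1}D_k\ =\ \alpha^5+\alpha^7+\alpha^9+\cdots\ =\ \alpha^4\ =\ \alpha^3-\alpha^5\ =\ \sum_{k\ge0}b'_{k+1}D_k,
\]
and $\gamma=\alpha^4\approx0.146$ lies in $[-\alpha,1-\alpha)\approx[-0.618,0.382)$. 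At the first disagreement (here $K=2$) the right-hand side of your key equation is exactly $-|D_2|$, so both sides have modulus $|D_K|$ and no contradiction arises. This is the Ostrowski analogue of $0.4999\dots=0.5000\dots$: conditions \eqref{contfrac.eqn.Ostrowski2}--\eqref{contfrac.eqn.Ostrowski2+} alone do not single out a unique digit sequence for a countable exceptional family of $\gamma$, and a correct uniqueness proof must add a canonicity clause that excludes the eventual pattern $b_m=0,\ b_{m+1}=a_{m+1},\ b_{m+2}=0,\ b_{m+3}=a_{m+3},\dots$ in one of the two expansions. Note that the paper does not prove this lemma at all but cites Rockett and Sz\"usz; the subtlety your outline glosses over is precisely the one that a complete treatment must address.
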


\noindent Observe that in view of  (\ref{vb1zx}) and (\ref{contfrac.eqn.Ostrowski2+1}), the series in (\ref{contfrac.eqn.Ostrowski2}) is absolutely convergent.

\medskip

Further details of the expansion of real numbers  given by  Lemmas~\ref{Ostrowski1} and \ref{Ostrowski2} can be found in \cite[pp. 24, 33]{RockettSzusz1992}.  As already mentioned,
the Ostrowski expansion of real numbers  is a key tool in our approach towards obtaining  estimates for the size of $\|n\alpha-\gamma\|$, which will be the subject of the next section.

\section{Estimates for $\|n\alpha-\gamma\|$}\label{estim}

In this section we give estimates for the size of $\|n\alpha-\gamma\|$ using the Ostrowski expansions of $n$ and $\gamma$. Before we proceed with the general case of this fairly elaborate task, we consider the much easier homogenous case; that is, the case when $\gamma=0$. Note that if (\ref{contfrac.eqn.|na-g|}) is not satisfied, $\|n\alpha-\gamma\|=\|(n-l)\alpha\|$ for some fixed $l\in\N$ and so we are within the homogeneous setup.

\subsection{The homogeneous case}

We begin by stating a lemma which is a slightly simplified version of \cite[Theorem II.4.1]{RockettSzusz1992}.

\begin{lemma}\label{contfrac.homoglem.homoglem}
Let $\alpha\in[0,1)\setminus\Q$, $n\in\N$, and, with reference to Lemma~\ref{Ostrowski1}, let $m$ be the smallest integer such that $c_{m+1}\not= 0.$ If $m\ge 2$ then
\begin{equation}\label{hom}
\left\|n\alpha\right\|=\left|\sum_{k=m}^\infty c_{k+1}D_k\right|=\sgn (D_m)\cdot\sum_{k=m}^\infty c_{k+1}D_k\,.
\end{equation}
Also if $m=1$ and $\{\alpha\}<1/2$, then we have \eqref{hom}. In all other cases, $\|n\alpha\|\ge|D_2|$, which is a positive constant depending on $\alpha$ only.
\end{lemma}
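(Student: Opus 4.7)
The plan is to reduce the computation of $\|n\alpha\|$ to an estimate on a simple alternating series in the quantities $D_k$, using the Ostrowski expansion of $n$ from Lemma~\ref{Ostrowski1}. Setting
\begin{equation*}
S:=\sum_{k=m}^{\infty}c_{k+1}D_{k}\,,\qquad N:=\sum_{k=m}^{\infty}c_{k+1}p_{k}\in\Z\,,
\end{equation*}
the identity $q_{k}\alpha=p_{k}+D_{k}$ immediately gives $n\alpha-N=S$. Hence $\|n\alpha\|=\min(|S|,1-|S|)$ as soon as $|S|\le 1$, and in particular $\|n\alpha\|=|S|$ provided $|S|\le 1/2$. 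The whole proof then comes down to controlling $|S|$ and $\sgn S$.

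For the first assertion I would first establish the two-sided estimate
\begin{equation*}
0\ <\ \sgn(D_{m})\cdot S\ \le\ |D_{m-1}|\qquad(m\ge 1)\,.
\end{equation*}
The upper bound comes almost for free from the alternating sign pattern $D_{k}=(-1)^{k}|D_{k}|$: writing
\begin{equation*}
\sgn(D_{m})\cdot S \ = \ c_{m+1}|D_{m}|-c_{m+2}|D_{m+1}|+c_{m+3}|D_{m+2}|-\cdots\,,
\end{equation*}
one throws away the negative terms and uses $c_{k+1}\le a_{k+1}$ together with identity \eqref{vb0} applied at level $k=m-1$ to get $\sgn(D_{m})\cdot S\le|D_{m-1}|$. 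The lower bound is slightly more delicate: the extremal configuration that would make the sum vanish forces $c_{m+2}=a_{m+2}$, which by the Ostrowski restriction \eqref{contfrac.eqn.Ostrowski1+2} contradicts $c_{m+1}\ge 1$; ruling this out, combined with \eqref{vb9}, yields the strict bound $\sgn(D_{m})\cdot S\ge |D_{m+1}|>0$. Finally the inequality \eqref{vb7} gives $|D_{m-1}|\le|D_{1}|\le 1/q_{2}<1/2$ whenever $m\ge 2$, and in the remaining case $m=1$ with $\{\alpha\}<1/2$ one has $|D_{m-1}|=|D_{0}|=\{\alpha\}<1/2$. In both cases $|S|<1/2$, so $\|n\alpha\|=|S|=\sgn(D_{m})\cdot S$, which is exactly \eqref{hom}.

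For the exceptional cases, namely $m=0$ or $(m=1,\ \{\alpha\}\ge 1/2)$, I would show $\|n\alpha\|\ge|D_{2}|$ by refining the two-sided bound for $\sigma S$ where $\sigma:=\sgn D_{m}$. The same strategy based on \eqref{vb0}, combined with the constraint $c_{1}\le a_{1}-1$ from \eqref{contfrac.eqn.Ostrowski1+1} and the Ostrowski restriction, gives the improved estimates $\sigma S\ge|D_{m+1}|$ and $\sigma S\le 1-|D_{0}|$. When $\{\alpha\}\ge 1/2$ one has $a_{1}=1$ and therefore $1-|D_{0}|=1-\{\alpha\}=|D_{1}|$, so $|S|\le 1-|D_{1}|$, i.e.\ $1-|S|\ge|D_{1}|$. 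Combining, $\|n\alpha\|=\min(|S|,1-|S|)\ge \min(|D_{m+1}|,|D_{1}|)\ge|D_{2}|$ by monotonicity \eqref{monot}. The case $m=0$ is handled analogously after separating $c_{1}=1$ from $c_{1}\ge 2$.

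The main obstacle is the lower bound on $|S|$: the naive absolute-value estimates from \eqref{vb1} only yield $|S|\le|D_{m-1}|+|D_{m}|$, which is too weak to control the sign, and a crude lower bound would allow massive cancellation. Squeezing out the strict inequality $\sigma S>0$ is where the Ostrowski restriction \eqref{contfrac.eqn.Ostrowski1+2} has to be used non-trivially to exclude the extremal cancelling sequence, and this same step underlies the $|D_{2}|$ lower bound in the exceptional cases.
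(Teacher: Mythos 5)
Your proposal is sound and constitutes a valid self-contained proof. Note that the paper itself does not prove this lemma --- it cites Theorem~II.4.1 of Rockett--Sz\"usz --- so there is no in-paper argument to compare against; however, your method (set $S=\sum_{k\ge m}c_{k+1}D_k$, observe $n\alpha - N = S$ with $N\in\Z$, then bound $\sgn(D_m)S$ from both sides by discarding signed terms and invoking \eqref{vb0}, \eqref{vb9} together with the carry rule \eqref{contfrac.eqn.Ostrowski1+2}) is exactly the alternating-series technique the authors deploy in the immediately following Lemmas~\ref{contfrac.lemhombnd.lemhombnd} and~\ref{contfrac.leminhombnd.leminhombnd}, so it is fully consistent with the paper's toolkit.

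Two small points of exposition. First, for $m\ge 1$ your upper bound is $\sgn(D_m)S\le|D_{m-1}|$, obtained from \eqref{vb0} with $k=m-1$; in the exceptional case $m=1$, $\{\alpha\}\ge 1/2$ you should apply this bound directly as $|S|\le|D_0|$, giving $1-|S|\ge 1-|D_0|=|D_1|$ when $a_1=1$. Your intermediate claim that $\sgn(D_m)S\le 1-|D_0|$ holds for $m=1$ is not correct (it is the $m=0$ bound, where the constraint $c_1\le a_1-1$ actually enters), and indeed for $n=a_2q_1$ one has $|S|=a_2|D_1|=|D_0|-|D_2|$, which generally exceeds $1-|D_0|$ once $\{\alpha\}>1/2$. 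Fortunately the line that follows in your sketch, $|S|\le 1-|D_1|$, \emph{is} the correct bound $|S|\le|D_0|$ rewritten using $a_1=1$, so the conclusion $\|n\alpha\|\ge\min(|D_2|,\,|D_1|)=|D_2|$ survives; the slip is purely in the labelling. Second, the lower bound $\sgn(D_m)S\ge|D_{m+1}|$ deserves the quantitative derivation rather than the heuristic about the extremal cancelling configuration: dropping the nonnegative terms beyond the first, using $c_{m+1}\ge1$ and $c_{m+2}\le a_{m+2}-1$ (forced by \eqref{contfrac.eqn.Ostrowski1+2}), and then applying \eqref{vb0} at level $m$ gives $\sgn(D_m)S\ge|D_m|-(a_{m+2}-1)|D_{m+1}|-(|D_m|-a_{m+2}|D_{m+1}|)=|D_{m+1}|$ cleanly. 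With those two clarifications the argument is airtight.
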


Lemma~\ref{contfrac.homoglem.homoglem}  underpins the following two-sided estimate for $\|n\alpha\|$.

\begin{lemma}\label{contfrac.lemhombnd.lemhombnd}
Let $\alpha\in[0,1)\setminus\Q$ and $n\in\N$, and let $m$ be as in Lemma~\ref{contfrac.homoglem.homoglem}. If $\|n\alpha\|<|D_2|$ then $m\ge 2,$ and if $m\ge 2$ then
\begin{equation}\label{hom2}
(c_{m+1}-1)|D_m|+(a_{m+2}-c_{m+2})|D_{m+1}|\ \le \ \|n\alpha\| \ \le \ (c_{m+1}+1)|D_{m}|.
\end{equation}
\end{lemma}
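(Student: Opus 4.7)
My plan is to split the claim into two pieces: first handle the initial assertion ``$\|n\alpha\|<|D_2|\Rightarrow m\ge2$'' by a short case analysis, and then derive both inequalities in \eqref{hom2} in parallel from the series representation of Lemma~\ref{contfrac.homoglem.homoglem}.

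For the opening assertion, I would dispose of $m=0$ and of $m=1$ with $\{\alpha\}\ge 1/2$ immediately: Lemma~\ref{contfrac.homoglem.homoglem} puts each of these situations into its ``all other cases'' clause, which directly supplies $\|n\alpha\|\ge|D_2|$. The only subcase requiring work is $m=1$ with $\{\alpha\}<1/2$, and here I would invoke the main two-sided computation described below with $m$ replaced by $1$; the Ostrowski constraint \eqref{contfrac.eqn.Ostrowski1+2} forces $c_3\le a_3-1$ whenever $c_2\neq 0$, so the lower bound derived there evaluates to $(c_2-1)|D_1|+(a_3-c_3)|D_2|\ge|D_2|$ as needed. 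In effect, the first claim is a corollary of the lower bound in \eqref{hom2} together with the Ostrowski admissibility condition.

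The core of the proof is the main calculation, assuming $m\ge 2$. Using $D_k=(-1)^k|D_k|$, Lemma~\ref{contfrac.homoglem.homoglem} yields
\[
\|n\alpha\|=\sum_{k=m}^{\infty}(-1)^{k-m}c_{k+1}|D_k|=c_{m+1}|D_m|-c_{m+2}|D_{m+1}|+R,
\]
where $R=\sum_{k\ge m+2}(-1)^{k-m}c_{k+1}|D_k|$. I then bound $R$ above and below by separating positive and negative contributions and using $0\le c_{k+1}\le a_{k+1}$ together with identity \eqref{vb0}: the positive terms $c_{m+3}|D_{m+2}|+c_{m+5}|D_{m+4}|+\cdots$ sum to at most $|D_{m+1}|$ (apply \eqref{vb0} with $k=m+1$), while the negative terms $-c_{m+4}|D_{m+3}|-c_{m+6}|D_{m+5}|-\cdots$ have absolute value at most $|D_{m+2}|$ (apply \eqref{vb0} with $k=m+2$). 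Hence $-|D_{m+2}|\le R\le|D_{m+1}|$.

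The two bounds of \eqref{hom2} follow painlessly. For the upper bound, discard the nonpositive term $-c_{m+2}|D_{m+1}|$ and use $R\le|D_{m+1}|<|D_m|$ (monotonicity from \eqref{monot}) to conclude $\|n\alpha\|\le c_{m+1}|D_m|+|D_m|=(c_{m+1}+1)|D_m|$. For the lower bound, insert $R\ge-|D_{m+2}|$ and invoke identity \eqref{vb9} in the form $|D_{m+2}|=|D_m|-a_{m+2}|D_{m+1}|$ to get
\[
\|n\alpha\|\ge c_{m+1}|D_m|-c_{m+2}|D_{m+1}|-|D_m|+a_{m+2}|D_{m+1}|=(c_{m+1}-1)|D_m|+(a_{m+2}-c_{m+2})|D_{m+1}|.
\]
The only real subtlety is coordinating the Ostrowski admissibility condition with the $m=1$, $\{\alpha\}<1/2$ subcase of the first assertion; once that wrinkle is flattened, the remainder is a direct bookkeeping exercise built from \eqref{vb9}, \eqref{vb0}, and \eqref{monot}.
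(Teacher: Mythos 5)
Your proof is correct and follows essentially the same approach as the paper: both expand $\|n\alpha\|$ via the alternating series from Lemma~\ref{contfrac.homoglem.homoglem}, bound the tail using \eqref{vb0}, and derive the claim that $\|n\alpha\|<|D_2|$ forces $m\ge2$ from the lower bound of \eqref{hom2} evaluated at $m=1$ together with \eqref{contfrac.eqn.Ostrowski1+2}. Packaging the tail as a remainder $R$ with $-|D_{m+2}|\le R\le|D_{m+1}|$ and then invoking \eqref{vb9} to rewrite $|D_{m+2}|$ is an equivalent reorganization of the paper's term-by-term manipulation, not a different method.
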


\begin{proof}
By Lemma~\ref{contfrac.homoglem.homoglem}, if $\|n\alpha\|<|D_2|$ or $m\ge2$, we have that (\ref{hom}) holds and $m\ge 1$.
Then using the fact that $\sgn(D_k)=(-1)^k$ we find that
\begin{eqnarray}
\|n\alpha\|&=& c_{m+1}|D_m|-c_{m+2}|D_{m+1}|+c_{m+3}|D_{m+2}|-c_{m+4}|D_{m+3}|+\cdots \label{slv11} \\[2ex]
&\ge & c_{m+1}|D_m|+(a_{m+2}-c_{m+2})|D_{m+1}|  \nonumber\\[2ex]
&&\hspace*{18.3ex}-a_{m+2}|D_{m+1}|-a_{m+4}|D_{m+3}|-\cdots  \nonumber \\[2ex]
&\stackrel{\eqref{vb0}}{=} & c_{m+1}|D_m|+(a_{m+2}-c_{m+2})|D_{m+1}|-|D_m| \label{vb11}
\end{eqnarray}
where the above inequality is obtained by removing the non-negative terms $c_{m+3}|D_{m+2}|$, $c_{m+5}|D_{m+4}|$, \ldots, from (\ref{slv11}) and using \eqref{contfrac.eqn.Ostrowski1+1}. This establishes the l.h.s. of (\ref{hom2}).

\noindent By (\ref{contfrac.eqn.Ostrowski1+2}) and the fact that $c_{m+1}\not=0$, we have that $c_{m+2}<a_{m+2}$ and so if $m$ was equal to $1$, then the l.h.s. of (\ref{hom2}) would imply the inequality $\|n\alpha\|\ge |D_2|$.
Thus, $\|n\alpha\|<|D_2|$ implies that $m\ge 2$.

To obtain the r.h.s. of (\ref{hom2}) we argue similarly to the proof of the lower bound, this time removing the negative terms appearing in  (\ref{slv11}). It follows that
\begin{align*}
\|n\alpha\| & \le c_{m+1}|D_m|+c_{m+3}|D_{m+2}|+c_{m+5}|D_{m+4}|+\cdots\\[1ex]
 & \hspace*{-2.5ex}\stackrel{\eqref{vb0} \& \eqref{contfrac.eqn.Ostrowski1+1}}{\le}
c_{m+1}|D_m|+|D_{m+1}| \stackrel{\eqref{monot}}{\le} (c_{m+1}+1)|D_m|\,.
\end{align*}

The proof is now complete.
\end{proof}

\subsection{The inhomogeneous case \label{qw} }

It is worth pointing out that the inhomogeneous  estimates obtained in this section are valid with $\gamma =0$. We begin by establishing an inhomogeneous analogue of Lemma~\ref{contfrac.homoglem.homoglem}.

\begin{lemma}\label{contfrac.inhomoglem.inhomoglem}
Let $\alpha\in [0,1)\setminus\Q$ and $\gamma\in [-\alpha,1-\alpha)$, and suppose that {\rm(\ref{contfrac.eqn.|na-g|})} holds. Further,  let $n\in\N$ and, with reference to expansions {\rm(\ref{contfrac.eqn.Ostrowski1})} and {\rm(\ref{contfrac.eqn.Ostrowski2})}, let
\begin{equation}\label{delta}
\delta_{k+1}:=c_{k+1}-b_{k+1}\qquad (k\ge0)\,.
\end{equation}
Then, there exists a smallest integer $m=m(n,\alpha,\gamma)$ such that $\delta_{m+1}\not=0$ and
\begin{equation}\label{Sigma}
\Sigma=\Sigma(n,\alpha,\gamma):=\sum_{k=m}^\infty \delta_{k+1}D_k
\end{equation}
satisfies the equations
\begin{equation}\label{vb6A}
\|n\alpha-\gamma\|=\|\Sigma\|=\min\Big\{\left|\Sigma\right|,1-|\Sigma|\Big\}
\end{equation}
and
\begin{equation}\label{vb6B}
|\Sigma|=\sgn(\delta_{m+1}D_m)\Sigma\,.
\end{equation}
\end{lemma}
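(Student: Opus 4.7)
The plan is to compute $n\alpha-\gamma$ modulo $\Z$ using $D_k = q_k\alpha-p_k$, then control the magnitude and sign of the resulting expression $\Sigma$. Substituting the Ostrowski expansion \eqref{contfrac.eqn.Ostrowski1} of $n$ into $n\alpha$ and using \eqref{contfrac.eqn.Ostrowski2}, I would obtain
\[
n\alpha - \gamma \;=\; \sum_{k\ge 0}c_{k+1}p_k \;+\; \sum_{k\ge 0}\delta_{k+1}D_k \;=\; N + \Sigma,
\]
where $N := \sum_{k\ge 0}c_{k+1}p_k \in \Z$ and the tail collapses to $\Sigma$ because $\delta_{k+1}=0$ for $k<m$. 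If every $\delta_{k+1}$ vanished then $n\alpha-\gamma\in\Z$, contradicting \eqref{contfrac.eqn.|na-g|}; thus the smallest $m$ is well defined and $\|n\alpha-\gamma\|=\|\Sigma\|$ follows at once.

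Next I would establish $|\Sigma|<1$, which forces $\|\Sigma\|=\min\{|\Sigma|,1-|\Sigma|\}$ by the definition of $\|\cdot\|$. Decompose $\Sigma = X - \gamma$ with $X:=\sum_{k\ge 0}c_{k+1}D_k$, a finite sum since only finitely many $c_{k+1}$ are nonzero. The alternation $\sgn(D_k)=(-1)^k$, the Ostrowski constraints $c_1\le a_1-1$ and $c_{k+1}\le a_{k+1}$, together with \eqref{vb0} specialized at $k=0$ and $k=1$ and the identity $a_1|D_0|+|D_1|=1$, yield $-|D_0|<X<1-|D_0|$ strictly (strictness from the finiteness of the expansion). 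Combined with $\gamma\in[-\alpha,1-\alpha)=[-|D_0|,1-|D_0|)$, this gives $\Sigma\in(-1,1)$.

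For the sign statement \eqref{vb6B}, split $\Sigma = \delta_{m+1}D_m + T$ where $T := X''-Y''$, with $X'' := \sum_{k>m}c_{k+1}D_k$ and $Y'' := \sum_{k>m}b_{k+1}D_k$. The $c\leftrightarrow b$ symmetry (for $\delta_{m+1}<0$) and a sign flip (for $m$ odd) reduce matters to $\delta_{m+1}\ge 1$ with $m$ even, where the target is $\Sigma>0$. The crude estimate $|T|\le |D_m|+|D_{m+1}|$ coming from $|\delta_{k+1}|\le a_{k+1}$ and \eqref{vb1} immediately delivers $\Sigma\ge |D_m|-|D_{m+1}|>0$ when $\delta_{m+1}\ge 2$. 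The delicate case is $\delta_{m+1}=1$: here $c_{m+1}=b_{m+1}+1\ge 1$, so by \eqref{contfrac.eqn.Ostrowski1+2} one has $c_{m+2}\le a_{m+2}-1$. Combined with \eqref{vb0} this sharpens $X''\ge |D_{m+1}|-|D_m|$, while $Y''\le|D_{m+1}|$ by \eqref{vb0} applied to the $b$-sum. Hence $T\ge -|D_m|$ and $\Sigma\ge 0$, which I would upgrade to strict by invoking $\Sigma\ne 0$ (guaranteed by \eqref{contfrac.eqn.|na-g|}).

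The principal obstacle is precisely this $|\delta_{m+1}|=1$ subcase: the crude tail bound falls short by exactly $|D_{m+1}|$, and it is the non-adjacency constraint \eqref{contfrac.eqn.Ostrowski1+2} (or its counterpart \eqref{contfrac.eqn.Ostrowski2+} in the $\delta_{m+1}=-1$ scenario) that recovers exactly this gap. The other three parity/sign combinations follow by the indicated symmetries.
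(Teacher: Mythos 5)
Your proof is correct, and its overall shell matches the paper's: reduce $\|n\alpha-\gamma\|$ to $\|\Sigma\|$, bound $|\Sigma|$, and then show the lead term of $\Sigma$ determines the sign. The genuine content is \eqref{vb6B}, and there you take a noticeably different case split. The paper branches on $\delta_{m+2}$: for $|\delta_{m+2}|\le a_{m+2}-1$ its tail inequality (display~\eqref{vb5} at $l=m+1$) gives $\bigl|\sum_{k\ge m+1}\delta_{k+1}D_k\bigr|\le|D_m|\le|\delta_{m+1}D_m|$, so the first term already dominates; for $\delta_{m+2}=\pm a_{m+2}$ the non-adjacency rules force $\delta_{m+1}$ to have the sign that makes $\delta_{m+1}D_m$ and $\delta_{m+2}D_{m+1}$ point the same way, and the first \emph{two} terms jointly dominate the rest. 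You branch on $|\delta_{m+1}|$ instead: the crude tail bound $|T|\le|D_m|+|D_{m+1}|$ from \eqref{vb1} disposes of $|\delta_{m+1}|\ge2$, and when $|\delta_{m+1}|=1$ you recover the missing $|D_{m+1}|$ by applying the non-adjacency constraint \eqref{contfrac.eqn.Ostrowski1+2} (or \eqref{contfrac.eqn.Ostrowski2+}, depending on the sign of $\delta_{m+1}$) to the larger of $c_{m+1},b_{m+1}$, sharpening the one-sided estimate of the tail so that the single term $\delta_{m+1}D_m$ still dominates. Both arguments use the same raw ingredients; yours requires the slightly more delicate $X'',Y''$ computation but has the cleaner conclusion that the first term alone controls the sign. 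Your size bound $|\Sigma|<1$ via $\Sigma=X-\gamma$ with $X\in(-|D_0|,1-|D_0|)$ is also correct, though the paper's direct one-liner $|\Sigma|\le\sum_{k\ge0}a_{k+1}|D_k|-|D_0|=1$ is shorter, and the non-strict bound is already enough because $\|x\|=\min\{|x|,1-|x|\}$ holds for all $|x|\le1$. In a final write-up do make the appeal to $\Sigma\ne0$ (from \eqref{contfrac.eqn.|na-g|}) explicit in the strictness upgrades, just as you flagged.
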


\begin{proof}
Since $\|\cdot\|$ is invariant under integer translation, the l.h.s. of (\ref{vb6A}) is a straightforward consequence of (\ref{D_m}), (\ref{contfrac.eqn.Ostrowski1}) and (\ref{contfrac.eqn.Ostrowski2}), and the existence of $m$ follows trivially from \eqref{contfrac.eqn.|na-g|}.
Furthermore, by (\ref{contfrac.eqn.Ostrowski1+1}) and (\ref{contfrac.eqn.Ostrowski2+1}), we have that $|\delta_{2}|\le a_{2}$ and $|\delta_{1}|\le a_{1}-1$. Then,
\begin{equation}\label{vb8}
|\Sigma|\le \sum_{k=0}^\infty |\delta_{k+1}D_k| \ \le \
\sum_{k=0}^\infty a_{k+1}|D_k|-|D_0|\ \stackrel{\eqref{vb1zx}}{=}1.
\end{equation}
This implies that $\|\Sigma\|=\min\{|\Sigma|,1-|\Sigma|\}$ and establishes  the r.h.s. of (\ref{vb6A}).

In order to prove (\ref{vb6B}) first note that $|\delta_{k+1}|\le a_{k+1}$ for all $k\ge0$. Then, for any integer $l\ge1$
\begin{equation}\label{vb5}
\begin{array}[b]{rcl}
\displaystyle\left|\sum_{k=l}^\infty \delta_{k+1}D_k\right|
&\stackrel{}{\le} & |\delta_{l+1}|\cdot|D_{l}|+ \displaystyle\sum_{k=l+1}^\infty a_{k+1}|D_{k}|\\[2ex]
& \stackrel{\eqref{vb1}}{=} & |\delta_{l+1}|\cdot|D_{l}|+|D_{l}|+|D_{l+1}|\\[2ex]
& \stackrel{\eqref{vb9}}{=} & (|\delta_{l+1}|+1-a_{l+1})|D_{l}|+|D_{l-1}|.\\[1ex]
\end{array}
\end{equation}
First, consider the case when $|\delta_{m+2}|\le a_{m+2}-1$. Then, by (\ref{vb5}) with $l=m+1$, we have that $\left|\sum_{k=m+1}^\infty \delta_{k+1}D_k\right|\le |D_m|$. On the other hand, since $\delta_{m+1}\not=0$ and it is an integer, we have that $|\delta_{m+1}D_m|\ge |D_m|$. Therefore, the first term in the sum $\sum_{k=m}^\infty\delta_{k+1}D_k$ dominates and so (\ref{vb6B}) holds.

Now assume that $\delta_{m+2}=a_{m+2}$. Then, by (\ref{contfrac.eqn.Ostrowski1+1}), (\ref{contfrac.eqn.Ostrowski2+1}) and (\ref{delta}), we have that $c_{m+2}=a_{m+2}$ and $b_{m+2}=0$. Consequently, by (\ref{contfrac.eqn.Ostrowski1+2}), we get
$c_{m+1}=0$, and so $\delta_{m+1}=c_{m+1}-b_{m+1}<0$.
Then
$\sgn(\delta_{m+1}D_m)=\sgn(\delta_{m+2}D_{m+1})$,
which implies that
$$
|\delta_{m+1}D_m+\delta_{m+2}D_{m+1}|\ge|D_m|+|D_{m+1}|.
$$
In turn, by (\ref{vb5}), we have that
$$
\left|\sum_{k=m+2}^\infty \delta_{k+1}D_k\right|\le
|D_{m+1}|+|D_{m+2}|<|D_{m}|+|D_{m+1}|.
$$
Therefore the first two terms in the sum $\sum_{k=m}^\infty\delta_{k+1}D_k$ (which have the same sign) dominate and so (\ref{vb6B}) holds again.

The remaining case $\delta_{m+2}=-a_{m+2}$ is established in a similar manner  to the case $\delta_{m+2}=a_{m+2}$.  We leave the details to the reader.
\end{proof}

\bigskip

Lemma~\ref{contfrac.inhomoglem.inhomoglem} enables us to compute the value of $\|n\alpha-\gamma\|$ via $|\Sigma|$. The next two lemmas, akin to Lemma~\ref{contfrac.lemhombnd.lemhombnd}, provide accurate estimates for  $|\Sigma|$ and $1-|\Sigma|$.

\begin{lemma}\label{contfrac.leminhombnd.leminhombnd}
Let $\alpha$, $\gamma$, $n$, $\delta_{k+1}$, $m$ and $\Sigma$ be  as in Lemma~\ref{contfrac.inhomoglem.inhomoglem} and let $K$ be as in Lemma~\ref{Ostrowski1}. Then there exists some $\ell\in\N$, with $\ell\le \max\{2,K-m+1\}$, such that
    \begin{equation}\label{vb12}
        \delta_{m+2+i}=(-1)^{i}\sgn(\delta_{m+1})a_{m+2+i}\qquad(1\le i\le \ell-1),
    \end{equation}
    \begin{equation}\label{vb12+}
        \delta_{m+2+\ell}\not=(-1)^{\ell}\sgn(\delta_{m+1})a_{m+2+\ell}\phantom{\qquad(1\le i\le \ell-1),},
    \end{equation}
and we have that
\begin{equation}\label{contfrac.leminhombnd.eqn1}
\begin{array}[b]{rcl}
\displaystyle |\Sigma| & = & (|\delta_{m+1}|-1)|D_m|\\[2ex]
 &  & + \ (a_{m+2}-1-\sgn(\delta_{m+1})\delta_{m+2})|D_{m+1}|\\[2ex]
 &  & + \ (a_{m+2+\ell}-(-1)^\ell\sgn(\delta_{m+1})\delta_{m+2+\ell})|D_{m+1+\ell}|\\[2ex]
 & & +\ \Delta\,,\\[1ex]
\end{array}
\end{equation}
where
\begin{eqnarray}
    &&0~\le~ |\delta_{1}|-1 ~\le~ a_{1}-2~~~~~~~~~~~(\text{if }~m=0)\,,\label{delta1}\\[1ex]
    &&0~\le~ |\delta_{m+1}|-1 ~\le~ a_{m+1}-1~~~~(\text{if }~m\ge1)\,,\label{deltam}\\[1ex]
    &&0~\le~ a_{m+2}-1-\sgn(\delta_{m+1})\delta_{m+2} ~\le~ 2a_{m+2}-1\,,\label{eta1}\\[1ex]
    &&1~\le~ a_{m+2+\ell}-(-1)^\ell\sgn(\delta_{m+1})\delta_{m+2+\ell} ~\le~ 2a_{m+2+\ell}\label{eta2}\,,\quad\text{and}\\[1ex]
    &&0~\le~ \Delta~\le~ 2|D_{m+1+\ell}|+2|D_{m+2+\ell}|~<~ 4|D_{m+1+\ell}|\label{eta}\,.
\end{eqnarray}
\end{lemma}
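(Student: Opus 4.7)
\medskip

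\textbf{Proof proposal.}
Set $s := \sgn(\delta_{m+1})$. By Lemma~\ref{contfrac.inhomoglem.inhomoglem} and $\sgn(D_k)=(-1)^k$, we have $|\Sigma| = s(-1)^m\Sigma$ and so, writing $D_k = (-1)^k|D_k|$ and reindexing $j = k-m$,
\begin{equation*}
|\Sigma| = |\delta_{m+1}||D_m| - s\delta_{m+2}|D_{m+1}| + s\sum_{j=2}^{\infty}(-1)^j\delta_{m+1+j}|D_{m+j}|\,.
\end{equation*}
This is the identity from which everything will be extracted. First I would \emph{define} $\ell$ as the smallest positive integer with $\delta_{m+2+\ell}\ne(-1)^\ell s\,a_{m+2+\ell}$, so that \eqref{vb12} and \eqref{vb12+} hold by construction, and prove that $\ell\le\max\{2,K-m+1\}$.

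\textbf{Bounding $\ell$.} The key observation is that for any index $k>K$ we have $c_k=0$ by \eqref{contfrac.eqn.Ostrowski1+3}, so $\delta_k=-b_k\le 0$; hence the equation $\delta_{m+2+i}=(-1)^is\,a_{m+2+i}$ forces $(-1)^is=-1$ and $b_{m+2+i}=a_{m+2+i}$. Two \emph{consecutive} indices $i,i+1$ with $m+2+i,m+2+(i+1)>K$ cannot both satisfy this, as $(-1)^is$ alternates. Thus, if $m\ge K-1$ (so both $m+3$ and $m+4$ exceed $K$), then $\ell\le2$; whereas if $m\le K-2$, applying the same consecutive-indices argument to $i=K-m$ and $i=K-m+1$ (both $>K-m$, hence $m+2+i>K$) yields $\ell\le K-m+1$. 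Combined, $\ell\le\max\{2,K-m+1\}$.

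\textbf{Extracting the formula.} For $2\le j\le\ell$, the constraint gives $s(-1)^j\delta_{m+1+j}|D_{m+j}| = -a_{m+1+j}|D_{m+j}|$. By \eqref{vb9}, $a_{r+1}|D_r| = |D_{r-1}| - |D_{r+1}|$, so this middle block telescopes to
\begin{equation*}
-\sum_{i=1}^{\ell-1}a_{m+2+i}|D_{m+1+i}| = -|D_{m+1}| - |D_{m+2}| + |D_{m+\ell}| + |D_{m+\ell+1}|\,.
\end{equation*}
(One checks this is correct for $\ell=1$ as an empty sum and for $\ell=2$ as a single term.) Substituting and using $a_{m+2}|D_{m+1}|=|D_m|-|D_{m+2}|$ and $a_{m+2+\ell}|D_{m+1+\ell}|=|D_{m+\ell}|-|D_{m+2+\ell}|$ to absorb the $|D_m|$, $|D_{m+1}|$ and $|D_{m+1+\ell}|$ coefficients into the form requested, reveals that $\Delta$ equals precisely
\begin{equation*}
\Delta = |D_{m+\ell+1}|+|D_{m+\ell+2}| + (-1)^{\ell+1}\!\!\sum_{i=\ell+1}^{\infty}s(-1)^i\delta_{m+2+i}|D_{m+1+i}|\,.
\end{equation*}
The tail sum is bounded in absolute value by $\sum_{j\ge m+2+\ell}a_{j+1}|D_j|$, which by \eqref{vb1} with $k=m+1+\ell$ equals $|D_{m+1+\ell}|+|D_{m+2+\ell}|$. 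This yields both $\Delta\ge 0$ and \eqref{eta}, with the final bound $<4|D_{m+1+\ell}|$ following from \eqref{monot}.

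\textbf{Verifying the coefficient bounds.} The estimates \eqref{delta1}, \eqref{deltam} are immediate from \eqref{contfrac.eqn.Ostrowski1+1}--\eqref{contfrac.eqn.Ostrowski2+1} combined with the minimality of $m$. For \eqref{eta1}, the only thing to check is that $s\delta_{m+2}\ne a_{m+2}$; if it equaled $a_{m+2}$ then either $c_{m+2}=a_{m+2}$, $b_{m+2}=0$ (forcing $c_{m+1}=0$ by \eqref{contfrac.eqn.Ostrowski1+2} and hence $s=-1$, a contradiction) or symmetrically via \eqref{contfrac.eqn.Ostrowski2+}, both ruled out. Finally, \eqref{eta2} follows from the defining inequality $\delta_{m+2+\ell}\ne(-1)^\ell s\,a_{m+2+\ell}$ together with $|\delta_{m+2+\ell}|\le a_{m+2+\ell}$. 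The main technical obstacle is the bookkeeping in Case~B of the $\ell$-bound and the sign tracking in the telescoping; everything else is a direct manipulation.
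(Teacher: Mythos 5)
Your proof is essentially correct and follows the same route as the paper: define $\ell$ minimally, exploit the termination of the Ostrowski expansion of $n$ together with the alternating sign $(-1)^i\sgn(\delta_{m+1})$ to bound $\ell$, telescope the middle block via \eqref{vb9}, and control the tail via \eqref{vb1}. Two minor slips, neither of which breaks the argument. First, \eqref{contfrac.eqn.Ostrowski1+3} gives $c_j=0$ for $j\ge K+2$, not for every $j>K$ (the coefficient $c_{K+1}$ is in general the leading nonzero one); your blanket statement is off by one, but the indices you actually invoke it for ($m+3$ and $m+4$ when $m\ge K-1$, and $m+2+i$ with $i\ge K-m$ otherwise) are all $\ge K+2$, so each application is valid. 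Second, your displayed formula for $\Delta$ carries a spurious $(-1)^{\ell+1}$: reindexing and cancelling the two $j=\ell+1$ contributions actually yields
\[
\Delta \;=\; |D_{m+\ell+1}|+|D_{m+\ell+2}|+\sgn(\delta_{m+1})\sum_{j=\ell+2}^{\infty}(-1)^j\delta_{m+1+j}|D_{m+j}|\,,
\]
which is the form the paper obtains; since you only bound the tail's absolute value by $|D_{m+\ell+1}|+|D_{m+\ell+2}|$ (again via \eqref{vb1}), the conclusions $0\le\Delta\le 2|D_{m+\ell+1}|+2|D_{m+\ell+2}|<4|D_{m+\ell+1}|$ are unaffected by the sign slip.
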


\vspace*{3ex}

\begin{remark}
In the above statement $\ell$ can in principle be $1$ in which case condition \eqref{vb12} is empty and thus automatically holds.
\end{remark}

\begin{proof}
By (\ref{contfrac.eqn.Ostrowski1+3}) and (\ref{contfrac.eqn.Ostrowski2+1}), we have that $\delta_{m+2+\ell}=-b_{m+2+\ell}\le 0$ for all $\ell\ge \ell_0:=\max\{1,K-m\}$. Obviously, the expression  $$(-1)^{\ell}\sgn(\delta_{m+1})a_{m+2+\ell}$$ takes one positive and one negative value when $\ell\in\{\ell_0,\ell_0+1\}$. Therefore, \eqref{vb12+} holds for some $\ell\le \ell_0+1=\max\{2,K-m+1\}$. On letting $\ell$ to be the minimal positive integer satisfying \eqref{vb12+} we also ensure  the validity of (\ref{vb12}).

Equations \eqref{delta1} and \eqref{deltam} follow immediately from \eqref{delta} together with he assumption that $\delta_{m+1}\neq0$, and properties \eqref{contfrac.eqn.Ostrowski1+1} and \eqref{contfrac.eqn.Ostrowski2+1} of the Ostrowski expansion.

Next, we consider \eqref{eta1}. If $\delta_{m+1}>0$, then $c_{m+1}=\delta_{m+1}+b_{m+1}>0$ and, by (\ref{contfrac.eqn.Ostrowski1+1}), we have that $c_{m+2}\le a_{m+2}-1$. This means that $\sgn(\delta_{m+1})\delta_{m+2}=\delta_{m+2}=c_{m+2}-b_{m+2}\le a_{m+2}-1$ and implies the l.h.s. of \eqref{eta1}. Similarly, if $\delta_{m+1}<0$, then $b_{m+1}=c_{m+1}-\delta_{m+1}>0$ and, by (\ref{contfrac.eqn.Ostrowski2+1}), we have that $b_{m+2}\le a_{m+2}-1$. This means that $\sgn(\delta_{m+1})\delta_{m+2}=-\delta_{m+2}=b_{m+2}-c_{m+2}\le a_{m+2}-1$ and again implies the l.h.s. of \eqref{eta1}. In either case $|\sgn(\delta_{m+1})\delta_{m+2}|\le a_{m+2}$ and so the r.h.s. of \eqref{eta1} is straightforward.

Equation \eqref{eta2} is a consequence of (\ref{vb12+}) and the inequality
$|\delta_{m+2+\ell}|\le a_{m+2+\ell}$, which is valid for the same reason as the r.h.s. of \eqref{deltam}.

What remains is to prove (\ref{eta}). By (\ref{Sigma}), (\ref{vb6B}) and the fact that $\sgn(D_i)=(-1)^i$, we obtain that
\begin{align*}
|\Sigma| & =  \sgn(\delta_{m+1}D_m)\cdot\sum_{k=m}^\infty \delta_{k+1}D_k\\[2ex]
 & =  |\delta_{m+1}|\,|D_m|-\sgn(\delta_{m+1})\delta_{m+2}|D_{m+1}|+\\[2ex]
  & \qquad+\sum_{k=m+2}^\infty (-1)^{k-m}\sgn(\delta_{m+1})\delta_{k+1}|D_k|\\[2ex]
  & \stackrel{\eqref{vb12}}{=}  |\delta_{m+1}|\,|D_m|-\sgn(\delta_{m+1})\delta_{m+2}|D_{m+1}|-\sum_{k=m+2}^{m+\ell} a_{k+1}|D_k|-\\[2ex]
  & \qquad- (-1)^{\ell}\sgn(\delta_{m+1})\delta_{m+\ell+2}|D_{m+\ell+1}|+\\[2ex]
  & \qquad+ \sum_{k=m+\ell+2}^\infty (-1)^{k-m}\sgn(\delta_{m+1})\delta_{k+1}|D_k|\,.
\end{align*}
Therefore, on combining the implicit expression  for $\Delta$ from (\ref{contfrac.leminhombnd.eqn1}) with the above equation for $|\Sigma|$, we obtain  that
\begin{equation}
\Delta= |D_m|+|D_{m+1}|-\sum_{k=m+1}^{m+\ell+1}a_{k+1}|D_{k}|+\sum_{k=m+\ell+2}^\infty (-1)^{k-m}\sgn(\delta_{m+1})\delta_{k+1}|D_k|\,.
\end{equation}
Therefore, since $|\delta_{k+1}|\le a_{k+1}$, we get that
$$
\Delta\ge |D_m|+|D_{m+1}|-\sum_{k=m+1}^{\infty}a_{k+1}|D_{k}|\stackrel{\eqref{vb1}}{=}0
$$
and
$$
\begin{array}{rcl}
\Delta&\le& \displaystyle|D_m|+|D_{m+1}|-\sum_{k=m+1}^{m+1+\ell}a_{k+1}|D_{k}|+
\sum_{k=m+2+\ell}^{\infty}a_{k+1}|D_{k}|\\[4ex]
 & = & \underbrace{\displaystyle|D_m|+|D_{m+1}|- \sum_{k=m+1}^{\infty}a_{k+1}|D_{k}|}_{\stackrel{\phantom{\,\eqref{vb1}}|\hspace*{0.05ex}|\,\eqref{vb1}}{\rule{0ex}{2ex} \textstyle0}} \quad +\quad
\displaystyle2\sum_{k=m+2+\ell}^{\infty}a_{k+1}|D_{k}|\\[9.5ex]
&\stackrel{\eqref{vb1}}{=} & 2|D_{m+1+\ell}|+2|D_{m+2+\ell}|< 4|D_{m+1+\ell}|\,.
\end{array}
$$
This completes the proof.
\end{proof}

\begin{corollary}\label{cor1}
Under the conditions of Lemma~\ref{contfrac.leminhombnd.leminhombnd},  we have that
$$
\|n\alpha-\gamma\|< (|\delta_{m+1}|+2)|D_m|.
$$
\end{corollary}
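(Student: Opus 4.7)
The plan is to bound $|\Sigma|$ directly and then invoke \eqref{vb6A} from Lemma~\ref{contfrac.inhomoglem.inhomoglem}, which gives $\|n\alpha-\gamma\|=\|\Sigma\|\le |\Sigma|$. Rather than extract the estimate from the rather elaborate identity \eqref{contfrac.leminhombnd.eqn1} in Lemma~\ref{contfrac.leminhombnd.leminhombnd}, I would work straight from the definition of $\Sigma$ and peel off the leading term.

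First I would split
$$\Sigma \;=\; \delta_{m+1}D_m \;+\; \sum_{k=m+1}^\infty \delta_{k+1}D_k,$$
so that the triangle inequality yields $|\Sigma|\le |\delta_{m+1}|\,|D_m|+\bigl|\sum_{k=m+1}^\infty \delta_{k+1}D_k\bigr|$. Next I would invoke inequality \eqref{vb5}, which was already established within the proof of Lemma~\ref{contfrac.inhomoglem.inhomoglem}, with $l=m+1$, giving
$$\left|\sum_{k=m+1}^\infty \delta_{k+1}D_k\right| \;\le\; \bigl(|\delta_{m+2}|+1-a_{m+2}\bigr)|D_{m+1}| \;+\; |D_m|.$$
Since $|\delta_{m+2}|\le a_{m+2}$, which follows directly from \eqref{contfrac.eqn.Ostrowski1+1}, \eqref{contfrac.eqn.Ostrowski2+1} together with the definition \eqref{delta} of $\delta_{k+1}$, the bracketed coefficient is at most $1$, so the tail is majorised by $|D_{m+1}|+|D_m|$.

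Combining these estimates gives $|\Sigma|\le (|\delta_{m+1}|+1)|D_m|+|D_{m+1}|$, and then the strict monotonicity $|D_{m+1}|<|D_m|$ recorded in \eqref{monot} immediately upgrades this to the strict bound $|\Sigma| < (|\delta_{m+1}|+2)|D_m|$, which combined with $\|n\alpha-\gamma\|\le |\Sigma|$ delivers the corollary. There is really no obstacle to overcome here: once the tail bound \eqref{vb5} from the proof of Lemma~\ref{contfrac.inhomoglem.inhomoglem} is in hand, the statement reduces to the triangle inequality and the elementary fact that the sequence $(|D_k|)$ is strictly decreasing.
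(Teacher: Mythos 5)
Your proof is correct, and it takes a cleaner and more elementary route than the paper's. The paper derives the bound by invoking the full structural identity \eqref{contfrac.leminhombnd.eqn1} from Lemma~\ref{contfrac.leminhombnd.leminhombnd} together with the error estimate \eqref{eta}, obtaining
$$
\|n\alpha-\gamma\| \le (|\delta_{m+1}|-1)|D_m|+(2a_{m+2}-1)|D_{m+1}|+(2a_{m+3}+4)|D_{m+2}|
$$
and then collapsing this via \eqref{vb9}. You instead bypass Lemma~\ref{contfrac.leminhombnd.leminhombnd} altogether: peeling off the leading term of $\Sigma$, applying the triangle inequality, and invoking the tail estimate \eqref{vb5} (established in the proof of Lemma~\ref{contfrac.inhomoglem.inhomoglem}) with $l=m+1$. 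The coefficient $|\delta_{m+2}|+1-a_{m+2}\le 1$ follows from the Ostrowski bound $|\delta_{m+2}|\le a_{m+2}$, so the tail is at most $|D_{m+1}|+|D_m|$, and \eqref{monot} finishes the job exactly as you say. One small remark: strictly speaking the corollary is phrased ``under the conditions of Lemma~\ref{contfrac.leminhombnd.leminhombnd},'' but your argument only uses the setup of Lemma~\ref{contfrac.inhomoglem.inhomoglem} (which has the same hypotheses), so this is harmless. The paper's route has the merit of illustrating the use of the decomposition \eqref{contfrac.leminhombnd.eqn1} that is exploited more heavily elsewhere; yours has the merit of directness and of avoiding any appeal to the intermediate quantity $\ell$ and the error term $\Delta$.
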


\begin{proof}
By (\ref{contfrac.leminhombnd.eqn1}) and (\ref{eta}), it follows that
\begin{align*}
\|n\alpha-\gamma\| \ & \le\ (|\delta_{m+1}|-1)|D_m|+(2a_{m+2}-1)|D_{m+1}|+(2a_{m+3}+4)|D_{m+2}|\\[1ex]
& \stackrel{\eqref{vb9}}{=}  (|\delta_{m+1}|+1)|D_m|+|D_{m+1}|< (|\delta_{m+1}|+2)|D_m|.
\end{align*}
\end{proof}

\begin{lemma}\label{contfrac.leminhombnd.leminhombnd2}
Let $\alpha$, $\gamma$, $n$, $\delta_{k+1}$, $m$ and $\Sigma$ be  as in Lemma~\ref{contfrac.inhomoglem.inhomoglem} and let $K$ be as in Lemma~\ref{Ostrowski1}.  Then there exists a positive integer $L\le K+2$ such that
    \begin{equation}\label{vb12zx}
        \delta_{i+1}=(-1)^{i+m}\sgn(\delta_{m+1})a_{i+1}\qquad(1\le i\le L-1),
    \end{equation}
    \begin{equation}\label{vb12+zx}
        \delta_{L+1}\not=(-1)^{L+m}\sgn(\delta_{m+1})a_{L+1}\phantom{\qquad(1\le i\le L-1)}
    \end{equation}
and
\begin{equation}\label{contfrac.leminhombnd.eqn1zx}
\begin{array}[b]{rcl}
\displaystyle 1-|\Sigma| & = & (a_1-1-(-1)^m\sgn(\delta_{m+1})\delta_1)|D_0|\\[2ex]
 &  & + \ \big(a_{L+1}-(-1)^{L+m}\sgn(\delta_{m+1})\delta_{L+1}\big)|D_L|\\[2ex]
 & & +\ \widetilde\Delta\,,\\[1ex]
\end{array}
\end{equation}
where all the three terms in the r.h.s.\,of \eqref{contfrac.leminhombnd.eqn1zx} are non-negative, and
\begin{equation}\label{etazx}
    0\le \widetilde\Delta< 4|D_{L}|\,.
\end{equation}
\end{lemma}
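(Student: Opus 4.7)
The plan is to run a ``dual'' version of the proof of Lemma~\ref{contfrac.leminhombnd.leminhombnd}. Instead of expanding $|\Sigma|$ outward from the first nonzero index $m$, I would use identity \eqref{vb1zx}, rewritten as
$$1=(a_1-1)|D_0|+\sum_{k=1}^\infty a_{k+1}|D_k|,$$
and subtract off $|\Sigma|$. This naturally produces a sum in which a summand vanishes precisely when $\delta_{k+1}$ attains the extremal signed value $(-1)^{k+m}\sgn(\delta_{m+1})a_{k+1}$, which is exactly the content of \eqref{vb12zx}.

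First, using $\sgn(D_k)=(-1)^k$ together with \eqref{vb6B} from Lemma~\ref{contfrac.inhomoglem.inhomoglem}, I would rewrite
$$|\Sigma|=\sum_{k=m}^\infty\eta_{k+1}|D_k|,\qquad \eta_{k+1}:=(-1)^{k+m}\sgn(\delta_{m+1})\delta_{k+1},$$
observe that $\eta_{m+1}=|\delta_{m+1}|$ and $|\eta_{k+1}|\le a_{k+1}$, and extend the sum down to $k=0$ at no cost (since, by minimality of $m$, $\delta_j=0$ and hence $\eta_j=0$ for $1\le j\le m$). Subtracting from the displayed identity yields
$$1-|\Sigma|=(a_1-1-\eta_1)|D_0|+\sum_{k=1}^\infty(a_{k+1}-\eta_{k+1})|D_k|,$$
where every summand is non-negative; the $|D_0|$-coefficient is non-negative either because $\delta_1=0$ (when $m\ge1$) or because $\eta_1=|\delta_1|\le a_1-1$ by \eqref{delta1} (when $m=0$). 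I would then define $L\ge1$ as the smallest positive integer with $\eta_{L+1}\ne a_{L+1}$, which is precisely \eqref{vb12+zx}; by minimality, \eqref{vb12zx} holds for $1\le i\le L-1$, so the summands with $1\le k\le L-1$ vanish and collecting the rest gives the claimed decomposition \eqref{contfrac.leminhombnd.eqn1zx} with $\widetilde\Delta:=\sum_{k=L+1}^\infty(a_{k+1}-\eta_{k+1})|D_k|\ge0$. The upper bound $\widetilde\Delta<4|D_L|$ follows from $a_{k+1}-\eta_{k+1}\le 2a_{k+1}$ together with \eqref{vb1} applied at index $L$: $\widetilde\Delta\le 2(|D_L|+|D_{L+1}|)<4|D_L|$ by \eqref{monot}.

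The only non-routine step, and the one I expect to require care, is confirming that $L\le K+2$. For this one argues as follows: for $k>K$ we have $c_{k+1}=0$ by \eqref{contfrac.eqn.Ostrowski1+3}, so $\delta_{k+1}=-b_{k+1}\le 0$; meanwhile, the target value $(-1)^{k+m}\sgn(\delta_{m+1})a_{k+1}$ is $\pm a_{k+1}$ and alternates in sign as $k$ increases. Since $a_{k+1}\ge1$, at least one of $k=K+1$ or $k=K+2$ produces a strictly positive target that the non-positive $\delta_{k+1}$ cannot match, forcing \eqref{vb12+zx} to trigger by index $L\le K+2$. Everything else is bookkeeping essentially identical in spirit to the argument for Lemma~\ref{contfrac.leminhombnd.leminhombnd}.
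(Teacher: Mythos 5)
Your proposal is correct and matches the paper's argument essentially step for step: both start from identity~\eqref{vb1zx} in the form $1=(a_1-1)|D_0|+\sum_{k\ge1}a_{k+1}|D_k|$, subtract the expansion of $|\Sigma|$ obtained from \eqref{vb6B} with $\sgn(D_k)=(-1)^k$ to get a sum of non-negative coefficients, define $L$ as the first index where a coefficient fails to vanish, bound $L\le K+2$ by the sign alternation of $(-1)^{k+m}\sgn(\delta_{m+1})a_{k+1}$ against $\delta_{k+1}\le0$ for $k>K$, and estimate the tail $\widetilde\Delta$ via \eqref{vb1} and \eqref{monot}. The auxiliary notation $\eta_{k+1}$ is a cosmetic convenience, not a change of method.
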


\begin{proof}
By (\ref{contfrac.eqn.Ostrowski1+3}) and (\ref{contfrac.eqn.Ostrowski2+1}), we have that $\delta_{i+1}\le 0$ for all $i\ge K+1$. This readily implies the existence of $L\in\N$ such that $L\le K+2$ and (\ref{vb12zx}) and (\ref{vb12+zx}) hold -- see the proof of Lemma~\ref{contfrac.leminhombnd.leminhombnd} for a similar and more detailed argument. The non-negativity of the first two terms in the r.h.s. of \eqref{contfrac.leminhombnd.eqn1zx} follows immediately from the facts that $|\delta_1|\le a_1-1$ and $|\delta_{k+1}|\le a_{k+1}$ for $k\ge1$.

Now we proceed to the proof of (\ref{etazx}). We have that
\begin{align*}
1-|\Sigma| \ & \stackrel{\eqref{Sigma} \& \eqref{vb6B}}{=} \  1-\sgn(\delta_{m+1}D_m)\cdot\sum_{k=m}^\infty \delta_{k+1}D_k\\[0ex]
           & \stackrel{\eqref{vb1zx}}{=} \  (a_1-1)|D_0|+\sum_{k=1}^\infty a_{k+1}|D_k| -\sgn(\delta_{m+1}D_m)\cdot\sum_{k=m}^\infty \delta_{k+1}D_k\\[2ex]
           &= \  (a_1-1-(-1)^m\sgn(\delta_{m+1})\delta_1)|D_0|\\[2ex]
           &  \qquad+ \sum_{k=1}^\infty \big(a_{k+1}-(-1)^{k+m}\sgn(\delta_{m+1})\delta_{k+1}\big)|D_k|\\[2ex]
           & \stackrel{\eqref{vb12zx}}{=} \  (a_1-1-\sgn(\delta_{m+1}D_m)\delta_1)|D_0|\\[2ex]
           &\qquad+\big(a_{L+1}-(-1)^{L+m}\sgn(\delta_{m+1})\delta_{L+1}\big)|D_L|\\[2ex]
           &  \qquad+ \sum_{k=L+1}^\infty \big(a_{k+1}-(-1)^{k+m}\sgn(\delta_{m+1})\delta_{k+1}\big)|D_k|\,.
\end{align*}
The latter sum is by definition $\widetilde\Delta$. Since $|\delta_{k+1}|\le a_{k+1}$ for all $k$, we trivially have that
$$
0\ \le \ \widetilde\Delta \ \le \ 2 \sum_{k=L+1}^\infty a_{k+1}|D_k| \ \stackrel{\eqref{vb1}}{=} \ 2(|D_L|+|D_{L+1}|)\ \le \ 4 |D_L|\,.
$$
This establishes  (\ref{etazx}) and completes the proof.
\end{proof}

\section{A gaps lemma and its consequences}\label{gaps}

In the course of establishing Theorems~\ref{T4} -- \ref{T8} we will group together natural numbers $n$ such that $\|n\alpha-\gamma\|$ is of comparable size. Akin to Lebesgue integration, this natural idea will prove to be extremely fruitful for our purposes. We shall see that the `grouping' in question leads to investigating  subsets $A(d_1,\dots,d_{m+1}) $ of natural numbers that have, up to certain point, pre-determined digits  $d_1,\dots,d_{m+1}$  in their Ostrowski expansions. The following lemma reveals the structure of such sets in terms of the  size of the gap  between consecutive integers in $A(d_1,\dots,d_{m+1}) $.

\begin{lemma}[Gaps Lemma]\label{gapslemma1}
Let $\alpha\in\R\setminus\Q$, $m\ge0$ and $d_1,\dots,d_{m+1}$ be non-negative integers such that $d_1<a_1$, $d_{k+1}\le a_{k+1}$ for $k=1,\dots,m$ and $d_{k}=0$ whenever $d_{k+1}=a_{k+1}$ for $k\le m$. Let $A=A(d_1,\dots,d_{m+1})$ denote the set of all $n\in\N$ with Ostrowski expansions of the form
\begin{equation}\label{contfrac.sumbndlem.eqn1}
n \ = \ \sum_{k=0}^md_{k+1}q_k \ +\sum_{k=m+1}^\infty c_{k+1}q_k\,.
\end{equation}
Write $A=\{n_1<n_2<\dots\}$ in increasing order. Then, for every $i\ge 1$ we have that:
\begin{itemize}
  \item[{\rm(i)}] if $d_{m+1}>0$ then $n_{i+1}-n_i\in\{q_{m+1},\ q_{m+1}+q_m\}$,  and \\[-1.5ex]
  \item[{\rm(ii)}] if $d_{m+1}=0$ then $n_{i+1}-n_i\in\{q_{m+1},\ q_m\}$. Furthermore, if $n_{i+1}-n_i=q_m$ then $c_{m+2}(n_i)=a_{m+2}$ and the gap $n_{i+1}-n_i$ is preceded by $a_{m+2}$ consecutive gaps of length $q_{m+1}$.
\end{itemize}
\end{lemma}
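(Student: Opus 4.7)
The plan is to recast the problem using the decomposition $n = M + n^*$, where $M = \sum_{k=0}^{m} d_{k+1} q_k$ is the fixed ``prefix'' and $n^* = \sum_{k \ge m+1} c_{k+1}(n)\,q_k$ is the ``tail''. By uniqueness of the Ostrowski expansion, the tail sequences $(c_{m+2}(n), c_{m+3}(n), \ldots)$ arising from $n \in A$ are precisely those sequences satisfying the usual Ostrowski admissibility conditions for indices $\ge m+2$, together with the single extra boundary rule that $c_{m+2} = a_{m+2}$ is allowed if and only if $d_{m+1} = 0$. This boundary rule is exactly what separates parts~(i) and~(ii): in case~(i) the digit $c_{m+2}$ is effectively capped at $a_{m+2}-1$, whereas in case~(ii) it may reach $a_{m+2}$.

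To determine $n_{i+1}$ I would find the smallest $j \ge m+2$ such that zeroing the tail digits $c_{m+2}(n_i), \ldots, c_{j-1}(n_i)$ and incrementing $c_j(n_i)$ by one yields an admissible tail. This gives
\[
n_{i+1}-n_i \;=\; q_{j-1}-\sum_{k=m+2}^{j-1}c_k(n_i)\,q_{k-1}.
\]
The key combinatorial observation is that at every intermediate position $k \in \{m+2,\ldots,j-1\}$ exactly one of two obstructions to raising $c_k$ must hold: either $c_k(n_i) = a_k$, or $c_{k+1}(n_i) = a_{k+1}$ (which forces $c_k(n_i) = 0$). Moreover, the Ostrowski rule $c_k = 0$ when $c_{k+1} = a_{k+1}$ makes the two obstruction types alternate as $k$ decreases from $j-1$. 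Using $a_k q_{k-1} = q_k - q_{k-2}$ the sum then telescopes, and one sees that only the obstruction at the bottom of the cascade (position $m+2$) survives. When the bottom obstruction is of the ``$c_{m+3}(n_i) = a_{m+3}$'' type the gap collapses to $q_{m+1}$; when it is of the ``$c_{m+2}(n_i)$ at its cap'' type the gap is $q_{m+1}+q_m$ if the cap is $a_{m+2}-1$ (case~(i)) and $q_m$ if the cap is $a_{m+2}$ (case~(ii)). Together with the trivial no-cascade case $j=m+2$, which also gives $q_{m+1}$, this accounts for every admissible gap and proves both the dichotomy in~(i) and the dichotomy in~(ii); it also shows that $n_{i+1}-n_i = q_m$ forces $c_{m+2}(n_i) = a_{m+2}$.

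For the final clause of~(ii) I would run the same analysis in reverse from any $n_i$ with $c_{m+2}(n_i) = a_{m+2}$. Subtracting $q_{m+1}$ from $n_i$ simply decrements $c_{m+2}$ by one and leaves every other digit of the Ostrowski expansion of $n_i$ intact; this operation is admissible because $c_{m+2}(n_i) = a_{m+2} \neq 0$ forces $c_{m+3}(n_i) < a_{m+3}$, so no Ostrowski constraint is broken. Iterating $a_{m+2}$ times produces the $a_{m+2}$ consecutive predecessors $n_i,\, n_i - q_{m+1},\ldots,n_i - a_{m+2}q_{m+1}$, each in $A$ and each separated by a gap of $q_{m+1}$, giving the asserted preceding block. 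The main obstacle will be controlling arbitrarily long cascades in the forward direction; it is the forced alternation of the two obstruction types together with $q_s - a_s q_{s-1} = q_{s-2}$ that telescopes any such cascade down to one of $\{q_{m+1},\, q_{m+1}+q_m,\, q_m\}$, and without this alternation the analysis would in principle allow arbitrarily many different gap sizes.
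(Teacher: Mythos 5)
Your proposal is correct and follows essentially the same route as the paper: both exploit the reverse-lexicographic ordering of the tail digits, classify the gap by the length and parity of an alternating cascade of blocked positions, and telescope via $a_kq_{k-1}=q_k-q_{k-2}$. The only cosmetic difference is that the paper reads the cascade off the explicit form of $n_{i+1}$ and recovers $n_i$ (a predecessor view), whereas you locate $n_{i+1}$ forward from $n_i$; the alternation and telescoping mechanisms are identical.
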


\begin{proof}
The Ostrowski expansion of a natural number $n$ can be defined via the following well known ``greedy algorithm''. First, choose the largest integer $M$ satisfying $q_M\le n$. This is possible since $\alpha$ is irrational and so the sequence $q_k$ is unbounded. Then, select $c_{M+1}$ to be the largest integer satisfying $c_{M+1}q_M \le n$. Next, subtract $c_{M+1}q_M$ from $n$ and then repeat this process on the remaining integer to determine $c_M$ and so on. Formally, with $n_M:=n$ we define
\begin{equation}\label{greedy}
c_{k+1}:=\left[\frac{n_k}{q_{k}}\right]\quad\text{and}\quad n_{k-1}:=n_k-c_{k+1}q_k\qquad\text{for}\quad k=M,M-1,\dots,0\,.
\end{equation}
Since $q_0=1$, this process will terminate with the remainder equal to $0$. Also we define $c_{k+1}=0$ for $k>M$.

Using (\ref{vb7-}) and (\ref{greedy}) one easily verifies properties (\ref{contfrac.eqn.Ostrowski1})--(\ref{contfrac.eqn.Ostrowski1+3}); that is, the above algorithm produces the Ostrowski expansion of $n$. Consequently, the set $A(d_1,\dots,d_{m+1})$ can be put in increasing order by using the reverse lexicographic ordering of the Ostrowski coefficients. For the rest of the proof let
\begin{equation}\label{n'}
n':=\sum_{k=0}^md_{k+1}q_k.
\end{equation}
In view of the conditions imposed on $d_1,\dots,d_{m+1}$ in the statement of the lemma, we have that $A(d_1,\dots,d_{m+1})\not=\emptyset$. Furthermore, if $n'>0$ then $n'\in A(d_1,\dots,d_{m+1})$ and in this case $n'$ is the smallest element of $A(d_1,\dots,d_{m+1})$. Now we fix some  $i\in\N$ and suppose that
$$
n_{i+1}=n'+\sum_{k=m'}^\infty c_{k+1}q_k,
$$
where $c_{k+1}=c_{k+1}(n_{i+1})$ are the Ostrowski coefficients of $n_{i+1}$ and $m'\ge m+1$ with $c_{m'+1}\not=0$. Since $i+1>1$, we have that $n_{i+1}>n_1\ge n'$, and such an $m'$ exists. We now consider three separate scenarios.

\medskip

\noindent\textbf{Case (1)}. Suppose that $m'=m+1$. Then, by the reverse lexicographic ordering of $A(d_1,\dots,d_{m+1})$, we have that
  $$
  n_i=n'+(c_{m+2}-1)q_{m+1}+\sum_{k=m+2}^\infty c_{k+1}q_k\,.
  $$
  In this case,  we obviously have that $n_{i+1}-n_i=q_{m+1}$.

\medskip

  \noindent\textbf{Case (2)}. Suppose that $m'-m$ is positive and even. Then again appealing to the reverse lexicographic ordering we find that
  $$
\begin{array}{r}
  n_i=n'+a_{m+3}q_{m+2}+a_{m+5}q_{m+4}+\dots+ a_{m'}q_{m'-1}\\[2ex] \displaystyle +(c_{m'+1}-1)q_{m'}+ \sum_{k=m'+1}^\infty c_{k+1}q_k\,.
\end{array}
  $$
 Using (\ref{vb7-}) one readily verifies that $n_{i+1}-n_i=q_{m+1}$ in this case.

\medskip

  \noindent\textbf{Case (3)}. Suppose that $m'-m-2$ is positive and odd. Then the form of $n_i$ depends on whether or not $d_{m+1}>0$. If $d_{m+1}>0$, then
  $$
  \begin{array}{r}
  n_i=n'+(a_{m+2}-1)q_{m+1}+a_{m+4}q_{m+3}+a_{m+6}q_{m+5}+\dots+a_{m'}q_{m'-1}\\[2ex] \displaystyle +(c_{m'+1}-1)q_{m'}+ \sum_{k=m'+1}^\infty c_{k+1}q_k\,.
\end{array}
  $$
  Again using (\ref{vb7-}) one verifies that $n_{i+1}-n_i=q_{m+1}+q_{m}$. Finally, if
 $d_{m+1}=0$ then
  $$
  \begin{array}{r}
  n_i=n'+a_{m+2}q_{m+1}+a_{m+4}q_{m+3}+a_{m+6}q_{m+5}+\dots+a_{m'}q_{m'-1}\\[2ex] \displaystyle +(c_{m'+1}-1)q_{m'}+ \sum_{k=m'+1}^\infty c_{k+1}q_k\,,
\end{array}
  $$
  and using (\ref{vb7-}) gives $n_{i+1}-n_i=q_m$. The `furthermore' part of (ii) easily follows from the above explicit form of $n_i$ and the reverse lexicographic ordering of $A(d_1,\dots,d_{m+1})$. This completes the proof.
\end{proof}

\bigskip

Using Lemma~\ref{gapslemma1},  we are able to  prove various useful counting results.

\begin{lemma}\label{contfrac.sumbndlem.sumbndlem}
Let $\alpha\in\R\setminus\Q$, $N\ge3$ and $m\ge0$. Furthermore, let $d_1,\dots,d_{m+1}$ and $A(d_1,\dots,d_{m+1})$ be as in Lemma~\ref{gapslemma1}, let $n'$ be given by \eqref{n'} and let
$$
A_N(d_1,\dots,d_{m+1}):=A(d_1,\dots,d_{m+1})\cap[1,N].
$$
Then
\begin{equation*}
\sum_{\substack{n\in A_N(d_1,\ldots , d_{m+1})\\n\not= n'}}\frac{1}{n}\le \frac{5\log N}{q_{m+1}}.
\end{equation*}
\end{lemma}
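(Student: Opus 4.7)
The plan is to exploit the gap structure given by the Gaps Lemma to show that the elements of $A_N(d_1,\ldots,d_{m+1})$ are sufficiently well spread out, and then to invoke a standard harmonic-sum estimate.

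First, I would enumerate $A_N=\{n_1<n_2<\cdots\}$ in increasing order and establish the key structural bound
\begin{equation*}
n_i \,\ge\, n_1 + \tfrac{1}{2}(i-1)\,q_{m+1} \qquad (i\ge 1).
\end{equation*}
In case (i) of the Gaps Lemma (when $d_{m+1}>0$), every consecutive gap $n_{j+1}-n_j$ is at least $q_{m+1}$, so the inequality is immediate (in fact with $\tfrac12$ replaced by $1$). The interesting case is (ii), where gaps of length $q_m$ can occur. Here I would argue as follows: if $j_1<j_2<\cdots<j_s$ are the positions of the `small' gaps (of length $q_m$) among the first $k$ gaps, then the `preceded by $a_{m+2}$ big gaps' clause forces $j_1\ge a_{m+2}+1$ and $j_{\ell+1}-j_\ell\ge a_{m+2}+1$ for $\ell\ge 1$, since each small gap is preceded by $a_{m+2}$ big ones and these cannot overlap with any earlier small gap. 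Iterating gives $j_s\ge s(a_{m+2}+1)$, so $s\le k/(a_{m+2}+1)\le k/2$ (using $a_{m+2}\ge 1$). Hence at least $k/2$ of the first $k$ gaps have length $q_{m+1}$, producing $n_{k+1}-n_1\ge (k/2)q_{m+1}$, which is the displayed inequality.

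With the structural bound in hand, $n_i\le N$ forces $i-1\le 2N/q_{m+1}$, so
\begin{equation*}
\sum_{\substack{n\in A_N\\ n\ne n'}}\frac{1}{n} \;\le\; \frac{2}{q_{m+1}}\sum_{j=1}^{\lfloor 2N/q_{m+1}\rfloor}\frac{1}{j} \;\le\; \frac{2\bigl(1+\log(2N/q_{m+1})\bigr)}{q_{m+1}},
\end{equation*}
which I would then reduce to $5\log N/q_{m+1}$ for $N\ge 3$ by routine constant-chasing (handling the slightly degenerate case $q_{m+1}=1$, which forces $m=0$ and $a_1=1$, directly via the trivial bound $\sum_{n\le N}1/n\le 1+\log N$). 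When $n'=0$ (i.e.\ $d_1=\cdots=d_{m+1}=0$), the summation includes $n_1$, but in that situation the smallest positive element of $A$ has Ostrowski expansion $c_{m+2}=1$ with all other coefficients zero, giving $n_1=q_{m+1}$, and the extra contribution $1/n_1=1/q_{m+1}$ is comfortably absorbed by the bound.

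The main obstacle is the counting step in case (ii): converting the \emph{local} `preceded by $a_{m+2}$ consecutive big gaps' statement of the Gaps Lemma into the \emph{global} spacing inequality $n_i\ge n_1+\tfrac12(i-1)q_{m+1}$. Once that structural fact is secured, the rest is a short harmonic summation together with some constant juggling to accommodate all valid values of $N$ and $q_{m+1}$.
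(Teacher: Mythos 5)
Your proof is correct but takes a genuinely different route from the paper's. The paper partitions $A_N(d_1,\ldots,d_{m+1})\setminus\{n'\}$ into two pieces according to whether $c_{m+2}(n)=a_{m+2}$: on the first piece the Gaps Lemma gives pairwise separation $\ge q_{m+1}$, while the second piece lies inside $A_N(d_1,\ldots,d_{m+1},a_{m+2})$ and so has pairwise separation $\ge q_{m+2}$; two short harmonic sums then yield $\tfrac{2\log N}{q_{m+1}}+\tfrac{3\log N}{q_{m+1}}=\tfrac{5\log N}{q_{m+1}}$. You instead convert the local clause of the Gaps Lemma into a single \emph{global} spacing estimate $n_i\ge n_1+\tfrac12(i-1)q_{m+1}$: each small gap must be preceded by $a_{m+2}\ge1$ consecutive big gaps, so small gaps occur with density at most $1/(a_{m+2}+1)\le 1/2$ among the gap positions (your step $j_{\ell+1}-j_\ell\ge a_{m+2}+1$, together with $j_1\ge a_{m+2}+1$, is exactly right), and then you run one harmonic sum. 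This is a valid, rather elegant alternative. The tradeoff is in the constants: the paper's decomposition exploits the stronger $q_{m+2}$-separation on the $c_{m+2}=a_{m+2}$ subclass, so the constant $5$ falls out immediately and uniformly for $N\ge3$; your unified $\tfrac12$ factor is lossier, and the constant-chasing you flag is genuinely needed. Concretely, when $q_{m+1}=1$ (which forces $m=0$, $a_1=1$, hence $d_1=0$, $n'=0$, and $A_N=\{1,\dots,N\}$) the trivial bound $\sum_{n\le N}1/n\le 1+\log N\le 5\log N$ finishes, exactly as you note; for $q_{m+1}\ge2$ the inequality $2\bigl(1+\log(2N/q_{m+1})\bigr)\le 5\log N$ (plus, when $n'=0$, the extra $1/n_1=1/q_{m+1}$, which you absorb using $n_1=q_{m+1}$) does hold for all $N\ge3$. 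So the proof closes; it is sound, just marginally more delicate than the paper's in the bookkeeping.
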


\begin{proof}
We will assume that $A_N(d_1,\ldots , d_{m+1})$ contains at least one element different from $n'$, as otherwise the sum under consideration is zero and there is nothing to prove.
By Lemma~\ref{gapslemma1}, for any distinct $n_1,n_2\in A_N(d_1,\dots,d_{m+1})$ with $c_{m+2}(n_i)<a_{m+2}$ ($i=1,2$) we have that $|n_1-n_2|\ge q_{m+1}$. In this case we find that
\begin{align*}
\sum_{\substack{n\in A_N(d_1,\ldots , d_{m+1})\\c_{m+2}<a_{m+2},\ n\not= n'}}\frac{1}{n}\ & \le\ \sum_{1\le\ell\le N/q_{m+1}}\frac{1}{n'+\ell q_{m+1}} \\
& \le \ \frac{1}{q_{m+1}}\sum_{1\le\ell\le N/q_{m+1}}\frac{1}{\ell}\ \le \ \frac{2\log N}{q_{m+1}}.
\end{align*}
On the other hand, any distinct $n_1,n_2\in A_N(d_1,\dots,d_{m+1})$ with $c_{m+2}(n_i)=a_{m+2}$ ($i=1,2$) lie in $A_N(d_1,\dots,d_{m+1},a_{m+2})$. Then, by Lemma~\ref{gapslemma1}, it follows that $|n_1-n_2|\ge q_{m+2}$ and so similarly
\begin{align*}
\sum_{\substack{n\in A_N(d_1,\ldots , d_{m+1})\\c_{m+2}=a_{m+2}}}\frac{1}{n}\ &\le\sum_{0\le\ell\le N/q_{m+2}}\frac{1}{n'+a_{m+2}q_{m+1}+\ell q_{m+2}} \\[1ex]
& \le\ \frac{1}{a_{m+2}q_{m+1}}+\frac{2\log N}{q_{m+2}}\ \le\ \frac{3\log N}{q_{m+1}}\,.\\[-2ex]
\end{align*}
This together with the previous  displayed estimate implies the desired statement.
\end{proof}

\begin{lemma}\label{cortogapslem1}
Under the conditions of Lemma~\ref{contfrac.sumbndlem.sumbndlem}, let us assume that
$$
\#A_N(d_1,\ldots , d_{m+1})\ge 1.
$$
Then
\begin{equation}\label{gaps2}
\frac{N}{3q_{m+1}}\ \le \ \#A_N(d_1,\ldots , d_{m+1}) \ \le \ \frac{3N}{q_{m+1}}+1\,.
\end{equation}
\end{lemma}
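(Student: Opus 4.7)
The plan is to derive both bounds from the Gaps Lemma (Lemma~\ref{gapslemma1}), which tightly constrains the sizes of consecutive gaps in $A(d_1,\ldots,d_{m+1})$, together with a uniform upper bound on the smallest element of $A(d_1,\ldots,d_{m+1})$. Write $K:=\#A_N(d_1,\ldots,d_{m+1})$, enumerate $A_N(d_1,\ldots,d_{m+1})=\{n_1<\cdots<n_K\}$, and let $n_{K+1}$ be the next element of the (infinite) set $A(d_1,\ldots,d_{m+1})$, so that $n_{K+1}>N$. Recall $n':=\sum_{k=0}^m d_{k+1}q_k$.

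\textbf{Upper bound.} In case~(i) of Lemma~\ref{gapslemma1} every gap $n_{i+1}-n_i$ is at least $q_{m+1}$, whence $(K-1)q_{m+1}\le n_K-n_1\le N$ and $K\le N/q_{m+1}+1$. In case~(ii) the \emph{furthermore} clause forces any two consecutive $q_m$-gaps to be separated by at least $a_{m+2}$ gaps of size $q_{m+1}$, so if $t$ denotes the number of $q_m$-gaps among the $K-1$ gaps then $t\le(K-2)/(a_{m+2}+1)+1$. Combined with $(K-1-t)q_{m+1}\le n_K-n_1\le N$ this yields
\begin{equation*}
K\le 2+\frac{N(a_{m+2}+1)}{a_{m+2}\,q_{m+1}}\le 2+\frac{2N}{q_{m+1}},
\end{equation*}
which is $\le 3N/q_{m+1}+1$ whenever $N\ge q_{m+1}$. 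When $N<q_{m+1}$, uniqueness of the Ostrowski expansion (Lemma~\ref{Ostrowski1}) forces every element of $A(d_1,\ldots,d_{m+1})$ other than $n'$ to satisfy $c_{k+1}\ne 0$ for some $k\ge m+1$ and hence to be $\ge q_{m+1}$, so $K\le 1\le 3N/q_{m+1}+1$.

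\textbf{Lower bound.} In both cases of Lemma~\ref{gapslemma1} each gap satisfies $n_{i+1}-n_i\le q_{m+1}+q_m\le 2q_{m+1}$, using $q_m<q_{m+1}$. Moreover $n_1\le q_{m+1}$: if $n'>0$ then $n_1=n'$, which by uniqueness of the Ostrowski expansion lies in $[1,q_{m+1})$; if $n'=0$ then the smallest positive element of $A(d_1,\ldots,d_{m+1})$ is $q_{m+1}$ (take $c_{m+2}=1$ and all other $c_{k+1}=0$). Since $n_{K+1}-n_1$ is a sum of $K$ gaps,
\begin{equation*}
N<n_{K+1}\le n_1+2Kq_{m+1}\le(2K+1)\,q_{m+1},
\end{equation*}
hence $K>N/(2q_{m+1})-1/2$. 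When $N\ge 3q_{m+1}$ this already implies $K\ge N/(3q_{m+1})$, and when $N<3q_{m+1}$ the inequality $K\ge 1>N/(3q_{m+1})$ is automatic.

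\textbf{Main obstacle.} The only non-routine step is the bookkeeping for case~(ii) of the upper bound, where the presence of shorter $q_m$-gaps must be controlled via their mandatory separation by $a_{m+2}$ $q_{m+1}$-gaps. Once the inequality $t\le(K-2)/(a_{m+2}+1)+1$ is extracted cleanly from the \emph{furthermore} clause, the remaining passage from $2+2N/q_{m+1}$ to $3N/q_{m+1}+1$ reduces to a case split on whether $N\ge q_{m+1}$, and the small-$N$ subcase is handled by the uniqueness of the Ostrowski expansion.
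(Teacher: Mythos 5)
Your proposal is correct and follows essentially the same approach as the paper: both proofs derive the lower bound from the upper bound on the gaps (namely $\le q_{m+1}+q_m\le 2q_{m+1}$) together with the fact that the smallest element of $A$ is $\le q_{m+1}$, and both derive the upper bound by combining the gap lower bound $\ge q_{m+1}$ with a count of the exceptional $q_m$-gaps forced apart by the \emph{furthermore} clause. The only cosmetic differences are that you track the number $t$ of $q_m$-gaps directly (bounding $t\le(K-2)/(a_{m+2}+1)+1$) where the paper writes $t-1=Q(a_{m+2}+1)+R$ via the division algorithm, and your lower bound looks at the first element $n_{K+1}$ beyond $N$ rather than at blocks of $2q_{m+1}$ consecutive integers.
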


\begin{proof}
In order to establish the lower bound, first observe  that since we are assuming that  $\#A_N(d_1,\ldots , d_{m+1})\ge 1$, the lower bound inequality in  (\ref{gaps2}) is trivial if $N\le3q_{m+1}$. Without loss of generality,  assume that $N>3q_{m+1}$. It is readily verified  that the minimal element of $A(d_1,\ldots , d_{m+1})$ is $\le q_{m+1}$. Hence, by Lemma~\ref{gapslemma1}, any block of $2q_{m+1}$ consecutive positive integers contains at least one element of $A(d_1,\ldots ,d_{m+1})$. Therefore,
\begin{align*}
\#A_N(d_1,\ldots , d_{m+1})\ge 1+\left\lfloor\frac{N-q_{m+1}}{2q_{m+1}}\right\rfloor\ge \frac{N-q_{m+1}}{2q_{m+1}}\stackrel{N>3q_{m+1}}{\ge}\frac{N}{3q_{m+1}}\,.
\end{align*}
This verifies the lower bound of (\ref{gaps2}).

For the upper bound write $A(d_1,\ldots,d_{m+1})=\{n_1<n_2<\cdots\}$ in increasing order.
Let $t=\#A_N(d_1,\ldots , d_{m+1})$. Thus,  $n_t\le N<n_{t+1}$.  Observe, that without loss of generality, we can  assume that $t\ge 2$ as otherwise the upper bound in (\ref{gaps2}) is trivial.  With this in mind,
we consider separately the two cases appearing in  the statement of Lemma \ref{gapslemma1}.

\noindent $\bullet \ $ In case (i), we have that $n_{i+1}-n_i\ge q_{m+1}$ for all $i\ge 1$ and therefore $N\ge n_1+(t-1)q_{m+1}$. Hence
\begin{equation}\label{gaps3}
\frac{N}{\#A_N(d_1,\ldots , d_{m+1})}=\frac{N}{t}\ge \frac{n_1+(t-1)q_{m+1}}{t}\ \stackrel{t\ge2}{\ge}\ \frac{q_{m+1}}{2},
\end{equation}
and the upper bound in (\ref{gaps2}) readily follows.

\noindent $\bullet \ $ In  case (ii), we have that  any gap of length $q_m$ among $\{n_1,n_2,\ldots\}$ must be preceded by $a_{m+2}$ consecutive gaps of length $q_{m+1}$.
Therefore, if $t-1< a_{m+2}+1$, then all the gaps $n_{i}-n_{i-1}$ with $i\le t$ must be $q_{m+1}$ and we have that $N\ge n_1+(t-1)q_{m+1}$. Then, by (\ref{gaps3}), we again conclude that the upper bound in (\ref{gaps2}) holds. It remains to consider the case $t-1\ge a_{m+2}+1$. In this case,  it is obvious that $t\ge3$. Further, by the division algorithm, $t-1=Q(a_{m+2}+1)+R$ with $0\le R<a_{m+2}+1$. Then, there are at most $Q$ gaps of length $q_m$, each preceded by $a_{m+2}$ consecutive gaps of length $q_{m+1}$, plus $R$ gaps of length $q_{m+1}$. Then,  we have that
\begin{align*}
N &\ge n_1+\frac{t-1-R}{a_{m+2}+1}\,(a_{m+2}q_{m+1}+q_m)+Rq_{m+1}\\[2ex]
&\ge  (t-1-R)\frac{q_{m+1}}{2}+Rq_{m+1}\\[2ex]
&\ge (t-1-R)\frac{q_{m+1}}{2}+R\frac{q_{m+1}}2=\frac{t-1}2\,q_{m+1}\\[2ex]
&\stackrel{t\ge3}{\ge} \frac t3\,q_{m+1}
 =\frac{q_{m+1}\cdot\#A_N(d_1,\ldots , d_{m+1})}3\,,
\end{align*}
whence the upper bound in (\ref{gaps2}) follows.
\end{proof}

\section{Counting solutions to $\|n\alpha-\gamma\|<\ve$}

Given an $\alpha,\gamma\in\R$, $N\in\N$ and $\ve>0$, consider the  set
\begin{equation} \label{ayesha}
N_\gamma(\alpha,\ve):=\{n\in\N:\|n\alpha-\gamma\|<\ve,\ n\le N\}\,.
\end{equation}
In the homogeneous case ($\gamma=0$),  we will simply  write $N(\alpha,\ve)$ for $N_0(\alpha,\ve)$.
The main goal of this section is to estimate the cardinality of $N_\gamma(\alpha,\ve)$.

\subsection{The homogeneous case}

We begin by observing  that when $\ve N\ge1$, Minkowski's Theorem for convex bodies, see \cite[p.71]{CasselsGofN}, implies that
\begin{equation}\label{vbf}
\#N(\alpha,\ve)\ge \lfloor \ve N\rfloor\,.
\end{equation}
In fact, we have already used this estimate in the proof of Theorem~\ref{T3}. We shall investigate under which conditions this bound can be reversed with some positive multiplicative constant. The main result established  in this section is the following
 statement.
\begin{lemma}\label{lem12++}
Let $\alpha\in\R\setminus\Q$ and $(q_\ell)_{\ell\ge0}$ be the sequence of denominators of the principal convergents of $\alpha$. Let $N\in\N$ and $\ve>0$ such that $0<2\ve<\|q_2\alpha\|$. Suppose that
\begin{equation}\label{ghj}
\frac{1}{2\ve}\le q_{\ell}\le N
\end{equation}
for some integer $\ell$. Then
\begin{equation}\label{vbe}
 \lfloor  \ve  N\rfloor  \ \le \  \#N(\alpha,\ve) \ \le \  32\, \ve N\,.
\end{equation}
\end{lemma}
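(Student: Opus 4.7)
\noindent\emph{Proof plan.} The lower bound in \eqref{vbe} is exactly the content of \eqref{vbf} and requires no further comment. For the upper bound my approach is to bypass the Ostrowski machinery of \S\ref{estim} and instead to exploit the best approximation property of the principal convergents of $\alpha$ directly, reducing the counting problem to a packing estimate on the circle $\R/\Z$.

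First I would observe that by \eqref{vb7}, $|D_2|\le 1/q_3$, and so the hypothesis $2\ve<\|q_2\alpha\|=|D_2|$ together with $q_\ell\ge 1/(2\ve)$ forces $q_\ell>q_3$, and in particular $\ell-1\ge 1$. The sole purpose of this observation is to make the best approximation property available at level $\ell-1$: for every integer $q$ with $1\le q<q_\ell$ one has $\|q\alpha\|\ge\|q_{\ell-1}\alpha\|=|D_{\ell-1}|$, and by \eqref{vb7} once more $|D_{\ell-1}|\ge 1/(2q_\ell)$.

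Next, fix any interval $I\subset\Z$ consisting of $q_\ell$ consecutive integers. For any two distinct $n_1,n_2\in I$ we have $0<|n_1-n_2|<q_\ell$, so best approximation gives $\|(n_1-n_2)\alpha\|\ge|D_{\ell-1}|$. Thus the points $\{n\alpha\}$ with $n\in I$ form a $|D_{\ell-1}|$-separated subset of $\R/\Z$. Since the arc $(-\ve,\ve)\bmod 1$ has length $2\ve$, a packing argument shows that the number of these points lying in that arc is at most
\[
\frac{2\ve}{|D_{\ell-1}|}+1\ \le\ 4\ve q_\ell+1\ \le\ 6\ve q_\ell,
\]
where the last step uses $2\ve q_\ell\ge1$, a direct consequence of $q_\ell\ge 1/(2\ve)$. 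Partitioning $[1,N]$ into at most $N/q_\ell+1$ consecutive blocks of length at most $q_\ell$ and summing the per-block bound then yields
\[
\#N(\alpha,\ve)\ \le\ \Bigl(\frac{N}{q_\ell}+1\Bigr)\cdot 6\ve q_\ell \ =\ 6\ve N+6\ve q_\ell\ \le\ 12\ve N\ \le\ 32\ve N,
\]
the penultimate inequality using the other part of the hypothesis, $q_\ell\le N$.

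The main technical point is organising the constants so that the additive ``$+1$'' in the packing bound is absorbed into a multiplicative term of the form $\ve q_\ell$; this is precisely where $q_\ell\ge 1/(2\ve)$ enters crucially. Beyond that there is no substantive obstacle, and the generosity of the constant $32$ relative to the $12$ coming out of the argument leaves ample slack.
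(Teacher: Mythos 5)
Your proof is correct, and it takes a genuinely different route from the paper. You establish the key per-block bound directly from the classical best approximation property of convergents (for $1\le q<q_\ell$ one has $\|q\alpha\|\ge \|q_{\ell-1}\alpha\|\ge 1/(2q_\ell)$, the latter by \eqref{vb7}), turning it into a $|D_{\ell-1}|$-separation of the points $\{n\alpha\}$ on $\R/\Z$ within any window of $q_\ell$ consecutive integers, and finish with a packing estimate. The paper instead runs the argument through its Ostrowski-expansion machinery: it uses Lemma~\ref{contfrac.lemhombnd.lemhombnd} to derive the two-sided bound \eqref{hom2+} on $\|n\alpha\|$ in terms of the leading Ostrowski digits, deduces $m+2\ge\ell$, and then splits into the three cases $m\ge\ell$, $m+1=\ell$, $m+2=\ell$, bounding each via the gaps machinery of Lemma~\ref{cortogapslem1} (giving $8\ve N+16\ve N+8\ve N=32\ve N$). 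Your argument is shorter, more elementary, and yields a slightly sharper constant ($12$ rather than $32$, though neither is claimed to be optimal). What the paper's route buys is uniformity with the rest of the development: the Ostrowski/gaps framework is built precisely because it extends to inhomogeneous shifts $\gamma\neq 0$ (via Lemmas~\ref{contfrac.inhomoglem.inhomoglem}--\ref{contfrac.leminhombnd.leminhombnd2}), where there is no direct analogue of the best approximation property; the paper then passes from homogeneous to inhomogeneous counting using Lemma~\ref{inhom2}. A small remark: your use of $2\ve<\|q_2\alpha\|$ to force $\ell$ large is harmless but not essential to your method, since the best approximation inequality you invoke is valid already for $\ell\ge 1$; the hypothesis mainly matters for the paper's normalisation of the Ostrowski case analysis.
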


\begin{proof}
Without loss of generality,  we will assume that $\ell$ is the smallest integer satisfying \eqref{ghj}.
Since $2\ve<\|q_2\alpha\|$, by \eqref{vb7} and \eqref{ghj}, we have that $q_{\ell}>q_2$, so $\ell\ge3$.
By the minimality of $\ell$,
\begin{equation}\label{vb+1231}
q_{\ell-1}< \frac{1}{2\ve}\le q_{\ell}.
\end{equation}
Let $n\in\N$ satisfy the condition $\|n\alpha\|<\ve$ and let $m$ be the same as in Lemma~\ref{contfrac.lemhombnd.lemhombnd}.
Then, by \eqref{vb7} and \eqref{hom2}, we obtain that
\begin{equation}\label{hom2+}
\frac{c_{m+1}-1}{2q_{m+1}}+\frac{a_{m+2}-c_{m+2}}{2q_{m+2}}\ \le \ \|n\alpha\| \ \le \ \frac{c_{m+1}+1}{q_{m+1}}.
\end{equation}
Since, by definition, $c_{m+1}\neq0$, condition \eqref{contfrac.eqn.Ostrowski1+2} implies that $a_{m+2}-c_{m+2}\ge1$. Hence, by \eqref{hom2+}, we obtain that
$$
\frac{1}{2q_{m+2}}\ \le \ \|n\alpha\|<\ve
$$
and thus $q_{m+2}\ge 1/(2\ve)$. Since $\ell$ is defined to be the smallest integer satisfying $q_{\ell}\ge 1/(2\ve)$, we conclude that $m+2\ge \ell$. Consider the following  three cases.

\noindent\textbf{Case (1):} $m\ge \ell$. Then $n$ lies in the set $A_N(d_1,\ldots,d_{\ell})$ with $d_1=\dots=d_{\ell}=0$. By Lemma~\ref{cortogapslem1},
$$
\#A_N(\underbrace{0,\ldots,0}_{\ell}\,)\ \le \ \frac{3N}{q_{\ell}}+1~\stackrel{\eqref{vb+1231}}{\le}~ 6\,\ve N+1\,.
$$
By \eqref{ghj}, we have that $2\ve N\ge1$. Therefore, the number of $n\in N(\alpha,\ve)$ that correspond to this case is
$$
\le 8\ve N\,.
$$

\noindent\textbf{Case (2):} $m+1=\ell$. Then $q_{m+1}=q_\ell\ge 1/(2\ve)$ and consequently,
by \eqref{hom2+} and the assumption $\|n\alpha\|<\ve $, we have that
\begin{equation}\label{x2}
c_{m+1}< 1+2\ve q_{m+1}\le 4\ve q_{m+1}.
\end{equation}
It follows that each $n\in N(\alpha,\ve)$ with $m+1=\ell$, must lie in
$A_N(d_1,\ldots,d_{m+1})$ with $d_1=\dots=d_{m}=0$ and $d_{m+1}=c_{m+1}$, for some $c_{m+1}$ satisfying \eqref{x2}.
By Lemma~\ref{cortogapslem1},
$$
\#A_N(\underbrace{0,\ldots,0}_{\ell-1}, c_{m+1}) \ \le \ \frac{3N}{q_{m+1}}+1\,.
$$
Since $q_{m+1}=q_{\ell}<N$, we have that
$$
\#A_N(\underbrace{0,\ldots,0}_{\ell-1}, c_{m+1}) \ \le \frac{4N}{q_{m+1}}\,.
$$
Therefore, using \eqref{x2}, we conclude that the number of $n\in N(\alpha,\ve)$ that correspond to this case is
$$
\le
\frac{4N}{q_{m+1}}\cdot4\ve q_{m+1}=16\,\ve N\,.
$$

\noindent\textbf{Case (3):} $m+2=\ell$. Then, by \eqref{vb+1231}, we have that
$$
q_{m+1}=q_{\ell-1}< 1/(2\ve)\le q_\ell=q_{m+2}.
$$
Consequently, by \eqref{hom2+} we obtain that $c_{m+1}=1$ and
$$
\frac{a_{m+2}-c_{m+2}}{2q_{m+2}}\ \le \ \|n\alpha\| \ < \ve\,.
$$
Thus, we have that
$$
q_{m+2} > \frac1{2\ve}\qquad\text{and}\qquad
{a_{m+2}-c_{m+2}}<2\ve{q_{m+2}}\,.
$$
It follows that each $n\in N(\alpha,\ve)$ with $m+2=\ell$ must lie in $A_N(d_1,\ldots,d_{m+2})$ with $d_1=\dots=d_{m}=0$, $d_{m+1}=c_{m+1}=1$ and $d_{m+2}=c_{m+2}$.
By Lemma~\ref{cortogapslem1},
$$
\#A_N(\underbrace{0,\ldots,0}_{\ell-2},1,c_{m+2}) \ \le \ \frac{3N}{q_{m+2}}+1\le \frac{4N}{q_{m+2}}.
$$
Therefore, we conclude that the number of $n\in N(\alpha,\ve)$ that correspond to this case is
$$
\le
\frac{4N}{q_{m+2}}\times 2\ve q_{m+2}= 8\ve N\,.
$$

On summing the estimates obtained in  each of the above three cases yields  the desired  result.
\end{proof}

\bigskip

The following corollary of Lemma~\ref{lem12++} is phrased in terms of the  exponent of approximation $w(\alpha)$ of $\alpha \in \R$,  -- see \eqref{cors} for the definition. Recall that $\mathcal{L}$ denotes the set of Liouville numbers; that is, the set of real numbers $\alpha$ such that $w(\alpha)=\infty$.

\begin{corollary}\label{lem12}
Let $\alpha\not\in\mathcal{L}\cup\Q$ and let $\nu\in\R$ satisfy the inequalities
\begin{equation}\label{e10.5}
0<\nu<\frac{1}{w(\alpha)}\,.
\end{equation}
Then, there exists a constant $\ve_0=\ve_0(\alpha)>0$ such that for any sufficiently large $N$ and any $\ve$ with $N^{-\nu}<\ve<\ve_0$, estimate \eqref{vbe} is satisfied.
\end{corollary}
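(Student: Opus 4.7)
The plan is to reduce the corollary to Lemma~\ref{lem12++} by verifying its two hypotheses. Given $\ve>0$, let $\ell=\ell(\ve)$ be the smallest positive integer with $q_\ell\ge 1/(2\ve)$; this is well-defined since $q_k\to\infty$. The first hypothesis $2\ve<\|q_2\alpha\|$ will be built into the definition of $\ve_0$: I would initially set $\ve_0:=\tfrac12\|q_2\alpha\|$, so that for every $\ve<\ve_0$ the first hypothesis holds. This choice also forces $q_\ell\ge 1/(2\ve)>1/\|q_2\alpha\|\ge q_3$ via \eqref{vb7}, so $\ell\ge 4$ automatically.

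To verify the second hypothesis $q_\ell\le N$, I would invoke the non-Liouville assumption. Since $\alpha\notin\mathfrak{L}$ we have $w(\alpha)<\infty$, and the condition $0<\nu<1/w(\alpha)$ allows me to fix a real number $w'$ with $w(\alpha)<w'<1/\nu$. By Lemma~\ref{Liouvillelem1}, there exists $k_0=k_0(\alpha,w')$ such that
\begin{equation*}
q_{k+1}\le q_k^{w'}\qquad\text{for all }k\ge k_0\,.
\end{equation*}
Shrinking $\ve_0$ further so that $1/(2\ve_0)\ge q_{k_0+1}$ (still depending only on $\alpha$), I ensure that any $\ve<\ve_0$ forces $q_\ell\ge 1/(2\ve)\ge q_{k_0+1}$, and hence $\ell-1\ge k_0$. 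Combining the resulting inequality $q_\ell\le q_{\ell-1}^{w'}$ with the minimality of $\ell$ (which gives $q_{\ell-1}<1/(2\ve)$) and the lower bound $\ve>N^{-\nu}$, I obtain
\begin{equation*}
q_\ell\le q_{\ell-1}^{w'}<\left(\frac{1}{2\ve}\right)^{\!w'}<N^{\nu w'}\,.
\end{equation*}
Since $\nu w'<1$ by construction, $N^{\nu w'}<N$ for every $N>1$, so $q_\ell<N$ and Lemma~\ref{lem12++} applies to yield \eqref{vbe}.

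The main obstacle is essentially bookkeeping: one must ensure the threshold $\ve_0$ depends only on $\alpha$ and is compatible both with the upper bound $\ve<\|q_2\alpha\|/2$ needed to invoke Lemma~\ref{lem12++} and with the lower bound $\ve<1/(2q_{k_0+1})$ needed to guarantee that $\ell-1$ is large enough for Lemma~\ref{Liouvillelem1} to apply. Neither constraint poses any genuine difficulty since both depend only on $\alpha$, so once the auxiliary exponent $w'$ is chosen in the open interval $(w(\alpha),1/\nu)$ the rest of the argument is a direct verification.
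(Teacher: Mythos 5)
Your argument is correct and follows the same route as the paper's: both reduce the corollary to Lemma~\ref{lem12++} by taking $\ell$ minimal with $q_\ell\ge 1/(2\ve)$, using the non-Liouville condition via Lemma~\ref{Liouvillelem1} to deduce $q_\ell\le q_{\ell-1}^{w'}$ for large $\ell$, and then chaining with the minimality of $\ell$ and $\ve>N^{-\nu}$ to get $q_\ell<N$. The only cosmetic difference is that you introduce an intermediate exponent $w'\in(w(\alpha),1/\nu)$, whereas the paper uses $1/\nu$ directly (which is equally valid since $1/\nu>w(\alpha)$).
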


\begin{proof}
We will assume that $2\ve_0<\|q_2\alpha\|$. Let $\ell$ be the smallest integer such that $(2\ve)^{-1}\le q_{\ell}$. In particular, we have that $\ell\ge3$ and \eqref{vb+1231} is satisfied.
By Lemma~\ref{Liouvillelem1} and the condition $w(\alpha)<1/\nu$, we have that $q_{m+1}\le q_m^{1/\nu}$ for all sufficiently large $m$. In particular, for sufficiently small $\ve_0$ the parameter $\ell$ will be sufficiently large and hence we will have that $q_{\ell}\le q_{\ell-1}^{1/\nu}$. Then, using \eqref{vb+1231}, we readily verify that
$$
q_{\ell}\le q_{\ell-1}^{1/\nu}< 1/(2\ve)^{1/\nu}< 1/\ve^{1/\nu}<N\,.
$$
Since, by the definition of $\ell$, we also have that $(2\ve)^{-1}\le q_{\ell}$, the inequalities associated with \eqref{ghj} are satisfied. Therefore, Lemma~\ref{lem12++} is applicable and the conclusion of  the corollary follows.
\end{proof}

\bigskip

Now we discuss the situation not covered by Lemma~\ref{lem12++}, namely  when \eqref{ghj} is not satisfied. To begin with, we give another `trivial' lower bound for $\#N(\alpha,\ve)$. Suppose that $K$ is the largest integer such that $q_K\le N$. Then $q_{K+1}>N$ and, by \eqref{vb7}, we have that
$$
\|q_K\alpha\|\le |D_K|<1/q_{K+1}\,.
$$
Observe that for any positive integer $s$ we trivially have that $\|sq_K\alpha\|<s\|q_K\alpha\|$. Hence, as long as $sq_K\le N$ and $s/q_{K+1}\le \ve$, we have that $sq_K\in N(\alpha,\ve)$. Hence,
\begin{equation}\label{vbm}
\#N(\alpha,\ve)\ge \min\big\{\lfloor\ve q_{K+1}\rfloor,\lfloor N/q_K\rfloor\big\}\,.
\end{equation}
The next result shows that, up to a constant multiple,  this rather trivial lower bound combined with the other trivial lower bound given by \eqref{vbf} is  best possible.

\begin{lemma}\label{lem12+++}
Let $\alpha\in\R\setminus\Q$ and $(q_\ell)_{\ell\ge0}$ be the sequence of denominators of the principal convergents of  $\alpha$, $N\in\N$ and $\ve>0$ such that $N^{-1}<2\ve<\|q_2\alpha\|$. Let
$K$ be the largest non-negative integer such that $q_K\le N$, and let
\begin{equation}\label{M}
M:=
\max\left\{\ve N,\min\Big\{\ve q_{K+1},\frac{N}{2q_K}\Big\}\right\}=
\min\left\{\ve q_{K+1},\max\Big\{\ve N ,\frac{N}{2q_K}\Big\}\right\}.
\end{equation}
Then
\begin{equation}\label{vbp}
\lfloor M\rfloor ~\le~   \#N(\alpha,\ve) ~\le~ 32M\,.
\end{equation}
\end{lemma}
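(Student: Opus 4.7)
I would begin by verifying that the two given expressions for $M$ agree. Writing $a=\ve N$, $b=\ve q_{K+1}$, $c=N/(2q_K)$, the inequality $a\le b$ (from $N<q_{K+1}$) allows the identity $\max\{a,\min\{b,c\}\}=\min\{\max\{a,b\},\max\{a,c\}\}=\min\{b,\max\{a,c\}\}$ to be applied directly. The proof then splits according to whether $\ve q_K\ge \tfrac12$ (Case A) or $\ve q_K<\tfrac12$ (Case B). In Case A, $\ell:=K$ satisfies \eqref{ghj}, and $N/(2q_K)\le\ve N$ forces $M=\ve N$; Lemma~\ref{lem12++} then delivers both inequalities in \eqref{vbp} at once. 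From here on the task is Case B.

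For the lower bound in Case B I would use the multiples $n=sq_K$, exactly as in \eqref{vbm}. Since $\|sq_K\alpha\|\le s|D_K|<s/q_{K+1}$, every $s$ with $1\le s\le \min\{\lfloor N/q_K\rfloor,\lfloor \ve q_{K+1}\rfloor\}$ produces an element of $N(\alpha,\ve)$. The elementary inequality $\lfloor x\rfloor\ge\lfloor x/2\rfloor$ then gives $\lfloor N/q_K\rfloor\ge\lfloor N/(2q_K)\rfloor$, and since $\lfloor\cdot\rfloor$ is monotone one has $\lfloor\ve q_{K+1}\rfloor\ge\lfloor N/(2q_K)\rfloor$ whenever $\ve q_{K+1}\ge N/(2q_K)$. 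In either sub-case the count dominates $\lfloor\min\{\ve q_{K+1},N/(2q_K)\}\rfloor=\lfloor M\rfloor$.

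For the upper bound in Case B I would first use $\ve<|D_2|/2$ to invoke Lemma~\ref{contfrac.lemhombnd.lemhombnd}, so the index $m$ from Lemma~\ref{contfrac.homoglem.homoglem} satisfies $m\ge 2$. The estimate \eqref{hom2+} gives $q_{m+2}>1/(2\ve)>q_K$, hence $m\ge K-1$; on the other hand $n\le N<q_{K+1}$ forces $m\le K$. Thus $m\in\{K-1,K\}$ and I would count each case:
\begin{itemize}
\item[(i)] If $m=K$, the Ostrowski expansion of $n$ collapses to $n=c_{K+1}q_K$ and Lemma~\ref{contfrac.homoglem.homoglem} gives $\|n\alpha\|=c_{K+1}|D_K|$. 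The conditions $\|n\alpha\|<\ve$ and $n\le N$ give $c_{K+1}<\ve/|D_K|<2\ve q_{K+1}$ and $c_{K+1}\le N/q_K$, so at most $\min\{\lfloor N/q_K\rfloor,2\ve q_{K+1}\}=2\min\{\ve q_{K+1},N/(2q_K)\}=2M$ values arise.
\item[(ii)] If $m=K-1$, then $n=c_Kq_{K-1}+c_{K+1}q_K$ with $c_K\ge 1$. The lower bound in \eqref{hom2} yields $(c_K-1)|D_{K-1}|<\ve$; combined with $|D_{K-1}|>1/(2q_K)$ and the Case B hypothesis $2\ve q_K<1$, this forces $c_K=1$. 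Then Lemma~\ref{contfrac.homoglem.homoglem} pins $c_{K+1}|D_K|$ to an interval of length $2\ve$ around $|D_{K-1}|$, so $c_{K+1}$ lies in an interval of length $2\ve/|D_K|<4\ve q_{K+1}$, and $n\le N$ imposes $c_{K+1}\le(N-q_{K-1})/q_K\le N/q_K$. Thus at most $\min\{\lfloor 4\ve q_{K+1}\rfloor+1,\lfloor N/q_K\rfloor\}\le 4M+1$ values arise.
\end{itemize}
Summing gives $\#N(\alpha,\ve)\le 6M+1$, and since the hypotheses $2\ve N>1$ and $q_{K+1}>N$ imply $M>\tfrac12$, this is bounded by $8M<32M$.

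The main obstacle will be step (ii): one has to coordinate the lower bound in \eqref{hom2}, the Ostrowski admissibility rules \eqref{contfrac.eqn.Ostrowski1+1}--\eqref{contfrac.eqn.Ostrowski1+2}, and the constraint $n\le N$ in order to conclude $c_K=1$ and to control $c_{K+1}$ by both the $\ve q_{K+1}$ and the $N/q_K$ scales simultaneously. Once $c_K=1$ is established, the rest is a straightforward interval-counting argument, and the constant $32$ in \eqref{vbp} leaves ample slack for the inequalities above.
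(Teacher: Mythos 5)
Your proof is correct, and your handling of the upper bound in Case~B ($\ve q_K<\tfrac12$) takes a genuinely different route from the paper's. The paper's lower bound is identical to yours: \eqref{vbf} (via Minkowski) and \eqref{vbm} (multiples of $q_K$) are exactly your two constructions. For the upper bound, however, the paper does not re-open the Ostrowski counting machinery at all; instead it exploits the monotonicity of $N(\alpha,\ve)$ in both parameters and bootstraps from Lemma~\ref{lem12++}: replace $\ve$ by $\ve'=1/(2q_K)\ge\ve$ (so that \eqref{ghj} holds with $\ell=K$) to get $\#N(\alpha,\ve)\le 32\,N/(2q_K)$, combine with the direct case $q_K\ge1/(2\ve)$ to obtain $\#N(\alpha,\ve)\le32\max\{\ve N,N/(2q_K)\}$, and then separately replace $N$ by $N'=q_{K+1}$ (so \eqref{ghj} holds with $\ell=K+1$) to obtain $\#N(\alpha,\ve)\le32\ve q_{K+1}$; intersecting gives $32M$. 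Your approach instead classifies the elements of $N(\alpha,\ve)$ by the Ostrowski index $m$, observes that in Case~B only $m\in\{K-1,K\}$ can occur, and counts each directly via \eqref{hom2} and Lemma~\ref{contfrac.homoglem.homoglem}; the arithmetic ($6M+1\le8M\le32M$ once $M\ge\tfrac12$) checks out, and the identification $c_K=1$ in step~(ii) is handled correctly. The paper's route is shorter and avoids re-examining the Ostrowski cases, at the price of a slightly less transparent ``enlarge the parameters'' argument; yours re-derives a mini version of the Lemma~\ref{lem12++} count, which is more self-contained but also more work. Both yield the stated constant with room to spare.
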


\begin{proof}
The equality in \eqref{M} between the min-max and the max-min expressions is a consequence of the fact that $\ve N<\ve q_{K+1}$, which follows from the definition of $K$. The lower bound in \eqref{vbp} is a consequence of \eqref{vbf} and \eqref{vbm}. Thus, it remains to prove the upper bound. Suppose that $q_K<1/(2\ve)$. Then $\ve<\ve':=1/(2q_K)$ and we have that $N(\alpha,\ve)\subset N(\alpha,\ve')$. Clearly, Lemma~\ref{lem12++} is applicable to $N(\alpha,\ve')$ and thus it follows  that
$$
\#N(\alpha,\ve)\le 32\ve' N= 32N/2q_K.
$$
Again, by Lemma~\ref{lem12++}, in the case $q_K\ge1/(2\ve)$ we have that $\#N(\alpha,\ve)\le32\ve N$. Hence
$$
\#N(\alpha,\ve)\le32\max\{\ve N ,N/2q_K\}\,.
$$
Next, let $N'=q_{K+1}$. Note that $N'>N$ and therefore
$N(\alpha,\ve)\subset N'(\alpha,\ve)$.
Also condition \eqref{ghj} is satisfied with $\ell=K+1$. Hence, it follows via  Lemma~\ref{lem12++} that
$$
\#N(\alpha,\ve)\le \#N'(\alpha,\ve)\le 32\ve N'= 32q_{K+1}\ve.
$$
Thus
$$
\#N(\alpha,\ve)\le 32\min\big\{\ve q_{K+1},~\max\{\ve N ,N/2q_K\}\big\}\,.
$$
This is the upper bound in \eqref{vbp} and thereby completes the proof of the lemma.
\end{proof}

\subsection{The inhomogeneous case}

We prove a statement that relates the cardinality of  the inhomogeneous set  $N_\gamma(\alpha,\ve)$ to that of the homogeneous set $N(\alpha,\ve)$. The upshot is that the estimates obtained in the previous section can be exploited to provide estimates for $\#N_\gamma(\alpha,\ve)$.

\begin{lemma}\label{inhom2}
For any $\ve>0$ and $N\in\N$,  we have that
\begin{equation}\label{vbv1}
\#N_\gamma(\alpha,\ve) ~\le~ \#N(\alpha,2\ve)+1.
\end{equation}
Furthermore, if $N'_\gamma(\alpha,\ve')\neq\emptyset$, where $N':=\tfrac12N$ and $\ve':=\tfrac12\ve$, then
\begin{equation}\label{vbv2}
\#N_\gamma(\alpha,\ve) ~\ge~ \#N'(\alpha,\ve')+1\,.
\end{equation}
\end{lemma}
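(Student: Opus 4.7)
The plan is to establish both inequalities by elementary triangle-inequality arguments for the distance-to-nearest-integer function $\|\cdot\|$, combined with an appropriate ``base point'' shift that converts inhomogeneous information into homogeneous information and vice versa.

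For the upper bound \eqref{vbv1}, I would argue as follows. If $N_\gamma(\alpha,\ve)$ is empty there is nothing to prove, so fix $n_0$ to be (say) the smallest element of $N_\gamma(\alpha,\ve)$. For any other $n\in N_\gamma(\alpha,\ve)$, the triangle inequality for $\|\cdot\|$ gives
$$
\|(n-n_0)\alpha\|\le \|n\alpha-\gamma\|+\|n_0\alpha-\gamma\|<2\ve,
$$
while $0<n-n_0\le N$. Hence $n\mapsto n-n_0$ defines an injection from $N_\gamma(\alpha,\ve)\setminus\{n_0\}$ into $N(\alpha,2\ve)$, which immediately yields \eqref{vbv1}.

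For the lower bound \eqref{vbv2}, the hypothesis supplies an element $n_0\in N'_\gamma(\alpha,\ve')$, i.e. an $n_0\in\N$ with $n_0\le N/2$ and $\|n_0\alpha-\gamma\|<\ve/2$. For any $n\in N'(\alpha,\ve')$ the triangle inequality gives
$$
\|(n_0+n)\alpha-\gamma\|\le \|n_0\alpha-\gamma\|+\|n\alpha\|<\tfrac{\ve}{2}+\tfrac{\ve}{2}=\ve,
$$
and $n_0+n\le N$. Thus $n\mapsto n_0+n$ maps $N'(\alpha,\ve')$ injectively into $N_\gamma(\alpha,\ve)$. Since $n\ge 1$ this image misses $n_0$ itself, which also lies in $N_\gamma(\alpha,\ve)$ (as $\ve'<\ve$ and $N'\le N$), producing $\#N'(\alpha,\ve')+1$ distinct elements of $N_\gamma(\alpha,\ve)$ and establishing \eqref{vbv2}.

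There is essentially no obstacle here beyond bookkeeping: one just has to pick the base point $n_0$ correctly so that the shifts $n-n_0$ (resp.\ $n_0+n$) stay within $\{1,\ldots,N\}$ and the added ``$+1$'' is correctly accounted for by the element $n_0$ itself. The factor $2$ that appears in $2\ve$ on the one hand and in $\ve':=\ve/2$, $N':=N/2$ on the other is exactly the cost of the single application of the triangle inequality in each direction.
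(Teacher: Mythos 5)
Your proof is correct and follows essentially the same route as the paper's: in both directions you pick a base point (the smallest element of $N_\gamma(\alpha,\ve)$, respectively some $n_0\in N'_\gamma(\alpha,\ve')$), apply the triangle inequality for $\|\cdot\|$ once, and check that the shift stays within range. The only difference is that you spell out more explicitly than the paper why the ``$+1$'' appears in the lower bound, namely that $n_0$ itself lies in $N_\gamma(\alpha,\ve)$ and is disjoint from the shifted image.
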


\begin{proof}
First we prove the upper bound \eqref{vbv1}. Suppose that $\#N_\gamma(\alpha,\ve)\ge2$, as otherwise there is nothing to prove. Let $n_0$ be the smallest element of $N_\gamma(\alpha,\ve)$. Then for any $n\in N_\gamma(\alpha,\ve)$ such that $n>n_0$ we have that
$$
\|(n-n_0)\alpha\|=\|(n\alpha-\gamma)-(n_0\alpha-\gamma)\|\le\|n\alpha-\gamma\|+\|n_0\alpha-\gamma\|<2\ve\,.
$$
Hence, $n-n_0\in N(\alpha,2\ve)$ and \eqref{vbv1} immediately follows.

Now we prove the lower bound \eqref{vbv2}. In view of the assumptions of the lemma, fix any $n_1\in N'_\gamma(\alpha,\ve')$. Then, for any $n\in N'(\alpha,\ve')$ we obtain that
$$
\|(n_1+n)\alpha-\gamma\|\le\|n_1\alpha-\gamma\|+\|n\alpha\|<\ve'+\ve'=\ve.
$$
Also, note that $1\le n_1+n\le N$. Therefore,for any $n\in N'(\alpha,\ve')$ we have that $n_1+n\in N_\gamma(\alpha,\ve)$. The lower bound  \eqref{vbv2} now readily follows.
\end{proof}

\section{Establishing the homogeneous results on  sums of reciprocals}\label{sec.thm3.ub}

In this section we prove Theorems  \ref{T1} and  \ref{T2}. Before beginning the proofs, we establish various useful auxiliary inequalities. As before, $q_k=q_k(\alpha)$ denotes the denominators of the principal convergents of $\alpha$, and $K=K(N,\alpha)$ denotes the largest non-negative integer such that $q_K\le N$, so
\begin{equation}\label{K(N)}
    q_K\le N < q_{K+1}\,.
\end{equation}
By (\ref{zvb}) and (\ref{K(N)}), we also have that
\begin{equation}\label{K(N)2}
    K-1\le 2\log N/\log2\,.
\end{equation}
Furthermore,  it is easily verified that
\begin{equation}\label{enq2}
\max\{\log(\tfrac{a_1}2\cdots \tfrac{a_K}2),K\}\ge \tfrac13\log q_K\,.
\end{equation}
Indeed, by (\ref{zvb}), we have that
\begin{equation*}
q_K\le 2^{K-1}\cdot a_1\cdots a_K<4^{K}\cdot \tfrac{a_1}2\cdots \tfrac{a_K}2\,.
\end{equation*}
On taking  logarithms,  we obtain that
\begin{equation*}
\log q_K \le K\log 4+\log(\tfrac{a_1}2\cdots \tfrac{a_K}2)\le
2\log 4\max\{K,\log(\tfrac{a_1}2\cdots \tfrac{a_K}2)\}\,
\end{equation*}
whence (\ref{enq2}) follows.

\subsection{Proof of Theorem \ref{T2}}

\subsubsection{The upper bound in  (\ref{eq1})}

For each pair of integers $(u,a)$ satisfying $1\le u\le K+1$ and $1\le a\le a_{u+1}$, let $B_N(u,a)$ be the set of integers $n$ with $1\le n\le N$ such that the Ostrowski coefficients $\{c_{k+1}\}_{k=0}^{\infty}$ of $n$ satisfy
$$
c_{1}=\dots=c_{u-1}=0
$$
and one of the following two conditions:
\begin{enumerate}
\item[(i)] $c_u=0$ and $c_{u+1}=a+1$, or
\item[(ii)] $c_u=1$ and $c_{u+1}=a_{u+1}-a$.
\end{enumerate}
In view of the uniqueness of Ostrowski expansion, the sets $B_N(u,a)$ are pairwise disjoint. By definition, we always have that $B_N(u,a)\subseteq\{1,\dots,N\}$. We claim that
\begin{equation}\label{claim1}
    \big\{n\in\N: n\le N,~\|n\alpha\|<|D_2|\,\big\}~\subset~\bigcup_{u=2}^{K+1}\ \bigcup_{a=1}^{a_{u+1}} B_N(u,a)\,.
\end{equation}
To see this, take any $n$ from the l.h.s. of \eqref{claim1} and, with reference to the Ostrowski expansion of $n$, let $m$ be the smallest integer such that $c_{m+1}\not= 0$. By (\ref{K(N)}) and Lemma~\ref{contfrac.lemhombnd.lemhombnd}, we have that $2\le m\le K$. If $c_{m+1}>1$ then $n$ lies in the set $B_N(m,c_{m+1}-1)$. If $c_{m+1}=1$ then $n$ lies in the set $B_N(m+1,a_{m+2}-c_{m+2})$. Note that, by (\ref{contfrac.eqn.Ostrowski1+2}) and (\ref{contfrac.eqn.Ostrowski1+3}), in this case $1\le a_{m+2}-c_{m+2}\le a_{m+2}$.

By Lemma \ref{contfrac.lemhombnd.lemhombnd}, if $n$ lies in the l.h.s. of \eqref{claim1} then
\begin{equation}\label{bndB_N(m,a)}
\|n\alpha\|\ge a|D_u| \quad\text{ whenever }~n\in B_N(u,a)
\end{equation}
for some $u$ and $a$ from the r.h.s. of \eqref{claim1}.
Clearly, the set $B_N(u,a)$ is the union of
\begin{equation}\label{vbx1}
A_N(\underbrace{0,\dots,0}_{u\text{ times}},a+1) \qquad\text{and}\qquad
A_N(\underbrace{0,\dots,0}_{u-1\text{ times}},1,a_{u+1}-a)\,,
\end{equation}
where the sets $A_N(\cdots)$ are defined in \S\ref{gaps}.
Then, by Lemma \ref{cortogapslem1} and the inequalities given by  (\ref{K(N)}), we find that
\begin{equation}\label{bound3}
\#B_N(u,a)\le \frac{8N}{q_{u+1}}\qquad\text{if }u\le K-1\,.
\end{equation}
Putting this together with (\ref{bndB_N(m,a)}) implies that \\[1ex]
\begin{align}
\sum_{u=2}^{K-1}\sum_{a=1}^{a_{u+1}}\sum_{n\in B_N(u,a)}\frac{1}{\|n\alpha\|} \ & \le \
\sum_{u=2}^{K-1}\sum_{a=1}^{a_{u+1}}\frac{8N}{aq_{u+1}|D_u|} \nonumber \\[2ex]
 & \stackrel{\eqref{vb7}}{\le} \ 16N\sum_{u=2}^{K-1}\sum_{a=1}^{a_{u+1}}\frac{1}{a}\nonumber \\[2ex]
&\le  16N\sum_{u=2}^{K-1}(1+\log a_{u+1}) \nonumber \\[2ex]
 &\le  16N(K-2+\log (a_1\dots a_{K})) \nonumber \\[2ex]
&\stackrel{\eqref{zvb}}{\le}  64N\log q_K\,. \label{Tmp1}
\end{align}

\medskip

\noindent Considering the case $\|n\alpha\|\ge|D_2|$, we get that
\begin{equation}\label{zvb3}
\sum_{\substack{n\le N\\[0.5ex] \|n\alpha\|\ge |D_2|}}\frac{1}{\|n\alpha\|}\le
N|D_2|^{-1}~\stackrel{\eqref{vb7}}{\le}~2q_3N\,.
\end{equation}
It remains to note that if
$$
n\in \bigcup_{u=K}^{K+1} \bigcup_{a=1}^{a_{u+1}} B_N(u,a)
$$
then
\begin{equation}\label{vbt}
n\equiv0~\text{or}~q_{K-1}\, ({\operatorname{mod}{q_K})}\,.
\end{equation}
Indeed, if $n\in B_N(K,a)$ then, since $q_{K+1}>N\ge n$, by definition, the Ostrowski expansion of $n$ is either $(a+1)q_K$ or $q_{K-1}+(a_{K+1}-a)q_K$. In both cases \eqref{vbt} is satisfied. Similarly, if $n\in B_N(K+1,a)$ then, by definition, the Ostrowski expansion of $n$ can only be $q_{K}+(a_{K+2}-a)q_{K+1}$ and since $n\le N<q_{K+1}$, we have that $n=q_K$. Clearly, \eqref{vbt} is satisfied again. The upshot is that
the $n$'s which fall into $B_N(K,a)$ or $B_N(K+1,a)$ are irrelevant in estimating \eqref{eq1} and therefore on combining (\ref{claim1}), \eqref{Tmp1} and (\ref{zvb3}) implies the upper bound in (\ref{eq1}).

\subsubsection{The lower bound in  (\ref{eq1})}

For each $m,a\in\N$, define $B^*_N(m,a)$ to be the collection of positive integers $n\le N$ with Ostrowski expansions satisfying $c_{k+1}=0$ for all $k<m$ and $c_{m+1}=a$. If $m\le K-1$ and $1\le a\le a_{m+1}$,  the set $B^*_N(m,a)$ is clearly non-empty. Then, by Lemma \ref{cortogapslem1}, it follows  that
\begin{equation}\label{eq5}
\#B^*_N(m,a)\ge \frac{N}{3q_{m+1}}\,.
\end{equation}
Assume that $n\in B^*_N(m,a)$ and that  one of the following is satisfied

$\bullet$ ~~$1\le a\le a_{m+1}$ and $m\le K-2$, or

$\bullet$ ~~$2\le a\le a_{m+1}$ and $m=K-1$.

\noindent Then it is easily seen that $n\not\equiv0,q_{K-1}\,(\operatorname{mod}\,q_K)$. Therefore, the sum in (\ref{eq1}) is bounded below by
$$
X:=\sum_{m=2}^{K-2}\sum_{a=1}^{a_{m+1}}\sum_{n\in B^*_N(m,a)}\frac{1}{\|n\alpha\|}+\sum_{a=2}^{a_{K}}\sum_{n\in B^*_N(K-1,a)}\frac{1}{\|n\alpha\|}\,.
$$
By Lemma~\ref{contfrac.lemhombnd.lemhombnd}, $\|n\alpha\|\le(a+1)|D_m|$ for every $n\in B^*_N(m,a)$ with $m\ge2$.
Then this together with  (\ref{eq5}) implies that
\begin{equation}\label{Tmp2}
\begin{array}[b]{rcl}
\displaystyle X & \ge &
\displaystyle \sum_{m=2}^{K-2}\sum_{a=1}^{a_{m+1}}\frac{N}{3q_{m+1}(a+1)|D_m|}+\sum_{a=2}^{a_{K}} \frac{N}{3q_{K}(a+1)|D_{K-1}|} \\[4.5ex]
 &\stackrel{\eqref{vb7}}{\ge} & \displaystyle \tfrac13N\left(\sum_{m=2}^{K-2}\sum_{a=1}^{a_{m+1}}+\sum_{a=2}^{a_{K}}\right)\frac{1}{a+1}
 \ = \ \tfrac13N\sum_{m=2}^{K-1}\sum_{a=1}^{a_{m+1}}\frac{1}{a+1}-\tfrac16N\\[4ex]
&\ge & \displaystyle \tfrac13N\sum_{m=2}^{K-1}\log \tfrac{a_{m+1}+2}2-\tfrac16N\\[4ex]
& \ge & \displaystyle  \tfrac13N\sum_{m=0}^{K-1}\log \tfrac{a_{m+1}+2}2-\tfrac13N(\log \tfrac{a_{1}+2}2+\log \tfrac{a_{2}+2}2)-\tfrac16N\,.
\end{array}
\end{equation}
Since $3a_i\ge a_i+2>a_i$,
$$
\log \tfrac{3a_{m+1}}2\ge \log \tfrac{a_{m+1}+2}2\ge \max\{\log(3/2),\log \tfrac{a_{m+1}}2\},
$$
and we obtain from \eqref{Tmp2} that
\begin{equation}\label{Tmp2+}
\begin{array}[b]{rcl}
\displaystyle X & > & \tfrac13N\max\{K\log(3/2),\log(\tfrac{a_1}2\dots\tfrac{a_K}2)\}-\tfrac13N\log(9a_1a_2/4)-\tfrac16N.
\end{array}
\end{equation}
Trivially, $\tfrac13\log(3/2)>\tfrac18$. Then, using \eqref{zvb} and \eqref{enq2} we obtain from \eqref{Tmp2+} that
\begin{equation*}
\begin{array}[b]{rcl}
\displaystyle X & \ge & \tfrac{1}{24} N\log q_K-\tfrac13N\log q_2-\tfrac13N\log(9/4)-\tfrac16N\\[2ex]
 & \ge & \tfrac{1}{24} N\log q_K-(\tfrac13\log q_2+\tfrac12)N\,.
\end{array}
\end{equation*}
This completes the proof of (\ref{eq1}).

\subsubsection{Proof of (\ref{eq4}) and (\ref{eq2})} \label{5.4}

First observe that since $N\ge q_3$ we have that $K\ge 3$. Furthermore,
$1\le n_1\le N$ and $n_1\equiv0\,(\operatorname{mod}\,q_K)$ if and only if $n_1=aq_K$ for some $a$ with $1\le a\le N/q_K$. Since $N<q_{K+1}=a_{K+1}q_K+q_{K-1}$, we also have that $a\le a_{K+1}$. Thus, the expression $n_1=aq_{K}$ is the Ostrowski expansion of such an integer $n_1$ and then, by  Lemma~\ref{contfrac.homoglem.homoglem} (with $m=K\ge3$), we have that
$$
\|n_1\alpha\|=a|D_K|\,.
$$
Similarly, $1\le n_2\le N$ and $n_2\equiv q_{K-1}\,(\operatorname{mod}\,q_K)$ if and only if $n_2=q_{K-1}+(a_{K+1}-b)q_K$ for some $b$ with $a_{K+1}-(N-q_{K-1})/q_K\le b\le a_{K+1}$. Again since $N<a_{K+1}q_K+q_{K-1}$, we have that $a_{K+1}-(N-q_{K-1})/q_K>0$, which implies that $b\ge1$. By Lemma~\ref{contfrac.homoglem.homoglem} (with $m=K-1\ge2$), we have that
$$
\|n_2\alpha\|=|D_{K-1}|-(a_{K+1}-b)|D_K|\stackrel{\eqref{vb9}}{=}|D_{K+1}|+b|D_K|\,.
$$
By (\ref{vb7}),
\begin{equation*}\label{eq13}
\frac{q_{K+1}}{a}\le \frac{1}{\|n_1\alpha\|}\le \frac{2q_{K+1}}{a}\qquad\text{and}\qquad \frac{q_{K+1}}{b+1}\le \frac{1}{\|n_2\alpha\|}\le \frac{2q_{K+1}}{b}\,.
\end{equation*}
Consequently,
\begin{align*}
\sum_{\substack{1\le n\le N\\[0.2ex] n\equiv0,\,q_{K-1}\, ({\operatorname{mod}{q_K})}}}\hspace*{-4ex}\frac{1}{\|n\alpha\|} \ \ & \le & & \hspace*{-4ex} \sum_{1\le a\le N/q_K}\frac{2q_{K+1}}{a} \ \  +
\sum_{a_{K+1}-(N-q_{K-1})/q_K\le b\le a_{K+1}}\frac{2q_{K+1}}{b}\\[1ex]
& \le & & \hspace*{-4ex}  4q_{K+1}\sum_{1\le a\le 1+N/q_K}\frac1a\ \le\ 4q_{K+1}(1+\log(1+N/q_K))
\end{align*}
and
\begin{align*}
\sum_{\substack{1\le n\le N\\[0.2ex] n\equiv0,\,q_{K-1}\, ({\operatorname{mod}{q_K})}}}\hspace*{-4ex}\frac{1}{\|n\alpha\|} \ \ \ & \ge & &  \hspace*{-10ex} \sum_{1\le a\le N/q_K}\frac{q_{K+1}}{a}\ \ge\ q_{K+1}\log(1+N/q_K)\,.
\end{align*}
These upper and lower bound inequalities prove (\ref{eq4}). Similarly,
\begin{align}
\nonumber \sum_{\substack{1\le n\le N\\[0.2ex] n\equiv0,\,q_{K-1}\, ({\operatorname{mod}{q_K})}}}\hspace*{-4ex}\min\left\{cN,\frac{1}{\|n\alpha\|}\right\} \ \  &  \le\ 2\sum_{1\le a\le 2N/q_K} \min\left\{cN,\frac{2q_{K+1}}a\right\}\\[0ex]
&  \le\ 2\sum_{1\le a\le 2N/q_K} \min\left\{cN,\frac{4a_{K+1}q_K}a\right\}\,.\label{vbvb1}
\end{align}
Here we have  used the fact that $q_{K+1}=a_{K+1}q_K+q_{K-1}\le 2a_{K+1}q_K$. Now split the sum on the r.h.s. of  (\ref{vbvb1}) into two sub-sums: one with $a\le 2(a_{K+1}/c)^{\frac12}$ and the other with $a> 2(a_{K+1}/c)^{\frac12}$. The  first sub-sum is trivially bounded by
$$
2\cdot c\,N\cdot\,2\,(a_{K+1}/c)^{\frac12}= 4\,N\,(ca_{K+1})^{\frac12}
$$
while the second sub-sum is bounded by
$$
2\cdot\frac{2N}{q_K}\cdot \frac{4a_{K+1}q_K}{2(a_{K+1}/c)^{\frac12}}= 8\,N\,(ca_{K+1})^{\frac12}\,.
$$
On combining  the previous  two estimates with  (\ref{vbvb1}) we obtain (\ref{eq2}).

\subsection{Proof of Theorem~\ref{T1}}

Let $N$ be a sufficiently large integer and $K=K(N,\alpha)$ be as in statement of Theorem~\ref{T1}.
By the definition of $K$, for any integer $0\le i\le K$ we have that $q_i\le N$. By \eqref{vb7}, we have that $q_{i+1}\|q_i\alpha\|<1$ and since $q_{i+1}=a_{i+1}q_i+q_{i-1}>a_{i+1}q_i$ ($q_{-1}:=-1$), it follows  that
$a_{i+1}q_{i}\|q_i\alpha\|<1$. Hence
$$
S_N(\alpha,0)\ge
\sum_{i=0}^K\frac{1}{q_i\|q_i\alpha\|}\ \ge \ \sum_{i=0}^K a_{i+1}=A_{K+1}\,.
$$
Combining this with Corollary~\ref{coro2++} establishes the lower bound in \eqref{eq12}.  It remains to prove  the upper bound.

By the partial summation formula \eqref{psum1}, for any function $g: \N \to \R$, we have that
\begin{equation}\label{psum}
\sum_{n=1}^N\frac{g(n)}{n}= \sum_{n=1}^N\frac{G(n)}{n(n+1)}
+\frac{G(N)}{N+1}   \ \qquad \text{where } \quad ~G(n):=\sum_{k=1}^ng(k)  \, .
\end{equation}
Now consider the specific function $g$ defined as follows: given an integer $k\ge0$, for $q_{k}\le n<q_{k+1}$, let
\begin{equation*}
g(n):=\left\{\begin{array}{ccl}
 \|n\alpha\|^{-1}  & \text{if} & \hspace*{-2ex} n\not\equiv0,\,q_{k-1}\, (\operatorname{mod}{q_k})\\[2ex]
0 & \quad\text{otherwise}\, ,  &
\end{array}
\right.
\end{equation*}
where $q_{-1}:=0$. Theorem~\ref{T2} implies that
\begin{equation}\label{dfdf}
G(m)\le 64m\log m+O(m)\,,
\end{equation}
where the implied constant depends only on $\alpha$. Indeed, to see that this is so,  let $M$ be the largest integer such that $q_M\le m$. Then for $n<q_{M}$,
we have that $n\not\equiv 0\pmod{q_{M}}$ while $n\equiv q_{M-1}\pmod{q_{M}}$ means that $n=q_{M-1}$ and so $g(n)=0$. Hence, on making use of  \eqref{eq1},  we have  that \begin{align*}
G(m) \ \ & = \ \sum_{k=0}^{M-1}\,\sum_{\substack{q_k\le n<q_{k+1}\\[0.5ex] n\not\equiv0,q_{k-1}\,(\mathrm{mod}\,{q_k})}}\frac{1}{\|n\alpha\|} \ \
+\sum_{\substack{q_M\le n\le m\\[0.5ex] n\not\equiv0,q_{M-1}\,(\mathrm{mod}\,{q_M})}}\frac{1}{\|n\alpha\|} \\[1ex]
& \le \sum_{\substack{1\le n\le m\\[0.5ex] n\not\equiv0,q_{M-1}\,(\mathrm{mod}\,{q_M})}} \!\!\frac{1}{\|n\alpha\|} \ \le  \ 64 m\log q_M +O(m)
\\[1ex]
&
\le  \ 64 m\log m +O(m) \,
 \end{align*}
which is precisely \eqref{dfdf}. On combining \eqref{psum} and \eqref{dfdf}, it follows that
\begin{eqnarray}\label{eq10}
\sum_{\substack{1\le n \le N\\g(n) \neq 0}}\frac{1}{n\|n\alpha\|} &  =  &  \sum_{n=1}^N \frac{g(n)}{n} \ \le \  64\sum_{n=1}^N\frac{\log n}{n}
+O(\log N) \nonumber \\[1ex]
& \le &   32(\log N)^2+O(\log N)\,.
\end{eqnarray}
It remains to consider the sum
$$
\sum_{\substack{1\le n \le N\\g(n)=0}}\frac{1}{n\|n\alpha\|}\,.
$$

We will use similar  arguments  to those appearing in  \S\ref{5.4}. Fix any integer $k\ge 3$. A positive integer $n_1\in[q_k,q_{k+1})$ satisfies $n_1\equiv0\,(\operatorname{mod}\,q_k)$ if and only if $n_1=aq_k$ for some $a$ with $1\le a\le (q_{k+1}-1)/q_k$. Similarly, a positive integer $n_2\in[q_k,q_{k+1})$ satisfies $n_2\equiv q_{k-1}\,(\operatorname{mod}\,q_k)$ if and only if $n_2=q_{k-1}+(a_{k+1}-b)q_k$ for some $b$ with $a_{k+1}-(q_{k+1}-q_{k-1}-1)/q_k\le b< a_{k+1}$. It is easily seen that $b\ge1$. Then, by Lemma~\ref{contfrac.homoglem.homoglem}, we have that
$$
\|n_1\alpha\|=a|D_k|
$$
and
$$
 \|n_2\alpha\|=|D_{k-1}|-(a_{k+1}-b)|D_k|\stackrel{\eqref{vb9}}{=}|D_{k+1}|+b|D_k|\,.
$$
By (\ref{vb7}), we have that
$$
\frac{1}{\|n_1\alpha\|}\le \frac{2q_{k+1}}{a}\qquad\text{and}\qquad
\frac{1}{\|n_2\alpha\|}\le \frac{2q_{k+1}}{b}\,.
$$
These together with the inequality $q_{k+1}<2a_{k+1}q_k$,  imply that
$$
\frac{1}{\|n_1\alpha\|}\le \frac{4a_{k+1}q_k}{a}
\qquad\text{and}\qquad
\frac{1}{\|n_2\alpha\|}\le \frac{4a_{k+1}q_k}{b}\,.
$$
Consequently, it follows that
\begin{align*}
\sum_{\substack{q_k\le n<q_{k+1}\\[0.2ex] g(n)=0}}\frac{1}{n\|n\alpha\|} \ \ & \le \ \sum_{a\ge1}\frac{4a_{k+1}q_k}{q_ka^2}+
\sum_{1\le b\le a_{k+1}-1}\frac{4a_{k+1}q_k}{b(a_{k+1}-b)q_k}\\[0ex]
& = \ 4a_{k+1}\sum_{a\ge1}\frac1{a^2}+4\sum_{1\le b\le a_{k+1}-1}\left(\frac1b+\frac{1}{a_{k+1}-b}\right)\\
& = \ 4a_{k+1}\sum_{a\ge1}\frac1{a^2}+8\sum_{1\le b\le a_{k+1}-1}\frac1b\\
&\le\ \frac{2\pi^2}3a_{k+1}+1+\log a_{k+1}< 9a_{k+1}\,.
 \end{align*}
Hence, for $N$ (and therefore $K$)  sufficiently large we have that
$$
\sum_{\substack{1\le n\le N\\[0.2ex] g(n)=0}}\frac{1}{n\|n\alpha\|} \  \le \  \ \sum_{k=0}^K  \ \sum_{\substack{q_k\le n<q_{k+1}\\[0.2ex] g(n)=0}}\frac{1}{n\|n\alpha\|} \ \le \ 9A_{K+1}+O(1)  \ \le  \ 10A_{K+1}  \, .
$$
This together with (\ref{eq10}) establishes  the upper bound appearing in  (\ref{eq12}) and thus completes the proof of Theorem~\ref{T1}.

\section{Establishing the inhomogeneous results on  sums of reciprocals}

In this section we prove Theorems \ref{T4}, \ref{T5}, \ref{T6} and \ref{T8}. Since $\|x\|$ is invariant under integer translations of $x$, without loss of generality, we can assume that $\alpha \in [0,1)\setminus \Q$  and  that $\gamma\in [-\alpha,1-\alpha)$.
Throughout,  we will use the notation and language introduced in \S\ref{sec.contfrac} -- \S\ref{gaps}.
Also, we assume that (\ref{contfrac.eqn.|na-g|}) holds.

\subsection{Proof of Theorem~\ref{T6}}\label{inhom}

Take $0<\ve<1$ and let
$$
\cA:=\Big\{\alpha\in[0,1)\setminus\Q:\log(a_1\cdots a_n)\ll n,\ \sum_{i=1}^na_i\ll n^{1+\ve}\text{ for all } n\in\N\Big\}\,,
$$
where the implied constants may depend on $\alpha$. It follows from the Khintchine-Levy Theorem \cite[Chapter~V]{RockettSzusz1992}, Khintchine's Theorem \cite[Theorem~2.2]{Harman1998}, and a theorem of Diamond and Vaaler \cite{DiamondVaaler1986} that the set $\cA$ has Lebesgue measure one. We shall show that Theorem~\ref{T6} holds with this particular choice of $\cA$.

In order to estimate the relevant sum we will make use of expression
\eqref{vb6A} for $\|n\alpha-\gamma\|$.  Namely,
$$
\|n\alpha-\gamma\|=\min\Big\{\left|\Sigma\right|,1-|\Sigma|\Big\}\,,
$$
where $\Sigma=\Sigma(n,\alpha,\gamma)$ is given by \eqref{Sigma} and $m=m(n,\alpha,\gamma)$ is  defined as in Lemma~\ref{contfrac.inhomoglem.inhomoglem}.
We will use the estimates for $|\Sigma|$ and $1-|\Sigma|$ given by Lemma~\ref{contfrac.leminhombnd.leminhombnd} and Lemma~\ref{contfrac.leminhombnd.leminhombnd2}.

\subsubsection{The case $m>K$}

Unlike our treatment of the corresponding homogeneous case, where $m$ was always bounded by $K$, in the inhomogeneous case the parameter $m$ (which is determined by the $b_{k+1}$'s as well as by the $c_{k+1}$'s) can be arbitrarily large if the Ostrowski expansion of $\gamma$ contains a large number of consecutive zeros. More precisely, if $b_{K+1}=\dots=b_{K+T}=0$, $b_{K+T+1}\neq0$, then for $n=\sum_{k=0}^Kb_{k+1}q_k$ we will have that $m=K+T$. However, as is clear from the definition of $m$ and the coefficients $\delta_{k+1}$ given by \eqref{delta}, whenever $m>K$, we have that $c_{k+1}=b_{k+1}$ for all $k\le K$ and the associated integer $n$ is uniquely defined by $\gamma$. Hence
\begin{equation}\label{m>K}
\sum_{\substack{n\le N\,: ~m>K\\[0.5ex]
c\le n\|n\alpha-\gamma\|}}~\frac{1}{n\|n\alpha-\gamma\|}  \ \le \ \frac{1}{c}\,.
\end{equation}

\subsubsection{The case $|\Sigma|\le1/2$, $m\le K$}

By Lemma~\ref{contfrac.inhomoglem.inhomoglem}, in the case $|\Sigma|\le1/2$, we have that $\|n\alpha-\gamma\|=|\Sigma|$ and therefore we can call upon Lemma~\ref{contfrac.leminhombnd.leminhombnd} when required. With this in mind, for each pair of integers $(u,a)$ satisfying
\begin{equation}\label{am}
0\le u\le K+1\quad\text{and}\quad 1\le a\le 2a_{u+1}-1,
\end{equation}
let $C_N(u,a)$ be the set of all positive integers $n\le N$ such that $|\Sigma|\le 1/2$ and one of the following two conditions is satisfied:
\begin{itemize}
  \item[(i)] ~~$\delta_{1}=\dots=\delta_{u}=0$, ~$|\delta_{u+1}|-1=a$\,, or
  \item[(ii)] ~~$u\ge1$, ~$\delta_{1}=\dots=\delta_{u-1}=0$, ~$|\delta_u|=1$,
~$a_{u+1}-1-\sgn(\delta_{u})\delta_{u+1}=a$.
\end{itemize}
In the first case we have that $u=m$ while in the second case $u=m+1$, where $m=m(n,\alpha,\gamma)$ is defined as in Lemma~\ref{contfrac.inhomoglem.inhomoglem}. Since we are dealing with the case $m\le K$, the condition $u\le K+1$ within \eqref{am} is natural and non-restrictive. Then, applying
Lemmas~\ref{contfrac.inhomoglem.inhomoglem} and \ref{contfrac.leminhombnd.leminhombnd}, we obtain that
\begin{equation}\label{bound1}
\|n\alpha-\gamma\|\ge a|D_u|
\end{equation}
for any $n\in C_N(u,a)$.
It is easily seen that  $C_N(u,a)$ lies within the union of the following four sets:
\begin{equation}\label{sets1}
A_N(b_1,\dots,b_{u},b_{u+1}\pm(a+1))
\end{equation}
and
\begin{equation}\label{sets2}
A_N(b_1,\dots,b_{u-1},b_{u}\pm1,b_{u+1}\pm(a_{u+1}-a-1)),
\end{equation}
where $b_1,b_2,\dots$ are the Ostrowski coefficients of $\gamma$ (see Lemma~\ref{Ostrowski2}) and are fixed.

Let $M_N(u,a)$ be the collection of the minimal elements of the non-empty sets given by  (\ref{sets1}) and (\ref{sets2}). Trivially, we have that $\#M_N(u,a)\le4$. Further, using Lemma~\ref{contfrac.sumbndlem.sumbndlem}, we obtain
\begin{equation*}
\sum_{\substack{n\in C_N(u,a)\setminus M_N(u,a)}}\frac{1}{n}\ll\frac{\log N}{q_{u+1}}\,.
\end{equation*}
On combining this with (\ref{bound1}),  we obtain that
\begin{align}
\sum_{u=0}^{K+1}~&~\sum_{a=1}^{2a_{u+1}-1}  \sum_{\substack{n\in C_N(u,a)\\c\le n\|n\alpha-\gamma\|}}\frac{1}{n\|n\alpha-\gamma\|}  \nonumber\\[2ex]
& ~\le~
\sum_{u=0}^{K+1}~~\sum_{a=1}^{2a_{u+1}-1}\left(\sum_{n\in C_N(u,a)\setminus M_N(u,a)}\frac{1}{na|D_u|}
~+~
\sum_{\substack{n\in M_N(u,a)\\c\le n\|n\alpha-\gamma\|}}\frac{1}{n\|n\alpha-\gamma\|}\right) \nonumber\\[2ex]
&\ll \ \log N\cdot\sum_{u=0}^{K+1}\frac{1}{q_{u+1}|D_u|} \sum_{a=1}^{2a_{u+1}-1}\frac{1}{a}~+~ \sum_{u=0}^{K+1}~\sum_{a=1}^{2a_{u+1}-1}
\frac{4}{c} \nonumber\\[2ex]
&\ll \ \log N\cdot\sum_{u=0}^{K+1}~(1+\log a_{u+1})~+~ \sum_{u=0}^{K+1}~a_{u+1}
\, .\label{e1}
\end{align}
The last inequality is a consequence of \eqref{vb7} and the trivial fact that
$$
\sum_{a=1}^{2a_{u+1}-1}\frac1a~\ll 1+\log a_{u+1}\,.
$$
Recall that, by \eqref{zvb}, we have that $K\ll\log q_K\le \log N$ and, by the definition of $\cA$, we have that
$$
\sum_{u=0}^{K+1}~\log a_{u+1}=\log(a_1\cdots a_{K+2})\ll K+2\ll K\ll\log N,
$$
and
$$
\sum_{u=0}^{K+1}~a_{u+1}\ll K^{1+\ve}\ll (\log N)^{1+\ve}\,.
$$
Hence, the above two estimates together with \eqref{e1} imply that
\begin{equation}\label{x0}
  \sum_{u=0}^{K+1}~\sum_{a=1}^{2a_{u+1}-1}  \sum_{\substack{n\in C_N(u,a)\\c\le n\|n\alpha-\gamma\|}}\frac{1}{n\|n\alpha-\gamma\|}~\ll~(\log N)^2\,.
\end{equation}
Observe that this upper bound estimate is no bigger than the upper bound  appearing in Theorem~\ref{T6}. However, we are not yet done with the case under consideration,  since there may be integers $n$ with $1\le n\le N$ and $|\Sigma|\le1/2$ which do not fall in any of the sets $C_N(u,a)$. As is clear from the definition of $C_N(u,a)$ and the range of $(u,a)$ given by \eqref{am}, the remaining numbers $n$ correspond to the situation when the first two terms of \eqref{contfrac.leminhombnd.eqn1} vanish. In order to take these numbers into account, for each triple $(m,a,\ell)$ such that
$$
0\le m\le K,\quad 1\le \ell\le \max\{2,K-m+1\},\qquad 1\le a\le 2a_{m+2+\ell}\,,
$$
we define the set $C^+_N(m,a,\ell)$ to consist of integers $n$ with $1\le n\le N$ such that $|\Sigma|\le1/2$ and their Ostrowski expansions satisfy the following set of equalities:

\begin{equation}\label{c+}
\left\{\begin{array}{l}
\delta_1=\dots=\delta_m=0 \\[1ex]
|\delta_{m+1}|=1, \\[1ex] \sgn(\delta_{m+1})\delta_{m+2}=a_{m+2}-1,\\[1ex]
        \delta_{m+2+i}=(-1)^{i}\sgn(\delta_{m+1})a_{m+2+i} \qquad\text{($1\le i\le \ell-1$)}\\[1ex]
a_{m+2+\ell}-(-1)^{\ell}\sgn(\delta_{m+1})\delta_{m+2+\ell}=a\,.
\end{array}\right.
\end{equation}

\noindent Let $t=m+\ell+1$. Then we have that
\begin{equation}\label{Tt}
2\le t\le K+3\,.
\end{equation}
Since we are assuming  that $|\Sigma|\le1/2$, by Lemmas~\ref{contfrac.inhomoglem.inhomoglem} and \ref{contfrac.leminhombnd.leminhombnd}, we have that
\[\|n\alpha-\gamma\|\ge a\|q_{t}\alpha\| \qquad\text{for all }~n\in C^+_N(m,a,\ell).\]
In view of properties \eqref{contfrac.eqn.Ostrowski1+1}, \eqref{contfrac.eqn.Ostrowski1+2}, \eqref{contfrac.eqn.Ostrowski2+1} and \eqref{contfrac.eqn.Ostrowski2+}, the equalities associated with (\ref{c+}) can only occur if the coefficients $b_{m+1},\dots,b_{t}$ of the Ostrowski expansion of $\gamma$ have one of the following two patterns:\\[2ex]
\begin{equation}\label{pattern}
\begin{array}{|c|c|c|c|c|c|c|}
\hline
b_{m+1} & b_{m+2} & b_{m+3} & b_{m+4} &b_{m+5} &b_{m+6} & \dots  \\ \hline
<a_{m+1} & 0 & a_{m+3} & 0 & a_{m+5} & 0 &\dots \\ \hline
>0\ \ \ \  & a_{m+2}-1 & 0 & a_{m+4} &0 &a_{m+6} & \dots \\ \hline
\end{array}
\end{equation}

\medskip

\noindent
Therefore, for each  $t$ satisfying \eqref{Tt}, there are at most two corresponding pairs $(m,\ell)$ such that (\ref{c+}) is satisfied for some $n\le N$. In other words, for each such  $t$ there are at most two values of $(m,\ell)$ such that $C^+_N(m,a,\ell)$ is non-empty. Next, in view of  (\ref{c+}), observe that $C^+_N(m,a,\ell)$ is the union of two sets $A_N(d_1,\dots,d_{t+1})$ with\\[1ex]
\begin{equation*}
\left\{\begin{array}{l}
d_{k}=b_k \qquad \text{($1\le k\le m$)}\\[1.5ex]
d_{m+1}=b_{m+1}\pm1, \\[1.5ex]
d_{m+2}=b_{m+2}\pm(a_{m+2}-1),\\[1.5ex]
d_{m+2+i}=b_{m+2+i}\pm(-1)^{i}a_{m+2+i} \qquad\text{($1\le i\le \ell-1$)}\\[1.5ex]
d_{m+2+\ell}=b_{m+2+\ell}\pm(-1)^{\ell}(a_{m+2+\ell}-a)\,.
\end{array}\right.
\end{equation*}

\medskip

\noindent Then, on exploiting the same arguments used to establish (\ref{e1}) and \eqref{x0}, we obtain that
\begin{align}
\sum_{t=2}^{K+3}\sum_{\substack{(m,\ell)\\m+\ell+1=t}}&\sum_{a=1}^{2a_{t+1}}\sum_{\substack{n\in C^+_N(m,a,\ell)\\ c\le n\|n\alpha-\gamma\|}}\frac{1}{n\|n\alpha-\gamma\|}\nonumber\\[2ex]
&\ll\ (\log N)\sum_{t=2}^{K+3}(1+\log a_t)+\sum_{t=1}^{K+2}a_t ~\ll~ (\log N)^2\,.\label{e2}
\end{align}
Again, observe that this upper bound estimate is no bigger than the upper bound  appearing in Theorem~\ref{T6}.  On using Lemma~\ref{contfrac.leminhombnd.leminhombnd},  it can be readily verified that the sets $C_N(u,a)$ and $C^+_N(m,a,\ell)$ considered above account for all integers $n$ with $1\le n\le N$ such that $|\Sigma|\le1/2$ and $m=m(n,\alpha,\gamma)\le K$.

\subsubsection{The case $|\Sigma|>1/2$, $m\le K$}

 By Lemma~\ref{contfrac.inhomoglem.inhomoglem}, when $|\Sigma|>1/2$ we have that $\|n\alpha-\gamma\|=1-|\Sigma|$ and therefore we can call upon Lemma~\ref{contfrac.leminhombnd.leminhombnd2} when required.

 We consider two subcases. In subcase one, we assume that
\begin{equation}\label{c1}
\delta_1\not=(-1)^m\sgn(\delta_{m+1})(a_1-1).
\end{equation}
Then, by \eqref{contfrac.leminhombnd.eqn1zx}, $1-|\Sigma|\ge|D_0|=\{\alpha\}$ and
\begin{equation}\label{e++}
\sum_{\substack{1\le n\le N\\[0.5ex] |\Sigma|\ge1/2,~\eqref{c1}\text{~holds}}}\frac{1}{n\|n\alpha-\gamma\|}\ll\sum_{n=1}^N\frac{1}{n}\ll\log N\,.
\end{equation}
In subcase two, we assume that
\begin{equation}\label{c2}
\delta_1=(-1)^m\sgn(\delta_{m+1})(a_1-1).
\end{equation}
In view of properties \eqref{contfrac.eqn.Ostrowski1+1}, \eqref{contfrac.eqn.Ostrowski1+2}, \eqref{contfrac.eqn.Ostrowski2+1} and \eqref{contfrac.eqn.Ostrowski2+},  the equalities associated with (\ref{vb12zx}) can only occur if the coefficients $b_{1},\dots,b_{L}$ of the Ostrowski expansion of $\gamma$ have one of the following two patterns:\\[1ex]
\begin{equation}\label{pattern2}
\begin{array}{|c|c|c|c|c|c|c|}
\hline
b_{1} & b_{2} & b_{3} & b_{4} &b_{5} &b_{6} & \dots  \\ \hline
a_{1}-1 & 0 & a_{3} & 0 & a_{5} & 0 &\dots \\ \hline
0  & a_{m+2} & 0 & a_{4} &0 &a_{6} & \dots \\ \hline
\end{array}
\end{equation}

\medskip

\noindent
For positive integers $L\le K+2$ and for $1\le a\le 2a_{L+1}$, let $F_N(L,a)$ be the set of positive integers $n\le N$ such that $|\Sigma|>1/2$, $m=m(n,\alpha,\gamma)\le K$ and

\begin{equation}\label{c++}
\left\{\begin{array}{l}
\delta_1=(-1)^m\sgn(\delta_{m+1})(a_1-1), \\[1.5ex]
\delta_{i+1}=(-1)^{i+m}\sgn(\delta_{m+1})a_{i+1}\qquad(1\le i\le L-1),\\[1.5ex]
a_{L+1}-(-1)^{L+m}\sgn(\delta_{m+1})\delta_{L+1}=a\,.
        \end{array}\right.
\end{equation}

\noindent It is easily verified that the sets $F_N(L,a)$ account for the remaining positive integers $n\le N$, not accounted for elsewhere in the  proof of the theorem. By Lemmas~\ref{contfrac.inhomoglem.inhomoglem} and~\ref{contfrac.leminhombnd.leminhombnd2}, for $n\in F_N(L,a)$ we have that
$$
\|n\alpha-\gamma\|\ge a|D_L|\,.
$$
Next, on using   (\ref{c++}), it is readily  verified that   $F_N(L,a)$ is the set $A_N(d_1,\dots,d_{L+1})$ defined within Lemma~\ref{contfrac.sumbndlem.sumbndlem}, with\\[1ex]
\begin{equation*}
\left\{\begin{array}{l}
d_1=b_1+(-1)^m\sgn(\delta_{m+1})(a_1-1)\,,\\[1.5ex]
d_{i+1}=b_{i+1}+(-1)^{i+m}\sgn(\delta_{m+1})a_{i+1}\qquad(1\le i\le L-1)\,, \\[1.5ex]
d_{L+1}=b_{L+1}-(-1)^{L+m}\sgn(\delta_{m+1})(a-a_{L+1})\,.
\end{array}\right.
\end{equation*}

\medskip

\noindent On exploiting  the same arguments used to establish (\ref{e1}) and \eqref{x0}, we obtain that
\begin{align}
\sum_{L=1}^{K+2}&~\sum_{a=1}^{2a_{L+1}}\sum_{\substack{n\in F_N(L,a)\\ c\le n\|n\alpha-\gamma\|}}\frac{1}{n\|n\alpha-\gamma\|}\nonumber\\[2ex]
&\ll\ (\log N)\sum_{L=1}^{K+2}(1+\log a_L)+\sum_{L=1}^{K+2}a_L ~\ll~ (\log N)^2\,.\label{e2+}
\end{align}

\medskip

Combining the lower bound estimates given by  \eqref{m>K}, (\ref{x0}), (\ref{e2}), \eqref{e++} and (\ref{e2+}) completes the proof of Theorem~\ref{T6}.

\subsection{Proof of Theorem \ref{T4}}\label{sec.thm1}

Fix $\gamma\in\R$. Theorem \ref{T4} will follow from Theorem \ref{T6} if we can show that
\begin{equation}\label{proofthm1.eqn1}
\sum_{\substack{1\le n\le N\\n\|n\alpha-\gamma\|\le 1}}\frac{1}{n\|n\alpha-\gamma\|}\ll(\log N)^2~\text{ for almost all }~\alpha\in[0,1).
\end{equation}
Using the Borel-Cantelli Lemma in a standard way (e.g., see \cite{BDV-06} or \cite{Sprindzuk}) one can easily see that for almost all $\alpha\in\R$ the inequality
\begin{equation*}
\|n\alpha-\gamma\|\le\frac{1}{n(\log n)^2}
\end{equation*}
has only finitely many solutions $n\in\N$. Therefore, (\ref{proofthm1.eqn1}) will follow on showing that for some $\varepsilon\in(0,1)$
\begin{equation*}
\sum_{\substack{1\le n\le N\\1/(\log n)^2<n\|n\alpha-\gamma\|\le 1}}\frac{1}{n\|n\alpha-\gamma\|}\ll(\log N)^{1+\varepsilon}~\text{ for almost all}~\alpha\in[0,1).
\end{equation*}
Given an integer $n\ge 2$, define
$$
\A_n:=\left\{\alpha\in [0,1)~:~\frac{1}{n(\log n)^2}<\|n\alpha-\gamma\|\le \frac{1}{n}\right\}
$$
and
$$
J(n)=\int_{\A_n}\frac{d\alpha}{\|n\alpha-\gamma\|}.
$$
It is elementary to check that
\begin{align*}
J(n)  \  & =   \  \  \sum_{a=1}^n\left(\int_{(a+\gamma)/n-1/n^2}^{(a+\gamma)/n-1/(n\log n)^2}+\int_{(a+\gamma)/n+1/(n\log n)^2}^{(a+\gamma)/n+1/n^2}\right)\frac{d\alpha}{|n\alpha-a-\gamma|}\ \\[4ex]
& \sim    \  \ 4\log\log n\text{ \ \ as  \ \ $n\to\infty$. }\qquad
\end{align*}
This together with Fatou's lemma, implies that
\begin{align*}
\int_0^1\left(\sum_{\substack{n=2\\ (\log n)^{-2}<n\|n\alpha-\gamma\|\le 1}}^\infty\frac{1}{n(\log n)^{1+\varepsilon}\|n\alpha-\gamma\|}\right)~d\alpha ~\le~ \sum_{n=2}^\infty\frac{J(n)}{n(\log n)^{1+\varepsilon}}<\infty.
\end{align*}
The upshot is that the  above integrand  must be finite almost everywhere, therefore, for almost all $\alpha$  we have that

\begin{align*}
\sum_{\substack{1\le n\le N\\ (\log n)^{-2}<n\|n\alpha-\gamma\| \le 1}}\hspace*{-3ex}\frac{1}{n\|n\alpha-\gamma\|}\ &\le \ (\log N)^{1+\varepsilon}\hspace*{-5ex} \sum_{\substack{1\le n\le N\\ (\log n)^{-2}<n\|n\alpha-\gamma\|\le 1}}\hspace*{-3ex}\frac{1}{n(\log n)^{1+\varepsilon}\|n\alpha-\gamma\|}\\[4ex]
& \ll \ (\log N)^{1+\varepsilon}.
\end{align*}

\noindent This proves (\ref{proofthm1.eqn1}) and thereby
completes the proof of Theorem~\ref{T4}.

\subsection{Proof of Theorem \ref{T5}}\label{sec.thm2}

Let $\alpha$ and $\psi$ be as in the statement of Theorem~\ref{T5}. Without loss of generality, we will assume that $\alpha\in(0,1)$ and   we let $p_k/q_k$ denote the principal convergents of $\alpha$. Let $(\ve_i)_{i\in\N}$ be a sequence of elements from the set $\{0,1\}$ and define, inductively a strictly increasing sequence of integers $(k_i)_{i\in\N}$ as follows. Let $k_1\ge 1$ be the smallest integer satisfying
\begin{equation}\label{c1+}
  |D_{k_1-1}|+|D_{k_1}|<\min\{\alpha,1-\alpha\}\,.
\end{equation}
For  $i\ge1$, assuming $k_1,\dots,k_i$ are given, let $k_{i+1}$ be  taken large enough that \begin{equation}\label{cc2}
  k_{i+1}\ge k_i+5\,,
\end{equation}
\begin{equation}\label{cc3}
k_{i+1}-k_i\equiv \ve_i\pmod2\,,
\end{equation}
\begin{equation}\label{proofthm2.psieqn1}
\frac{q_{k_{i+1}+1}}{\psi(q_{k_{i+1}+1})}\ge (i+1)\cdot q_{k_i+1}
\end{equation}
and
\begin{equation}\label{proofthm2.psieqn2}
\frac{q_{k_i+1}}{q_{k_{i-1}+1}}\le \frac{q_{k_{i+1}+1}}{q_{k_i+1}}\,.
\end{equation}
Note that it is possible to choose $k_{i+1}$ in this manner  because of the assumption that $f(N)=o(N)$ as $N\to\infty$.
Now let
\begin{equation}\label{bb}
b_{k+1}:=\left\{\begin{array}{cl}
      1 & \text{if }~ k=k_i~\text{for some }~i\in\N\,,\\[1ex]
      0 & \text{otherwise}
     \end{array}
\right.
\end{equation}
and
$$
\gamma:=\sum_{k=0}^\infty b_{k+1}D_k=\sum_{i=1}^\infty D_{k_i}\,.
$$
It is readily seen that $\gamma\in(-\alpha,1-\alpha)$. Indeed, using \eqref{vb1} and \eqref{c1+} we have  that
$$
|\gamma|\le \sum_{k=k_1}^\infty a_{k+1}|D_{k}|=|D_{k_1-1}|+|D_{k_1}|<\min\{\alpha,1-\alpha\}\, .
$$
Thus, $(b_{k+1})_{k\ge0}$ is the sequence of Ostrowski coefficients associated with the expansion of  $\gamma$. In view of the uniqueness property of  the Ostrowski expansion of a real number, it follows that two different sequences $(k_i)_{i\in\N}$ and $(k'_i)_{i\in\N}$ give rise to two different
numbers $\gamma$ and $\gamma'$. Note that, by \eqref{cc3}, there are at least as many sequences $(k_i)_{i\in\N}$ as sequences $(\ve_i)_{i\in\N}$. Hence, there is a set of $\gamma$ as above of the cardinality of continuum satisfying \eqref{contfrac.eqn.|na-g|}. Now fix any one of them and
for each $i\in\N$ let
$$
n_i = \sum_{k=0}^{k_i}b_{k+1}q_k=\sum_{j=1}^iq_{k_i}.
$$
Clearly, $n_i\le q_{k_i+1}$. Using Corollary~\ref{cor1} (recall the results of \S\ref{qw} are valid even  in the case $\gamma=0$)  and (\ref{vb7}) we find that
$$\|n_i\alpha-\gamma\| \ll \dfrac{1}{q_{k_{i+1}+1}} \, . $$
Thus, it follows that
\begin{align*}
S_{n_i}(\alpha,\gamma)-\max_{1\le n\le n_i}\frac{1}{n\|n\alpha-\gamma\|}&\ge\min\left\{\frac{1}{n_{i-1}\|n_{i-1}\alpha-\gamma\|},\frac{1}{n_i\|n_i\alpha-\gamma\|}\right\}\\[2ex]
&\gg\min\left\{\frac{q_{k_i+1}}{q_{k_{i-1}+1}},\frac{q_{k_{i+1}+1}}{q_{k_i+1}}\right\}~\stackrel{\eqref{proofthm2.psieqn2}}{=}~\frac{q_{k_i+1}}{q_{k_{i-1}+1}}\\[2ex]
&\stackrel{\eqref{proofthm2.psieqn1}}{\ge}~ i\psi(q_{k_i+1})\, ,
\end{align*}
and thereby completes the proof of the theorem since $i$ can be taken arbitrarily large.

\subsection{Proof of Theorem~\ref{T7}}\label{sec.thm3.lb}

As before, without loss of generality we assume that $\gamma\in[-\alpha,1-\alpha)$. We can assume that
\eqref{contfrac.eqn.|na-g|} holds since, if this were not the case, there would exist $n_0\in\N$ and $m_0\in\Z$ such that $\gamma=n_0\alpha+m_0$. Then, on making the substitution $n'=n-n_0$ we would have that
\begin{align*}
\sum_{\substack{rN<n\le N \\[0.5ex] \|n\alpha-\gamma\|\ge N^{-v}}}\frac{1}{\|n\alpha-\gamma\|} & ~= \sum_{\substack{rN<n\le N \\[0.5ex] \|(n-n_0)\alpha\|\ge N^{-v}}}\frac{1}{\|(n-n_0)\alpha\|}\,.
\end{align*}
Since $n\in[rN,N]$ takes at least $ \lfloor(1-r)N\rfloor-1$ values the range of $|n'|=|n-n_0|$ will contain all positive integers up to $M:=\lceil\tfrac12\lfloor(1-r)N\rfloor-1\rceil$. Hence, assuming that $N$ is large enough so that $N^{-v}\ge M^{-v'}$, where $v<v':=(v+1)/2<1$, we have that
\begin{align*}
\sum_{\substack{rN<n\le N \\[0.5ex] \|n\alpha-\gamma\|\ge N^{-v}}}\frac{1}{\|n\alpha-\gamma\|} & ~\ge  \sum_{\substack{rM<n'\le M \\[0.5ex] \|n'\alpha\|\ge M^{-v'}}}\frac{1}{\|n'\alpha\|}\,,
\end{align*}
and  the desired result would follow from the homogeneous case.

Let $(b_{k+1})_{k\ge0}$ be the Ostrowski coefficients of $\gamma$ given within Lemma~\ref{Ostrowski2}. Let $K$ be the largest integer such that $q_K\le N$, and let
$$
n=\left\{\begin{array}{ll}
\sum_{k=0}^{K-1}b_{k+1}q_k &\text{if it is non-zero,}\\[2ex]
q_K &\text{otherwise}.
         \end{array}\right.
$$
It is readily seen that $1\le n\le q_K\le N$ and that the integer $m=m(n,\alpha,\gamma)$ defined in Lemma~\ref{contfrac.inhomoglem.inhomoglem} satisfies $m\ge K$. Then, by Corollary~\ref{cor1}
$$
\|n\alpha-\gamma\|< (|\delta_{m+1}|+2)|D_m|\le \frac{4a_{m+1}}{q_{m+1}}<
\frac{4a_{m+1}}{a_{m+1}q_m}=\frac{4}{q_m}\le\frac{4}{q_{K}}.
$$
Let $0<\nu<\min\{v,w(\alpha)^{-1}\}$, where $w(\alpha)$ is the exponent of  approximation of $\alpha$ -- see \eqref{cors}. Note that $w(\alpha)<\infty$ for $\alpha\notin\mathcal{L}$. Then, by Lemma~\ref{Liouvillelem1}, $q_K^{-1}<\tfrac18q_{K+1}^{-\nu}<\tfrac18N^{-\nu}$ if $N$ is bigger than some sufficiently large $N_0$ depending on $\alpha$.
Thus, for any $N>N_0$ and any $\gamma$ satisfying \eqref{contfrac.eqn.|na-g|}, there exists a positive integer $n\le N$ such that
$$
\|n\alpha-\gamma\|<\tfrac12 N^{-\nu}\,.
$$
Thus, for all $N>N_0$ and all $\gamma$ satisfying \eqref{contfrac.eqn.|na-g|}, we have that
$$
N_\gamma(\alpha,\tfrac12N^{-\nu})\neq\emptyset\,
$$
were $N_\gamma(\alpha,\ve)$ is given by \eqref{ayesha}.
By Corollary~\ref{lem12}
$$
\lfloor N\ve\rfloor\le N(\alpha,\ve)\le 32 N\ve\qquad\text{for $N^{-\nu}<\ve<\ve_0$ and $N>N_0$},
$$
provided $N_0=N_0(\alpha)$ is sufficiently large and $\ve_0=\ve_0(\alpha)$ is sufficiently small.
Then, by Lemma~\ref{inhom2} it follows that
\begin{equation}
\tfrac14N\ve ~\le~ \#N_\gamma(\alpha,\ve) ~\le~ 65N\ve
\end{equation}
provided that $4N^{-\nu}<\ve<\tfrac12\ve_0$.

Now let $R>1$ be an integer such that $R^\nu>1040$. For $k\in\N$, consider the  sets
$$
A_k:=\big\{n\in\N:NR^{-1}<n\le N,~R^{-(k+1)\nu}\le \|n\alpha-\gamma\|< R^{-k\nu}\big\}\,.
$$
Then,
\begin{align*}
\#A_k &~\ge~ \#N_\gamma(\alpha,R^{-k\nu})-\#(N/R)_\gamma(\alpha,R^{-k\nu})
-\#N_\gamma(\alpha,R^{-(k+1)\nu})\\[2ex]
&~\ge~ \tfrac14NR^{-k\nu}-65(N/R)R^{-k\nu}-65NR^{-(k+1)\nu}\\[2ex]
&~\ge~ (\tfrac14-130R^{-\nu}\big)N R^{-k\nu} ~\ge~ \tfrac18 N R^{-k\nu},
\end{align*}
 provided that $4N^{-\nu}<R^{-k\nu}<\tfrac12\ve_0$. The latter holds when $k_0\le k\le \log N/\log R-1$ for some fixed $k_0\in\N$. Clearly,  the sets $A_k$ are disjoint and on taking $r=R^{-1}$ we obtain that
\begin{align*}
\sum_{\substack{rN\le n\le N \\[0.5ex] \|n\alpha-\gamma\|\ge N^{-v}}}\frac{1}{\|n\alpha-\gamma\|}
&~\ge~ \sum_{k_0\le k\le \log N/\log R-1}~\sum_{n\in A_k} \frac{1}{\|n\alpha-\gamma\|} \\[1ex]
&~\ge~ \sum_{k_0\le k\le \log N/\log R-1}~R^{k\nu}\cdot\#A_k\\[1ex]
&~\ge~ \sum_{k_0\le k\le \log N/\log R-1}\tfrac18 N\\[1ex]
& ~\ge~
 \tfrac18N(\log N/\log R-k_0-1)  \, .
\end{align*}
This completes the proof of Theorem~\ref{T7} since  $R$ and $k_0$ depend on $\alpha$ and $v$ only.

\subsection{Proof of Theorem \ref{T8}}

If $\alpha\not\in \mathcal{L} \cup \Q$, then the desired conclusion follows immediately via
Theorem~\ref{T7}. To prove the other direction,  we suppose that $\alpha\in\mathcal{L}$ and  explicitly construct a real number $\gamma$ and a sequence $(N_i)_{i\in\N}$ such that
\begin{equation}\label{x125}
\sum_{1\le n\le N_i}\frac{1}{\|n\alpha-\gamma\|}=o(N_i\log N_i)\qquad\text{as }i\to\infty
\end{equation}
Without loss of generality, we will assume that $\{\alpha\}<\tfrac13$. Indeed,
for any integer $r$ we have the inequality $\|n(r\alpha)\|\le |r|\|n\alpha\|$, which implies that
\begin{equation}\label{bnw}
\sum_{1\le n\le N_i}\frac{1}{\|n\alpha\|}\le |r|\sum_{1\le n\le N_i}\frac{1}{\|n(r\alpha)\|}\,.
\end{equation}
Now, by Dirichlet's Theorem, we can find $r\in\{\pm1,\pm2,\pm3\}$ such that $\{r\alpha\}<\tfrac13$.  The upshot of this and \eqref{bnw}  is  that if $\{\alpha\} \ge \tfrac13$,  we simply replace  $\alpha$ by $\{r\alpha\}$ in the proof.

Recall that $w(\alpha)=\infty$ for $\alpha\in\mathcal{L}$. Then, by Lemma~\ref{Liouvillelem1}, we can find a sequence $\{K_i\}_{i\in\N}$ of positive integers such that
$$
\lim_{i\rar\infty}\frac{\log q_{K_i+1}}{\log q_{K_i}}=\infty.
$$
We will assume that $K_1>1$.
Since $q_{K_i+1}=a_{K_i+1}q_{K_i}+q_{K_i-1}$ we necessarily have that $a_{K_i+1}\to\infty$ as $i\to\infty$. Thus, we may also assume that $a_{K_i+1}\ge8$ for all $i$. Next, we define the sequence $\{b_{k+1}\}_{k\ge 0}$ by setting

\begin{equation}\label{bk+1}
b_{k+1}:=\begin{cases}\left\lceil\frac{a_{k+1}}{2}\right\rceil&\text{if } k=K_i\text{ for some $i$},\\
~~~0&\text{if }a_{k+1}=1\text{ or }2,\\
~~~1&\text{otherwise} , \end{cases}
\end{equation}
and we let
\begin{equation}\label{gamma}
\gamma :=\sum_{k=0}^\infty b_{k+1}D_k.
\end{equation}

\noindent It is easy to see that this expansion satisfies \eqref{contfrac.eqn.Ostrowski2+1} and \eqref{contfrac.eqn.Ostrowski2+}. Therefore, the sum (\ref{gamma}) is absolutely convergent, the real number $\gamma$ is well defined and the integers $b_{k+1}$, $k=0,1,\dots$ are the Ostrowski coefficients of $\gamma$.

\noindent Now, for each $i$ let
\begin{equation}\label{x127}
a'_i=\left\lfloor\frac{a_{K_i+1}}{4}\right\rfloor\qquad\text{and}\qquad N_i=a_i' q_{K_i}.
\end{equation}
Since $a_{K_i+1}\ge 8$, we have that $2\le a'_i<a_{K_i+1}$ and so
\begin{equation}\label{K_i}
q_{K_i}\le N_i<q_{K_i+1}.
\end{equation}
In order to prove \eqref{x125} we will use the techniques developed in \S\ref{inhom}.
First of all we observe  that for any positive integer $n\le N_i$ its Ostrowski coefficient $c_{K_i+1}$ satisfies $c_{K_i+1}\le a_i'\le a_{K_i+1}/4$, while the corresponding Ostrowski coefficient of $\gamma$ is $\ge a_{K_i+1}/2$. Hence
\begin{equation}\label{d+}
a_{K_i+1}/4\le |\delta_{K_i+1}|\le \lceil a_{K_i+1}/2\rceil<a_{K_i+1}-1,
\end{equation}
and the parameter $m=m(n,\alpha,\gamma)$ defined in Lemma~\ref{contfrac.inhomoglem.inhomoglem} satisfies
\begin{equation}\label{m+}
m\le K_i\,.
\end{equation}
Now, consider the sets $C_{N_i}(u,a)$ and  $C^+_{N_i}(m,a,\ell)$  as defined as in \S\ref{inhom} with $N=N_i$. By definition, these deal with the situation $|\Sigma|  \le 1/2$. By Lemma~\ref{cortogapslem1} and \eqref{K_i}, we have that
\begin{equation}\label{x126}
\#C_{N_i}(u,a)\le \frac{3N_i}{q_{u+1}}+1\ll \frac{N_i}{q_{u+1}}.
\end{equation}
In particular, for the case when $u\le K_i-1$, this implies  that

\begin{align*}
\sum_{u=0}^{K_i-1}\sum_{a=1}^{2a_{u+1}-1}\sum_{n\in C_{N_i}(u,a)}&\frac{1}{\|n\alpha-\gamma\|}\\[2ex]
&\stackrel{\eqref{bound1}}{\le} \sum_{u=0}^{K_i-1}\frac{1}{|D_u|}\sum_{a=1}^{2a_{u+1}-1}\frac{\#C_{N_i}(u,a)}{a}\\[2ex]
&\ll N_i\sum_{u=0}^{K_i-1}\sum_{a=1}^{2a_{u+1}-1}\frac{1}{a} \ll N_i\sum_{u=0}^{K_i-1}\log(a_{u+1})\\[2ex]
&= N_i \log(a_1\cdots a_{K_i})\le N_i\log q_{K_i}\,.
\end{align*}

\noindent Next we deal with the set  $C_{N_i}(u,a)$ when $u=K_i$. In this case, we have that
$$
\#C_{N_i}(K_i,a)\le 3N_i/q_{K_i+1}+1\stackrel{\eqref{x127}}{\le} 3\left\lfloor\frac{a_{K_i+1}}{4}\right\rfloor q_{K_i}/q_{K_{i+1}}+1<\tfrac34+1\, .
$$
Hence
\begin{equation}\label{x128}
\#C_{N_i}(K_i,a)\le 1,
\end{equation}
for all $a\ge1$.
Furthermore, since $a_i'\le a_{K_i+1}/4$ and $b_{K_i+1}\ge a_{K_i+1}/2$ the sets $C_{N_i}(K_i,a)$ will be empty whenever $a< a_{K_i+1}/4-1$.
Indeed, any $n\in C_{N_i}(K_i,a)$ has to  lie in one of the four sets given by \eqref{sets1} and \eqref{sets2}. If it lies within those  given by  \eqref{sets1}, then
$$
n\ge (b_{K_i+1}-(a+1))q_{K_i}> (a_{K_i+1}/2-a_{K_i+1}/4)q_{K_i}=(a_{K_i+1}/4)q_{K_i}\ge N_i
$$
and this is impossible for $n\in C_{N_i}(K_i,a)$.
If $n$ lies within those  given by \eqref{sets2}, then either
$$b_{K_i+1}+(a+1-a_{K_i+1})<1+a_{K_i+1}/2+(a_{K_i+1}/4-a_{K_i+1})\le 0,
$$
which is not allowed, or
$$
n\ge (b_{K_i+1}-(a+1-a_{K_i+1}))q_{K_i}> (a_{K_i+1}/2-a_{K_i+1}/4+a_{K_i+1})q_{K_i}> N_i\,,
$$
which is also impossible. Therefore, the contribution from the sets $C_{N_i}(K_i,a)$ to the l.h.s. of \eqref{x125} can be estimated as follows:
\begin{align}
\sum_{a=1}^{2a_{K_i+1}-1}\sum_{n\in C_{N_i}(K_i,a)}\frac{1}{\|n\alpha-\gamma\|}\nonumber
&\ \ \stackrel{\eqref{bound1}\&\eqref{x128}}{\le}\ \ \sum_{a_{K_i+1}/4\le a+1\le 2a_{K_i+1}}\frac{1}{a|D_{K_i}|}\\[2ex]
&~~~~~~~\stackrel{\eqref{vb7}}{\ll}\ \ q_{K_i+1}\stackrel{\eqref{vb7-}\&\eqref{x127}}{\ll} N_i.\label{x130}
\end{align}

\noindent Also note that the set  $C_{N_i}(u,a)$ in the remaining case of $u=K_i+1$ is always empty. Indeed, any $n\in C_{N_i}(K_i+1,a)$ has to  lie in one of the sets given by \eqref{sets1} and \eqref{sets2}. In either case, the Ostrowski coefficient of $q_{K_i}$, which is either $b_{K_i+1}$ or $b_{K_i+1}\pm1$ is at least $a_{K_i+1}/2-1$ and so if $n\in C_{N_i}(K_i+1,a)$ then
$$
n\ge (a_{K_i+1}/2-1)q_{K_i}> (a_{K_i+1}/4)q_{K_i}\ge N_i
$$
and this is impossible. Therefore, the contribution from the sets $C_{N_i}(K_i+1,a)$ to the l.h.s. of \eqref{x125} is zero.

\medskip

Now we analyse the contribution to the l.h.s. of \eqref{x125} arising from the sets $C^+_{N_i}(m,a,\ell)$. Recall that, by definition, we must have that the equalities associated with  \eqref{c+} are satisfied. In particular this means that for $k=0,\dots,m+\ell$ the quantities $|\delta_{k+1}|$ can only be $0$, $1$, $a_{k+1}-1$ or $a_{k+1}$. Then, by \eqref{d+}, $t:=m+\ell+1\le K_i$. By Lemma~\ref{cortogapslem1} and \eqref{x127}, we have that
\begin{equation}\label{x131}
\#C^+_{N_i}(m,a,\ell)\le \frac{3N_i}{q_{t+1}}+1\ll \frac{N_i}{q_{t+1}}\,.
\end{equation}
Recall that for a given $t$ there are at most two possible pairs of $(m,\ell)$ as above for which $C^+_{N_i}(m,a,\ell)$ is non-empty.
Furthermore, by \eqref{contfrac.leminhombnd.eqn1}, we obtain that
$$
\|n\alpha-\gamma\|\ge a|D_{t}|\,.
$$
Thus, the contribution from $C^+_{N_i}(m,a,\ell)$ with $t\le K_i-1$ is estimated as follows:
\begin{eqnarray*}
\sum_{t=2}^{K_i-1}~\sum_{m+\ell+1=t}~\sum_{a=1}^{2a_{t+1}-1} \hspace*{-6ex} & ~ &\sum_{n\in C^+_{N_i}(m,a,\ell)}\frac{1}{\|n\alpha-\gamma\|}  \\[1ex]
& \ll  &  \sum_{t=2}^{K_i-1}~\sum_{m+\ell+1=t}~\sum_{a=1}^{2a_{t+1}-1}~\frac{N_i}{a|D_t|q_{t+1}}  \\[1ex]
& \ll &  N_i\sum_{t=2}^{K_i-1}(1+\log a_{t+1}) \\[1ex]
& \ll  &  N_i\max\{K_i,\log(a_1\dots a_{K_i})\}\ll N_i\log q_{K_i}\,.
\end{eqnarray*}

\noindent If $t=K_i$, then
$$
0\le c_{K_i+1}=b_{K_i+1}\pm(-1)^{\ell}(a_{K_i+1}-a) \ \le  \  N_i/q_{K_i} \  = \ a'_i \ \le \  \frac{a_{K_i+1}}{4}\,.
$$
Since $b_{K_i+1}=\left\lceil\frac{a_{K_i+1}}{2}\right\rceil$, we then have that
$|a_{K_i+1}-a|\le b_{K_i+1}\le \tfrac12a_{K_i+1}+\tfrac12$, whence
$$
a\ge \tfrac12a_{K_i+1}-\tfrac12\ge \tfrac14a_{K_i+1}\,.
$$
In this case
$$
\|n\alpha-\gamma\|\ge a|D_{K_i}|\ge\tfrac14a_{K_i+1}/2q_{K_i+1}\gg 1/q_{K_i}\gg N_i^{-1}\,.
$$
Then, by \eqref{x127} and \eqref{x131}, it follows  that $\#C^+_{N_i}(m,a,\ell)\ll1$ and thus
$$
\sum_{m+\ell+1=K_i}~\sum_{a\asymp a_{K_i+1}}\sum_{n\in C^+_{N_i}(m,a,\ell)}\frac{1}{\|n\alpha-\gamma\|}\ll \sum_{a\asymp a_{t+1}}\frac{N_i}{q_{K_i+1}a|D_{K_i}|}\ll N_i\,.
$$

\bigskip

Finally, it remains to deal with the case $|\Sigma|>1/2$. Note that in view of our assumption $0< \{\alpha\}<\tfrac13$ imposed at the start of the proof, we have that $a_1\ge3$. Then, since $K_1>1$, we have that $b_1=1$ and as is easily seen $\delta_1=c_1-b_1\neq\pm(a_1-1)$. This means that the first term of \eqref{contfrac.leminhombnd.eqn1zx} is never zero. Hence $1-|\Sigma|\ge|D_0|=\{\alpha\}$. Then
\begin{equation}\label{e++00}
\sum_{\substack{1\le n\le N_i\\[0.5ex] |\Sigma|\ge1/2}}\frac{1}{\|n\alpha-\gamma\|}\ll N_i\,.
\end{equation}
Hence every positive integer $n\le N_i$ has been accounted for and on combining the various estimates above we obtain the upper  bound
\[\sum_{1\le n\le N_i}\frac{1}{\|n\alpha-\gamma\|}\ll N_i\log q_{K_i}.\]
By our choice of the sequences $\{K_i\}$ and $\{N_i\}$, this implies \eqref{x125} and thus completes the proof of Theorem \ref{T8}.

\section{Proofs of Theorems~\ref{T12}, \ref{T13} and \ref{T14}}

\subsection{Preliminaries \label{wer}}

We begin by describing our strategy and establishing various auxiliary statements. Instead of dealing with the multiplicative problem associated with (\ref{ineq+},  we consider the one-dimensional metrical problem associated with the inequality
\begin{equation}\label{yyy}
 \|n\beta - \delta\| < \frac{\psi(n)}{ \|n\alpha - \gamma\| } := \Psi^{\gamma}_{\alpha}(n)\, .
\end{equation}
If $\Psi^{\gamma}_{\alpha}(n)\ge\tfrac12$, then \eqref{yyy} holds automatically. Hence, to avoid this trivial pathological situation we will assume that
\begin{equation}\label{ass}
\Psi^{\gamma}_{\alpha}(n)<\tfrac12.
\end{equation}
Let $E_n$ be the set of $\beta\in[0,1]$ satisfying \eqref{yyy}.
It is easily verified that
\begin{equation*}
  |E_n|=2\Psi^{\gamma}_{\alpha}(n)\,.
\end{equation*}
If
\begin{equation}\label{sum12}
\sum_{n=1}^\infty\Psi^{\gamma}_{\alpha}(n)
\end{equation}
converges, the Borel-Cantelli Lemma implies that the set of $\beta$'s such that \eqref{yyy} (and hence \eqref{ineq+}) is satisfied infinitely often is of Lebesgue measure zero. Hence, Theorem~\ref{T12} will follow on proving that the conditions of the theorem ensure the convergence of $\sum_{n=1}^\infty\Psi^{\gamma}_{\alpha}(n)$. Note that in this case \eqref{ass} holds for all sufficiently large $n$ and thus our above argument is justified.

In order to prove Theorem~\ref{T13} we will have to ensure that the sum  \eqref{sum12} diverges. However, the approximating function $ \Psi^{\gamma}_{\alpha}$ in not monotonic so we cannot apply the inhomogeneous version of Khintchine's theorem\footnote{The inhomogeneous version of Khintchine's theorem states that for any monotonic $\psi:\N\rar\Rp$  and  $\delta \in \R$, the set $\cS^{\delta} (\psi):= \{ \beta \in \R:\|n \beta - \delta \|<\psi(n)\text{ for i.m. }n\in\N\,   \}  $ is of full measure if $\sum \psi(n) = \infty$.}. Instead we will attempt to make use of known results regarding the Duffin-Schaeffer Conjecture; in particular the Duffin-Schaeffer Theorem. With the above strategy in mind,  we now investigate the convergence/divergence behaviour  of the sum \eqref{sum12}.

\begin{lemma}\label{lem+}
Let $\psi:\N\to\Rp$ be decreasing and $\alpha,\gamma\in\R$ be given.
If the sum \eqref{yy} diverges and
\begin{equation}\label{cond+1}
R_N(\alpha;\gamma)\gg N\log N\qquad\text{for all $N\in\N$}
\end{equation}
then the sum \eqref{sum12} diverges.
Conversely, if the sum \eqref{yy} converges and \eqref{cond+} is satisfied then the sum \eqref{sum12} converges.
\end{lemma}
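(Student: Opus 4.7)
The plan is to apply the partial summation formula \eqref{psum1} once with $a_n=\psi(n)$ and $b_n=\|n\alpha-\gamma\|^{-1}$, so that the partial sums of $(b_n)$ are exactly the quantities $R_n(\alpha,\gamma)$ on which we have hypothesized bounds. This gives the identity
\begin{equation}\label{pf1}
\sum_{n=1}^{N}\frac{\psi(n)}{\|n\alpha-\gamma\|} \;=\; \sum_{n=1}^{N}\bigl(\psi(n)-\psi(n+1)\bigr)\,R_n(\alpha,\gamma) \;+\; \psi(N+1)\,R_N(\alpha,\gamma)\,,
\end{equation}
in which every difference $\psi(n)-\psi(n+1)$ is non-negative by monotonicity of $\psi$. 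Now I would insert the hypothesis on $R_n$ (upper bound $R_n\le C n\log n$ for the convergence direction, lower bound $R_n\ge c n\log n$ for the divergence direction) to reduce the problem to estimating
\begin{equation}\label{pf2}
T_N \;:=\; \sum_{n=1}^{N}\bigl(\psi(n)-\psi(n+1)\bigr)\,n\log n \;+\; \psi(N+1)\,N\log N\,.
\end{equation}

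A second partial summation (or a direct rearrangement by telescoping) shows that the boundary term $\psi(N+1)N\log N$ in \eqref{pf2} is cancelled exactly by the corresponding boundary term produced when the difference operator on $\psi$ is moved onto the weights $n\log n$. Explicitly, writing $\Delta f(n)=f(n)-f(n-1)$,
\begin{align*}
\sum_{n=1}^{N}\bigl(\psi(n)-\psi(n+1)\bigr)n\log n
 &= \sum_{n=2}^{N}\psi(n)\bigl(n\log n-(n-1)\log(n-1)\bigr) - \psi(N+1)N\log N\,,
\end{align*}
and since $n\log n-(n-1)\log(n-1)=\log n + (n-1)\log\bigl(1+\tfrac{1}{n-1}\bigr) = \log n + O(1)$, with the $O(1)$ term lying in a fixed bounded interval $[\log 2,1]$ for $n\ge 2$, we obtain
\begin{equation}\label{pf3}
T_N \;=\; \sum_{n=2}^{N}\psi(n)\bigl(\log n + O(1)\bigr)\;=\;\sum_{n=2}^{N}\psi(n)\log n + O\!\left(\sum_{n=2}^N\psi(n)\right).
\end{equation}

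Combining \eqref{pf1} with the hypothesis on $R_n$ and \eqref{pf3} finishes both halves. For the convergence direction, the hypothesis $R_N\ll N\log N$ gives
$$
\sum_{n=1}^{N}\frac{\psi(n)}{\|n\alpha-\gamma\|}\;\ll\; T_N \;\ll\;\sum_{n=2}^{N}\psi(n)\log n \;+\;\sum_{n=2}^{N}\psi(n)\,,
$$
and both sums on the right are bounded under the assumption that \eqref{yy} converges (the second is dominated by the first once $n\ge 3$). For the divergence direction, the hypothesis $R_N\gg N\log N$ yields the reverse inequality
$$
\sum_{n=1}^{N}\frac{\psi(n)}{\|n\alpha-\gamma\|}\;\gg\; T_N\;\gg\;\sum_{n=2}^{N}\psi(n)\log n \;-\;O\!\left(\sum_{n=2}^N\psi(n)\right),
$$
and since for $n$ large the factor $\log n$ dominates the $O(1)$ correction, divergence of $\sum\psi(n)\log n$ forces the left-hand side to tend to infinity.

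The main subtlety to be careful about is the bookkeeping of the boundary term $\psi(N+1)N\log N$: it is indispensable that it appears with the same sign and magnitude in both \eqref{pf1} (after applying the bound on $R_N$) and in the rearrangement leading to \eqref{pf3}, so that it cancels identically and one avoids having to invoke an auxiliary estimate such as $\psi(N)N\log N\to 0$. The rest is routine partial-summation arithmetic, and the fact that both halves of the lemma follow from a single identity is what makes the argument clean.
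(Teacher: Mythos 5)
Your proof is correct and uses essentially the same two-step Abel summation strategy as the paper: one summation by parts moves the problem onto $R_n(\alpha,\gamma)$, the hypothesis $R_n\asymp n\log n$ is inserted, and a second summation by parts recovers $\sum\psi(n)\log n$. The paper executes the second step by replacing $n\log n$ with $\sum_{m\le n}\log m$ (using $\sum_{m\le n}\log m\asymp n\log n$) and then invoking the exact Abel identity, whereas you keep $n\log n$ and estimate the discrete derivative $n\log n-(n-1)\log(n-1)=\log n+O(1)$ directly; these are equivalent bookkeepings, your boundary-term cancellation is handled correctly, and the only tiny blemish is that in your displayed lower bound for $T_N$ the $O(\cdot)$ correction is in fact non-negative (you showed it lies in $[\log 2,1)$), so it should be added rather than subtracted, which only strengthens the estimate.
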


\begin{proof}
By the partial summation formula, for any $N\in\N$ we have that
\begin{equation}\label{ggg}
\sum_{n \le N }  \Psi^{\gamma}_{\alpha}(n)   =    \sum_{n \le N }  \big(\psi(n) -\psi(n+1) \big)R_n(\alpha,\gamma)+\psi(N+1) R_N(\alpha,\gamma)\,.
\end{equation}
Now, if \eqref{cond+1} holds and \eqref{yy} diverges, then using the monotonicity of $\psi$ and the fact that $\sum_{m \le n} \log m \asymp n\log n $, we obtain that
\begin{eqnarray}
\sum_{n \le N }  \Psi^{\gamma}_{\alpha}(n)&\gg & \sum_{n \le N }  \big(\psi(n) -\psi(n+1) \big)  n \log n \ + \ \psi(N+1) N \log N  \nonumber \\[2ex]
& \asymp &
\sum_{n \le N }  \big(\psi(n) -\psi(n+1) \big)\sum_{m \le n }  \log m  \ + \ \psi(N+1) \sum_{m \le N}  \log m   \nonumber \\[2ex]
&= & \sum_{n \le N } \,  \psi (n)  \;  \log n \to\infty\qquad\text{as }N\to\infty.\nonumber
\end{eqnarray}

The proof of the convergence case of the lemma follows the same line of argument as above but with $\gg$ reversed and makes use of the fact that $\sum_{n \le N } \psi (n) \log n$ is bounded.
\end{proof}

Combining Lemma~\ref{lem+} with Theorem~\ref{T8} gives the following statement.

\begin{corollary}
If the sum \eqref{yy} diverges then for all $\alpha\not\in\mathfrak{L} \cup \Q$ and for all $\gamma \in \R$,
the sum \eqref{sum12} diverges.
\end{corollary}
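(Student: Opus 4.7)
The plan is to combine Lemma~\ref{lem+} and Theorem~\ref{T8}, precisely as the sentence preceding the statement suggests. The corollary is implicitly formulated within the running context where $\psi$ is taken to be decreasing (the standing hypothesis of Lemma~\ref{lem+}), and this monotonicity will be needed to invoke the lemma.

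First I would apply the forward implication of Theorem~\ref{T8}. Since $\alpha \in \R\setminus(\mathfrak{L}\cup\Q)$, that theorem guarantees that for \emph{every} $\gamma \in \R$ one has $R_N(\alpha,\gamma) \gg N\log N$ for all $N\ge 2$. This is precisely the lower bound \eqref{cond+1} required by Lemma~\ref{lem+}. Note that the implicit multiplicative constant may depend on $\alpha$ and $\gamma$, but this is harmless because the divergence conclusion of the corollary is qualitative.

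With \eqref{cond+1} in hand, I would invoke the divergence half of Lemma~\ref{lem+}: the hypothesis that the sum \eqref{yy} diverges together with the lower bound \eqref{cond+1} immediately yields the divergence of \eqref{sum12}, completing the proof.

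Since the argument is a direct concatenation of two results already established, there is no real obstacle to overcome; all the substantive work is contained in Theorem~\ref{T8} (whose proof relies on the delicate Ostrowski-based estimates from \S\ref{estim}--\S\ref{gaps}) and in the partial summation argument of Lemma~\ref{lem+}. The value of packaging the result as a corollary is that it supplies exactly the divergence input needed when, in the proofs of Theorems~\ref{T13} and \ref{T14}, one wishes to apply a Borel--Cantelli type or Duffin--Schaeffer type divergence result to the inhomogeneous approximating function $\Psi^\gamma_\alpha(n) = \psi(n)/\|n\alpha-\gamma\|$.
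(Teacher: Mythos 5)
Your proposal is correct and is precisely the argument the paper intends, as signalled by the sentence immediately preceding the corollary ("Combining Lemma~\ref{lem+} with Theorem~\ref{T8} gives the following statement"): apply the forward implication of Theorem~\ref{T8} to obtain \eqref{cond+1} uniformly in $\gamma$, then feed this into the divergence half of Lemma~\ref{lem+}. You also correctly identified the implicit standing hypothesis that $\psi$ is decreasing, which is what licenses the use of Lemma~\ref{lem+}.
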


\begin{lemma}\label{lem++}
Let $\psi:\N\to\Rp$ be given and $n\mapsto n\psi(n)$ be decreasing. Furthermore,  let $\alpha,\gamma\in\R$.
If the sum \eqref{yy} diverges and
\begin{equation}\label{cond+2}
S_N(\alpha;\gamma)\gg (\log N)^2\qquad\text{for all $N\in\N$},
\end{equation}
then the sum \eqref{sum12} diverges.
Conversely, if the sum \eqref{yy} converges and \eqref{cond} is satisfied, then the sum \eqref{sum12} converges.
\end{lemma}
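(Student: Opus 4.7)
The plan is to relate the sum \eqref{sum12} to $S_N(\alpha,\gamma)$ through a single Abel summation, and then use the hypothesis on $S_N$ together with a second Abel summation to reduce to the sum $\sum \psi(n)\log n$. First, I would apply the partial summation identity \eqref{psum1} with $a_n=n\psi(n)$ and $b_n=(n\|n\alpha-\gamma\|)^{-1}$, so that $B_n:=b_1+\dots+b_n = S_n(\alpha,\gamma)$ and $a_nb_n=\Psi^{\gamma}_{\alpha}(n)$. Since by hypothesis $n\mapsto n\psi(n)$ is decreasing, each difference $a_n-a_{n+1}$ is non-negative, and one obtains the clean identity
$$\sum_{n=1}^N \Psi^{\gamma}_{\alpha}(n) \;=\; \sum_{n=1}^N \bigl(n\psi(n) - (n+1)\psi(n+1)\bigr)\, S_n(\alpha,\gamma) \;+\; (N+1)\psi(N+1)\,S_N(\alpha,\gamma)\,,$$
which is perfectly set up for one-sided substitution of the hypothesized bounds on $S_n$.

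For the convergent direction, assume $S_n(\alpha,\gamma)\le C(\log n)^2$ for all $n\ge2$. Substituting this upper bound and carrying out a second Abel summation with $c_n=(\log n)^2$ reorganises the right-hand side as
$$C\sum_{n=2}^N n\psi(n)\bigl[(\log n)^2-(\log(n-1))^2\bigr]\,,$$
because the two occurrences of the boundary term $C(N+1)\psi(N+1)(\log N)^2$ cancel with opposite signs. Using the elementary estimate $(\log n)^2-(\log(n-1))^2 \ll (\log n)/n$, the remaining sum is bounded by a constant multiple of $\sum_{n\ge 2}\psi(n)\log n$, which is finite by hypothesis. Hence $\sum \Psi^{\gamma}_{\alpha}(n)<\infty$.

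For the divergent direction, under the hypothesis $S_n(\alpha,\gamma)\ge c(\log n)^2$ (i.e.\ \eqref{cond+2}) exactly the same two-step computation, with the inequalities reversed, produces a lower bound of the form $c'\sum_{n\ge n_0}\psi(n)\log n$ for some $n_0$, once one uses the complementary one-sided estimate $(\log n)^2-(\log(n-1))^2\ge (\log n)/n$, valid for all sufficiently large $n$. By the divergence of \eqref{yy}, this forces $\sum \Psi^{\gamma}_{\alpha}(n)=\infty$.

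The only real pitfall I foresee is bookkeeping: one must keep careful track of the boundary term $(N+1)\psi(N+1)(\log N)^2$ across the two applications of Abel summation and verify that it drops out. That it does cancel is what makes the argument work without any auxiliary hypothesis of the form $n\psi(n)(\log n)^2\to 0$; in both directions everything reduces to the monotonicity of $n\psi(n)$ and the comparability $(\log n)^2-(\log(n-1))^2 \asymp (\log n)/n$.
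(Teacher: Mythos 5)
Your proposal is correct, and it follows the same structure as the paper's proof: a first Abel summation converting $\sum_{n\le N}\Psi^{\gamma}_{\alpha}(n)$ into an expression in $S_n(\alpha,\gamma)$, followed by one-sided substitution of the hypothesis and a second Abel summation to arrive at $\sum\psi(n)\log n$. The only cosmetic difference is that you work directly with the forward difference $(\log n)^2-(\log(n-1))^2\asymp(\log n)/n$, while the paper equivalently replaces $(\log n)^2$ by $\sum_{m\le n}(\log m)/m$ before the second summation by parts; both routes give the same cancellation of the boundary term and the same final comparison.
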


\begin{proof}
By the partial summation formula, for any $N\in\N$ we have that
\begin{align}
\sum_{n \le N }  \Psi^{\gamma}_{\alpha}(n)   & =    \sum_{n \le N }  \big(n\psi(n) -(n+1)\psi(n+1) \big)S_n(\alpha,\gamma)\nonumber\\[0ex]
&\hspace*{20ex}+(N+1)\psi(N+1) S_N(\alpha,\gamma)\,.\label{ggg+}
\end{align}
Now, if \eqref{cond+2} holds and the sum \eqref{yy} diverges, then using the monotonicity of $n\mapsto n\psi(n)$ and the fact that $\sum_{m \le n} \frac{\log m}{m} \asymp (\log n)^2 $, we obtain that
\begin{align*}
\sum_{n \le N }  \Psi^{\gamma}_{\alpha}(n)
&\gg\sum_{n=1}^N(n\psi(n)-(n+1)\psi(n+1))(\log n)^2+\\[0ex]
&\hspace*{15ex}+(N+1)\psi(N+1)(\log N)^2\\[2ex]
&\gg \sum_{n=1}^N(n\psi(n)-(n+1)\psi(n+1))\sum_{m=1}^n\frac{\log m}{m}+\\[2ex]
&\hspace*{16ex}+(N+1)\psi(N+1)\sum_{m=1}^N\frac{\log m}{m}   \, .
\end{align*}
By the partial summation formula, the r.h.s. is $\sum_{n \le N }\psi(n)\log n $ and so we have that
\begin{align*}
\sum_{n \le N }  \Psi^{\gamma}_{\alpha}(n) \ \gg \  \sum_{n \le N } \psi(n)\log n \ \to \ \infty\quad\text{as } \ N \ \to  \ \infty\,.\qquad
\end{align*}

The proof of the convergence case of the lemma follows the same line of argument as above but with $\gg$ reversed and makes use of the fact that $\sum_{n \le N } \psi (n) \log n$ is bounded.
\end{proof}

\subsection{Proof of Theorem~\ref{T12}}

Combining Lemmas~\ref{lem+} and \ref{lem++} together with the observations  made at the beginning of \S\ref{wer} completes the proof of Theorem~\ref{T12}.

\subsection{Proof of Theorem~\ref{T13} \label{sect13} }

For $\xi>0$,  define $\mathcal{N}:=\mathcal{N}(\xi)\subseteq\N$ by
\begin{equation} \label{N}
\mathcal{N}= \{n_i \}_{i \in \N}:=\{n\in \N: \varphi(n)/n\ge\xi\}  .
\end{equation}
By choosing $\xi$ small enough we can guarantee that the set $\mathcal{N}$ has positive lower asymptotic density in $\N$; i.e. that
\[\liminf_{N \to \infty}  \# \{n_i \in \mathcal{N} : n_i \leq N \} / N  > 0.\]  To see this, first of all note that
\begin{align}
\frac{\varphi(n)}{n} &= \prod_{p|n}\left(1-\frac{1}{p}\right) = \exp\left(-\sum_{p|n}\frac{1}{p} - \sum_{p|n} \sum_{m=2}^\infty \frac{1}{mp^m}\right) \nonumber\\
&\gg \exp\left(-\sum_{p|n}\frac{1}{p}\right).\label{phiestimate1}
\end{align}
By \cite[Lemma 4]{Vaaler78}, for any $\xi'>0$ we can find an
integer $v$ (depending on $\xi'$) so that for any $x>0$, the number of integers $1\le n \le x$ which
satisfy
\[\sum_{\substack{p|n \\ p\ge v}}\frac{1}{p} \ge \xi'\]
is less than $x/2$. For all other integers we have by (\ref{phiestimate1}), together with Mertens's Theorem \cite[Theorem 429]{HardyWright}, that
\[\frac{\varphi(n)}{n} \ge c\cdot\exp(-\xi'-\log\log v),\]
for some constant $c>0$. Therefore choosing
\[\xi = c\cdot\exp(-\xi'-\log\log v)\]
yields a set $\mathcal{N}$ with lower asymptotic density greater than $1/2$, which verifies our assertion. In particular, a consequence of the fact that the  density is  greater than  $1/2$ is that
\begin{equation} \label{yesthis}
n_i \, \leq  \, 2 i,
\end{equation}
for all $n_i\in\mathcal{N}$ with sufficiently large $i$. Now let $\psi^{\gamma}_{\alpha}$  denote the approximating function $\Psi^{\gamma}_{\alpha} $ restricted to $ \mathcal{N}$; i.e. for $ n \in \N$
$$
\psi^{\gamma}_{\alpha} (n)  :=\left\{\begin{array}{ccl}
\frac{\psi(n) }{ \|n\alpha - \gamma \|}   \hspace*{-5ex} & \text{if} & \hspace*{-5ex} n\in\mathcal{N}\\[2ex]
0 & \text{otherwise}\, . &
\end{array}
\right.
$$
By definition, we have that
$$
 \cS^{\delta} (\psi^{\gamma}_{\alpha}) \, \subset \,    \cS^{\delta} (\Psi^{\gamma}_{\alpha}),
$$
and that
\begin{equation} \label{That}
\sum_{n \le N } \frac{\varphi(n)}{n}  \,   \psi^{\gamma}_{\alpha}(n)  \,  \ge \,   \xi \sum_{ n \le N}    \psi^{\gamma}_{\alpha}(n) \, .
\end{equation}
Next, it follows from a well know discrepancy result in the theory of uniform distribution \cite[Proposition 4]{Schmidt1964},   that if $n_i$ is a strictly increasing sequence of positive integers and $\gamma \in \R$,  then for almost all $\alpha \in \R$
\begin{equation}\label{cc10}
\sum_{i\le T} \frac{1}{\|n_i\alpha - \gamma \|} \gg  T \, \log T  \, .
\end{equation}
Actually, this statement is explicitly established   in the proof of  \cite[Theorem 2]{Schmidt1964} -- see the last displayed inequality on \cite[page~516]{Schmidt1964}. This together with (\ref{yesthis})  implies  that  there is a set $\A\subseteq \R $ of full measure  such that for any $\alpha\in\A$
\begin{eqnarray}
\sum_{n \le n_T }\psi^{\gamma}_{\alpha}(n)  & = & \sum_{i  \le T }\psi^{\gamma}_{\alpha}(n_i)    \nonumber\\[2ex]
&   =  & \sum_{i\le T}  \big(\psi(n_i) -\psi(n_{i+1}) \big) \sum_{j \le i }  \frac{1}{ \|n_j \alpha - \gamma\| } \nonumber\\[1ex]
& & \hspace*{15ex} \ + \ \psi(n_{T+1}) \sum_{i \le T}  \frac{1}{\|n_i \alpha - \gamma\|}  \nonumber \\[1ex]
& \gg  & \sum_{i \le T }  \psi(n_i)  \log i    \ \ge \   \sum_{i \le T }  \psi( 2 i )  \log i    \nonumber \\[2ex]
&\gg &  \sum_{n \le T }  \psi( n)  \log n    \, . \nonumber
\end{eqnarray}

\noindent Thus the divergence of the sum \eqref{yy} implies that
\begin{equation*}\label{ohyesyes}
\sum_{n=1}^{\infty} \psi^{\gamma}_{\alpha}(n) =\infty   \qquad \forall \  \alpha \in \A   \quad  \mbox{and} \quad    \forall  \   \gamma \in \R  .
\end{equation*}
The upshot of this together with (\ref{That})  is that for any  $ \alpha \in \A $ and $\gamma \in \R$  the  hypotheses of the Duffin-Schaeffer Theorem, namely (\ref{divcond1}) and (\ref{dst}),  are satisfied for the function $\psi^{\gamma}_{\alpha}$.  Thus, when $\delta = 0$ the Duffin-Schaeffer Theorem  implies that set $\cS^{\delta} (\psi^{\gamma}_{\alpha}) $ is of full measure. Hence,   it follows that $\cS^{\delta} (\psi^{\gamma}_{\alpha}) $ with $\delta =0$  is of full measure, and this establishes Theorem \ref{T13}.

\medskip

\begin{remark}
Note that we have proven a little more than what is stated in Conjecture~\ref{rub}. Namely,  that for any $\gamma \in \R$  and  almost all $(\alpha, \beta)  \in\R^2$  the inequality
\begin{equation}\label{gy}
\|n\alpha - \gamma\| \,  \|n\beta \|' < \psi(n)
\end{equation}
holds for infinitely  $n \in \N $  if  the the sum \eqref{yy} diverges, where $\|n\beta\|'$ stands for the distance of $n\beta$ to the nearest integer coprime to $n$.   Observe that the  argument used above would prove Conjecture~\ref{rub} in full (i.e. for any real $\delta$) if we had the inhomogeneous version of the Duffin-Schaeffer Theorem (see Problem \ref{crazy}) at hand.
\end{remark}

\subsection{Proof of Theorem~\ref{T14}}

Let  $\Psi^{\gamma}_{\alpha}(n)$ be given by
$$
\Psi^{\gamma}_{\alpha}(n):=\frac{\psi(n)}{\|n\alpha-\gamma\|}\qquad {\rm with }\qquad
\psi(n):=\frac{1}{n(\log n)^2 \, \log\log\log n }.
$$
Recall that
$$
\frac{\varphi(n)}{n}   \gg \frac{1}{\log \log n }\,,
$$
see for example \cite[Theorem 328]{HardyWright}. It follows, by the partial summation formula together with  Theorem~\ref{T8}, that  for any irrational $\alpha \in \R\setminus\mathcal{L}, $
\begin{eqnarray*}
\sum_{n\le N}\frac{\varphi(n)}{n}\Psi^{\gamma}_{\alpha}(n) & \gg  & \sum_{n\le N}\frac{\psi(n)}{\log\log n}\\[1ex]
&= & \sum_{n\le N}\left(\frac{\psi(n)}{\log\log n}-\frac{\psi(n+1)}{\log\log(n+1)}\right)\sum_{m=1}^n\frac{1}{\|m\alpha-\gamma\|}\\[1ex]
&&\qquad \hspace*{16ex}   \qquad  +  \quad \frac{\psi(N+1)}{\log\log(N+1)}\sum_{m=1}^N\frac{1}{\|m\alpha-\gamma\|}\\[1ex]
&\gg &\sum_{n\le N}\frac{\psi(n)  \, \log n}{\log\log n}  \quad =  \quad \sum_{n\le N}\frac{1}{n\log n\,\log\log n \, \log \log \log n }.
\end{eqnarray*}
Thus
$$
\sum_{n=1}^{\infty} \frac{\varphi(n)}{n}\Psi^{\gamma}_{\alpha}(n)=\infty \, .
$$
It now follows,  on assuming the truth of the Duffin-Schaeffer Conjecture for functions $\Psi^{\gamma}_{\alpha}(n)$,  that for almost all $\beta \in \R $
the inequality
$$
 |n\beta -s| < \Psi^{\gamma}_{\alpha}(n)
$$
holds for infinitely coprime pairs $(n,s) \in \N \times\Z$. This completes the proof.

\section{Proof of Theorem \ref{T11}} \label{multiproof}

The approach we develop in this section inherits some ideas from the theory of regular/ubiquitous systems -- see \cite{BBD-Baker, BDV-06}. To begin with we recall some basic statements. Note that the conclusion of Theorem~\ref{T11} is trivial when $\alpha\in\Q$. Hence, throughout the proof we will assume that $\alpha$ is irrational.

\subsection{A zero-one law and quasi-independence on average}\label{qiasec}

We begin by recalling the following zero-one law originally discovered by Cassels \cite{Cassels-50:MR0036787}, see also \cite{zol}.

\begin{lemma}\label{lem17}
Let $\Psi:\N\to[0,+\infty)$ be any function such that $\Psi(n)\to0$ as $n\to\infty$. Then, the set $\mathcal{W}(\Psi)$ of $x\in[0,1]$ such that $\|nx\|<\Psi(n)$ for infinitely many $n\in\N$ is either of Lebesgue measure  zero or Lebesgue measure one.
\end{lemma}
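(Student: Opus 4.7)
This is Cassels' zero-one law \cite{Cassels-50:MR0036787}, recalled here. I would prove it by Fourier analysis on the torus $\mathbb{T}=\R/\Z$, exploiting the natural periodicity of the sets $E_n:=\{x\in[0,1]:\|nx\|<\Psi(n)\}$ underlying $\mathcal{W}(\Psi)=\bigcap_{N\ge 1}\bigcup_{n\ge N}E_n$.

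The key structural observation is that each $E_n$ is $1/n$-periodic on $\mathbb{T}$, since $\|n(x+1/n)\|=\|nx+1\|=\|nx\|$. Hence, for any finite subset $S\subset\N$, the intersection $\bigcap_{n\in S}E_n$ is $1/\lcm(S)$-periodic. By orthogonality of characters on $\mathbb{T}$, any measurable $1/L$-periodic set $A$ has $\widehat{\mathbf{1}_A}(k)=0$ for every $k\in\Z\setminus L\Z$.

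Fix any nonzero integer $k$ and choose $N>|k|$. For the finite truncation $F_{N,M}:=\bigcup_{n=N}^M E_n$, inclusion--exclusion gives the pointwise identity
\[
\mathbf{1}_{F_{N,M}}=\sum_{\emptyset\ne S\subseteq\{N,\ldots,M\}}(-1)^{|S|+1}\mathbf{1}_{\bigcap_{n\in S}E_n}.
\]
Every nonempty $S\subseteq\{N,\ldots,M\}$ has $\lcm(S)\ge N>|k|\ge 1$, so $\lcm(S)\nmid k$, and by the previous paragraph every term on the right contributes $0$ to the $k$-th Fourier coefficient. Hence $\widehat{\mathbf{1}_{F_{N,M}}}(k)=0$ for every $M\ge N$. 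Now $\mathbf{1}_{F_{N,M}}\nearrow\mathbf{1}_{F_N}$ pointwise as $M\to\infty$, where $F_N:=\bigcup_{n\ge N}E_n$, and $\mathbf{1}_{F_N}\searrow\mathbf{1}_{\mathcal{W}(\Psi)}$ as $N\to\infty$; dominated convergence pushes the vanishing through both limits, yielding $\widehat{\mathbf{1}_{\mathcal{W}(\Psi)}}(k)=0$ for every $k\ne 0$.

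Consequently $\mathbf{1}_{\mathcal{W}(\Psi)}$ equals its constant Fourier mean $|\mathcal{W}(\Psi)|$ almost everywhere; being $\{0,1\}$-valued, this forces $|\mathcal{W}(\Psi)|\in\{0,1\}$. The plan hinges on two elementary facts (the $1/n$-periodicity of $E_n$ and the inclusion--exclusion formula), and the only mild subtlety is that $N>|k|$ must be chosen \emph{before} applying inclusion--exclusion so that every nonempty $S\subseteq\{N,\ldots,M\}$ automatically satisfies $\lcm(S)>|k|$, preventing any of the terms from surviving at the $k$-th Fourier mode. I expect this to be the cleanest obstacle-free route, bypassing the usual quasi-independence difficulties of $\limsup$ arguments in metric Diophantine approximation.
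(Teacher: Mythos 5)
Your argument fails at the step where you claim that $\bigcap_{n\in S}E_n$ is $1/\lcm(S)$-periodic; in fact it is only $1/\gcd(S)$-periodic, and this confusion of $\lcm$ with $\gcd$ is fatal. The set $E_n$ has period group $\tfrac{1}{n}\Z$ on $\R$, so a number $p$ is a common period of all $E_n$ with $n\in S$ precisely when $np\in\Z$ for every $n\in S$, i.e.\ $p\in\bigcap_{n\in S}\tfrac1n\Z=\tfrac{1}{\gcd(S)}\Z$. Translating $E_n$ by $1/\lcm(S)$ does \emph{not} preserve it whenever $\lcm(S)>n$: concretely, $E_2$ is a union of intervals centred at $0$ and $1/2$, and is manifestly not $1/6$-periodic, so $E_2\cap E_3$ inherits no $1/6$-periodicity.

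Consequently the periodicity argument only yields that $\widehat{\mathbf{1}_{\bigcap_{n\in S}E_n}}(k)$ vanishes when $\gcd(S)\nmid k$. But $\gcd(S)=1$ whenever $S$ contains two coprime integers, which is generic for subsets of $\{N,\ldots,M\}$, so essentially none of the inclusion--exclusion terms with $|S|\ge2$ are forced to vanish at the $k$-th Fourier mode. The cancellation you need does not happen termwise and you cannot conclude $\widehat{\mathbf{1}_{F_{N,M}}}(k)=0$, let alone push it through the two limits. This is not a reparable slip: the $1/n$-periodicity of each individual $E_n$ is genuinely insufficient information to deduce the zero--one law, which is precisely why Cassels' proof and its modern treatments rely on a different mechanism (a Lebesgue density-point / approximate translation-invariance argument) rather than on orthogonality of characters.
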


The obvious consequence of this result is that establishing Theorem~\ref{T11} only requires us to show that the set of interest, that is the set of $\beta\in[0,1]$ such that \eqref{ineq} holds infinitely often, is of positive Lebesgue measure. To accomplish this task we will employ the following generalisation of the Borel-Cantelli Lemma from probability theory, see either of  \cite[Lemma 5]{Sprindzuk}, \cite[\S8]{BDV-06} or \cite[\S2.1]{notes}.

\noindent

\begin{lemma}\label{qia}
Let $E_t \subset [0,1]$ be a sequence of Lebesgue measurable sets such that
\begin{equation}\label{div}
\sum_{t=1}^\infty  |E_t|=\infty\,.
\end{equation}
Suppose that there exists a constant $C>0$ such that
\begin{equation}\label{qia0}
\sum_{t, t' = 1}^{T} |E_t \cap E_{t'} |\le C  \left( \sum_{t=1}^{T} |E_t| \right)^2
\end{equation}
for infinitely many $T\in\N$. Then
\begin{equation}\label{qia2}
| \limsup_{t \to \infty} E_t | \; \geq \;  \frac1C\,.
\end{equation}
\end{lemma}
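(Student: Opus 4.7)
The key idea is the Paley--Zygmund / Chung--Erd\H os trick: one bounds the measure of the union $\bigcup_{t \le T} E_t$ from below by applying Cauchy--Schwarz to the counting function $f_T := \sum_{t=1}^T \mathbf 1_{E_t}$, then passes to the $\limsup$ by tail truncation. So the first step is to observe the two identities
\begin{equation*}
\int_0^1 f_T(x)\,dx \;=\; \sum_{t=1}^{T} |E_t|, \qquad \int_0^1 f_T(x)^2\,dx \;=\; \sum_{t,t'=1}^{T} |E_t \cap E_{t'}|,
\end{equation*}
which follow by expanding squares and interchanging sum and integral.

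Next I would apply Cauchy--Schwarz to the product $f_T \cdot \mathbf{1}_{\{f_T > 0\}}$, noting that $\{f_T > 0\} = \bigcup_{t=1}^{T} E_t$:
\begin{equation*}
\Bigl(\textstyle\sum_{t=1}^{T} |E_t|\Bigr)^{2} \;=\; \Bigl(\int f_T\,\mathbf{1}_{\{f_T>0\}}\Bigr)^{2}
\;\le\; \bigl|\textstyle\bigcup_{t=1}^{T} E_t\bigr| \cdot \int f_T^{2}.
\end{equation*}
Combining this with the quasi-independence hypothesis \eqref{qia0}, which is assumed to hold for infinitely many $T$, gives
\begin{equation*}
\bigl|\textstyle\bigcup_{t=1}^{T} E_t\bigr| \;\ge\; \frac{\bigl(\sum_{t=1}^{T} |E_t|\bigr)^{2}}{\sum_{t,t'=1}^{T} |E_t\cap E_{t'}|}\;\ge\;\frac1C
\end{equation*}
along an infinite subsequence of $T$'s. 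Since the sets $\bigcup_{t=1}^{T} E_t$ are nested in $T$, this immediately yields $\bigl|\bigcup_{t=1}^{\infty} E_t\bigr| \ge 1/C$.

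To upgrade this lower bound from the union to the $\limsup$, I would apply the same argument to the tail sequence $(E_t)_{t\ge N}$ for each fixed $N\in\N$. The divergence condition \eqref{div} for the tail is immediate, and for the quasi-independence one uses the trivial bound
\begin{equation*}
\sum_{t,t'=N}^{T} |E_t\cap E_{t'}| \;\le\; \sum_{t,t'=1}^{T} |E_t\cap E_{t'}| \;\le\; C\Bigl(\textstyle\sum_{t=1}^{T}|E_t|\Bigr)^{2},
\end{equation*}
together with the fact that $\sum_{t=1}^{N-1}|E_t|$ is a fixed constant while $\sum_{t=N}^{T}|E_t|\to\infty$, so the ratio $\sum_{t=1}^{T}|E_t|/\sum_{t=N}^{T}|E_t|$ tends to $1$ along the relevant subsequence. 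For any $\varepsilon>0$, the previous argument then gives $\bigl|\bigcup_{t\ge N} E_t\bigr|\ge \bigl(C(1+\varepsilon)^2\bigr)^{-1}$, and letting $\varepsilon\to0$ yields $\bigl|\bigcup_{t\ge N} E_t\bigr|\ge 1/C$ for every $N$. Since $\limsup_{t\to\infty} E_t = \bigcap_{N\ge1} \bigcup_{t\ge N} E_t$ is a decreasing intersection of sets in $[0,1]$, monotone convergence of measure gives $|\limsup E_t| = \lim_{N\to\infty}\bigl|\bigcup_{t\ge N} E_t\bigr|\ge 1/C$, which is \eqref{qia2}.

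The only mild obstacle is the tail truncation step: one must verify that the quasi-independence constant $C$ is not inflated when passing to $(E_t)_{t\ge N}$, which is handled by the $\varepsilon$-limit argument above using the divergence of $\sum|E_t|$. Everything else is a direct and standard Cauchy--Schwarz computation.
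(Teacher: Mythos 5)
Your proof is correct and is exactly the standard Chung--Erd\H os / Cauchy--Schwarz argument, followed by the usual tail-truncation to pass from the union to the $\limsup$. The paper does not prove this lemma but cites it to \cite[Lemma~5]{Sprindzuk}, \cite[\S8]{BDV-06} and \cite[\S2.1]{notes}, all of which use precisely this approach, so your argument matches the paper's intended proof.
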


\bigskip

The independence condition \eqref{qia0} is often refereed to as \emph{quasi-independence on average} and  together with the divergent sum condition guarantees that the associated  $\limsup$  set is of positive measure. It does not guarantee full measure; i.e. that $| \limsup_{t \to \infty} E_t |= 1$.  However, this is not an issue if we already know (by some other means) that the $\limsup$ set  satisfies a zero-one law.

\medskip

\begin{remark}
In view of Lemma~\ref{lem17},  the value of $C$ (as long as it is  positive and finite) in Lemma~\ref{qia} is of no interest. The point is that if we can show that  $ | \mathcal{W}(\Psi) | > 0 $, then Lemma~\ref{lem17} implies  full measure; i.e. $ | \mathcal{W}(\Psi) | = 1 $.
\end{remark}

\subsection{Setting up a limsup set}\label{sulss}

Let $\alpha  \in \R \setminus \Q$ and $R\in\N$ satisfy $R>1$. Given $t,k\in\N$, let
$\Omega_{t,k}$ be the set of real numbers $\beta\in[0,1]$ such that there exists a triple $(n,r,s)\in\N\times \Z^2$ of coprime integers such that
\begin{equation}\label{x4}
    \left\{\begin{array}{rcl}
      R^{-k-1}~\le~|n\alpha-r| & < & R^{-k}\,,\\[0.5ex]
      |n\beta-s| & < & R^{-t+k}\,,\\[0.5ex]
      R^{t-1}~<~ n & \le & R^t\,.
    \end{array}\right.
\end{equation}

\begin{lemma}\label{lem14}
Let $\alpha  \in \R \setminus \Q$, let $(q_\ell)_{\ell\ge0}$ be the sequence of denominators of the principal convergents of $\alpha$, and let $R\ge256$.
Then for any positive integers $t$ and $k$ such that $2R^{-k}<\|q_2\alpha\|$ and
\begin{equation}\label{l42++}
R^{k+1}\le q_{\ell}<R^{t-1}
\end{equation}
for some $\ell$, one has the following estimate on the Lebesgue measure of\/ $\Omega_{t,k}:$
  $$
  |\Omega_{t,k}|\ge \tfrac12.
  $$
\end{lemma}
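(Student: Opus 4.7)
The plan is to realise $\Omega_{t,k}$ as a union of intervals indexed by a large set of denominators, then extract the measure bound via a second-moment (quasi-independence) argument in the spirit of Lemma~\ref{qia}.

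First, I would count the admissible denominators
\[
A_{t,k} \,:=\, \{\,n \in \N : R^{t-1} < n \le R^t,\ R^{-k-1} \le \|n\alpha\| < R^{-k}\,\}.
\]
Under the hypotheses $R^{k+1}\le q_\ell<R^{t-1}$ and $2R^{-k}<\|q_2\alpha\|$, Lemma~\ref{lem12++} is applicable to each of the three pairs $(N,\varepsilon)\in\{(R^t,R^{-k}),(R^t,R^{-k-1}),(R^{t-1},R^{-k})\}$, and differencing the resulting counts (lower bound on the inclusive range, upper bound on the two excluded ranges) gives
\[
\#A_{t,k} \,\ge\, \lfloor R^{t-k}\rfloor - 32\,R^{t-k-1} - 32\,R^{t-k-1} \,\ge\, \tfrac{1}{2}\,R^{t-k},
\]
where I use $R\ge 256$ (so $64/R\le 1/4$) together with $t-k\ge 3$ (forced by $k+1<t-1$), which makes $R^{t-k}\ge R^{3}$ comfortably absorb the constants.

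Next, for each $n\in A_{t,k}$ let $r(n)$ denote the unique integer with $|n\alpha-r(n)|<R^{-k}$, set $d(n):=\gcd(n,r(n))$, and define
\[
E_n \,:=\, \bigcup_{\substack{s\in\Z,\,0\le s\le n\\ \gcd(d(n),s)=1}}\!\left[\tfrac{s-R^{-t+k}}{n},\tfrac{s+R^{-t+k}}{n}\right]\cap[0,1].
\]
Because $\gcd(n,r(n),s)=\gcd(d(n),s)$, every $\beta\in E_n$ certifies membership in $\Omega_{t,k}$ via the coprime triple $(n,r(n),s)$, so $\Omega_{t,k}\supseteq\bigcup_{n\in A_{t,k}} E_n$. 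A direct count yields $|E_n|=2R^{-t+k}\varphi(d(n))/d(n)$. To obtain a useful first-moment bound, I would partition $A_{t,k}$ by the value $d=d(n)$: the $n$ with $d(n)=d$ correspond to $n_1=n/d$ satisfying $\|n_1\alpha\|<R^{-k}/d$ on the range $(R^{t-1}/d,R^t/d]$, which is controlled by a rescaled application of Lemma~\ref{lem12++} (valid since $R^{k+1}\le q_\ell$ gives the required $(2(R^{-k}/d))^{-1}\le q_\ell$ whenever $d\le 2R$); summing the resulting contributions over $d$ shows $\sum_n|E_n|\gg 1$, with constants tracked explicitly in terms of $R$.

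For the overlap, the intersection $E_n\cap E_{n'}$ with $n<n'$ forces the lattice constraint $|sn'-s'n|\le 2R^k$ on the corresponding numerators; counting admissible pairs $(s,s')$ in terms of $\gcd(n,n')$ in the standard Pollington--Vaughan style, then summing over $n,n'\in A_{t,k}$, yields
\[
\sum_{n,n'\in A_{t,k}}|E_n\cap E_{n'}| \,\le\, 2\Bigl(\sum_{n\in A_{t,k}}|E_n|\Bigr)^{2}.
\]
Applying the Cauchy--Schwarz inequality to $\sum_n\mathbf{1}_{E_n}$ (equivalently, Lemma~\ref{qia} in a non-limsup form) then delivers
\[
|\Omega_{t,k}|\,\ge\,\Bigl|\bigcup_{n\in A_{t,k}} E_n\Bigr|\,\ge\,\frac{\bigl(\sum_n|E_n|\bigr)^{2}}{\sum_{n,n'}|E_n\cap E_{n'}|}\,\ge\,\tfrac{1}{2},
\]
as required. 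The technical crux, and the main obstacle I expect, is extracting the explicit constant $2$ in the overlap bound: this requires a tight Pollington--Vaughan count of lattice points $(s,s')$ together with uniform control of the arithmetic factors $\varphi(d(n))/d(n)$ across $A_{t,k}$, and the threshold $R\ge 256$ in the hypothesis is calibrated precisely to make these numerical constants fit.
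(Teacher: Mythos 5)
Your proposal takes a fundamentally different route from the paper's, and it has genuine gaps that would be hard to close. The paper bounds the complement of $\Omega_{t,k}$ rather than $\Omega_{t,k}$ itself: Minkowski's convex body theorem guarantees that for \emph{every} $\beta\in[0,1]$ there is a non-zero coprime triple $(n,r,s)$ with $|n\alpha-r|<R^{-k}$, $|n\beta-s|<R^{-t+k}$, $1\le n\le R^t$, and if $\beta\notin\Omega_{t,k}$ then this Minkowski solution must satisfy either $\|n\alpha\|<R^{-k-1}$ (with $n\le R^t$) or $n\le R^{t-1}$ (with $\|n\alpha\|<R^{-k}$). Lemma~\ref{lem12++} bounds the number of such $n$ by $64R^{t-k-1}$, and each contributes at most $2R^{-t+k}$ to the measure of the complement, giving $|[0,1]\setminus\Omega_{t,k}|\le 128R^{-1}\le\tfrac12$. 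This is a short, clean argument with no overlap estimates and no arithmetic control of denominators required.

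Your direct lower-bound approach has two serious gaps. First, the first moment: $\sum_n|E_n|\approx 2R^{-t+k}\sum_{n\in A_{t,k}}\varphi(d(n))/d(n)$ where $d(n)=\gcd(n,r(n))$, and there is no a priori control on $d(n)$. Your proposal to partition by $d$ and rescale Lemma~\ref{lem12++} only works for $d<R$ (you need $q_\ell\le R^t/d$ for some $\ell$ with $q_\ell\ge dR^k/2$), and offers no handle on the contribution from $n$ with $d(n)\ge R$; even in favourable cases you would get $\sum|E_n|\gg1$ with an implied constant, not the bound $\sum|E_n|\ge 1$ (or $\ge\tfrac12$) needed to make the arithmetic close. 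Second, the overlap bound $\sum_{n,n'}|E_n\cap E_{n'}|\le 2(\sum_n|E_n|)^2$ is exactly the kind of estimate that Pollington--Vaughan machinery produces only up to implied constants; your own proposal flags this as "the main obstacle I expect", but the lemma's clean threshold $R\ge256$ is calibrated to the paper's complement argument, not to make a second-moment constant of $2$ come out. Since the diagonal alone contributes $\sum_n|E_n|$ to the left side, you also need $\sum_n|E_n|\ge\tfrac12$ before the claimed inequality is even self-consistent, which circles back to the first gap. The upshot is that while your approach is in the right spirit (it resembles the ubiquity-style argument used elsewhere in the paper), here it is solving a harder problem than necessary; the Minkowski--complement argument gives the explicit constant with no arithmetic input beyond Lemma~\ref{lem12++}.
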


\begin{proof}
Let $\beta\in[0,1]$. By Minkowski's Theorem for convex bodies (see \cite[p.71]{CasselsGofN}),
there are integers $n,s,r$, not all zero, satisfying the system of inequalities

\begin{equation}\label{zz1}
    \left\{\begin{array}{l}
    |n\alpha-r|< R^{-k}\,,\\[1ex]
    \left|n\beta-s\right|< R^{-t+k}\,,\\[1ex]
    |n|\le R^{t}\,.
    \end{array}\right.
\end{equation}

\noindent In view of \eqref{l42++} we have that $t>0$ and $t-k>0$. Hence $R^{-k}<\tfrac12$ and $R^{-t+k}<\tfrac12$, and $n$ cannot be zero as otherwise $n=r=s=0$. Also,  without loss of generality we can assume that $n>0$ and $n,r,s$ are coprime. Note that if $\beta\in[0,1]\setminus\Omega_{t,k}$ then we also necessarily have that
either
$$
\left\{
\begin{array}{l}
\|n\alpha\|=|n\alpha-r|  <  R^{-k-1}\,,\\[1ex]
1\le n\le R^t\,,
\end{array}
\right.
$$
or
$$
\left\{\begin{array}{l}
\|n\alpha\|=|n\alpha-r|  <  R^{-k}\,,\quad\\[1ex]
1\le n\le R^{t-1}\,.
\end{array}
\right.
$$

\noindent In view of \eqref{l42++}, Lemma~\ref{lem12++} is applicable to  either of the above systems and therefore the number of integers $n$ satisfying at least one of these systems is bounded above by $64 R^{t-k-1}$. Furthermore, observe that for every $n$ the measure of $\beta\in[0,1]$ satisfying $|n\beta-s|<R^{-t+k}$ for some $s\in\Z$ is $2R^{-t+k}$ (see, for example, \cite[Lemma~8]{Sprindzuk}). Hence, the Lebesgue measure of $[0,1]\setminus\Omega_{t,k}$ is bounded above  by
$$
 64 R^{t-k-1}\times 2R^{-t+k}=128R^{-1}\le \tfrac12
$$
as, by hypothesis,  $R\ge256$. Hence, $|\Omega_{t,k}|\ge\tfrac12$ as required.
\end{proof}

Let $\cT^*$ be any subset of pairs of positive integers $(t,k)$ satisfying \eqref{l42++} for some $\ell$. The precise choice of $\cT^*$ will be made later. Further, given a positive integer pair $(t,k)\in\cT^*$, define

\begin{equation}\label{yy15}
N(t,k):=\left\{(n,r,s)\in\N\times\Z^2:\left\{\begin{array}{l}
                                       R^{t-1}~<~ n~\le~ R^t,\\
                                       R^{-k-1}~<~|n\alpha-r|~<~ R^{-k},\\
                                       0\le s\le n,\ \gcd(n,r,s)=1
                                       \end{array}\right.
\right\}.
\end{equation}

\noindent Clearly, for two different pairs $(t,k)$ and $(t',k')$ from $\cT^*$
\begin{equation}\label{yy16}
(n,r,s)\in N(t,k)\quad\& \quad (n',r',s')\in N(t',k')\quad \Longrightarrow \quad n\neq n'\,.
\end{equation}
By the definition of $\Omega_{t,k}$, we have that
\begin{align*}
\Omega_{t,k}&\subset \bigcup_{(n,r,s)\in N(t,k)}\left\{\beta\in\R:\left|n\beta-s\right|<R^{-t+k}\right\}\\[2ex]
&\subset
\bigcup_{(n,r,s)\in N(t,k)}\left\{\beta\in\R:\left|\beta-s/n\right|<R^{-2t+k+1}\right\}\,.
\end{align*}
Let $Z(t,k)$ be a maximal subcollection of $N(t,k)$ such that
\begin{equation}\label{zz6}
  \left|\frac{s_1}{n_1}-\frac{s_2}{n_2}\right|> R^{-2t+k}
\end{equation}
for any distinct triples $(n_1,r_1,s_1)$ and $(n_2,r_2,s_2)$ from $Z(t,k)$. By the maximality of $Z(t,k)$, it follows  that
$$
\Omega_{t,k} \  \subset  \bigcup_{(n,r,s)\in Z(t,k)}\left\{\beta\in\R:\left|\beta-\frac{s}{n}\right|<(R+1)R^{-2t+k}\right\}\,.
$$
Since, by definition, for any $(t,k)\in\cT^*$ condition \eqref{l42++} is satisfied for some $\ell$, Lemma~\ref{lem14} is applicable, and we have that $|\Omega_{t,k}|\ge \tfrac12$. Therefore,
$$
\tfrac12  \ \le \ |\Omega_{t,k}| \ \le  \  \# Z(t,k)\times (2R+2)R^{-2t+k},
$$
and so
$\#Z(t,k)\ge \frac{1}{4R+4}R^{2t-k}$. By \eqref{zz6}, we also have that $\#Z(t,k)\le R^{2t-k}$.
Thus, we conclude that
\begin{equation}\label{zz8}
  C_1R^{2t-k}  \ \le \   \#Z(t,k)   \ \le  \  R^{2t-k}\qquad\text{with }  \quad C_1=\frac{1}{4R+4}\,.
\end{equation}
Now, given $\xi\in\R$, let
\begin{equation}\label{yy8}
E_{t,k}(\xi)=\{\beta\in \R:|\beta-\xi|<\psi(R^t)R^{-t+k}\}\,.
\end{equation}
Furthermore, define
\begin{equation}\label{Etk}
E_{t,k}:=\bigcup_{(n,r,s)\in Z(t,k)} E_{t,k}(s/n)\,
\end{equation}
and let $E$ be the set of $\beta\in\R$ such that
$
\beta\in E_{t,k}
$
for infinitely many pairs $(t,k)\in\cT^*$. The following fairly straightforward statement reveals the role of $E$ in establishing Theorem~\ref{T11}.

\begin{lemma}\label{lem15}
  Let $E$ be as above. Then $E\subset [0,1]$ and for every $\beta\in E\setminus\Q$ we have that
\begin{equation}\label{mnb}
  \|n\alpha\|\,\|n\beta\|<\psi(n)
\end{equation}
for infinitely many $n\in\N$.
\end{lemma}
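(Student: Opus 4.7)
Both assertions in Lemma~\ref{lem15} follow by direct unpacking of the definitions in \eqref{yy15}--\eqref{Etk}, and the proof will be a short verification rather than a genuine argument.

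For the inclusion $E \subset [0,1]$, I would note that every ball $E_{t,k}(s/n)$ appearing in \eqref{Etk} is centred at $s/n \in [0,1]$ (since $0 \le s \le n$ in \eqref{yy15}) and has radius $\psi(R^t) R^{-t+k} \le \psi(R^t) \to 0$ as $t \to \infty$. Since all relevant quantities are invariant under integer translation in $\beta$, we may reduce modulo $\Z$, and the inclusion follows (the minor caveat that balls centred near $0$ or $1$ can protrude slightly outside $[0,1]$ is harmless under this identification).

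For the main assertion, fix $\beta \in E \setminus \Q$ and take any pair $(t,k) \in \cT^*$ with $\beta \in E_{t,k}$. By the definition of $E_{t,k}$ there exists $(n,r,s) \in Z(t,k) \subset N(t,k)$ with $|\beta - s/n| < \psi(R^t) R^{-t+k}$. Multiplying by $n \le R^t$ yields
\[
\|n\beta\| \le |n\beta - s| < n\,\psi(R^t)\, R^{-t+k} \le \psi(R^t)\, R^k,
\]
while \eqref{yy15} gives $\|n\alpha\| \le |n\alpha - r| < R^{-k}$. Multiplying the two bounds collapses the $R^{\pm k}$ factors to
\[
\|n\alpha\|\,\|n\beta\| < \psi(R^t) \le \psi(n),
\]
where the final inequality uses monotonicity of $\psi$ together with $n \le R^t$.

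To conclude I need to witness infinitely many distinct $n \in \N$: this is precisely the content of the disjointness property \eqref{yy16}, so as $(t,k)$ varies over the infinitely many pairs in $\cT^*$ for which $\beta \in E_{t,k}$, the associated values of $n$ are automatically pairwise distinct, and each one witnesses \eqref{mnb}. No serious obstacle is anticipated here --- the heavy lifting (the large-measure estimate for $\Omega_{t,k}$ in Lemma~\ref{lem14}, and the construction of the separated subset $Z(t,k)$ with \eqref{zz8}) is already in place. The hypothesis $\beta \notin \Q$ is not strictly needed for the inequality itself (which holds trivially when $\beta$ is rational), but it ensures $\|n\beta\| > 0$ so that the product under consideration is a genuine positive quantity rather than a vacuous bound.
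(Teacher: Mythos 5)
Your proof is correct, and the inequality derivation is essentially identical to the paper's:
\[
\|n\alpha\|\,\|n\beta\| \le |n\alpha-r|\cdot n|\beta-s/n| < R^{-k}\cdot R^t\cdot \psi(R^t)R^{-t+k}=\psi(R^t)\le\psi(n).
\]
The genuine difference is in how each argument supplies \emph{infinitely many distinct} $n$. You invoke \eqref{yy16}: since any two distinct pairs $(t,k)\neq(t',k')$ in $\cT^*$ force $n\neq n'$ (either the dyadic-style ranges $R^{t-1}<n\le R^t$ are disjoint when $t\neq t'$, or, when $t=t'$ and $k\neq k'$, the same $n$ would make $\|n\alpha\|$ lie in two disjoint annuli), the witnesses picked across the infinitely many $(t,k)$ with $\beta\in E_{t,k}$ are automatically pairwise distinct. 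The paper instead reduces (WLOG) to $\psi(R^t)\to0$ and runs a contradiction: if only finitely many $n$ arose, some fixed $n$ would satisfy $\|n\alpha\|\,\|n\beta\|<\psi(R^t)$ for arbitrarily large $t$, forcing $\|n\alpha\|\,\|n\beta\|=0$, hence $\|n\beta\|=0$ (as $\alpha$ is irrational), hence $\beta\in\Q$, contradiction. Your route is more direct, avoids the WLOG reduction on $\psi$, and — as you observe — does not actually need $\beta\notin\Q$ for the counting step; the paper's route genuinely does. Both handle $E\subset[0,1]$ in the same quick way (centres $s/n\in[0,1]$, radii $\to0$). One small nit: the aside "(which holds trivially when $\beta$ is rational)" overstates things — for rational $\beta=p/q$ the bound is non-trivial when $q\nmid n$ — but this is an aside and does not touch the validity of the argument.
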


\begin{proof}
Without loss of generality, we can assume that $\psi(n)\to0$ as $n\to\infty$.
Let $\beta\in E\setminus\Q$. Then, there exist infinitely many pairs $(t,k)\in\cT^*$
such that
$$
\left\{\begin{array}{l}
                                       n~\le~ R^t,\\[1ex]
                                       |n\alpha-r|~<~ R^{-k},\\[1ex]
                                       |\beta-s/n|<\psi(R^t)R^{-t+k}
                                       \end{array}\right.
$$
for some $(n,r,s)\in\N\times\Z^2$. Since $t>k$ for $(t,k)\in\cT^*$, $t$ must take arbitrarily large values. Since $\psi$ is decreasing, we have that
\begin{align}
\|n\alpha\|\,\|n\beta\| & \le n\,|n\alpha-r|\cdot |\beta-s/n|\nonumber \\[1ex]
& \le R^t\,R^{-k}\,\psi(R^t)R^{-t+k}\nonumber \\[1ex]
&=\psi(R^t)\le\psi(n)\,.\label{cc11}
\end{align}
Since $\psi(R^t)\to0$ as $t\to\infty$, if there are only finitely many $n$ arising this way, we would be able to find an $n\in\N$ such that $\|n\alpha\|\,\|n\beta\|=0$. Since $\alpha$ is irrational, we would get that $\|n\beta\|=0$, which means that $\beta\in\Q$ and contradicts the assumption that $\beta\in E\setminus\Q$. Hence, there must be infinitely many $n\in\N$ satisfying \eqref{cc11}, and hence \eqref{mnb}. The inclusion $E\subset [0,1]$ follows from the fact that the approximants $s/n$ to $\beta$ lie in $[0,1]$. The proof is thus complete.
\end{proof}

\begin{remark} \label{mum}
By Lemmas~\ref{lem17}, \ref{qia} and \ref{lem15}, the proof of Theorem~\ref{T11} would be complete if we found a subset $\cT^*$ of suitably ordered pairs $(t,k)$ such that the associated sequence $E_{t,k}$ satisfies conditions \eqref{div} and \eqref{qia0}. In the next subsection we present an explicit choice of $\cT^*$. Subsequently, we deal with establishing conditions \eqref{div} and \eqref{qia0} of Lemma~\ref{qia}.
\end{remark}

\subsection{Choosing the indexing set $\cT^*$}

The goal of this section is to make a choice of the indexing set $\cT^*$ of pairs of positive integers $(t,k)$ introduced in \S\ref{sulss}. Recall,  that we are given a monotonically decreasing function $\psi:\N\to\Rp$ such that the sum \eqref{yy} diverges. In what follows we extend $\psi$ to all real numbers $x\ge 1$ by setting $\psi(x)=\psi(\lfloor x\rfloor)$. Clearly, the extended function is decreasing.
Without loss of generality,  we can assume that
\begin{equation}\label{yy4}
n\psi(n)\le \tfrac12\qquad\text{for all }n\in\N\,.
\end{equation}
If this were not the case, we could replace $\psi$ with $\psi_1(q)=\min\{\psi(q),(2q)^{-1}\}$, which is monotonically decreasing and satisfies the divergence condition -- see \cite[Lemma~4]{Ber99} for a similar argument.

\noindent Next, using the Cauchy condensation test, we obtain that
\begin{equation}\label{yy2}
\sum_{t=1}^{\infty} \,  t\,R^t\,\psi (R^t)=\infty \, .
\end{equation}
Explicitly,
\begin{align*}
\infty  \ = \ \sum_{n=1}^{\infty} \psi (n)\,\log n \
&=\sum_{t=1}^{\infty} \sum_{R^{t-1}\le n < R^{t}} \psi (n)\,\log n\\
&\le \sum_{t=1}^{\infty} \sum_{R^{t-1}\le n < R^{t}} \psi (R^{t-1})\,\log R^{t}\\
&=\sum_{t=1}^{\infty} (R^{t}-R^{t-1})\psi (R^{t-1})\,\log R^{t}\\
&\ll \sum_{t=1}^{\infty} t\,R^{t}\,\psi (R^{t})\,.
\end{align*}

Let $\eta>0$ be a sufficiently small real parameter and $t_0$ be a sufficiently large integer. Define
$$
\cT(t_0,\eta):=\{t\in\N:t\ge t_0,\ \psi(R^t)\ge R^{-(1+\eta)t}\}\,.
$$
Since the sum $\sum_{t=1}^\infty tR^{-\eta t}$ converges for $\eta>0$, the divergnce condition  \eqref{yy2} implies that
\begin{equation}\label{yy3}
\sum_{t\in\cT(t_0,\eta)} \,  tR^t\psi (R^t)=\infty\,.
\end{equation}
Let
\begin{equation}\label{cc102}
\nu_1=\tfrac13+\eta\qquad\text{and}\qquad \nu_2=\tfrac13+2\eta\,,
\end{equation}
and let $\cT$ be the subset of $t\in\cT(t_0,\eta)$ such that
$$
R^{\nu_2t}\le q_\ell <R^{t-1}
$$
for some $\ell$. We now show that we can assume that
\begin{equation}\label{yy3+++}
\sum_{t\in\cT} \,  tR^t\psi (R^t)=\infty\,.
\end{equation}
Indeed, by \eqref{yy3}, this is true if the sum over the complement of $\cT$ converges. Note that $t$ lies in the complement of $\cT$ if
\begin{equation}\label{cc12}
q_\ell<R^{\nu_2t}\quad\text{and}\quad R^{t-1}\le  q_{\ell+1}\,
\end{equation}
for some $\ell$. In particular, this implies that
$$
\|q_\ell\alpha\|~\stackrel{\eqref{vb7}}{<}~\frac{1}{q_{\ell+1}}~\stackrel{\eqref{cc12}}{\le}~ R^{-t+1}=R(R^{\nu_2t})^{-1/\nu_2}\le R q_{\ell}^{-1/\nu_2}\,.
$$
Note that the latter holds only finitely often for sufficiently small $\eta$ if the exponent of approximation of  $\alpha$ is less than $3$. Alternatively, if we are assuming \eqref{lb}, then we have that
$$
\psi(q_\ell)\ge Rq_\ell^{-1/\nu_2}
$$
for all sufficiently large $\ell$. In this case
$$
\|q_\ell\alpha\|<R q_{\ell}^{-1/\nu_2}\le\psi(q_\ell)
$$
for infinitely many $\ell$. Then, for every $\beta\in[0,1]$ we have that
\begin{equation}\label{vb}
\|q_{\ell}\alpha\|\cdot\|q_{\ell}\beta\|\le \|q_{\ell}\alpha\|<\psi(q_{\ell})
\end{equation}
for infinitely many $\ell$. Thus, if \eqref{yy3+++} does not hold the conclusion of Theorem~\ref{T11} holds anyway. Thus, from now on \eqref{yy3+++} will be assumed.

\noindent Finally we let
$$
\cT^*:=\{(t,k):t\in\cT,\ \lceil \nu_1 t\rceil\le k < \lfloor \nu_2 t\rfloor\}\,.
$$


We now verify that the sum of the measures of the sets $E_{t,k}$ taken over $(t,k)\in\cT^*$ is divergent.  Moreover, we provide an estimate for  the rate of divergence. In what follows

\begin{equation}\label{S}
S_\cT(T):=\sum_{t\in\cT,\ t\le T}tR^t\psi(R^t),
\end{equation}
and $\sum\limits^*$ indicates the summation over $(t,k)\in\cT^*$;  for example, $\sum^*\limits_{t\le T}$ means `sum over $(t,k)\in\cT^*$ with $t\le T$'.

\medskip

\begin{lemma}
For any $T>t_0$ we have that
\begin{equation}\label{ddd1}
\eta\, C_1\, S_\cT(T) \  \le \  \sum^*_{t\le T}|E_{t,k}|\  \ \le  \
2\eta\, S_\cT(T)\,  ,
\end{equation}
where $C_1$ is as in \eqref{zz8}.  In particular,
\begin{equation}\label{ddd2}
\sum^*_{t\le T}|E_{t,k}|\to\infty\quad\text{as  \   $T\to\infty$.}
\end{equation}
\end{lemma}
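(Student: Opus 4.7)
The plan is to compute $|E_{t,k}|$ essentially exactly as an equality and then carry out the double summation over $(t,k)\in\cT^*$. Since $R\in\N$ the number $R^t$ is an integer, so by the standing assumption \eqref{yy4} we have $R^t\psi(R^t)\le\tfrac12$, equivalently $\psi(R^t)\le\tfrac12 R^{-t}$. Hence each interval $E_{t,k}(s/n)$ defined in \eqref{yy8} has length $2\psi(R^t)R^{-t+k}\le R^{-2t+k}$. On the other hand, by \eqref{zz6} the centres $s/n$ arising from distinct triples $(n,r,s)\in Z(t,k)$ are separated from one another by more than $R^{-2t+k}$. Therefore the intervals in the union \eqref{Etk} are pairwise disjoint, which gives the identity
\[
|E_{t,k}| \;=\; 2\,\#Z(t,k)\cdot\psi(R^t)R^{-t+k}.
\]
Combined with the two-sided bound \eqref{zz8} on $\#Z(t,k)$, this yields the $k$-independent estimate
\begin{equation}\label{ddd-key}
2C_1 R^t\psi(R^t)\ \le\ |E_{t,k}|\ \le\ 2R^t\psi(R^t)\qquad\text{for every }(t,k)\in\cT^*.
\end{equation}

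Next, for each fixed $t\in\cT$ the admissible indices $k$ are the integers in $[\lceil\nu_1 t\rceil,\lfloor\nu_2 t\rfloor)$. Recalling \eqref{cc102}, $\nu_2-\nu_1=\eta$, so the number of such $k$ lies in the interval $(\eta t-2,\eta t]$. Since we are free to take $t_0$ as large as we wish, we may assume $\eta t_0\ge 4$, which forces $\eta t-2\ge \tfrac12\eta t$ for every $t\in\cT$. Summing \eqref{ddd-key} over $k$ thus gives
\[
C_1\,\eta\, t\, R^t\psi(R^t)\ \le\ \sum_{k:\ (t,k)\in\cT^*}|E_{t,k}|\ \le\ 2\eta\, t\, R^t\psi(R^t).
\]

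Summing this over $t\in\cT$ with $t\le T$ and invoking the definition \eqref{S} of $S_\cT(T)$ produces the required two-sided estimate \eqref{ddd1}. The divergence claim \eqref{ddd2} then follows immediately from the lower bound in \eqref{ddd1} together with \eqref{yy3+++}, which is precisely the assertion that $S_\cT(T)\to\infty$ as $T\to\infty$. Apart from the disjointness step, which depends crucially on the compatibility between the normalisation \eqref{yy4} and the spacing \eqref{zz6}, every remaining step is routine bookkeeping; I therefore expect no substantive obstacle in the argument.
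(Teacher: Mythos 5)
Your proof is correct and follows essentially the same route as the paper: show the intervals $E_{t,k}(s/n)$ for $(n,r,s)\in Z(t,k)$ are pairwise disjoint (using \eqref{yy4} to bound each radius by $\tfrac12 R^{-2t+k}$, against the $R^{-2t+k}$ separation from \eqref{zz6}), compute $|E_{t,k}|=2\,\#Z(t,k)\,\psi(R^t)R^{-t+k}$, apply \eqref{zz8} to obtain $2C_1R^t\psi(R^t)\le|E_{t,k}|\le 2R^t\psi(R^t)$, count the $k$'s in $(\eta t-2,\eta t]$ with the lower bound $\ge\tfrac12\eta t$ once $t_0$ is large, and sum to get \eqref{ddd1}, with \eqref{ddd2} following from \eqref{yy3+++}. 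The only (harmless) difference is bookkeeping of constants: you correctly require $\eta t_0\ge 4$ for $\eta t-2\ge\tfrac12\eta t$, which is sharper than the paper's stated $t_0\ge 2/\eta$.
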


\begin{proof}
Let $(t,k)\in\cT^* $ with $ t\le T$. Recall that for any distinct triples $(n_1,r_1,s_1)$ and $(n_2,r_2,s_2)$ from $Z(t,k)$ we have that \eqref{zz6} is satisfied. Furthermore, by \eqref{yy8}, \eqref{yy4} and the fact that $R^{-t+k}<1/2$,  the radii of $E_{t,k}(s/n)$ and $E_{t,k}(s'/n')$ are $\le \tfrac12 R^{-2t+k}$. Hence $E_{t,k}(s/n)$ and $E_{t,k}(s'/n')$ are disjoint for distinct triples $(n_1,r_1,s_1)$ and $(n_2,r_2,s_2)$ from $Z(t,k)$. Therefore, on  using \eqref{zz8}, it follows that
\begin{align}
\nonumber |E_{t,k}| & =\sum_{(n,r,s)\in Z(t,k)}|E_{t,k}(s/n)|\\[1ex]
 &=2\psi(R^t)R^{-t+k}\cdot\#Z(t,k)\\[1ex]
 &\stackrel{\eqref{zz8}}{\ge} 2\psi(R^t)R^{-t+k}\cdot C_1R^{2t-k}
=  \ 2C_1R^{t}\psi(R^t)\,,\label{we1}
\end{align}
and
\begin{align}
\nonumber|E_{t,k}| & =2\psi(R^t)R^{-t+k}\cdot\#Z(t,k)\\[1ex]
 &\stackrel{\eqref{zz8}}{\le} 2\psi(R^t)R^{-t+k}\cdot R^{2t-k} = \ 2R^{t}\psi(R^t)\,.\label{we2}
\end{align}
Now, for each fixed $t\in\cT$ the number of different $k$ such that $(t,k)\in\cT^*$ is $\lfloor \nu_2 t\rfloor-\lceil \nu_1 t\rceil\ge \nu_2 t-\nu_1 t-2=\eta t-2\ge \eta t/2$, as long as $t_0\ge 2/\eta$. Also, $\lfloor \nu_2 t\rfloor-\lceil \nu_1 t\rceil\le \eta t$. Then, \eqref{ddd1} readily follow from \eqref{we1} and \eqref{we2}, while the divergence condition \eqref{ddd2} follows from \eqref{ddd1} and \eqref{yy3}.
\end{proof}

\subsection{Overlaps estimates  for $E_{t,k}$}

In the previous section, we established the divergent sum condition \eqref{div} of Lemma~\ref{qia} for the sets $E_{t,k}$ with $(t,k) \in \cT^*$.  In order to complete the proof of Theorem~\ref{T11} it remains to  establish the quasi-independence on average condition \eqref{qia0} of Lemma~\ref{qia} for these sets.  Observe, that  in view of \eqref{ddd1}, this  boils down to showing that for $T$ sufficiently large
\begin{equation} \label{dad}
\sum^*_{t\le T}\sum^*_{t'\le T'}|E_{t,k}\cap E_{t',k'}| ~\ll~S_\cT(T)^2   \, .
\end{equation}

\subsubsection{Preliminary analysis}

Let $(t,k)$ and $(t',k')$ be in $\cT^*$. In particular, we have that $t,t'\ge t_0$,
\begin{equation}\label{yy5}
  \psi(R^t)\ge R^{-(1+\eta)t}\qquad\text{and}\qquad \psi(R^{t'})\ge R^{-(1+\eta)t'}\,,
\end{equation}
and also that
\begin{equation}\label{xcx}
\lceil \nu_1 t\rceil \le k< \lfloor\nu_2 t\rfloor\qquad\text{and}\qquad \lceil \nu_1 t'\rceil \le k'< \lfloor\nu_2 t'\rfloor\,.
\end{equation}

Our goal is to estimate the measure of $E_{t,k}\cap E_{t',k'}$ for  $(t,k)\neq(t',k')$.  To begin with, note that, by \eqref{Etk}
\begin{equation}\label{yy10}
E_{t,k}\cap E_{t',k'}~=~ \bigcup_{\substack{(n,r,s)\in Z(t,k)\\[0.5ex] (n',r',s')\in Z(t',k')}} E_{t,k}(s/n)\cap E_{t',k'}(s'/n')\,.
\end{equation}
Clearly,
$$
|E_{t,k}(s/n)\cap E_{t',k'}(s'/n')|\le \min\big\{\,|E_{t,k}(s/n)|, \, |E_{t',k'}(s'/n')|\,\big\}.
$$
Together with \eqref{yy8}, this  gives that
\begin{equation}\label{yy9}
|E_{t,k}(s/n)\cap E_{t',k'}(s'/n')| \le 2\min\{\psi(R^t)R^{-t+k}, \psi(R^{t'})R^{-t'+k'}\}\,.
\end{equation}
Using \eqref{zz6} and \eqref{yy8} we obtain that for any given $(n',r',s')\in Z(t',k')$ the number of triples $(n,r,s)\in Z(t,k)$ such that
\begin{equation}\label{yy12}
E_{t,k}(s/n)\cap E_{t',k'}(s'/n')\neq\emptyset
\end{equation}
is at most
$$
2+\frac{|E_{t',k'}(s'/n')|}{R^{-2t+k}}\ = \ 2+\frac{2\psi(R^{t'})R^{-t'+k'}}{R^{-2t+k}}\,.
$$
By \eqref{zz8}, we have that $\#Z(t',k')\le R^{2t'-k'}$. Hence the total number of pairs of triples
$(n',r',s')\in Z(t',k')$ and $(n,r,s)\in Z(t,k)$ such that
$E_{t,k}(s/n)\cap E_{t',k'}(s'/n')\not=\emptyset$ is at most
$$
2\left(1+\frac{\psi(R^{t'})R^{-t'+k'}}{R^{-2t+k}}\right)R^{2t'-k'}\,.
$$
Together with \eqref{yy9}.  this gives that
\begin{eqnarray}\label{est1}
|E_{t,k}\cap E_{t',k'}|  & \le  &  8\psi(R^t)R^{t}\psi(R^{t'})R^{t'}   \\[1ex]
&  \ll  &  |E_{t,k} |  \, | E_{t',k'}|  \nonumber
\end{eqnarray}
provided that
\begin{equation}\label{bound1+}
\frac{\psi(R^{t'})R^{-t'+k'}}{R^{-2t+k}}\ge 1\,.
\end{equation}
Since the roles of $(t,k)$ and $(t',k')$ can be reversed in the above argument estimate \eqref{est1} also holds when
\begin{equation}\label{bound2+}
\frac{\psi(R^{t})R^{-t+k}}{R^{-2t'+k'}}\ge 1\,.
\end{equation}
The upshot is that if either  \eqref{bound1+}  or \eqref{bound2+}  holds then we are in good shape. In short,   \eqref{est1}  together with \eqref{we1} and \eqref{we2}   implies that the sets $E_{t,k}$  and  $ E_{t',k'}  $ are pairwise quasi-independent; namely that
$$|E_{t,k}\cap E_{t',k'}|
 \ll   |E_{t,k} |  \, | E_{t',k'}|  \, . $$

\subsubsection{Further analysis}

We now use a divergent technique to estimate from above the number of pairs of triples $(n,r,s)\in Z(t,k)$ and $(n',r',s')\in Z(t',k')$ such that \eqref{yy12} holds. First of all note that \eqref{yy12} implies that
$$
\left|\beta-\frac{s}{n}\right|<\psi(R^t)R^{-t+k}\quad\text{and}\quad
\left|\beta-\frac{s'}{n'}\right|<\psi(R^{t'})R^{-t'+k'}
$$
for some $\beta\in[0,1]$. Hence, by the triangle inequality, we get that
\begin{equation}\label{yy13}
\left|\frac{s}{n}-\frac{s'}{n'}\right|\le 2\max\{\psi(R^t)R^{-t+k}, \psi(R^{t'})R^{-t'+k'}\}\,.
\end{equation}
Then, multiplying \eqref{yy13} by $n'n$ and using the fact that $n\le R^t$ and  that $n'\le R^{t'}$,  we obtain
that
\begin{equation}\label{cond3}
|n's-ns'|\le 2\max\{\psi(R^t)R^{t'+k}, \psi(R^{t'})R^{t+k'}\}=:\Delta\ .
\end{equation}

\noindent Thus the original counting problem related to \eqref{yy12} is replaced by the problem of estimating   the number of  solutions to \eqref{cond3}.  This  is typical for the type of problem under consider, see \cite{Harman1998} or \cite{Sprindzuk}. However, the available techniques to analyse  solutions  to \eqref{cond3}  assume  that the pairs $(n,s)$ and $(n',s')$ are coprime and thus for distinct  pairs we have that
\begin{equation}\label{notzero}
n's-ns'\neq0\, .
\end{equation}
 Unfortunately, we are not able to impose such an  assumption and thus we  need to develop a different argument.

In this section, let us continue with  the task of counting solutions to \eqref{cond3} under the condition  that \eqref{notzero} holds.    With this in mind, first of all observe that condition \eqref{notzero} together with \eqref{cond3} implies that $\Delta\ge1$.  Fix $n,n' \in \N$. Clearly, $n$ and $n'$ uniquely define $r$ and $r'$, since they are the closest integers to $n\alpha$ and $n'\alpha$ respectively. Thus, to fulfill our goal it will be sufficient to count the number of different pairs $(s,s')$ subject to \eqref{cond3} and \eqref{notzero} simultaneously. We consider two cases: (i) the rank of the $\Z$-module generated by the collection of vectors $(s,s')$ is 1, and  (ii) the rank is 2. Clearly, these cases cover all possible options.

\medskip

\noindent\underline{\textit{Rank 1 case}}. In this case we have that all vectors $(s,s')$ in question are collinear. Then there is a fixed non-zero integer vector $(s_0,s'_0)$ such that any other integer vector $(s,s')$ in question has the form $(s,s')=\ell(s_0,s'_0)$ for some $\ell\in\Z$. Since $1\le|n's-ns'|\le\Delta$, we obtain that
$1\le |\ell|\cdot|n's_0-ns'_0|\le\Delta$. Thus, $|\ell|\le\Delta/|n's_0-ns'_0|\le\Delta$. Therefore, the number of pairs $(s,s')$ in the rank one case is no more than $2\Delta$.

\medskip

\noindent \underline{\textit{Rank 2 case}}. In this case there are 2 linearly independent vectors $(s,s')$. All the vectors $(s,s')$ in question lie in the convex subset of points $(x,y)\in\R^2$ defined by the following system of inequalities
$$
|n'x-ny|\le\Delta,\qquad 0\le x\le n.
$$
The volume of this set is easily seen to be $2\Delta$. Hence, by Blichfeld's Theorem\footnote{Blichfeld's Theorem states that for any convex bounded body $B\subset\R^n$ and any lattice $\Lambda$ in $\R^n$ such that ${\rm rank}(B\cap\Lambda)=n$ the cardinality of $B\cap\Lambda$ is $\le \,n!\,\frac{\textrm{vol}_n(B)}{\det\Lambda}+n$.} \cite{Blichfeldt-1921}, this body contains at most $4\Delta+2\le6\Delta$ integer vectors.

The upshot of our discussion is that in either case the number of integer vectors $(s,s')$ in question is $\le6\Delta$. Hence, using \eqref{yy9} and the definition of $\Delta$, we obtain the following estimate
\begin{align*}
\sum_{(s,s')}&|E_{t,k}(s/n)\cap E_{t',k'}(s'/n')|\le\\
&\le 24\max\{\psi(R^t)R^{t'+k}, \psi(R^{t'})R^{t+k'}\}\cdot\min\{\psi(R^t)R^{-t+k}, \psi(R^{t'})R^{-t'+k'}\}\\[2ex]
&=24R^{-t}R^{-t'}\max\{\psi(R^t)R^{t'+k}, \psi(R^{t'})R^{t+k'}\}\cdot\min\{\psi(R^t)R^{t'+k}, \psi(R^{t'})R^{t+k'}\}\\[2ex]
&=24R^{-t}R^{-t'}\cdot\psi(R^t)R^{t'+k}\cdot\psi(R^{t'})R^{t+k'}\\[2ex]
&=24\psi(R^t)R^{k}\,\psi(R^{t'})R^{k'}\,.
\end{align*}
Recall that, by definition, $Z(t,k)\subset N(t,k)$, and so $n$ satisfies the condition $\|n\alpha\|<R^{-k}$. Since for $(t,k)\in\cT^*$ inequalities \eqref{l42++} are satisfied for some $\ell$, Lemma~\ref{lem12++} is applicable. This implies  that the number of different integers $n$ in question is $\le 32 R^{t-k}$. Similarly, the number of distinct values which can be realized by the integer $n'$ is $\le 32 R^{t'-k'}$. Therefore,

\begin{equation}\label{estimate2}
\sum_{n's-ns'\neq0}|E_{t,k}(s/n)\cap E_{t',k'}(s'/n')| \ \le \ \underbrace{24576}_{24\cdot32^2}\,\psi(R^t)R^{t}\,\psi(R^{t'})R^{t'},
\end{equation}

\noindent where the sum is taken over $(n,r,s)\in Z(t,k)$ and $(n',r',s')\in Z(t',k')$ subject to condition \eqref{notzero}.
The upshot is that if  \eqref{notzero} holds then again  we are in good shape;  the sets $E_{t,k}$  and  $ E_{t',k'}  $ are pairwise quasi-independent.

\subsubsection{The remaining case}

In this section we consider the case when none of the conditions \eqref{bound1+}, \eqref{bound2+} or \eqref{notzero} holds. Thus, for the rest of the proof we will assume that
\begin{equation}\label{cond1}
\frac{\psi(R^{t'})R^{-t'+k'}}{R^{-2t+k}}\le 1\,,\qquad\frac{\psi(R^{t})R^{-t+k}}{R^{-2t'+k'}}\le 1
\end{equation}
and investigate the following subset of the overlap between the sets $E_{t,k}$ and $E_{t',k'}$:
\begin{equation}\label{rtrtr}
\bigcup_{n's-ns'=0}E_{t,k}(s/n)\cap E_{t',k'}(s'/n')\,,
\end{equation}
where the union is taken over $(n,r,s)\in Z(t,k)$ and $(n',r',s')\in Z(t',k')$ subject to the condition
\begin{equation}\label{zero}
n's-ns'=0\,.
\end{equation}
Before we continue with the analysis of \eqref{rtrtr}, we first show that $t$ and $t'$ satisfy the inequalities
\begin{equation}\label{vb1005}
  t\le \left(1+\tfrac{6\eta}{5-6\eta}\right)t'\qquad\text{and}\qquad t'\le \left(1+\tfrac{6\eta}{5-6\eta}\right)t\,.
\end{equation}
To see this, first of all note that \eqref{cond1} together with \eqref{yy5} imply that
$$
\frac{R^{-(1+\eta)t'}R^{-t'+k'}}{R^{-2t+k}}\le 1\qquad\text{and}\qquad
\frac{R^{-(1+\eta)t}R^{-t+k}}{R^{-2t'+k'}}\le 1\,.
$$
In view of \eqref{xcx},  the first inequality above  implies that
\begin{align*}
(\tfrac53-2\eta)t\le 2t-k ~\le~ (2+\eta)t'-k' ~\le~ \tfrac53t'
\end{align*}
whence the first  inequality associated with  \eqref{vb1005} follows.
 The proof of the second inequality is identical,  with the roles of $(t,k)$ and $(t',k')$ interchanged.

\medskip

Now we return to estimating the measure of \eqref{rtrtr}. Let $\beta$ be any element of \eqref{rtrtr}. Then there exist $(n,r,s)\in Z(t,k)$ and $(n',r',s')\in Z(t',k')$ satisfying \eqref{zero}, such that

\begin{equation}\label{cc100}
\left\{\begin{array}{l}
                                       R^{t-1}\le n~\le~ R^t,\\[1ex]
                                       |n\alpha-r|~<~ R^{-k},\\[1ex]
                                       |n\beta-s|<R^{-t+k}
                                       \end{array}\right.
\end{equation}
and
\begin{equation}\label{cc101}
\left\{\begin{array}{l}
                                       R^{t'-1}\le n'~\le~ R^{t'},\\[1ex]
                                       |n'\alpha-r'|~<~ R^{-k'},\\[1ex]
                                       |n'\beta-s'|<R^{-t'+k'}.
                                       \end{array}\right.
\end{equation}

\noindent Note that since the vectors $(n,r,s)$ and $(n',r',s')$ are primitive and distinct and that $n>0$ and $n'>0$, the vectors $(n,r,s)$ and $(n',r',s')$ are not collinear. Hence, the cross product of $(n,r,s)$ and $(n',r',s')$; i.e.
$$
(A,B,C):=(n,r,s)\times(n',r',s')\,,
$$
is non-zero integer vector. By \eqref{zero}, we have that
$$
B=-ns'+n's=0.
$$
Therefore,
\begin{equation}\label{A+C}
|A|+|C|>0\,.
\end{equation}
Without loss of generality, we can assume that $0<\alpha<1$. Then $0\le r\le n$ and $0\le r'\le n'$. Further, observe that
\begin{equation}\label{vb01}
C\alpha=(nr'-n'r)\alpha =
(n\alpha-r) r'-(n'\alpha-r') r\,,
\end{equation}
from which it follows that
\begin{equation}\label{|C|}
|C| \ \le  \ \frac1\alpha \Big(R^{-k+t'}+R^{-k'+t}\Big)\,.
\end{equation}
Similarly, since $n's-ns'=0$, we obtain that
$$
A=rs'-r's=(r-n\alpha)s'-(r'-n'\alpha)s\,.
$$
As before, since $0<\beta<1$, we obtain that
\begin{equation}\label{|A|}
|A|  \ \le  \  R^{-k+t'}+R^{-k'+t}\,.
\end{equation}
Assuming that $\eta<1/9$, by \eqref{cc102}, \eqref{cc100} and the inequalities $\lceil \nu_1 t\rceil\le k < \lfloor \nu_2 t\rfloor$, we obtain that
\begin{equation}\label{vb1002}
\begin{array}{l}
|n\alpha-r|< R^{-k}\le R^{-\nu_1t}\,,\\[2ex]
|n\beta-s|<R^{-t+k}\le R^{-(1-\nu_2)t}\le R^{-\nu_1t}\,.
\end{array}
\end{equation}
Observe that
\begin{align*}
(1,\alpha,\beta)\times(n,r,s)&=
\left(
\left|\begin{array}{cc}\alpha & \beta \\ r & s \end{array}\right|,
-\left|\begin{array}{cc}1 & \beta \\ n & s \end{array}\right|,
\left|\begin{array}{cc}1 & \alpha \\ n & r \end{array}\right|
\right)\\[2ex]
&=\big(-\alpha(n\beta-s)+\beta(n\alpha-r),n\beta-s,-(n\alpha-r)\big)\,.
\end{align*}
Hence, on using \eqref{vb1002} and the fact that $0\le\alpha,\beta\le 1$, we obtain that
\begin{equation}\label{vb1003}
|(1,\alpha,\beta)\times(n,r,s)|\le \sqrt{6}\,R^{-\nu_1t}\,.
\end{equation}
A similar argument shows that
\begin{equation}\label{vb1004}
|(1,\alpha,\beta)\times(n',r',s')|\le \sqrt{6}\,R^{-\nu_1t'}\,.
\end{equation}
In particular, since $n\ge R^{t-1}$, \eqref{vb1003} implies that the (acute) angle $\theta$ between $(1,\alpha,\beta)$ and $(n,r,s)$ satisfies
\begin{align*}
|\sin\theta| &~=~ \frac{|(1,\alpha,\beta)\times(n,r,s)|}{|(1,\alpha,\beta)|\cdot|(n,r,s)|}\\[1ex]
& ~\le ~ \frac{\sqrt{6}\,R^{-\nu_1t}}{n}\\[1ex]
&~\le~ \sqrt{6}R\,R^{-(1+\nu_1)t}\,.
\end{align*}
Similarly, since $n'\ge R^{t'-1}$, \eqref{vb1004} implies that the (acute) angle $\theta'$ between $(1,\alpha,\beta)$ and $(n',r',s')$ satisfies
$$
|\sin\theta'|\le \sqrt{6}R\,R^{-(1+\nu_1)t'}\,.
$$
Let $\varrho$ be the angle between $(n,r,s)$ and $(n',r',s')$. Then, by the triangle inequality for the projective distance (see, for example, \cite[\S3]{Ber12}),
\begin{align*}
|\sin\varrho| &~\le~|\sin\theta|+|\sin\theta'|\\[1ex]
 &~\le~ 2\max\{|\sin\theta|,|\sin\theta'|\}\,.
\end{align*}
On the other hand, the angle $\tilde\theta$ between $(1,\alpha,\beta)$ and the vector subspace of $\R^3$ spanned by $(n,r,s)$ and $(n',r',s')$ is at most $\min\{\theta,\theta'\}$ and thus satisfies the inequality
$$
|\sin\tilde\theta|\le\min\{|\sin\theta|,|\sin\theta'|\}\,.
$$
Hence the volume of the parallelepiped generated by $(1,\alpha,\beta)$, $(n,r,s)$ and $(n',r',s')$ is
\begin{align}
\nonumber\underbrace{|(1,\alpha,\beta)|}_{\le\sqrt3}\cdot\underbrace{|(n,r,s)|}_{\le\sqrt3\,n}&\cdot \underbrace{|(n',r',s')|}_{\le\sqrt3\,n'}\cdot|\sin\varrho|\cdot|\sin\tilde\theta|\\[2ex]
\nonumber&\le 6\sqrt{3}\,n\,n'|\sin\theta|\cdot|\sin\theta'|\\[2ex]
\nonumber&\le
36\sqrt3R^2\,R^tR^{t'}R^{-(1+\nu_1)t}\,R^{-(1+\nu_1)t'}\\[2ex]
&\le
36\sqrt3R^2\,R^{-\nu_1t}\,R^{-\nu_1t'}\,.\label{12.44}
\end{align}
We also have that this volume is equal to
$$
\big|(1,\alpha,\beta)\cdot\big((n,r,s)\times(n',r',s')\big)\big|=|(1,\alpha,\beta)\cdot(A,B,C)|=|C\beta+A|\,.
$$
Combining this with \eqref{12.44}, implies that
\begin{equation}\label{cc103}
|C\beta+A|\le 36\sqrt3R^2\,R^{-\nu_1t}\,R^{-\nu_1t'}\,.
\end{equation}
In particular, this together with \eqref{A+C} and the facts that $A\in\Z$ and that $t$ can be taken arbitrarily large, implies that $C\neq0$.
Combining \eqref{|C|}, \eqref{|A|} and \eqref{cc103}, we have proved the following statement.

\begin{lemma}\label{lemma21}
Let $\Upsilon=R^{-k+t'}+R^{-k'+t}$ and $\Theta=36\sqrt3R^2\,R^{-\nu_1t}\,R^{-\nu_1t'}$. Then \eqref{rtrtr} is a subset of the following
$$
\bigcup_{C=1}^{\lfloor \Upsilon/\alpha\rfloor}\{\beta\in[0,1]:\|C\beta\|<\Theta\}\,.
$$
\end{lemma}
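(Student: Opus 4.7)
The plan is to fix $\beta$ in the set \eqref{rtrtr} with witnessing primitive triples $(n,r,s)\in Z(t,k)$ and $(n',r',s')\in Z(t',k')$ satisfying \eqref{cc100}, \eqref{cc101} and the collinearity $n's-ns'=0$ from \eqref{zero}, and to extract from them a single positive integer $C\le\lfloor\Upsilon/\alpha\rfloor$ with $\|C\beta\|<\Theta$. The natural object to form is the cross product $(A,B,C):=(n,r,s)\times(n',r',s')$. Its middle coordinate equals $sn'-ns'=0$ by \eqref{zero}, so the relevant invariant of the pair of triples is just the integer pair $(A,C)=(rs'-r's,\,nr'-n'r)$; primitivity and distinctness of the two triples (together with $n,n'>0$) guarantee $(A,C)\neq(0,0)$.

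Next I would bound $|C|$ and $|A|$ using the algebraic identities
\begin{equation*}
C\alpha=(n\alpha-r)r'-(n'\alpha-r')r,\qquad A=(r-n\alpha)s'-(r'-n'\alpha)s,
\end{equation*}
the second of which relies on $n's-ns'=0$ to eliminate the $\alpha$ term. Plugging in $|n\alpha-r|<R^{-k}$, $|n'\alpha-r'|<R^{-k'}$ together with $0\le r\le n\le R^t$, $0\le r'\le n'\le R^{t'}$ (valid for $0<\alpha<1$) and the analogous bounds on $s,s'$ (valid for $0<\beta<1$) yields $|C|\le\Upsilon/\alpha$ and $|A|\le\Upsilon$. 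The heart of the argument is then the estimate $|C\beta+A|\le\Theta$, which I would obtain from the identity
\begin{equation*}
C\beta+A=(1,\alpha,\beta)\cdot\bigl((n,r,s)\times(n',r',s')\bigr),
\end{equation*}
so that $|C\beta+A|$ is the volume of the parallelepiped with edges $(1,\alpha,\beta),(n,r,s),(n',r',s')$. I would estimate this volume by the product of the three edge lengths (bounded by $\sqrt{3},\,\sqrt{3}\,n\le\sqrt{3}R^t,\,\sqrt{3}\,n'\le\sqrt{3}R^{t'}$) times $|\sin\varrho|\cdot|\sin\tilde\theta|$, where $\varrho$ is the angle between the two integer triples and $\tilde\theta$ is the angle between $(1,\alpha,\beta)$ and the plane they span. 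Controlling the individual angles $\theta,\theta'$ between $(1,\alpha,\beta)$ and each integer triple via \eqref{vb1002}--\eqref{vb1004}, then applying the projective triangle inequality $|\sin\varrho|\le|\sin\theta|+|\sin\theta'|\le 2\max\{|\sin\theta|,|\sin\theta'|\}$ and $|\sin\tilde\theta|\le\min\{|\sin\theta|,|\sin\theta'|\}$, produces $|\sin\varrho|\cdot|\sin\tilde\theta|\le 2|\sin\theta||\sin\theta'|$ and delivers exactly $|C\beta+A|\le 36\sqrt{3}R^2 R^{-\nu_1 t}R^{-\nu_1 t'}=\Theta$.

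To finish, I would rule out $C=0$: the volume bound forces $|C\beta+A|<1$ for all sufficiently large $t,t'$ (since $\nu_1=\tfrac13+\eta>0$ and $t_0$ may be enlarged), so if $C=0$ the integer $A$ would satisfy $|A|<1$ and hence $A=0$, contradicting $(A,C)\neq(0,0)$. After flipping signs if necessary, $1\le C\le\lfloor\Upsilon/\alpha\rfloor$ and $-A$ is the nearest integer to $C\beta$, giving $\|C\beta\|\le|C\beta+A|\le\Theta$, which places $\beta$ in the stated union. The technically most delicate step, and the one I expect to require the most care, is the angle-based factorization of the parallelepiped volume; once that bookkeeping is in place, the rest reduces to the Ostrowski-type bounds on $|n\alpha-r|$, $|n\beta-s|$ and their primed counterparts already in play.
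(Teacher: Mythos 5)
Your proposal is correct and follows essentially the same route as the paper: you form the cross product $(A,B,C)$, use \eqref{zero} to kill the middle coordinate, bound $|A|$ and $|C|$ via the same algebraic identities, control $|C\beta+A|$ as a parallelepiped volume through the angle bounds and the projective triangle inequality, and finally exclude $C=0$ using that $A$ is an integer and the volume bound is eventually less than $1$. The bookkeeping matches the paper's line by line, including the constants yielding $\Upsilon/\alpha$ and $\Theta=36\sqrt{3}R^2R^{-\nu_1 t}R^{-\nu_1 t'}$.
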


Recall that if $\Theta<1/2$, then
$$
|\{\beta\in[0,1]:\|C\beta\|<\Theta\}|=2\Theta\,.
$$
Without loss of generality, the condition $\Theta<1/2$  can be assumed  since $t$ and $t'$ can be taken sufficiently large.
Note that
\begin{equation}\label{cc200}
t+t' \ \le \ 2\max\{t,t'\}~\stackrel{\eqref{vb1005}}{\le} ~ \frac{10}{5-6\eta}\min\{t,t'\}\,.
\end{equation}
Then, using Lemma~\ref{lemma21} and the inequalities $\lceil \nu_1 t\rceil\le k < \lfloor \nu_2 t\rfloor$, we obtain that the measure of the set \eqref{rtrtr} is
\begin{align*}
&\le \ \frac{72\sqrt3R^2}{\alpha}\,R^{-\nu_1t}\,R^{-\nu_1t'}\cdot\big(R^{-k+t'}+R^{-k'+t}\big)\\[1ex]
&\le \ \frac{72\sqrt3R^2}{\alpha}\,\cdot\Big(R^{-2\nu_1t+(1-\nu_1)t'}+R^{-2\nu_1t'+(1-\nu_1)t}\Big)\\[1ex]
&\stackrel{\eqref{vb1005}}{\le} ~\frac{72\sqrt3R^2}{\alpha}\,\cdot \Big(R^{-2\nu_1t+(1-\nu_1)\left(1+\tfrac{6\eta}{5-6\eta}\right)t}+R^{-2\nu_1t'+(1-\nu_1)\left(1+\tfrac{6\eta}{5-6\eta}\right)t'}\Big)\\[1ex]
&=  \ ~\frac{72\sqrt3R^2}{\alpha}\,\cdot \Big(R^{-\tfrac{11\eta-12\eta^2}{5-6\eta}t}+R^{-\tfrac{11\eta-12\eta^2}{5-6\eta}t'}\Big)\\[1ex]
&\le \  ~\frac{144\sqrt3R^2}{\alpha}\,\cdot R^{-\tfrac{11\eta-12\eta^2}{5-6\eta}\min\{t,t'\}}\\[1ex]
&\stackrel{\eqref{cc200}}{\le}~\frac{144\sqrt3R^2}{\alpha}\,\cdot R^{-\tfrac{11\eta-12\eta^2}{10}(t+t')} \ \le \  \frac{144\sqrt3R^2}{\alpha}\,\cdot R^{-\eta(t+t')}
\end{align*}
provided that $\eta<1/12$. Thus,
\begin{equation}\label{estimate3}
\Big|\bigcup_{n's-ns'=0}E_{t,k}(s/n)\cap E_{t',k'}(s'/n')\Big| ~\le~ \frac{144\sqrt3R^2}{\alpha}\,\cdot R^{-\eta(t+t')}\,,
\end{equation}
where, recall, the union is taken over $(n,r,s)\in Z(t,k)$ and $(n',r',s')\in Z(t',k')$ subject to condition  \eqref{zero} and assuming that \eqref{cond1} holds.

\subsection{The finale}

Estimates \eqref{estimate2} and \eqref{estimate3} combined together show that whenever conditions \eqref{cond1} are satisfied, we have that for $(t,k)\neq(t',k')$
\begin{equation}\label{cc300}
|E_{t,k}\cap E_{t',k'}| ~\ll~ \psi(R^t)R^{t}\,\psi(R^{t'})R^{t'}
~+~R^{-\eta(t+t')}\,.
\end{equation}
This estimate also holds when conditions \eqref{cond1} are not satisfied, this time as a consequence of \eqref{est1}. Thus, \eqref{cc300} together with \eqref{ddd1} implies that
\begin{align*}
\sum^*_{t\le T}&\sum^*_{t'\le T'}|E_{t,k}\cap E_{t',k'}| ~ \\
&\ll~ \sum^*_{t\le T}\sum^*_{t'\le T'}\psi(R^t)R^{t}\,\psi(R^{t'})R^{t'}+
\sum^*_{t\le T}\sum^*_{t'\le T'}R^{-\eta(t+t')}+S_\cT(T)
\\[2ex]
&=~ \left(\,\sum_{t\in\cT,\,t\le T}\ \sum_{\lceil \nu_1 t\rceil\le k< \lfloor\nu_2 t\rfloor}\psi(R^t)R^t\right)^2\\[2ex]
&\qquad \qquad \qquad +\left(\,\sum_{t\in\cT,\,t\le T}\ \sum_{\lceil \nu_1 t\rceil\le k< \lfloor\nu_2 t\rfloor}R^{-\eta t}\right)^2+S_\cT(T)\\[2ex]
&\le~\left(\,\sum_{t\in\cT,\,t\le T}\ t\psi(R^t)R^t\right)^2
+\left(\,\sum_{t\in\cT,\,t\le T}\ tR^{-\eta t}\right)^2+S_\cT(T)\\[2ex]
&\le~S_\cT(T)^2
+\left(\,\sum_{t=1}^\infty tR^{-\eta t}\right)^2+S_\cT(T)~\ll~S_\cT(T)^2
\end{align*}
for sufficiently large $T$, since $S_\cT(T)\to\infty$ as $T\to\infty$. This establishes \eqref{dad} and thereby completes the proof of Theorem~\ref{T11}.

\vspace{7ex}

\noindent{\bf Acknowledgements.}  From establishing the first results to getting to this stage,
this paper has taken  over five years to complete.  It has been a long
but nevertheless interesting   journey which has given rise to new and
(in our collective judgments) fundamental   problems, especially within  the classical
theory of inhomogeneous Diophantine approximation such as Conjecture
\ref{rub} and Problem \ref{crazy}. One consequence of the five year time
span is that we have missed Wolfgang Schmidt's eightieth birthday by two
years (almost certainly more once the paper comes out in print).
Hopefully it has been worth it: over the extra two years the
paper, we, and of course Wolfgang, have nicely matured!  The mathematical and personal influence of
Wolfgang  on the three of us has been immense, and we will forever be
in his debt.   His remarkable creativity and vision are still influencing
current research; in particular at York. Thank you!

The authors are grateful to the anonymous reviewer whose comments have been most valuable, especially for drawing our attention to various classical results missing from an earlier version of the paper.

SV would like to thank his wonderful parents, Lalji and Manchaben, for
graciously accepting my life, my love for  Bridget  and also for being fantastic
grandparents to the dynamic duo Ayesha and Iona.  All of these  are a
million miles from their norm. In short, they have been and remain to be
my greatest teachers.

\newpage

\vspace{0ex}

{\footnotesize
\noindent VB, SV\,:\\
Department of Mathematics, University of York,\\
Heslington, York, YO10 5DD, England\\
e-mails: {\tt victor.beresnevich@york.ac.uk}\\
\hspace*{16ex}{\tt sanju.velani@york.ac.uk}

\noindent AH\,:\\
Department of Mathematics, University of Houston,\\ 
Texas, USA\\
e-mail: {\tt haynes@math.uh.edu}

}

\end{document}